\numberwithin{equation}{section}
\numberwithin{figure}{section}
\theoremstyle{plain}
\newtheorem{thm}{\protect\theoremname}[section]
  \theoremstyle{plain}
  \newtheorem{lem}[thm]{\protect\lemmaname}
  \theoremstyle{remark}
  \newtheorem{rem}[thm]{\protect\remarkname}
  \theoremstyle{plain}
  \newtheorem{prop}[thm]{\protect\propositionname}
  \theoremstyle{plain}
  \newtheorem{cor}[thm]{\protect\corollaryname}
  \theoremstyle{remark}
  \newtheorem*{acknowledgement*}{\protect\acknowledgementname}
\newtheorem*{conjecture*}{{\bf Conjecture}}
  \providecommand{\acknowledgementname}{Acknowledgement}
  \providecommand{\corollaryname}{Corollary}
  \providecommand{\lemmaname}{Lemma}
  \providecommand{\propositionname}{Proposition}
  \providecommand{\remarkname}{Remark}
\providecommand{\theoremname}{Theorem}
\begin{document}
\title{The subleading order of two dimensional cover times}
\begin{abstract}
The $\varepsilon$-cover time of the two dimensional torus by Brownian motion is the time it takes for the process to come within distance $\varepsilon>0$ from any point. Its leading order in the small $\varepsilon$-regime has been established by Dembo, Peres, Rosen and Zeitouni [{\it Ann. of Math.}, {\bf 160} (2004)]. In this work, the second order correction is identified. The approach relies on a multi-scale refinement of the second moment method, and draws on ideas from the study of the extremes of branching Brownian motion. \end{abstract}
\maketitle

\authors{David Belius\footnote{Centre de recherches math\'{e}matiques, Universit\'{e} de Montr\'{e}al, 2920 chemin de la Tour, Montr\'{e}al, QC H3T 1J4, Canada;  \email{david.belius@cantab.net}. Supported by the Swiss National Science Foundation, the Centre de Recherches Math\'{e}matiques and the Institut des Sciences Math\'{e}matiques.}, Nicola Kistler\footnote{
Department of Mathematics, College of Staten Island, City University of
New York, 2800 Victory Boulevard, Staten Island 10314 New York; \email{nicola.kistler@csi.cuny.edu}}}.

\section{Introduction}

A fundamental question one can ask about a Markov process concerns the time 
it takes to visit all of the state space. In this article
we study this question for Brownian motion in the two dimensional
Euclidean torus $\mathbb{T}=\left(\mathbb{R}/\mathbb{Z}\right)^{2}$, i.e.
the box $[0,1)^{2}$ with periodic boundary. More precisely, we study the time it takes for the process to come within distance $\varepsilon>0$ of every point, in the small $\varepsilon$-regime. This time is referred to as the \emph{$\varepsilon$-cover time}, and is denoted by $C_{\varepsilon}$.

The $\varepsilon$-cover time (and its discrete version, the cover time) has been extensively studied over the past decades.
For the two dimensional torus upper and lower bounds on the expected cover time were proven by Matthews \cite{MatthewsCoveringProblemsForBM} and Lawler \cite{LawlerOnTheCovering}. The gap between these bounds was closed by Dembo, Peres, Rosen and Zeitouni \cite{DemboPeresEtAl-CoverTimesforBMandRWin2D}, who proved the law of large numbers,
\begin{equation}
\frac{C_{\varepsilon}}{\frac{1}{\pi}\log\varepsilon^{-1}}=\big(1+o\left(1\right)\big)\log\varepsilon^{-2}.
\label{eq:DPRZResult}
\end{equation}

The question of lower order corrections, and, in general, fluctuations, was left open.
By analogy to related models (for instance the two dimensional Gaussian free field) one may expect the presence of a ``$\log \log$-correction term'', see \cite{Ding-CoverTimefor2DLattice,BramsonZeitouniTightnessoftheRecenteredMaxOf2dGFF}. No suggestion for the exact form of this conjectured term (i.e. including multiplicative constant) appears in the literature.
In this work we settle this issue by establishing the following asymptotics, 
\begin{equation}
\frac{C_{\varepsilon}}{\frac{1}{\pi}\log\varepsilon^{-1}}=\log\varepsilon^{-2}-
\big(1+o(1)\big)\log\log\varepsilon^{-1},
\label{eq:MainResultIntro}
\end{equation}
in probability, as $\varepsilon \downarrow 0$.

The law of large numbers \prettyref{eq:DPRZResult} is somewhat surprising. In fact, $C_{\varepsilon}$ is the maximum of all hitting times of balls in the torus of radius $\varepsilon$. To first approximation, these hitting times are exponentially distributed, with mean given by the denominator of \eqref{eq:DPRZResult}. Now, since hitting times of highly overlapping balls should be roughly the same, one may take the maximum over a ``packing'' of $\sim \varepsilon^{-2}$ balls of radius $\varepsilon$ which do not overlap too much: assuming that these exponentials are independent, one indeed recovers \eqref{eq:DPRZResult}.
In other words, despite (what turn out to be) long-range correlations between hitting times of disjoint balls, the \emph{leading order} of the maximum behaves as in the independent setting.

On the other hand, since the maximum of  independent exponentials does {\it not} exhibit any correction, \eqref{eq:MainResultIntro}
is testament to the presence of these correlations.
As it turns out, the field of hitting times is {\it log-correlated}, i.e. correlations decay (roughly) with the logarithm of the distance.
The prototypical example of such a random field is branching Brownian motion, BBM for short. Our proof of \eqref{eq:MainResultIntro} goes via a multi-scale analysis which is much inspired by the picture which has emerged in the study of the extremes of BBM \cite{KistlerDerridasRandomEnergyModelsBeyondSpinGlasses,BramsonMaxDisplacementofBBM,ABKGenealogy}.
The correction term in \eqref{eq:MainResultIntro} corresponds to the well-known $3/2$-correction first identified by Bramson \cite{BramsonMaxDisplacementofBBM} for the
maximum of BBM (see the end of the introducton).

The above can be constrasted with the situation for discrete torii of higher dimensions, about which much more is known, see  
\cite{BeliusCTDT, denHollanderGoodmanExtremalGeometryofBrownianPorousMedium}. For the cover time of the discrete torus \cite{BeliusCTDT} proves that in $d\geq 3$ there is no correction term to the leading order, and that the fluctuations follow the Gumbel distribution\footnote{ Although it does not appear in the literature, it is expected that the behaviour of the $\varepsilon$-cover time of the Euclidean torus
in $d\geq 3$ is the same as in the discrete setting.}, just as for the maximum of independent exponentials. The reason for this behavior is the local transience of Brownian motion in $d\geq 3$, which leads to weak correlations among hitting times: weak enough for the extremes of the field to behave like the extremes of a field of independent random variables, even at the level of fluctuations.

In $d=2$, the local recurrence of Brownian motion leads to intricate long-range correlations among hitting times, and these are responsible for a radically different process of covering.
%
%
Perhaps more important than the numerical value of the subleading order identified in \prettyref{eq:MainResultIntro} is the description that our proof provides of this covering process: In short, at each scale the torus can be thought of as being tiled by neighbourhoods, where the scale corresponds to the neighbourhoods' size. Because of ergodicity Brownian motion has a tendency to spend a similar amount of time in most neighbourhoods at each scale (the effect becomes weaker at smaller scales). But to leave an $\varepsilon$-ball unvisited until very late Brownian motion needs to spend atypically little time in that ball's neighbourhoods (this effect becomes stronger at smaller scales). This ``conflict'' makes it harder to ``miss'' a small ball, thus making the cover time happen a little bit faster and giving rise to the subleading correction. Furthermore, the strategy\footnote{We do not prove that this is the \emph{only} strategy, but \cite{ABKGenealogy} proves the analogous statement for Branching Brownian Motion, and this seems very likely to carry over to our setting.} needed to avoid a small ball up until right before the $\varepsilon$-cover time turns out to be to spend \emph{relatively more} time in the intermediate scales. These phenomena can be considered instances of \emph{entropic repulsion}.


As in  \cite{DemboPeresEtAl-CoverTimesforBMandRWin2D}, we control hitting times via excursions between concentric circles at different scales, relying on an implicit tree structure. 
 Our main contribution is the identification of the mechanism by which this approximate tree structure gives rise to the covering behaviour described above, and the discovery of a concrete analogy to branching Brownian motion. Armed with this analogy we are able to apply methods from the study of branching Brownian motion to prove \prettyref{eq:MainResultIntro}.

\begin{figure}
\includegraphics[scale=0.8,clip,trim=585 650 600 0]{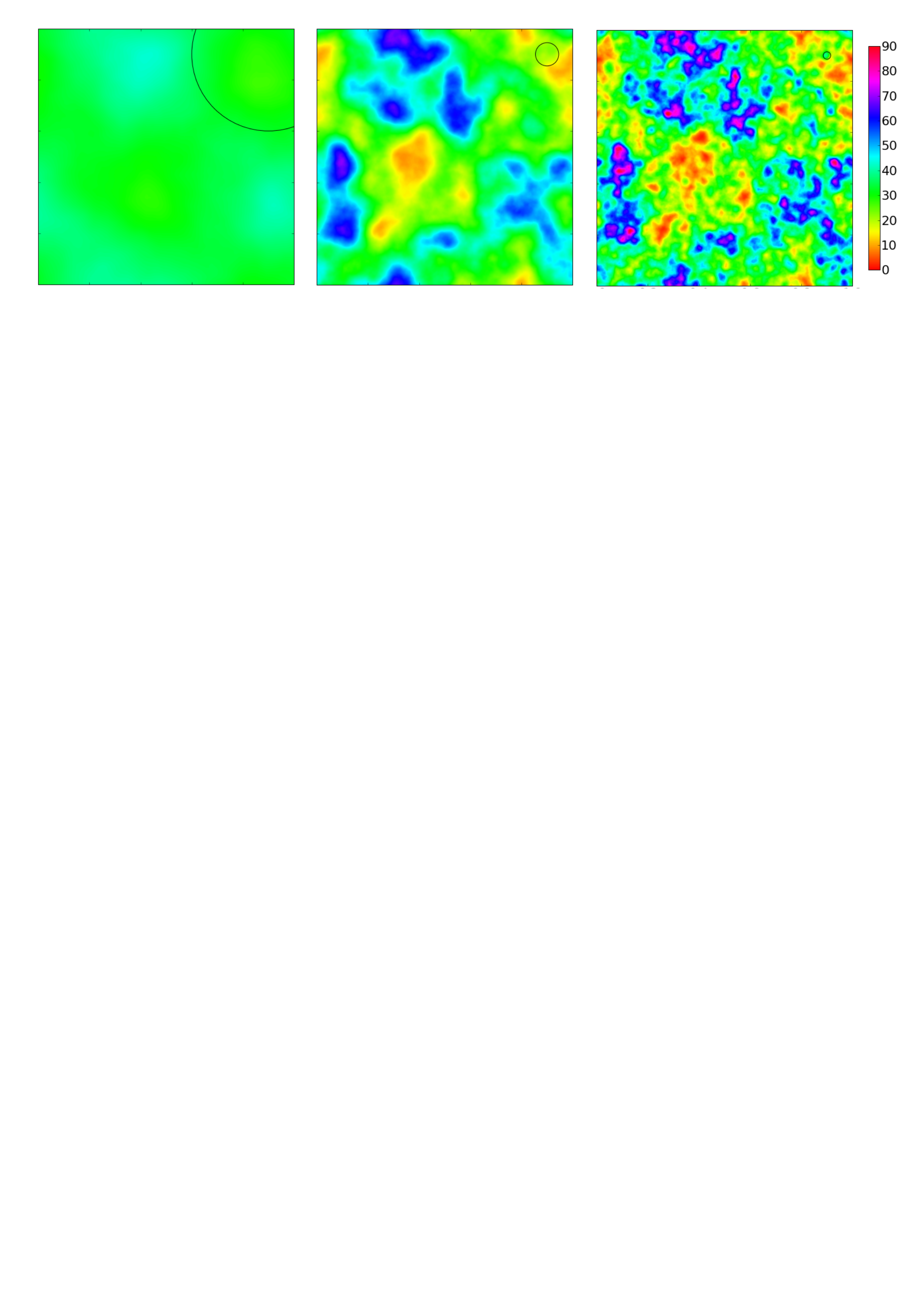}

\framebox{\begin{minipage}[t]{1\columnwidth}%
This simulation shows the occupation times of balls in the torus at three
different scales. Brownian motion is run up to time $10$ and the
intensity of each pixel is given by the time spent in a ball centered
at that pixel. The radius is indicated by the ball in the upper-right
corner, and the occupation times are rescaled by a factor proportional
to its area. The traversal process of a point (see \eqref{eq:bla}) can be thought of as
a proxy for the occupation times of balls around that point.\label{fig:Simulation}
The picture hints at the approximate hierarchical structure: at the
first scale all occupation times are essentially the same. At the
second scale the torus has been split into regions of high, moderate
and low occupation time. At the third scale these regions have been further
subdivided.\end{minipage}}

\caption{Effect of approximate hierarchical structure in simulation of occupation times.}
\end{figure}

\subsection{A sketch of the proof}
In the following, $F$ denotes a set of $\varepsilon^{-2}$ points in the torus, scattered in such a 
way that the balls of radius $\varepsilon$ centered at these points do not overlap too much.  
\subsubsection{{\bf (Failure of) vanilla second moment method}}
A classical approach for the study of extremes of random fields goes via the so-called {\it second moment method}, i.e. the comparative study of first and second moment of a suitably chosen quantity. In the case of cover times,  a natural candidate is
\begin{equation}
Z(m) = \mbox{number of }y\in F\mbox{ that such that }B\left(y,\varepsilon\right)\mbox{ is hit after time }m.\label{eq:ZDefIntro}
\end{equation}
Assuming that hitting times are approximately exponential with mean $\left(1/\pi\right)\log\varepsilon^{-1}$, we have: 
\begin{equation}
\mathbb{E}\left[Z\left(m\right)\right]\approx \varepsilon^{-2} \exp\left({ - \frac{m}{\frac{1}{\pi} \log \varepsilon^{-1}} }\right).\label{eq:ZExpIntro}
\end{equation}
Note that this is vanishing for $\varepsilon \downarrow 0$ if
\begin{equation}
m > \frac{1}{\pi}(1+\delta) 2 \left(\log \varepsilon^{-1} \right)^2 \mbox{ and } \delta>0.\label{eq:mforfirstmoment}
\end{equation}
By the Markov inequality, one immediately obtains an upper bound on the {\it leading} order of the $\varepsilon$-cover time (under the exponentiality assumption). In hindsight, this bound is tight. The analysis of the second moment is however inconclusive: it does not yield a matching lower bound, due to strong correlations of hitting times. To overcome this obstacle one needs a more sophisticated multi-scale analysis \cite{DemboPeresEtAl-CoverTimesforBMandRWin2D}. At the level of the subleading order the situation is even more delicate, since already the analysis of the first moment is inconclusive. In fact, if we let
\begin{equation}
m\left(s\right) = \frac{1}{\pi}\log\varepsilon^{-1}\left\{ \log\varepsilon^{-2}-s\log\log\varepsilon^{-1}\right\} , \quad s\in \mathbb{R},\label{eq:mwithcorrection}
\end{equation}
one has (cf. \eqref{eq:ZExpIntro} and \eqref{eq:mforfirstmoment})
\begin{equation}
\mathbb{E}\left[Z\left(m\left(s\right)\right)\right]\approx\left(\log\varepsilon^{-1}\right)^{s},\label{eq:Expofzms}
\end{equation}
which explodes if $s>0$, although \eqref{eq:MainResultIntro}  claims that $Z\left(m\left(s\right)\right)=0$ with high probability also for $0 < s < 1$. The source of the problem is 
easily identified: by linearity of the expectation, we are completely dismissing the  correlations of the field of hitting times, but these are severe enough to have an impact at the level of the subleading order.

To get around this $Z(m)$ should be replaced by a truncated version $\widetilde Z(m)$, whose first moment already encodes information about the correlation structure. This approach can be used to derive the subleading order of branching Browian motion (see \cite{KistlerDerridasRandomEnergyModelsBeyondSpinGlasses}). The challenge is identifying the right truncation procedure for the cover time of the torus.
\subsubsection{{\bf Traversal processes}}
As it turns out, a suitable truncation is formulated in terms of a \emph{traversal process} associated to each of the $\varepsilon^{-2}$ points in $F$. This process captures the amount of time Brownian motion spends in the neighbourhood of the point $y$ at the different scales in an associated tower of concentric balls (see \prettyref{fig:Simulation}). The scales are represented by $L\approx\log\varepsilon^{-1}$ geometrically
growing concentric balls,
\begin{equation}
B\left(y,\varepsilon\right)=B\left(y,r_{L}\right)\subset B\left(y,r_{L-1}\right)\subset\ldots\subset B\left(y,r_{1}\right)\subset B\left(y,r_{0}\right)\label{eq:scales},
\end{equation}
around each $y$, where $r_{l} = e \times r_{l+1} = e^{L-l}\varepsilon$
is the ``size'' of the $l-$th scale. We measure time spent in a ball $B\left(y,r_{l}\right)$ by the number of \emph{traversals} made from scale $l$ to scale $l+1$, that is the number of times that Brownian motion moves from the exterior of $B\left(y,r_{l}\right)$
to the interior of $B\left(y,r_{l+1}\right)$. More precisely, we count the number
of such traverals that take place during the first $t$ excursions from $\partial B\left(y,r_{1}\right)$ to  $\partial B\left(y,r_{0}\right)$. 
We call this number  $T_{l}^{y,t}$ and view this as a process in $l$, thus obtaining for each $y\in\mathbb{T}$ and initial excursion count $t\ge1$,
\begin{equation}
\begin{array}{c}
\mbox{the \emph{traversal process} }\left(T_{l}^{y,t}\right)_{l\ge0},\mbox{ counting}\\
\mbox{the number of traversals from }\partial B\left(y,r_{l}\right)\mbox{ to }\partial B\left(y,r_{l+1}\right).\label{eq:bla}
\end{array}
\end{equation}
Note that,
\begin{equation}
T_{L-1}^{t,y}=0 \iff B\left(y,\varepsilon\right)\mbox{ is not hit during  }t\mbox{ excursions}\label{eq:iff}
\end{equation}
from $\partial B(y,r_1)$ to $\partial B(y,r_0)$, thus providing a connection between traversal processes, hitting times of balls, and ultimately the $\varepsilon$-cover time.

Since we have one traversal process for each $y \in \mathbb{T}$ there is no explicit hierarchical structure in our construction. However,
the correlations of the processes have a crucial \emph{approximate} hierarchical structure, which underlies the whole approach.
If $y$ and $z$ are at distance of about $r_{k}$, then for $l$ slightly smaller than $k$ the balls of radius $r_{l} \gg r_k$ around $y$ and $z$ will have a very large overlap: they will be almost the same ball. Therefore, one would expect that the number of traversals around $y$ and $z$ at such scales essentially coincide, that is $T^{y,t}_{l} \approx T^{z,t}_{l}$ for $l \ll k$. On the other hand for $l$ larger than $k$ the balls of radius $r_{l} \ll r_k$ around $y$ and $z$ will be disjoint. By the strong Markov property the excursions of Brownian motion in disjoint balls are conditionally independent, and we may therefore expect that the traversal processes of $y$ and $z$ evolve essentially independently at such scales, conditionally on the number of traversals at scale $r_k$ (which should be roughly the same for both). This picture leads one to imagine a tree\footnote{Or more accurately a forest of $\sim r_0^{-2}$ trees, the latter being the number of balls that can be ``packed'' into the highest scale.} of depth $L$ where $y,z \in \mathbb{T}$ at distance of about $r_k$ roughly correspond to leaves whose most recent ancestor is in level $k$ of the tree (see \prettyref{fig:PosOfCirclesTWB}).

The advantage of defining the traversal process in terms of excursions from $\partial B\left(y,r_{1}\right)$ to  $\partial B\left(y,r_{0}\right)$ is that it then becomes a critical \emph{Galton-Watson process} with geometric offspring distribution, due to the ``spacing'' of the scales and a well-known result on the exit distribution of Brownian motion in two dimensions from an annulus (see \prettyref{fig:UpAScaleDownAScale}). A concentration argument for excursion times shows that the time needed to make $t$ excursions is very close to $\frac{1}{\pi}t$, thus providing a way to ``translate'' between excursion counts and the actual time of Brownian motion.
\subsubsection{{\bf Upper bound - barrier}}
The key idea, which eventually leads to the truncation, is based on the insight that a traversal process {\it cannot die out too quickly}. To formalize, consider the following ``barrier'' for the square root of the traversal count,
\begin{equation}
\alpha=\alpha\left(l\right)=\left(1-\frac{l}{L}\right)\sqrt{t}-\left(\log L\right)^2\mbox{\,\ for }l=0,\ldots, L.\label{eq:IntroBarrier}
\end{equation}
This barrier is the linear function interpolating between $\sqrt{t}$ and $0$, shifted downwards slightly (see \prettyref{fig:Illustration-of-barriers}).
It turns out that with high probability,
\begin{equation}
\mbox{no traversal process }T_{l}^{y,t}\mbox{ falls below }\alpha\left(l\right)^2,\mbox{ for }l=0,\ldots,L-1.\label{eq:IntroSmartMarkov}
\end{equation}
%
%
We prove this claim in two steps: first we reduce the ``combinatorial complexity'' at each scale by means of a  {\it packing argument}, and, second, we use a {\it Markov inequality} over the scales (``multi-scale Markov'', cf. \cite{KistlerDerridasRandomEnergyModelsBeyondSpinGlasses}). Roughly $r_l^{-2}$ balls of radius $r_l$ and at mutual distance roughly $r_l$ can be ``packed'' into the torus. By the above intuition that $T_{l}^{y,t}\approx T_{l}^{z,t}$ for $y$ and $z$ at distance smaller than $r_l$ (see also \prettyref{fig:AnnulusPacking}), we expect the minimum of $T^{y,t}_l$ over all $y \in F$ to be essentially the mininum over this packing.
A union bound then shows that the probability that the minimum of $T_{l}^{y,t}$ over all $y\in F$ drops below $\alpha(l)^{2}$ should be at most
\begin{equation}
cr_{l}^{-2}\mathbb{P}\left[T_{l}^{y,t}\le\alpha(l)^{2} \right]\label{eq:OneLevelUnionBound}.
\end{equation}
We derive a large deviation control on $T_{l}^{y,t}$, which allows us to prove that with our choice of $\alpha(l)$ the quantity in \prettyref{eq:OneLevelUnionBound} tends to zero, and what's more, the sum over $l$ of \prettyref{eq:OneLevelUnionBound} tends to zero. Therefore by a union bound over the scales $l=0,\ldots,L-1$ we will be able
to show that no traversal process falls below $\alpha(l)^2$, i.e. derive \prettyref{eq:IntroSmartMarkov}. For the upper bound on the cover time, this ``multi-scale'' use of the Markov inequality is the only place where the correlation structure of the traversal processes is used.

\subsubsection{{\bf Upper bound - matching}}
Now \eqref{eq:IntroSmartMarkov} suggests the following truncated version of the counting random variable $Z(m)$, which also counts balls which are not hit (cf. \eqref{eq:iff}) but furthermore requires the traversal process to stays above $\alpha(l)^2$:
\begin{equation}
\tilde{Z}\left(t\right)=\begin{array}{c}
\mbox{number of }y\in F\mbox{\,\ such that }T_{L-1}^{y,t}=0\\
\mbox{ and }\sqrt{T_{l}^{y,t}}\mbox{ never falls below }\alpha\left(l\right).\end{array}\label{eq:ZDefIntro-1-1}
\end{equation}
When written in terms of the number of scales $L \approx (\log \varepsilon)^{-1}$, the number of excursions that typically take place up to the time $m(s)$  from \eqref{eq:mwithcorrection} turns out to be roughly
\begin{equation}
t\left(s\right) = L\left\{ 2L - s\log L\right\} , \quad s\in \mathbb{R}.\label{eq:introts}
\end{equation}
Therefore to obtain the upper bound in \eqref{eq:MainResultIntro} one has to show that $\tilde{Z}(t(s))=0$ with high probability for $s<1$.

The expectation of $\tilde{Z}\left(t\right)$ can be written as
\begin{equation}
E[\tilde{Z}(t)]=|F| \cdot \mathbb{P}\left[T_{L-1}^{y,t}=0\right] \cdot \mathbb{P}\left[\sqrt{T_{l}^{y,t}}\ge\alpha(l)\mbox{ for }l=0,\ldots,L-1\Big|T_{L-1}^{y,t}=0\right].\label{eq:truncmean}
\end{equation}

Note that when $t=t(s)$ the product of the first two terms is essentially the expectation of the untruncated counting variable $Z(m(s))$, and as such will be equal to $L^{s}$, cf. \eqref{eq:Expofzms} and recall that $L\approx \log \varepsilon^{-1}$. The gain comes from the conditional probability, which plays a fundamental role.
It will become apparent that the mean of the process $\sqrt{T_l^{y,t}}$ when conditioned on $T^{y,t}_{L-1}=0$ 
is well approximated by $\left(1-\frac{l}{L}\right)\sqrt{t}$. Furthermore, we will see that the fluctuations of $\sqrt{T_{l}^{y,t}}$ around this mean behave roughly like a Brownian bridge on $[0,L]$ starting and ending in zero. Therefore the conditional probability in \prettyref{eq:truncmean} roughly behaves as
\begin{equation}
\mathbb{P}\left[X_{l}\ge-\left(\log L\right)^2\mbox{ for }l=0,\ldots,L\right],\label{eq:BBridgeTorusSketch}
\end{equation}
where $X_{l}$ is a Brownian bridge. It is well-known (e.g. by the Ballot
theorem) that this probability is of order $L^{-1}$ (ignoring unimportant $log$-terms). 
Coming back to \eqref{eq:truncmean}, this line of reasoning will lead to
\begin{equation} \label{denominator}
\mathbb{E}\left[\widetilde{Z}\left(t(s)\right)\right] \approx \underset{ L^s }{\underbrace{e^{2L} \cdot e^{-t(s)/L}}} \cdot L^{-1}
\longrightarrow \begin{cases} \infty\mbox{ if }s>1, \\
0\mbox{ if }s<1,
\end{cases}
\end{equation}
where the first term arises because $F$ has $\varepsilon^{-2} \approx e^{2L}$ elements, and the second because the number of excursions until the ball $B(y,\varepsilon)$ is hit turns out to be essentially exponentially distributed with mean $L$.
{\it Matching to unity}, we see that $\mathbb{E}[\widetilde{Z}\left(t\right)]$
is of order one for $t$ close to $t\left(1\right)$. In other words, the $L^{-1} \approx 1/\log\varepsilon^{-1}$ at the end of \eqref{denominator} gives rise to the $\log\log$-correction of the cover time. For $s<1$ the expectation tends to zero, giving the upper bound of \eqref{eq:MainResultIntro}. This will be formalized in Section \ref{sec:UpperBound}.
\subsubsection{{\bf Lower bound}}
As for a  tight lower bound, the approach relies on a key idea related to \eqref{eq:BBridgeTorusSketch}. In fact, it can be proven that a Brownian bridge which is required to stay above the line $-(\log L)^2$ 
 for $l=0, \dots, L$ stays \emph{well} above that line, a phenomenon which is reminiscent of the entropic repulsion appearing in the statistical mechanics of random surfaces, see \cite{ABKGenealogy}. More precisely, it can be shown that with overwhelming probability such a Brownian bridge will typically lie  higher than curves of the form $\min\{l^\delta, (L-l)^{\delta}\}$ for any $0 < \delta < 1/2$. Reformulating back in terms of 
the traversal process, this suggests that we do not lose any information 
by considering  
\begin{eqnarray}
 & \text{those}\; y\in F\;\text{for which the associated square root traversal process}\nonumber \\
 & \sqrt{T_{l}^{y,t}}\;\text{stays above}\;\left(1-\frac{l}{L}\right)\sqrt{t}+\min\{l^{0.49},(L-l)^{0.49}\},\;\text{for}\;0\ll l\ll L,\label{eq:notlosing}
\end{eqnarray}
(see \prettyref{fig:Illustration-of-barriers}).
We count the number of balls which have not been hit during $t$ excursions, and whose associated traversal process satisfies the constraint in \prettyref{eq:notlosing}:
\begin{equation}
\widehat{Z}\left(t\right)=
\mbox{number of }y\in F\mbox{\,\ such that }T_{L-1}^{y,t}=0\mbox{ and }\sqrt{T_{l}^{y,t}}\mbox{ satisfies \eqref{eq:notlosing} }.\label{eq:ZHatIntro}
\end{equation}
The expectation of $\widehat{Z}(t(s))$ turns out to be essentially that of the counting random variable $\tilde{Z}(t(s))$ used for the upper bound, and in particular it tends to infinity for $s>1$ (see \eqref{denominator}). Furthermore the truncation turns out to reduce correlations sufficiently for the second moment to be asymptotically equivalent to the first moment squared. An application of the Payley-Zygmund inequality will therefore establish that, when $s>1$, there will with high probability exist a $y \in F$ whose ball $B(y, \varepsilon)$ is not hit in $t(s)$ excursions. By the aforementioned concentration of excursion times this will provide the lower bound on the $\varepsilon$-cover time from \eqref{eq:MainResultIntro}. This is formalized in Section \ref{sec:Lowerbound}.

Without the truncation the second moment explodes with respect to the first moment squared. When writing the second moment as a sum over $y,z \in F$ of the probability that the traversal processes assosciated with both $y$ and $z$ satisfy the condition in \eqref{eq:ZHatIntro}, one sees that the source of the problem are those pairs of balls which lie at ``mesoscopic'' distance (smaller than $r_0$ but larger than $\varepsilon$). 
The truncation helps by penalising such pairs, since the ``bump'' on top of the line $(1-l/L)\sqrt{t}$ forces the square root traversal processes of $y$ and $z$ at distance roughly $r_k$ to \emph{each} make an atypically extreme jump from $(1-k/L)\sqrt{t}  + \min\{k^{0.49}, (L-k)^{0.49}\}$ to $0$ between scales $k$ and $L-1$. One such jump turns out to be achievable, but two jumps in the same neighbourhood turn out to be too costly for such pairs to contribute significantly to the truncated second moement.
Here it is crucial that the traversal processes decorrelate at scales $l \gg k$, so that one really needs to make two essentially \emph{independent} jumps if $y$ and $z$ are both to satisfy \eqref{eq:ZHatIntro}.
Finally, the number of pairs at distance close to $r_L$ is too small to contribute much to the second moment. Therefore the main contribution to the truncated second moment comes from pairs that are at distance at least $r_0$, and for these pairs the events of satisfying the condition in \eqref{eq:ZHatIntro} turn out to be independent. We choose $r_0$ tending to zero slowly, which means that the overwhelming majority of the pairs are independent. This causes the second moment of $\widehat{Z}$ to be asymptotic to the first moment squared.

The rigorous implementation of this decoupling for scales $l \gg k$ is  arguably the most delicate and technically demanding step  in our approach,
 and will be formalized in \prettyref{sec:TwoPointBound} (see also the statement \prettyref{prop:MainTwoProfileBound} of the main bound and \prettyref{rem:2ndmomentboundremark}).

\subsubsection{{\bf Barrier estimates and excursion time concentration}}

The above sketch rests on being able to control the probability that the traversal process $T^{y,t}_l$ avoids certain barriers. Proving these rigorously turns out to
be delicate, and is carried out in \prettyref{sec:BoundaryCrossingProofs} via a comparision of both a conditioned Galton-Watson process and a Brownian bridge to a Bessel bridge.

Furthermore, we have assumed that we are able to control the time needed to make $t$ excursions. As the subleading correction term we are trying to establish is very small compared to the leading order, we need a very precise bound. The basic recipe for such bounds, used e.g. in \cite{DemboPeresEtAl-CoverTimesforBMandRWin2D} (a large deviation bound on excursion times obtained by estimating their exponential moments using Khasminskii's lemma/Kac moment formula, together with a union bound), turns out to be insufficent. We must complement it with a packing argument to reduce combinatorial complexity in the union bound, and in our large deviation bound we need to exploit the Markovian structure of the excursions of Brownian motion. This is carried out in Section \ref{sec:Concentration}.

\subsection{Relation to branching Brownian motion, or: ``$3/2=1$''}
The heuristics described above rests on the approximate hierarchical
structure: it is absolutely fundamental that the traversal processes of two points at distance
of about $r_{l}$ for a given scale $l$ essentially {\it agree}
at higher scales, and {\it decorrelate} at lower scales. In other words, intuitively one starts with a small collection of traversal processes at the first scale which then branch in each subsequent scale, producing several offspring which evolve as (essentially)
independent traversal processes. The situation is thus reminiscent of 
branching Brownian motion, one of the simplest models with an exact hierarchical structure. A similar procedure of truncation and matching to unity can be applied to establish the level of the maximum of BBM, see \cite{KistlerDerridasRandomEnergyModelsBeyondSpinGlasses}.
The key insight is that with suitably chosen scales (cf. \eqref{eq:scales}) the \emph{square roots} of traversal processes correspond to the trajectories (or ``profiles'') of particles in BBM, and that the truncations applied to profiles in BBM can succesfully be applied to these processes.

A fundamental difference is that in case of BBM, the field consists of correlated {\it Gaussian} random variables, whereas in our case hitting times are (approximately) {\it exponentially} distributed. In particular, 
the {\it tail} of a Gaussian distribution has a polynomial term which exponentials do not 
 have. This has a considerable impact when ``matching to unity'': in the BBM version\footnote{To be precise, a version of BBM with branching at discrete integer times and average branching factor $e^2$, run up to time $L$.} of \eqref{eq:truncmean} and  \eqref{denominator} the middle term corresponds to the probability that a trajectory ends up close to the level of maximum, and has not only has the main part of the Gaussian tail $e^{-t^2/(2L)}$, but also the polynomial term $L^{-1/2}$ (when $t$ is chosen to be of order $L$, as it must be for the exponential part of the tail to match the combinatorial complexity $e^{2L}$). The last term in \eqref{eq:truncmean} and \eqref{denominator} is essentially the same barrier crossing probability also for BBM, but applied to the trajectory of a particle up to time $L$, and it also has order $\sim L^{-1}$, giving a total contribution from polynomial terms of $L^{-3/2}$. This gives rise to well-known BBM correction involving $3/2$ when ``matching to unity''.

Thus the subleading correction for BBM (and by extension the Gaussian free field on the two dimensional torus \cite{BramsonZeitouniTightnessoftheRecenteredMaxOf2dGFF}) correctly ``predicts'' the correction term of the $\varepsilon$-cover time of the torus, once this small difference in the tail is taken into account. The subleading order for the cover time of the tree, which was established in \cite{DingZeitouni-SharpEstimateForCTOnTree}, can also be ``predicted'' in the same manner; in this case the subleading correction coincides numerically\footnote{To verify this one must rearrange (0.1) of \cite{DingZeitouni-SharpEstimateForCTOnTree} appropriately, so that the cover time is rescaled by the expected hitting time of a leaf.} with our main result \eqref{eq:MainResultIntro}, since the tail of hitting times of leaves also lacks a polynomial term.

In short, the subleading correction in all of these models encodes the very same physical principle of entropic repulsion.

\subsection{Open problems}
Our main result is a necessary step towards the identification of the weak limit of the (suitably rescaled)
$\varepsilon$-cover time. Based on the analogy with branching Brownian motion it is natural to expect 
the limiting law to be described by a mixture of Gumbel distributions, see e.g. \cite{ABKGenealogy}. Even  more challenging would be the full description of the process of covering; also in this case, 
the analogy with BBM suggests that regions which are missed the longest form a Poisson {\it cluster} process of random intensity \cite{ABKExtremalProcOFBBM, AidekonBerestickyBruneyShiBBMSeenFromItsTip}.

The extension of our main result to the {\it discrete} setting is also of interest. Here $C_{N}$ is the time it takes for (discrete or continuous time)
random walk to visit every vertex of the two dimensional discrete torus graph $\left(\mathbb{Z}/N\mathbb{Z}\right)^{2}$,
in the large $N$-limit. Dembo \emph{et. al.} \cite{DemboPeresRosen-BrownianMotionOnCompactManifolds} were able to {\it deduce} from \eqref{eq:DPRZResult} 
the corresponding law of large numbers for the discrete torus, namely:
\begin{equation}
\frac{C_{N}}{\frac{2}{\pi}N^{2}\log N}=\big(1+o\left(1\right)\big)\log N^{2}, \label{eq:DPRZDiscrete}
\end{equation}
in probability, for $N$ large. The deduction uses a strong ``Hungarian'' coupling of random walk and Brownian motion. As it turns out, this coupling is too coarse to deduce from our main result \prettyref{eq:MainResultIntro} the subleading order for $C_{N}$.
Nevertheless, the heuristic underlying the proof of \prettyref{eq:MainResultIntro}
can be applied to the discrete setting as well, and leads to the following conjecture (see also \prettyref{rem:EndRemark}).
\begin{conjecture*}
\begin{equation}
\frac{C_{N}}{\frac{2}{\pi}N^{2}\log N}=\log N^{2}-\big(1+o\left(1\right)\big)\log\log N,\label{eq:DiscreteCorrection}
\end{equation}
in probability, as $N\to\infty$.
\end{conjecture*}

\newpage
\tableofcontents

\section{Notation and definitions}

In this section we collect some important notations and definitions
used throughout the article.

We write $\mathbb{R}$ for the real numbers and define $\mathbb{R}_{+}=[0,\infty)$.
For any real $t\in\mathbb{R}_{+}$, we let $\lfloor t\rfloor$ denote
the integer part of $t$, and let $\lceil t\rceil$ denote the smallest
integer at least as large as $t$. For any set $A$ in a topological
space, $A^{\circ}$ denotes the interior of $A$ and $\bar{A}$ the
closure. The boundary $\partial A$ of the set $A$ is defined by
$\bar{A}\backslash A^{\circ}$. For any $a,b\in\mathbb{R}$ we denote
the minimum of $a$ and $b$ by $a\wedge b$. For any sequences $a_{t},b_{t},$
depending on some parameter $t$ the notation
\[
a_{t}\asymp b_{t},
\]
means that there exist constants $c$ and $C$ (not depending on $t$)
such that
\[
cb_{t}\le a_{t}\le Cb_{t}\mbox{ for all }t\ge0.
\]

Let $\left\{ 0,1,\ldots\right\} ^{\infty}$ be the space of integer
sequences and let $\left(T_{l}\right)_{l\ge0}$ be the canonical process
on this space. We let $\mathbb{G}_{n}$ denote the law on $\left\{ 0,1,\ldots\right\} ^{\infty}$
of a critical Galton Watson process with geometric offspring distribution
and initial population $n\in\left\{ 0,1\ldots\right\} $. This offspring
distribution has parameter $\frac{1}{2}$ and is supported on $\left\{ 0,1,\ldots\right\} $.
For real $t\ge0$, we take $\mathbb{G}_{t}$ to mean $\mathbb{G}_{\lfloor t\rfloor}$.

We write $\mathbb{T}=\left(\mathbb{R}/\mathbb{Z}\right)^{2}$ for
the two dimensional Euclidean torus. The map $\pi$ is the natural
projection of $\mathbb{R}^{2}$ onto $\mathbb{T}$, and $\pi^{-1}\left(x\right)$
the point in $[0,1)^{2}\subset\mathbb{R}^{2}$ which maps to $x$
under $\pi$. The Euclidean metric on $\mathbb{R}^{2}$ induces a
metric on $\text{\ensuremath{\mathbb{T}}}$ which we denote by $d(x,y),x,y\in\mathbb{T}$.
The closed ball of radius $r>0$ in $\mathbb{T}$ or $\mathbb{R}^{2}$
centered at $x$ is denoted by $B\left(x,r\right)$.

For any interval $I\subset\mathbb{R}$ and $D=\mathbb{R},\mathbb{T}$
or $\mathbb{R}^{2}$ we write $C\left(I,D\right)$ for the space of
continuous functions from $I$ to $D$ with the topology of uniform
convergence. We let $\mathcal{B}\left(I,D\right)$ denote the Borel
sigma algebra on this space. The canonical process on $C\left(I,\mathbb{R}\right)$
is denoted by $X_{t},t\ge0$, and the canonical processes on $C\left(I,\mathbb{T}\right)$
and $C\left(I,\mathbb{R}^{2}\right)$ are denoted by $W_{t},t\ge0$.
We denote by $\mathcal{F}_{t}=\mathcal{F}_{t}\left(I,D\right)$ the
natural filtration of the canonical process. The canonical shift on
$C\left(\mathbb{R}_{+},D\right)$ is denoted by $\theta_{t},t\ge0$.
To indicate ``chunks'' of the canonical process we write e.g. $W_{S+\cdot}$
for the path $t\to W_{S+t}$ of $W_{t}$ after time $S$, where $S$
is a random time. If $S_{1}$ and $S_{2}$ are two random times $W_{\left(S_{1}+\cdot\right)\wedge S_{2}}$
denotes the path $t\to W_{\left(S_{1}+t\right)\wedge S_{2}}$ of $W_{t}$
between times $S_{1}$ and $S_{2}$.

We let $P_{x}^{\mathbb{R}^{2}},x\in\mathbb{R}^{2},$ be the law on
$\left(C\left(\mathbb{R}_{+},\mathbb{R}^{2}\right),\mathcal{B}\left(\mathbb{R}_{+},\mathbb{R}^{2}\right)\right)$
which turns $W_{t},t\ge0,$ into a standard Brownian motion starting
at $x\in\mathbb{R}^{2}$. We let $P_{x},x\in\mathbb{T}$, be the law
on $\left(C\left(\mathbb{R}_{+},\mathbb{T}\right),\mathcal{B}\left(\mathbb{R}_{+},\mathbb{T}\right)\right)$
of $\left(\pi\left(W_{t}\right)\right)_{t\ge0}$ under $P_{\pi^{-1}\left(x\right)}^{\mathbb{R}^{2}}$;
that is, $P_{x}$ is the law of standard Brownian motion in $\mathbb{T}$
started at $x$. For any measure $\nu$ on $\mathbb{T}$ we let $P_{\nu}\left[\cdot\right]=\int_{\mathbb{T}}P_{x}\left[\cdot\right]\nu\left(dx\right)$;
if $\nu$ is a probability measure then $P_{\nu}$ is the law of Brownian
motion with starting point distributed according to $\nu$.

For any measurable set $A\subset\mathbb{T}$ or $\mathbb{R}^{2}$
we define the hitting time $H_{A}$ of $A$ by
\[
H_{A}=\inf\left\{ t\ge0:W_{t}\in A\right\} ,
\]
and for $A\subset\mathbb{R}$ we define $H_{A}$ similarly but with
$W_{t}$ replaced by $X_{t}$. For a singleton $a\in\mathbb{R}$ we
abbreviate $H_{a}=H_{\left\{ a\right\} }$. We write $T_{A}$ for
the exit time from $A\subset\mathbb{T}$ or $\mathbb{R}^{2}$, that
is 
\[
T_{A}=\inf\left\{ t\ge0:W_{t}\notin A\right\} .
\]

Note that any set $A\subset(0,1)^{2}\subset\mathbb{R}^{2}$ can be
identified with $\pi\left(A\right)\subset\mathbb{T}$, and that the
law of Brownian motion in $\pi\left(A\right)$ and $A$ coincide:
formally speaking, for any $a\in A$,
\begin{equation}
\mbox{the }P_{\pi\left(x\right)}-\mbox{law of }\pi^{-1}\left(W_{\cdot\wedge T_{\pi\left(A\right)}}\right)\mbox{ is the }P_{x}^{\mathbb{R}^{2}}-\mbox{law of }W_{\cdot\wedge T_{A}}.\label{eq:LawOfBMInTorusAndR2Same}
\end{equation}
In particular, when $R<\frac{1}{2}$ the ball $B\left(y,R\right)\subset\mathbb{T}$
can be identified with $B\left(\mathbb{\pi}^{-1}\left(y\right),R\right)$,
and the laws of Brownian motion in these two balls coincide. Brownian
motion in $\mathbb{R}^{2}$ is rotationally invariant, in the sense
that for any rotation $\rho:\mathbb{R}^{2}\to\mathbb{R}^{2}$ of the
space $\mathbb{R}^{2}$ around a point $y\in\mathbb{R}^{2}$ we have
that
\begin{equation}
\mbox{the }P_{\rho\left(z\right)}^{\mathbb{R}^{2}}-\mbox{law of }\left(\rho^{-1}\left(W_{t}\right)\right)_{t\ge0}\mbox{ is }P_{z}^{\mathbb{R}^{2}}.\label{eq:RotationinR2}
\end{equation}
The law of Brownian motion in a ball $B\left(y,R\right)\subset\mathbb{T}$
in the torus is also rotationally invariant if $R<\frac{1}{2}$, since
if $\rho:\mathbb{T}\to\mathbb{T}$ is a rotation of the ball $B\left(y,R\right)$
around $y$ (leaving $B\left(y,R\right)^{c}$ invariant) then \eqref{eq:LawOfBMInTorusAndR2Same}
and \eqref{eq:RotationinR2} imply that
\begin{equation}
\mbox{the }P_{\rho\left(z\right)}-\mbox{law of }\rho^{-1}\left(W_{\cdot\wedge T_{B\left(y,R\right)}}\right)\mbox{ is the }P_{\rho}-\mbox{law of }W_{\cdot\wedge T_{B\left(y,R\right)}}\label{eq:RotationalInvarianceInTorusBall}
\end{equation}

It is a standard fact that for any $0<r<R$ the exit distribution
of Brownian motion from the annulus $B\left(y,R\right)\backslash B\left(y,r\right)\subset\mathbb{R}^{2}$
satisfies
\[
P_{v}^{\mathbb{R}^{2}}\left[T_{B\left(y,R\right)}<H_{B\left(y,r\right)}\right]=\frac{\log\left(\left|v-y\right|/r\right)}{\log\left(R/r\right)}\mbox{ for all }v\in B\left(y,R\right)\backslash B\left(y,r\right)\subset\mathbb{R}^{2},
\]
(see Theorem 3.17 \cite{PeresMoertersBrownianMotion}). By \prettyref{eq:LawOfBMInTorusAndR2Same}
the same also holds in the torus: for any $0<r<R<\frac{1}{2}$
\begin{equation}
P_{v}\left[T_{B\left(y,R\right)}<H_{B\left(y,r\right)}\right]=\frac{\log\left(d\left(v,y\right)/r\right)}{\log\left(R/r\right)}\mbox{ for all }v\in B\left(y,R\right)\backslash B\left(y,r\right)\subset\mathbb{T}.\label{eq:AnnulusExitProb}
\end{equation}

In this article we will make heavy use of departure and return times
from concentric circles. For $0<r<R<\frac{1}{2}$ and $y\in\mathbb{T}$
the succesive return times to $B\left(y,r\right)$ are denoted by
$R_{n}\left(y,R,r\right),n\ge1,$ and the succesive departure times
from $B\left(y,R\right)$ are denoted by $D_{n}\left(y,R,r\right),n\ge1$.
Formally,
\begin{equation}
\begin{array}{lcl}
R_{1}\left(y,R,r\right) & = & H_{B\left(y,r\right)},\\
R_{n}\left(y,R,r\right) & = & H_{B\left(y,r\right)}\circ\theta_{D_{n-1}\left(y,R,r\right)}+D_{n-1}\left(y,R,r\right),n\ge1,\\
D_{n}\left(y,R,r\right) & = & T_{B\left(y,R\right)}\circ\theta_{R_{n}\left(y,R,r\right)}+R_{n}\left(y,R,r\right),n\ge1.
\end{array}\label{eq:DefOfExcursionTimes}
\end{equation}
Note that
\[
0\le R_{1}\left(y,R,r\right)<D_{1}\left(y,R,r\right)<R_{2}\left(y,R,r\right)<D_{2}\left(y,R,r\right)<\ldots.
\]
We will often refer to $W_{\left(R_{n}\left(y,R,r\right)+\cdot\right)\wedge D_{n}\left(y,R,r\right)}$
as the $n$-th excursion or $n-$th traversal from $\partial B\left(y,r\right)$
to $\partial B\left(y,R\right)$; these (and other excursions) will
play an important role in the proofs.

Lastly a note on constants. The letter $c$ represents a constant
that is positive and does not depend on any other parametes. It may
represent different constants in different formulas, and even within
the same formula. Dependence on e.g. a parameters $s$ is denoted
by $c\left(s\right)$.

\section{\label{sec:Reduction}Statement of main theorem, construction of
traversal processes and first reduction}

In this section we formally state the main theorem. We also start
the proof by constructing the traversal processes around each point
$y\in\mathbb{T}$ which where mentioned in the introduction, and deriving
their basic properties. We reduce the proof of the main theorem to
three main propositions, which will be proven in the subsequent sections.
The first two of these deal with the traversal processes. More precisely,
\prettyref{prop:UpperBound} essentially proves the upper bound of
\prettyref{eq:MainResultIntro}, and \prettyref{prop:LowerBound}
essentially proves the lower bound. However, they do this in terms
of excursions, i.e. they determine how many excursions around each
point are needed to cover the torus. The third of the main proposition,
\prettyref{prop:Concentration}, relates this number of excursions
to the actual time of Brownian motion, thus allowing us to deduce
the main result \prettyref{eq:MainResultIntro} from propositions
\ref{prop:UpperBound} and \ref{prop:LowerBound}.

To formally state our main result we define the $\varepsilon-$cover
time $C_{\varepsilon}$ as 
\begin{equation}
C_{\varepsilon}=\sup_{y\in\mathbb{T}}H_{B\left(y,\varepsilon\right)},\label{eq:FormalDefOfCoverTime}
\end{equation}
and (deviating slightly from the formulation in the introduction,
cf. \eqref{eq:mwithcorrection}) let 
\begin{equation}
m\left(\varepsilon,s\right)=\frac{1}{\pi}\log\varepsilon^{-1}\left(2\log\varepsilon^{-1}-\left(1-s\right)\log\log\varepsilon^{-1}\right).\label{eq:DefinitionOfm}
\end{equation}
The formal statement of \eqref{eq:MainResultIntro} is the following.
\begin{thm}
\label{thm:MainResult}For all $s>0$ and all $x\in\mathbb{T}$
\begin{equation}
\lim_{\varepsilon\to0}P_{x}\left[m\left(\varepsilon,-s\right)\le C_{\varepsilon}\le m\left(\varepsilon,s\right)\right]=1.\label{eq:MainResult}
\end{equation}

\end{thm}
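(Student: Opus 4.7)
The plan is to reduce \prettyref{thm:MainResult} to three propositions, as indicated by the author: an upper bound on the cover time in terms of excursion counts, a matching lower bound, and an excursion-time concentration estimate that converts between the number of excursions and Brownian time. Writing $L \approx \log\varepsilon^{-1}$ and fixing an $\varepsilon^{-2}$-packing $F \subset \mathbb{T}$ with scales $r_l = e^{L-l}\varepsilon$, the first two propositions determine how many excursions around each $y \in F$ are needed to cover the torus, and the third translates this into actual Brownian time via the fact that one excursion from $\partial B(y,r_1)$ to $\partial B(y,r_0)$ takes time close to $1/\pi$.

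For the upper bound I would analyse the truncated counting variable $\tilde Z(t)$ of \eqref{eq:ZDefIntro-1-1} via the factorization \eqref{eq:truncmean}: the product $|F| \cdot \mathbb{P}[T_{L-1}^{y,t}=0]$ yields the $L^{s}$ combinatorial/unhit term (cf.\ \eqref{eq:Expofzms}), while the conditional probability that $\sqrt{T_l^{y,t}}$ stays above the barrier $\alpha$ of \eqref{eq:IntroBarrier} contributes an order $L^{-1}$ factor by a ballot-type estimate, justified by viewing the conditioned square-root traversal process as an approximate Brownian bridge on $[0,L]$ (compare \eqref{eq:BBridgeTorusSketch}). These two factors combine to $L^{s-1}$, giving $\mathbb{E}[\tilde Z(t)] \to 0$ in the relevant regime. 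To see that $\tilde Z$ actually captures all unhit balls I would prove \eqref{eq:IntroSmartMarkov} by the multi-scale Markov argument of \cite{KistlerDerridasRandomEnergyModelsBeyondSpinGlasses}: at each scale a packing argument reduces the union bound to $\asymp r_l^{-2}$ essentially independent representatives, a one-scale large-deviation estimate for the Galton--Watson process $T_l^{y,t}$ handles each, and a final union bound over $l=0,\ldots,L-1$ controls all scales simultaneously.

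For the lower bound I would pass to the sharper truncated count $\hat Z(t)$ of \eqref{eq:ZHatIntro}, where the barrier is raised by the entropic-repulsion bump $\min\{l^{0.49},(L-l)^{0.49}\}$. A Bessel-bridge comparison shows the first moment is unchanged in order of magnitude. The decisive step is the second moment bound: decomposing $\mathbb{E}[\hat Z(t)^2]$ according to $d(y,z) \approx r_k$ and invoking the approximate hierarchical structure (traversal processes coincide for $l \ll k$ and decorrelate for $l \gg k$), each member of a mesoscopic pair must \emph{independently} make a descent from $(1-k/L)\sqrt t + \min\{k^{0.49},(L-k)^{0.49}\}$ down to $0$ between scales $k$ and $L-1$. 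The extra bump makes two such independent descents too costly to offset the $r_k^{-2}$-fold combinatorial complexity at scale $k$; pairs at distance $\lesssim r_L$ are too few, and pairs at distance $\gtrsim r_0$ are essentially independent and give a truncated second moment asymptotic to the first moment squared, so Paley--Zygmund produces an unhit ball with probability tending to one.

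Combining the two excursion-count statements with \prettyref{prop:Concentration} proves \prettyref{eq:MainResult}; the starting-point $x$ enters only through an initial warm-up phase that is negligible on the scale of the cover time. The most demanding technical point is the quantitative two-point decoupling of traversal processes used in the second-moment bound: the hierarchical structure is only approximate, and making the estimates sharp enough to see the subleading $\log\log$-correction through the entropic-repulsion truncation is the core of \prettyref{sec:TwoPointBound}. A secondary technical hurdle is the excursion-time concentration of \prettyref{sec:Concentration}, which must be sharp to precision $o(\log\varepsilon^{-1}\log\log\varepsilon^{-1})$; the usual Khasminskii/Kac exponential-moment plus union-bound recipe has to be complemented by a packing argument and a careful use of the Markov property of successive excursions.
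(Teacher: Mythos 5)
Your proposal follows exactly the paper's three-proposition architecture (upper bound in excursions, lower bound in excursions, excursion-time concentration) and correctly identifies the key mechanisms behind each: truncated first moment with multi-scale Markov for the upper bound, truncated second moment with entropic-repulsion bump and Paley--Zygmund for the lower bound, and a packing plus renewal/Markovian refinement of Khasminskii for the concentration. The proof of \prettyref{thm:MainResult} itself, however, is precisely the ``combination'' step you compress into a single sentence, and it is not entirely trivial: the paper must first reduce to the subsequence $\varepsilon=r_L$ by trapping a general $\varepsilon$ between $r_{L_+}$ and $r_{L_-}$ and absorbing the resulting multiplicative error $1+O(1/\log\varepsilon^{-1})$ into the slack in $s$; second, it must reduce the supremum over all $y\in\mathbb{T}$ in the definition of $C_{r_L}$ to a supremum over the finite grid $F_{L+1}$ by observing that every ball of radius $r_L$ contains a grid-centered ball of radius $r_{L+1}$; and third, it must carefully translate the time scale $m(\varepsilon,s)$ into the excursion-count scale $t_s$ via the relation $m(r_L,s)=\frac{1}{\pi}t_s\bigl(1-O(\log\log L/L)\bigr)$, again consuming slack in $s$. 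These reductions are what make the theorem follow from the three propositions, and your sketch should not leave them implicit. Your remark that the dependence on the starting point $x$ ``enters only through an initial warm-up phase'' is also imprecise relative to the paper: the dependence is handled in \prettyref{prop:UpperBound} via the modified traversal processes $\tilde{T}_l^{y,t}$ (counting only after the first exit from $B(y,r_0)$) and in \prettyref{prop:LowerBound} by summing over $\tilde{F}_L=F_L\setminus B(x,r_0)$ rather than $F_L$; neither of these is visible at the level of your description and both are needed to make the Galton--Watson law of \prettyref{lem:BranchingProc} available unconditionally.
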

The proof of \prettyref{thm:MainResult} (or rather, its reduction
to propositions \ref{prop:UpperBound}-\ref{prop:Concentration})
will be given at the end of this section.

We now construct the traversal processes that are the cornerstone
of \prettyref{thm:MainResult}'s proof. The construction will depend
on an integer parameter $L\ge1$, which represents the number of scales
that we consider. Let
\begin{equation}
r_{l}=r_{l}\left(L\right)=e^{-\frac{3}{4}\log\log L-l},l=0,1,\ldots,\label{eq:DefOfRadii}
\end{equation}
denote a sequence of radii, corresponding to the scales in the multiscale
analysis described in the introduction. Note that 
\[
r_{0}\to0,\mbox{ as }L\to\infty,
\]
which is important for the proof of the lower bound \prettyref{prop:LowerBound}
(since it means that an overwhelming majority of all pairs of traversal
processes depend on disjoint regions and will therefore be completely
independent, which helps in bounding the second moment of the truncated
counting random variable, see \prettyref{eq:FirstTermInDecomp}; if
we were only proving the upper bound of \prettyref{thm:MainResult}
we could have removed $\frac{3}{4}\log\log L$ from \prettyref{eq:DefOfRadii}).
We will show \prettyref{thm:MainResult} along the sequence $\varepsilon=r_{L}$
as $L\to\infty$. We will see that this easily implies \prettyref{thm:MainResult}
in full generality.

The proof is based on tracking Brownian motion as it moves between
the scales given by the $r_{l}$. For this it is useful to note that
since the $r_{l}$ shrink geometrically we have from \prettyref{eq:AnnulusExitProb}
that for any $y\in\mathbb{T}$ and $0<l<L$ 
\begin{equation}
P_{v}\left[H_{B\left(y,r_{l+1}\right)}<T_{B\left(y,r_{l-1}\right)}\right]=\frac{1}{2},\mbox{ for }v\in\partial B\left(y,r_{l}\right).\label{eq:BallExitProbForRadii}
\end{equation}
\begin{figure}
\includegraphics[scale=0.8]{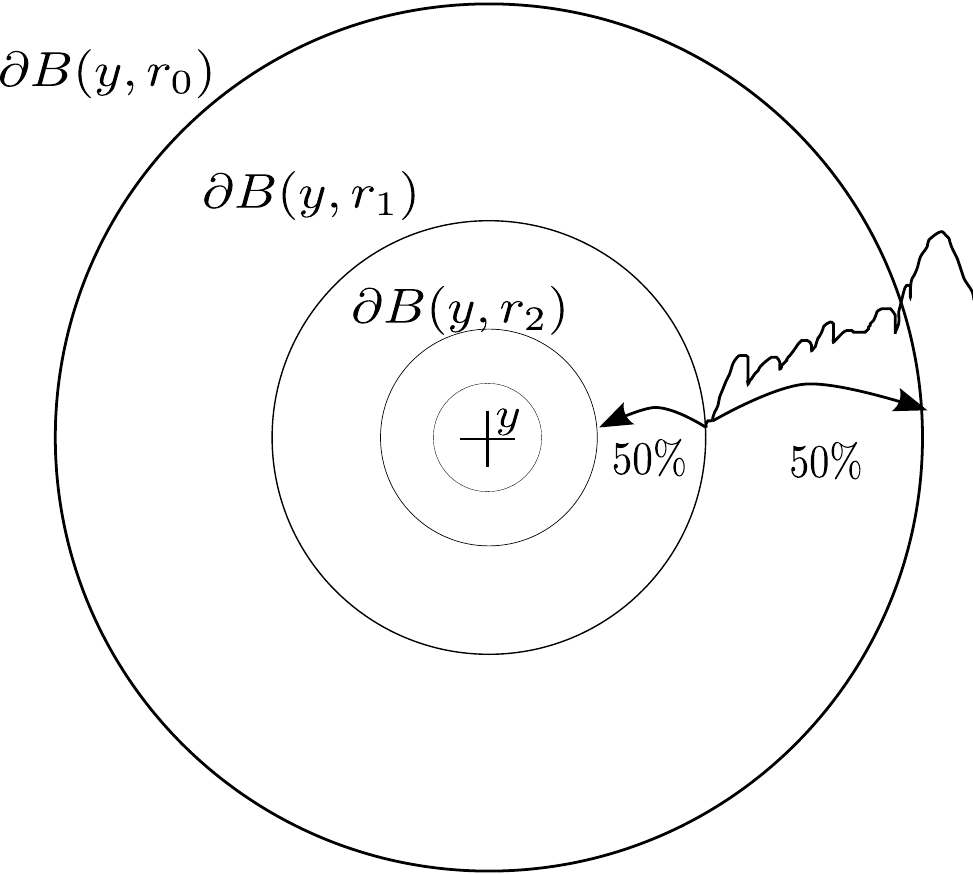}\caption{\label{fig:UpAScaleDownAScale}There is a $50\%$ probability that
Brownian motion goes ``up a scale'' and $50\%$ probability that
it goes ``down a scale''}
\end{figure}
That is Brownian motion essentially speaking flips an unbiased coin
to decide wether to move to a higher scale or a lower scale (see \prettyref{fig:UpAScaleDownAScale}).
More generally
\begin{equation}
P_{v}\left[H_{B\left(y,r_{l_{3}}\right)}<T_{B\left(y,r_{l_{1}}\right)}\right]=\frac{l_{2}-l_{1}}{l_{3}-l_{1}},\mbox{ for }v\in\partial B\left(y,r_{l_{2}}\right),l_{1}<l_{2}<l_{3}.\label{eq:HitLProbStartingFromk}
\end{equation}

It will be useful to introduce the following abbreviations for the
time $R_{n}\left(y,r_{l},r_{l+1}\right)$ that the $n-$th traversal
from scale $l$ and $l+1$ is completed and for the time $D_{n}\left(y,r_{l},r_{l+1}\right)$
that the $n-$th traversal from scale $l+1$ to $l$ is completed
(recall \prettyref{eq:DefOfExcursionTimes})
\begin{equation}
\begin{array}{l}
R_{n}^{y,l}=R_{n}^{y,l}\left(L\right)=R_{n}\left(y,r_{l},r_{l+1}\right)\mbox{ and }\\
D_{n}^{y,l}=D_{n}^{y,l}\left(L\right)=D_{n}\left(y,r_{l},r_{l+1}\right)\mbox{ for }n\ge0,l\ge0,y\in\mathbb{T}.
\end{array}\label{eq:ExcursionTimeAbreviation}
\end{equation}
If $t\in\mathbb{R}_{+}$ we let $R_{t}^{y,l}=R_{\lfloor t\rfloor}^{y,l}$
and $D_{t}^{y,l}=D_{\lfloor t\rfloor}^{y,l}$. For $y\in\mathbb{T}$
and $t\in\mathbb{R}_{+}$ we can now formally define the process of
traversals $T_{l}^{y,t},l\ge0,$ by
\begin{equation}
\begin{array}{ccccl}
T_{l}^{y,t} & = & T_{l}^{y,t}\left(L\right) & = & \sup\left\{ n\ge0:R_{n}^{y,l}\le D_{t}^{y,0}\right\} ,l\ge0,\end{array}\label{eq:TraversalsDef}
\end{equation}
where we understand $R_{0}^{y,l}=0$.
\begin{figure}
\begin{minipage}[t][0.25\paperheight][b]{0.65\columnwidth}%
\includegraphics[scale=0.7]{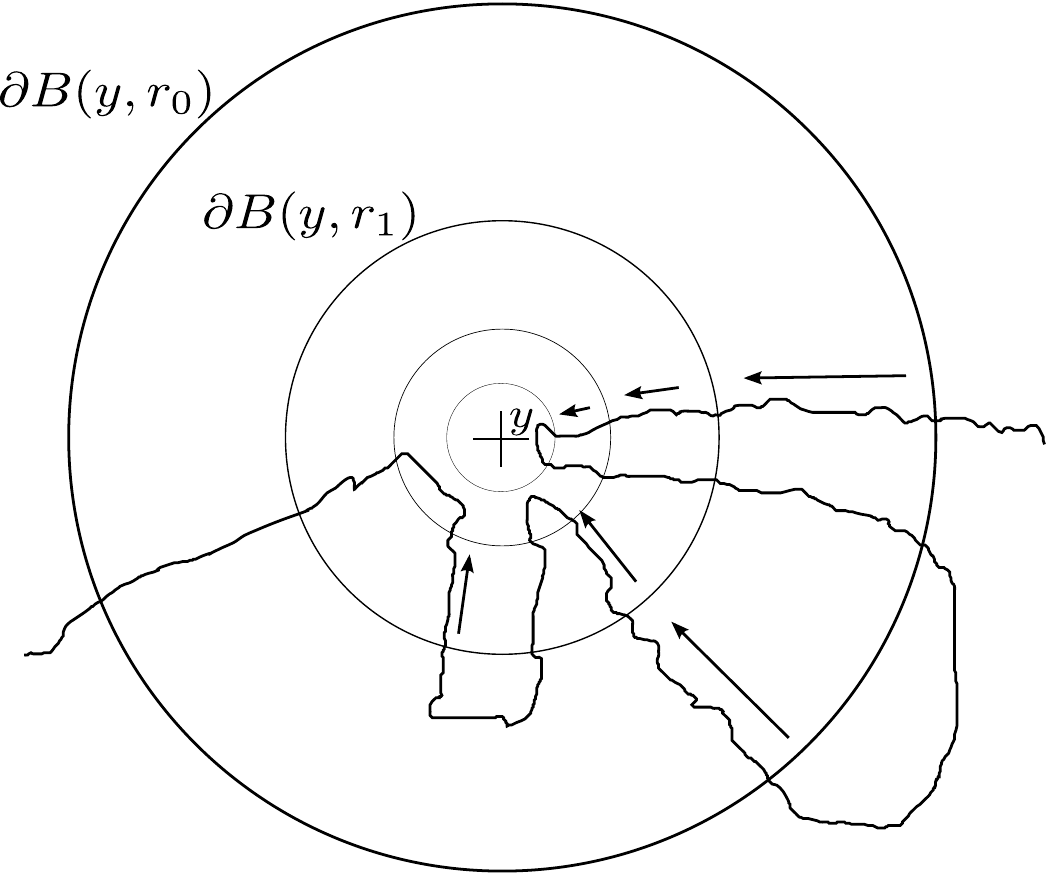}%
\end{minipage}%
\begin{minipage}[t][0.25\paperheight][c]{0.1\columnwidth}%
\[
\begin{array}{c}
T_{0}^{y,2}=2,\\
T_{1}^{y,2}=3,\\
T_{2}^{y,2}=1.
\end{array}
\]
\end{minipage}

\framebox{\begin{minipage}[t]{1\columnwidth}%
This illustration shows Brownian motion moving ``in the first three
scales'' around a point $y$. The arrows indicate completed traversals.
From this picture we can read off $T_{l}^{y,2},l=0,1,2.$ The values
are shown to the right.%
\end{minipage}}\caption{\label{fig:Illustration-of-process}Illustration of traversal process}
\end{figure}
Note that $T_{l}^{y,t}$ is the number of traversals from $B\left(y,r_{l}\right)^{c}$
to $B\left(y,r_{l+1}\right)$ made by Brownian motion during the first
$\lfloor t\rfloor$ excursions from $\partial B\left(y,r_{1}\right)$
to $\partial B\left(y,r_{0}\right)$ (see \prettyref{fig:Illustration-of-process}
for an illustration). This means that the process $T_{l}^{y,t}$ contains
information about whether $B\left(y,r_{L}\right)$ has been hit by
time $D_{t}^{y,0}$ or not, since (see \eqref{eq:ExcursionTimeAbreviation}
and \eqref{eq:TraversalsDef}, and cf. \eqref{eq:iff})
\begin{equation}
T_{L-1}^{y,t}=0\iff H_{B\left(y,r_{L}\right)}>D_{t}^{y,0},\mbox{ for }y\in\mathbb{T}.\label{eq:ExtinctionImpliesNotHit}
\end{equation}
This gives a link between the $r_{L}-$cover time and the collection
of process $T_{l}^{y,t}$. The traversal processes have the following
simple characterisation.
\begin{lem}
\label{lem:BranchingProc}For all $y\in\mathbb{T}$ and $v\notin B\left(y,r_{1}\right)^{\circ}$
the $P_{v}-$law of $\left(T_{l}^{y,t}\right)_{l\ge0}$ is the $\mathbb{G}_{t}-$law
of $\left(T_{l}\right)_{l\ge0}$, i.e. it is a critical Galton-Watson
process with geometric offspring distribution.\end{lem}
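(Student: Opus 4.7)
The plan is to show by induction on $l$ that $(T_l^{y,t})_{l\ge 0}$ has the Galton-Watson structure claimed. The two ingredients are the strong Markov property of Brownian motion and the exit-probability identity \eqref{eq:BallExitProbForRadii}, which says that from any point on $\partial B(y,r_{l+1})$ the chance of reaching the inner sphere $\partial B(y,r_{l+2})$ before returning to the outer sphere $\partial B(y,r_l)$ is exactly $1/2$; this value is precisely the parameter of the desired geometric offspring law, and crucially does not depend on $l$ nor on the entry point.

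For the base case, the strict interlacing $R_1^{y,0}<D_1^{y,0}<R_2^{y,0}<D_2^{y,0}<\cdots$ — valid under $v\notin B(y,r_1)^{\circ}$ — forces $T_0^{y,t}=\lfloor t\rfloor$ almost surely, matching the initial population of $\mathbb{G}_t$. For the inductive step I decompose
\[
T_{l+1}^{y,t}=\sum_{j=1}^{T_l^{y,t}}\xi_j^{(l)},\qquad \xi_j^{(l)}:=\#\bigl\{n\ge1:R_j^{y,l}\le R_n^{y,l+1}\le D_j^{y,l}\bigr\},
\]
so $\xi_j^{(l)}$ counts the traversals at scale $l+1$ occurring within the $j$-th ``meta-excursion'' $[R_j^{y,l},D_j^{y,l}]$ at scale $l$. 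Since any entry to $B(y,r_{l+2})$ forces $W\in B(y,r_l)$, every such entry must lie inside some meta-excursion at scale $l$, which justifies the decomposition modulo the time-window issue addressed at the end.

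Conditionally on $\mathcal{F}_{R_j^{y,l}}$, the strong Markov property identifies $\xi_j^{(l)}$ as the number of round trips $\partial B(y,r_{l+1})\to\partial B(y,r_{l+2})\to\partial B(y,r_{l+1})$ made by a fresh Brownian motion started on $\partial B(y,r_{l+1})$ before its first exit from $B(y,r_l)$. Each such trip happens with probability $1/2$ by \eqref{eq:BallExitProbForRadii}; and, conditionally on success, Brownian motion started on $\partial B(y,r_{l+2})$ returns to $\partial B(y,r_{l+1})$ with probability one, by path continuity. Iterating the strong Markov property yields $P(\xi_j^{(l)}=k)=(1/2)^{k+1}$ for $k\ge 0$, the desired geometric law; its independence of the entry point means no conditioning on $W_{R_j^{y,l}}$ is needed. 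Applying the strong Markov property at the disjoint stopping times $D_1^{y,l}<D_2^{y,l}<\cdots$ then gives the mutual independence of the $\xi_j^{(l)}$, which together with the constancy of their common law closes the inductive step.

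The technical point I expect to be trickiest is the alignment of the truncation at $D_t^{y,0}$: were this time to fall in the middle of a meta-excursion at scale $l$, the corresponding $\xi_j^{(l)}$ would be a censored geometric and the Galton-Watson property would fail. Fortunately $D_t^{y,0}$ is an exit time from $B(y,r_0)$, so at that instant $W\in\partial B(y,r_0)$ and therefore $W\notin B(y,r_l)$ for every $l\ge1$. This immediately gives $D_{T_l^{y,t}}^{y,l}\le D_t^{y,0}<R_{T_l^{y,t}+1}^{y,l}$, so every meta-excursion counted by $T_l^{y,t}$ is contained in $[0,D_t^{y,0}]$ and the decomposition is exact, completing the induction.
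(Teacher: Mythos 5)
Your decomposition $T_{l+1}^{y,t}=\sum_{j=1}^{T_l^{y,t}}\xi_j^{(l)}$ and the two ingredients you single out (the strong Markov property and the angle-independent exit probability \eqref{eq:BallExitProbForRadii}) are exactly the ones the paper uses; the paper packages the same information as i.i.d.\ Bernoulli ``up/down'' coin flips at the departure times $D_n^{y,l}$ and reconstructs the traversal process from them, whereas you work directly with the geometric offspring counts $\xi_j^{(l)}$. Your time-window argument in the last paragraph, showing $D_{T_l^{y,t}}^{y,l}\le D_t^{y,0}<R_{T_l^{y,t}+1}^{y,l}$ so that the decomposition is exact, is a clean and correct observation.

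There is, however, a gap at the very end. Establishing that the $\xi_j^{(l)}$ are mutually i.i.d.\ geometric does not by itself close the inductive step: the Galton--Watson property requires that, \emph{conditionally on} $(T_0^{y,t},\ldots,T_l^{y,t})$ with $T_l^{y,t}=m$, the tuple $(\xi_1^{(l)},\ldots,\xi_m^{(l)})$ still be i.i.d.\ geometric, and this does not follow from unconditional i.i.d.-ness of the offspring alone. The obstacle is that the exit point $W_{D_j^{y,l}}$ is correlated with $\xi_j^{(l)}$ (a meta-excursion that plunges deep tends to exit $\partial B(y,r_l)$ with a more uniform angular distribution than one that exits immediately), and the Brownian trajectory after $D_j^{y,l}$ --- which determines whether there is a $(j{+}1)$-th return to $\partial B(y,r_{l+1})$ before $D_t^{y,0}$, and hence the value of $T_l^{y,t}$ --- depends on where $W_{D_j^{y,l}}$ landed. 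To recover the needed independence you should add one more application of \eqref{eq:BallExitProbForRadii}: the law of the discrete sequence of circles $\partial B(y,r_k)$ visited after $D_j^{y,l}$ (which, together with $\mathcal{F}_{R_j^{y,l}}$, is all that determines $T_l^{y,t}$) does not depend on the angular position of $W_{D_j^{y,l}}$, so the correlation through the exit point is invisible at the level of the crossing structure. This is exactly the decoupling that the paper's i.i.d.\ coin-flip formulation makes automatic, and it is worth making explicit in your version.
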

\begin{proof}
Fix $y\in\mathbb{T}$. Consider the indicator functions 
\[
I_{l,n}^{y}=\left\{ T_{B\left(y,r_{l-1}\right)}\circ\theta_{D_{n}^{y,l}}<H_{B\left(y,r_{l+1}\right)}\circ\theta_{D_{n}^{y,l}}\right\} ,n\ge0,l\ge0,
\]
which are one if Brownian motion next visits $\partial B\left(y,r_{l-1}\right)$
after making a traversal $l+1\to l$ and zero if it next visits $\partial B\left(y,r_{l+1}\right)$.
By \prettyref{eq:BallExitProbForRadii} and the strong Markov property
they are unbiased i.i.d. Bernoulli ``coin flips'' by (see also \prettyref{fig:UpAScaleDownAScale}).
We can reconstruct the traversal process from the $I_{l,n}^{y}$ recursively
by setting $T_{0}^{y,t}=\lfloor t\rfloor$ and
\[
T_{l+1}^{y,t}=\begin{array}{c}
\mbox{the number of zeros among }I_{l+1,m}^{y},m\le n\\
\mbox{ where }n\mbox{ is such that }I_{l+1,1}^{y}+\ldots+I_{l+1,n}^{y}=T_{l}^{y,t}
\end{array},\mbox{for }l\ge0.
\]
Thus $T_{l+1}^{y,t}$ has a negative binomial distribution conditioned
on $T_{l^{'}}^{y,t},l^{'}\le l,$ and since this distribution is also
the sum of $T_{l}^{y,t}$ independent geometrics with support $\left\{ 0,1,\ldots\right\} $
and mean $1$ the claim follows.\end{proof}
\begin{rem}
This can be seen as a discrete Ray-Knight theorem for the directed
edge local times of a simple random walk $\left(Z_{n}\right)_{n\ge0}$
on $\left\{ 0,1,\ldots\right\} $. By \prettyref{eq:BallExitProbForRadii}
the process $Z_{n}$ can be constructed by letting it be the index
of the successive scales around $y$ that $W_{t}$ visits, i.e. $Z_{0}=0,Z_{1}=1$
and $Z_{2}=0$ or $2$ depending on if $W_{t}$ visits $\partial B\left(y,r_{0}\right)$
or $\partial B\left(y,r_{2}\right)$ first after $R_{1}^{y,0}$, and
so on. With this construction $T_{l}^{y,t}$ is $Z_{n}$'s edge local
time at $l\to l+1$ after $t$ of $Z_{n}$'s excursions from $0$.
Also note that \eqref{eq:HitLProbStartingFromk} can be seen as the
standard result about the exit distribution of the simple random walk
$Z_{n}$ from the interval $\left[l_{1},l_{3}\right]$. 
\end{rem}
As mentioned in the introduction, there are several instances when
we will use ``packings'' of balls in the torus at different scales.
The $l-$th scale packing will consist of balls centered at points
of the following grid of ``spacing'' $r_{l}/1000$:
\begin{equation}
F_{l}=\left\{ \left(i\frac{r_{l}}{1000},j\frac{r_{l}}{1000}\right):i,j\in\left\{ 0,1,\ldots,\lfloor\frac{1000}{r_{l}}\rfloor\right\} \right\} \subset\mathbb{T},l\ge0.\label{eq:GridDefinition}
\end{equation}
It will turn out that $C_{r_{L}}$ is close to the maximum of the
hitting times of balls centered in $F_{L}$. We record for future
use that (see \eqref{eq:DefOfRadii}) 
\begin{equation}
\left|F_{l}\right|\asymp cr_{l}^{-2}=c\left(\log L\right)^{3/2}e^{2l},l\ge0.\label{eq:SizeOFfl}
\end{equation}
Note that when comparing our method to the study of branching Brownian
motion (or rather a version of BBM with branching at integer times;
equivalently Gaussian Free Field on a tree) $F_{l}$ corresponds to
the vertices at distance $l$ from the root. With this point of view
we see that essentially speaking we have a ``forest'' of $c\left(\log L\right)^{3/2}$
``pseudo-tree'' with branching factor $e^{2}$.

We now state a second simple but crucial property of the traversal
process, which essentially gives bounds on the probability the it
``dies out'' by generation $L-1$ (using the Galton-Watson terminology),
or equivalently that the ball $B\left(y,r_{L}\right)$ does not get
hit. For this we consider a number of traversals
\begin{equation}
t_{s}=t_{s}\left(L\right)=L\left(2L-\left(1-s\right)\log L\right)\mbox{\,\ for }s\in\mathbb{R},\label{eq:defofts}
\end{equation}
from scale $0$ to scale $1$, which we will see is roughly the number
of traversals that take place up to time $m\left(r_{L},s\right)$
(cf. \eqref{eq:introts}; note the slight difference).  The bound
is the following.
\begin{lem}
\label{lem:NotHitByrL}For all $y\in\mathbb{T}$, $x\notin B\left(y,r_{1}\right)^{\circ}$
, $s\in\mathbb{R}$ and $L\ge c\left(s\right)$
\begin{equation}
P_{x}\left[T_{L-1}^{y,t_{s}}=0\right]\asymp e^{-2L}L^{1-s}.\label{eq:NotHitByrL}
\end{equation}
\end{lem}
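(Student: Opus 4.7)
My plan is to reduce the statement to a direct computation about a critical Galton--Watson process with geometric offspring, using Lemma~\ref{lem:BranchingProc} and the well-known closed form for the iterates of the geometric generating function.

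First, by Lemma~\ref{lem:BranchingProc}, for $x\notin B(y,r_1)^\circ$ the $P_x$-law of $(T_l^{y,t_s})_{l\ge 0}$ is the Galton--Watson law $\mathbb{G}_{t_s}=\mathbb{G}_{\lfloor t_s\rfloor}$. Hence
\[
P_x\bigl[T_{L-1}^{y,t_s}=0\bigr]
= \mathbb{G}_{\lfloor t_s\rfloor}\bigl[T_{L-1}=0\bigr]
= \bigl(q_{L-1}\bigr)^{\lfloor t_s\rfloor},
\]
where $q_n$ is the extinction-by-generation-$n$ probability starting from a single individual, and the product structure comes from the independence of the sub-trees rooted at distinct initial individuals.

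Next I would compute $q_n$ explicitly. The offspring distribution Geom$(1/2)$ on $\{0,1,\ldots\}$ has generating function $f(u)=\tfrac{1}{2-u}$, and an easy induction shows
\[
f_n(u)=\underbrace{f\circ\cdots\circ f}_{n}(u)=\frac{n-(n-1)u}{(n+1)-nu},
\]
which gives $q_n=f_n(0)=\tfrac{n}{n+1}$. Therefore
\[
P_x\bigl[T_{L-1}^{y,t_s}=0\bigr]=\left(1-\frac{1}{L}\right)^{\lfloor t_s\rfloor}.
\]

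Finally I would extract the asymptotics. Using $\log(1-1/L)=-1/L-1/(2L^2)+O(L^{-3})$ and $t_s=2L^2-L(1-s)\log L$ from \eqref{eq:defofts},
\[
\lfloor t_s\rfloor\log\!\left(1-\tfrac{1}{L}\right)
=-\frac{t_s}{L}-\frac{t_s}{2L^2}+O\!\left(\tfrac{\log L}{L}\right)
=-2L+(1-s)\log L-1+o(1),
\]
for $L\ge c(s)$; exponentiating yields $P_x[T_{L-1}^{y,t_s}=0]=e^{-1+o(1)}e^{-2L}L^{1-s}\asymp e^{-2L}L^{1-s}$, as desired. There is really no technical obstacle here beyond the bookkeeping in the Taylor expansion: the whole content of the statement is packaged in the exact formula $q_n=n/(n+1)$ provided by Lemma~\ref{lem:BranchingProc}, and the fact that $t_s$ has been calibrated in \eqref{eq:defofts} precisely so that $t_s/L$ matches the leading $2L$ while the subleading $(1-s)\log L$ produces the polynomial factor $L^{1-s}$.
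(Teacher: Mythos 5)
Your proof is correct and arrives at the same intermediate identity $(1-1/L)^{\lfloor t_s\rfloor}$ as the paper does, but by a slightly different route. The paper computes this quantity directly: it observes that $\{T_{L-1}^{y,t_s}=0\}$ is the event that $B(y,r_L)$ goes unhit during $\lfloor t_s\rfloor$ excursions from $\partial B(y,r_1)$ to $\partial B(y,r_0)$, and each such excursion avoids $B(y,r_L)$ with probability $1-1/L$ independently of its starting point, by \eqref{eq:HitLProbStartingFromk} and the strong Markov property. You instead pass through Lemma~\ref{lem:BranchingProc} to identify the law as $\mathbb{G}_{\lfloor t_s\rfloor}$, factor $\mathbb{G}_{\lfloor t_s\rfloor}[T_{L-1}=0]=q_{L-1}^{\lfloor t_s\rfloor}$ by independence of the subtrees rooted at the initial individuals, and use the closed form $f_n(u)=\frac{n-(n-1)u}{(n+1)-nu}$ for the iterates of the $\mathrm{Geom}(1/2)$ generating function to obtain $q_{L-1}=(L-1)/L$. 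The paper's version is a hair more direct (no generating-function identity needed, just the annulus exit probability); yours has the virtue of being a purely branching-process computation once Lemma~\ref{lem:BranchingProc} is in hand, which is consonant with how the paper treats the traversal process everywhere else. Your Taylor-expansion bookkeeping matches the paper's final step and is correct.
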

\begin{proof}
The event $\left\{ T_{L-1}^{y,t_{s}}=0\right\} $ is the event that
$B\left(y,r_{L}\right)$ is not hit in $\lfloor t_{s}\rfloor$ excursions
from $\partial B\left(y,r_{1}\right)$ to $\partial B\left(y,r_{0}\right)\cup\partial B\left(y,r_{L}\right)$.
By \prettyref{eq:HitLProbStartingFromk} one such excursion hits $B\left(y,r_{L}\right)$
with probability $1/L$, regardless of where in $\partial B\left(y,r_{1}\right)$
it starts. Thus using the strong Markov property 
\[
P_{x}\left[T_{L-1}^{y,t_{s}}=0\right]=\left(1-\frac{1}{L}\right)^{\lfloor t_{s}\rfloor}=e^{-\frac{t_{s}}{L}}\left(1+O\left(\frac{t_{s}}{L^{2}}+\frac{1}{L}\right)\right).
\]
Thus \prettyref{eq:NotHitByrL} follows since (recall \eqref{eq:defofts})
\begin{equation}
\frac{t_{s}}{L}=2L-\left(1-s\right)\log L.\label{eq:TsDivdedByL}
\end{equation}

\end{proof}
Using this bound and \prettyref{eq:SizeOFfl} one can roughly speaking
compute the expectation of the simple untruncated random variable
counting balls of radius $r_{L}$ centered in $F_{L}$ that are not
hit in $t_{s}$ excursions:
\begin{equation}
"E_{x}\left[\sum_{y\in F_{L}}1_{\left\{ T_{L-1}^{y,t_{s}}=0\right\} }\right]\asymp\left(\log L\right)^{3/2}L^{1-s}",\label{eq:Sec3CountingRV}
\end{equation}
cf. \prettyref{eq:ZDefIntro} and \eqref{eq:Expofzms} (here $L$
corresponds to $\log\varepsilon^{-1}$ and the $\log L$ factor is
an artifact of defining the $r_{l}$ so that $r_{0}\downarrow0$).
This essentially proves that for $s>1$ there are no balls with center
in $y\in F_{L}$ that avoid being hit in $t_{s}$ excursions from
$\partial B\left(y,r_{1}\right)$ to $\partial B\left(y,r_{0}\right)$.
For $s\le1$ we see that the expected number of balls that manage
this tends to infinity. This does not correctly capture the actual
number, as shown by our first main proposition which we now state.
It essentially says that also for $0<s\le1$ there will be no balls
which avoid being hit in $t_{s}$ of ``its'' excursions from scale
$1$ to scale $0$.
\begin{prop}
\label{prop:UpperBound}($x\in\mathbb{T}$)
\begin{equation}
\lim_{L\to\infty}P_{x}\left[T_{L-1}^{y,t_{s}}=0\mbox{ for some }y\in F_{L}\right]=0,\mbox{ for all }s>0.\label{eq:UppberBound}
\end{equation}

\end{prop}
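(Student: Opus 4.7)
The plan is to combine a packing-assisted multi-scale Markov bound with a truncated first-moment computation, exactly as sketched around \prettyref{eq:IntroSmartMarkov}--\prettyref{denominator}. Introduce the barrier
\[
\alpha(l) = \left(1 - \frac{l}{L}\right)\sqrt{t_s} - (\log L)^2, \qquad l = 0, \ldots, L,
\]
the event $A_L^y = \bigl\{\sqrt{T_l^{y,t_s}} \geq \alpha(l) \text{ for all } l = 0, \ldots, L-1\bigr\}$, and the truncated counting variable
\[
\widetilde Z(t_s) = \sum_{y \in F_L} 1_{\{T_{L-1}^{y,t_s} = 0\}} 1_{A_L^y}.
\]
Then
\[
P_x\bigl[\exists y \in F_L : T_{L-1}^{y,t_s} = 0\bigr] \leq P_x\bigl[\,\exists y \in F_L : (A_L^y)^c\bigr] + P_x\bigl[\widetilde Z(t_s) \geq 1\bigr],
\]
so it suffices to show that both summands tend to zero.

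For the first summand I plan a union bound over scales $l = 0, \ldots, L-1$ combined with a packing argument that replaces the complexity $|F_L| \asymp e^{2L}$ by that of the sparser grid $F_l$ of cardinality $\asymp (\log L)^{3/2} e^{2l}$, cf. \eqref{eq:SizeOFfl}. The key geometric fact is that any $y \in F_L$ lies within $r_l / 1000$ of some $z \in F_l$, in which case the balls $B(y, r_{l'})$ and $B(z, r_{l'})$ almost coincide for $l' \leq l$, so the traversal counts at scale $l$ around $y$ can be sandwiched by the counts around $z$ at neighbouring scales. A large-deviation bound for the critical geometric Galton--Watson process (\prettyref{lem:BranchingProc}), applied with $\alpha(l)^2 = (1 - l/L)^2 t_s - 2(1-l/L)\sqrt{t_s}(\log L)^2 + O((\log L)^4)$ and $t_s/L = 2L - (1-s)\log L$, should yield $P_v[T_l^{z,t_s} \leq \alpha(l)^2] \leq e^{-2l} L^{-c}$ for some $c > 0$, uniformly in starting points on $\partial B(z,r_1)$. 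Summing over $z \in F_l$ gives a per-scale bound of the order $(\log L)^{3/2} L^{-c}$, and a further sum over $l$ yields $P_x[\exists y \in F_L : (A_L^y)^c] \to 0$.

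For the second summand I invoke Markov's inequality and the factorisation
\[
E_x[\widetilde Z(t_s)] = \sum_{y \in F_L} P_x[T_{L-1}^{y,t_s} = 0] \cdot P_x\bigl[A_L^y \,\big|\, T_{L-1}^{y,t_s} = 0\bigr].
\]
By \prettyref{lem:NotHitByrL} the first factor is $\asymp e^{-2L} L^{1-s}$, so when combined with $|F_L| \asymp (\log L)^{3/2} e^{2L}$ it produces $(\log L)^{3/2} L^{1-s}$. The conditional probability is the crucial gain. As outlined near \eqref{eq:BBridgeTorusSketch}, the conditioned process $\sqrt{T_l^{y,t_s}}$ should be well approximated by a Brownian bridge from $\sqrt{t_s}$ to $0$ on $[0, L]$, and $A_L^y$ asks that this bridge, after subtracting its linear mean, stay above $-(\log L)^2$. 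A ballot-type bound for such bridges yields $O(L^{-1})$ up to polylog factors, so in the end $E_x[\widetilde Z(t_s)] = O\bigl(L^{-s}(\log L)^c\bigr)$, which vanishes for every $s > 0$.

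The principal obstacle is the final step: rigorously controlling the conditional barrier-staying probability. This requires comparing the critical geometric Galton--Watson chain conditioned on $T_{L-1}^{y,t_s}=0$ to a discrete Bessel bridge, and then comparing the barrier probability for the discrete bridge to its Brownian counterpart via a ballot estimate. Both comparisons are delicate, and according to the introduction they are deferred to \prettyref{sec:BoundaryCrossingProofs}. The large-deviation estimate in the packing step, and the monotonicity/coupling justifying the reduction from $F_L$ to $F_l$, are technically easier but must be done with enough precision that the $(\log L)^2$ shift in $\alpha(l)$ furnishes the surplus $L^{-c}$ needed to overcome the packing factor $e^{2l}$ and the outer sum over $l$.
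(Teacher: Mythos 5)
Your overall architecture matches the paper's: split $\{\exists y\in F_L : T^{y,t_s}_{L-1}=0\}$ into a barrier-violation event and a truncated-count event, control the latter by a first-moment estimate in which the key factor is a Galton--Watson barrier probability deferred to \prettyref{sec:BoundaryCrossingProofs}, and control the former by a packing-assisted multi-scale union bound. The truncated first-moment half is essentially the paper's \prettyref{prop:UpperBoundOneProfile}, and its deferrals are appropriate.

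There is, however, a genuine gap in the packing step. You replace each $y\in F_L$ by the nearest $z\in F_l$, whose spacing is $r_l/1000$. But $r_l/1000 = e\,r_{l+1}/1000$ is a fixed \emph{constant} fraction of the inner radius $r_{l+1}$. Any one-sided domination $\hat T_l^{z,t_s}\le T_l^{y,t_s}$ obtained by shrinking $r_{l+1}$ and enlarging $r_l$ (and analogously at scales $0,1$) by an offset of this size produces a compound binomial--geometric count (cf.\ \prettyref{lem:ModifedTraversalsAreCompoundGeo}) whose parameters satisfy $p,q=\frac{1}{l+1}(1+O(\epsilon))$ with $\epsilon\sim 10^{-3}$ fixed and, in general, $p\ne q$ at that order. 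Its mean $t_s\,q/p=t_s(1+O(\epsilon))$ is then displaced from $t_s$ by $\Theta(\epsilon L^2)$, whereas the margin $t_s-\alpha(l)^2$ is only of order $Ll+L(\log L)^2$. For small $l$ the barrier $\alpha(l)^2$ therefore lies \emph{above} the mean of $\hat T_l^{z,t_s}$, so $P[\hat T_l^{z,t_s}\le\alpha(l)^2]$ is bounded away from $0$ and no bound of the form $e^{-2l}L^{-c}$ can hold; the per-scale union bound does not close. The cure (used by the paper) is to pack with the much finer grid $F_{l+\log L}$, so the offset is $\lesssim r_l/L$, and to adjust the radii only by a factor $1\pm 100/L$ as in \eqref{eq:ModifiedRadiiDef}, giving $p,q=\frac{1}{l+1}+O(L^{-1})$ (\prettyref{rem:AlsoAppliesToTHat}); the resulting $O(1/\sqrt{l+1})$ perturbation of $\sqrt{q}\,\alpha(l)-\sqrt{p}\,\sqrt{t_s}$ is then negligible against the $(\log L)^2/\sqrt{l+1}$ barrier shift. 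Since $|F_{l+\log L}|\asymp(\log L)^{3/2}L^2e^{2l}$ is $L^2$ times larger than $|F_l|$, you also need per-scale decay $e^{-2l-c(\log L)^2}$ rather than merely $e^{-2l}L^{-c}$, which \prettyref{lem:LDForBinSumOfGeo} supplies. Finally, one should work with the modified process $\tilde T^{y,t_s}_l$ of \eqref{eq:ModifiedTraversals} rather than $T^{y,t_s}_l$, since \prettyref{lem:BranchingProc} gives the $\mathbb{G}_{t_s}$ law only when the start $x$ lies outside $B(y,r_1)^\circ$.
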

This roughly speaking gives the upper bound of \prettyref{thm:MainResult},
``in terms of excursions at the highest scale''. \prettyref{prop:UpperBound}
will be proven in \prettyref{sec:UpperBound} using a truncated first
moment bound. We now state \prettyref{prop:LowerBound}, which essentially
says that for $s>0$ there is (with high probability) some $y\in F_{L}$
which is not hit in $t_{-s}$ of ``its'' excursions. This roughly
speaking gives the lower bound of \prettyref{thm:MainResult}, ``in
terms of excursions at the highest scale''.
\begin{prop}
\label{prop:LowerBound}($x\in\mathbb{T}$)
\begin{equation}
\lim_{L\to\infty}P_{x}\left[T_{L-1}^{y,t_{-s}}=0\mbox{ for some }y\in F_{L}\right]=1\mbox{, for all }s>0.\label{eq:LowerBoundInRedSec}
\end{equation}

\end{prop}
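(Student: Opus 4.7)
The plan is to apply the Paley--Zygmund inequality to a suitably truncated analog of the counting variable $\widehat Z(t)$ from \prettyref{eq:ZHatIntro}. Concretely, fix $s>0$ and put
\begin{equation*}
\widehat Z \;=\; \sum_{y\in F_L} \mathbf{1}\{T_{L-1}^{y,t_{-s}} = 0\}\,\mathbf{1}\bigl\{\sqrt{T_l^{y,t_{-s}}} \ge (1-l/L)\sqrt{t_{-s}} + \min(l^{0.49},(L-l)^{0.49})\text{ for }l_0\le l\le L-l_0\bigr\},
\end{equation*}
where $l_0 = l_0(L)$ is a threshold, e.g.\ $l_0 = (\log L)^2$, with $l_0\to\infty$ and $l_0/L\to 0$. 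Since $\{\widehat Z > 0\}$ forces $T_{L-1}^{y,t_{-s}}=0$ for some $y\in F_L$, it suffices by Paley--Zygmund to show $\mathbb{E}_x[\widehat Z]\to\infty$ and $\mathbb{E}_x[\widehat Z^2] \le (1+o(1))\,\mathbb{E}_x[\widehat Z]^2$.

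For the first moment, \prettyref{lem:BranchingProc} identifies the $P_x$-law of $(T_l^{y,t_{-s}})_{l\ge 0}$ with the critical geometric Galton--Watson law $\mathbb{G}_{t_{-s}}$ (once Brownian motion has reached $\partial B(y,r_1)$). Combining \prettyref{lem:NotHitByrL} with \prettyref{eq:SizeOFfl} gives an unrestricted expected count of order $(\log L)^{3/2}L^{1+s}$. One then needs to estimate the cost of the barrier constraint conditional on extinction at generation $L-1$. As promised in \prettyref{sec:BoundaryCrossingProofs}, the square root of a critical geometric Galton--Watson process conditioned on extinction at time $L-1$ can be compared to a three-dimensional Bessel bridge on $[0,L]$ with endpoints $\sqrt{t_{-s}}$ and $0$; a ballot-type estimate for such bridges then gives barrier probability $\gtrsim L^{-1}/(\log L)^{O(1)}$, so that $\mathbb{E}_x[\widehat Z]\gtrsim L^{s}/(\log L)^{O(1)}\to\infty$ for every $s>0$.

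For the second moment, write $\widehat A^y$ for the indicator event associated with $y$ and split pairs according to the dyadic scale of their distance,
\begin{equation*}
\mathbb{E}_x[\widehat Z^2] \;=\; \sum_{k=0}^{L}\;\sum_{\substack{y,z\in F_L\\ r_{k+1}< d(y,z)\le r_k}} P_x[\widehat A^y \cap \widehat A^z].
\end{equation*}
For $k=0$ (pairs at distance $\ge r_0$) the excursion systems driving the two traversal processes are supported in disjoint discs, so by the strong Markov property $\widehat A^y$ and $\widehat A^z$ are independent; since $r_0\downarrow 0$, these pairs exhaust $(1-o(1))|F_L|^2$ and contribute $(1+o(1))\mathbb{E}_x[\widehat Z]^2$. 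For $1\le k\le L$, the approximate hierarchical structure suggests that $T_l^{y,\cdot}$ and $T_l^{z,\cdot}$ essentially coincide on scales $l\ll k$ and decorrelate on scales $l\gg k$. Invoking the two-point profile bound \prettyref{prop:MainTwoProfileBound}, one controls $P_x[\widehat A^y\cap\widehat A^z]$ by the probability of a single barrier event up to scale $k$ times the square of a barrier event from scale $k$ down to $L-l_0$, with the crucial ``bump'' $\min(k^{0.49},(L-k)^{0.49})$ built into the initial level at scale $k$. Summed against the combinatorial factor $\lesssim |F_L|\cdot r_k^{-2}$ of pairs at scale $k$, the entropic bump provides enough gain that each $k\ge 1$ contributes $o(\mathbb{E}_x[\widehat Z]^2/L)$, and the sum in $k$ is also of that order.

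The main obstacle is the rigorous decorrelation at scales $l>k$ that underlies the two-point estimate. The $\sigma$-algebras generated by the excursions of Brownian motion in the vicinity of $y$ and of $z$ are \emph{not} literally independent: between visits to $\partial B(y,r_0)$ and $\partial B(z,r_0)$ the path may wander freely and intertwine contributions to the two traversal processes. Turning the heuristic ``conditionally on $T_k^{y,\cdot}\approx T_k^{z,\cdot}$, the finer-scale processes around $y$ and $z$ are independent critical Galton--Watson processes'' into a quantitative statement requires an explicit coupling with two genuinely independent Galton--Watson pieces and a careful control of the coupling defect on the barrier event. This coupling, encoded in \prettyref{prop:MainTwoProfileBound} and developed in \prettyref{sec:TwoPointBound}, is the technically hardest part of the proof.
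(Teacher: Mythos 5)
Your overall strategy is the paper's: restrict the count to points whose square-root traversal profile stays above an entropically repelled barrier, show the first moment of this truncated count diverges via the Galton--Watson/Bessel-bridge comparison, and control the second moment by decomposing pairs according to the dyadic scale of their distance and invoking the two-point estimate \prettyref{prop:MainTwoProfileBound}. However, two details of your truncation are not merely cosmetic and would break the argument as written.

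First, you impose only a \emph{lower} barrier $\gamma(l)=\beta(l)+\min(l^{0.49},(L-l)^{0.49})$, whereas the paper's event $I_y$ also requires the process to stay \emph{below} $\delta(l)=\beta(l)+\min(l^{0.51},(L-l)^{0.51})$. The upper constraint is not decoration: it is precisely what makes the number $N$ of excursions between scales $k$ and $k^+$ bounded by $\delta(k^+)^2\le 2L^2$ in the decoupling step (\prettyref{prop:UntanglingJy}, eq.~\prettyref{eq:BoundOnN}). The decoupling compares the end-point--conditioned excursion law to the unconditioned one with a multiplicative error $(1+O(L^{-99}))$ \emph{per excursion}, and without the a priori bound $N\le cL^2$ the cumulative error is uncontrolled. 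Also, \prettyref{prop:MainTwoProfileBound} and \prettyref{lem:TwoProfileBoundEdgeCase} are proved for the two-sided event $I_y$; they do not apply to your one-sided version. You would need either to reinstate $\delta$ or to add some separate cap on $T_{k^+}^{y,t_{-s}}$, and then re-derive the two-point bounds for your modified event.

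Second, your example cutoff $l_0=(\log L)^2$ is far too large. The edge-case contribution (pairs at distance in $(r_{l_0},r_0)$) is handled by \prettyref{lem:TwoProfileBoundEdgeCase}, which decouples at scale $l_0+1$ and gives a per-pair bound of size $\bigl(e^{-(2L-2l_0)}L^{s}l_0^{0.51}g(l_0)\bigr)^2$. Against the combinatorial factor $|G_k|\le c(\log L)^{3/2}e^{4L-2k}$ and compared with $(E_x[Z])^2\asymp(\log L)^3 l_0^2 L^{2s}$, the sum over $1\le k\le l_0$ is negligible only if $e^{4l_0}=o\bigl((\log L)^{3/2}\bigr)$ up to powers of $l_0$. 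This forces $l_0$ to be of order $\log\log L$ (the paper takes $l_0=\lfloor\tfrac{1}{10}\log\log L\rfloor$, see \prettyref{eq:DefOfCutOff} and the estimate \prettyref{eq:HighK}); with $l_0=(\log L)^2$ the factor $e^{4l_0}$ dwarfs everything and the second-moment argument fails. A related minor point: your first-moment comparison should be to the \emph{zero}-dimensional squared Bessel bridge (whose Radon--Nikodym derivative against the one-dimensional, i.e.\ Brownian-bridge-squared, case the paper controls in \prettyref{lem:RadonNikodymBesselBridge}); the three-dimensional Bessel bridge is what arises from conditioning on survival, not on extinction, and its behaviour near zero is qualitatively different. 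You also sum over $F_L$ rather than $\tilde F_L=F_L\setminus B(x,r_0)$, which creates a technical mismatch with \prettyref{lem:BranchingProc} for the $O(|F_L|r_0^2)$ points near $x$; this is easy to fix but should be noted.
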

\prettyref{prop:LowerBound} will be proved in \prettyref{sec:Lowerbound}
using a truncated second moment method. Finally we state the concentration
result \prettyref{prop:Concentration} which essentially speaking
says that at time $\frac{1}{\pi}t_{s}$ there will have been roughly
$t_{s}$ excursions from scale $0$ to scale $1$ for all $y\in F_{L}$.
This will allow us to deduce the main result \prettyref{eq:MainResult}
from the above two propositions.
\begin{prop}
\label{prop:Concentration}($x\in\mathbb{T}$) For all $s>0$
\begin{eqnarray}
\lim_{L\to\infty}P_{x}\left[D_{t_{s}}^{y,0}>\frac{1}{\pi}t_{2s}\mbox{ for some }y\in F_{L}\right] & = & 0\mbox{ and},\label{eq:UpperBoundConcentration}\\
\lim_{L\to\infty}P_{x}\left[D_{t_{-s}}^{y,0}<\frac{1}{\pi}t_{-\frac{1}{2}s}\mbox{ for some }y\in F_{L}\right] & = & 0.\label{eq:LowerBoundConcentration}
\end{eqnarray}

\end{prop}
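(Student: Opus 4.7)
The plan is to combine a Khasminskii-style large-deviation bound for the duration of a single excursion with a packing argument that replaces the union bound over $|F_L|\asymp e^{2L}(\log L)^{3/2}$ points by one over a set of representatives of size polylogarithmic in $L$. The two statements \eqref{eq:UpperBoundConcentration} and \eqref{eq:LowerBoundConcentration} are symmetric and I describe only the first.

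For a fixed $y\in\mathbb{T}$, by the strong Markov property at the times $D_n^{y,0}$ the increments $\tau_n^y=D_n^{y,0}-D_{n-1}^{y,0}$ are independent conditionally on the successive entry points on $\partial B(y,r_0)$, each with mean $\frac{1}{\pi}(1+o(1))$. The slow component of $\tau^y$ is the return leg to $\partial B(y,r_1)$, which is governed by the principal Dirichlet eigenvalue of $-\Delta/2$ on $\mathbb{T}\setminus B(y,r_1)$ (of order $\pi/\log(1/r_1)$), so a Kac/Khasminskii computation yields $E_v[e^{\lambda\tau^y}]\le 1+\lambda/\pi+C\lambda^2$ for $\lambda$ in a fixed neighborhood of $0$, uniformly in $v\in\partial B(y,r_0)$ and $y\in\mathbb{T}$. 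Telescoping this estimate over the $t_s$ increments via the strong Markov property (which replaces the missing i.i.d.\ structure) and optimizing a Chernoff bound gives, for $\delta\in[0,c\,t_s]$,
\[
P_x\!\left[\bigl|D_{t_s}^{y,0}-\tfrac{1}{\pi}t_s\bigr|>\delta\right]\le 2\exp\!\bigl(-c\,\delta^2/t_s\bigr).
\]
Taking $\delta=(t_{2s}-t_s)/\pi\asymp sL\log L$ and $t_s\asymp L^2$ gives a single-point failure probability of order $\exp(-c_s(\log L)^2)$.

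Since $|F_L|\exp(-c_s(\log L)^2)$ does not tend to zero, the next step is a packing reduction. I would fix a coarse scale $K=K(L)$ with $r_K\ll r_1$ and $|F_K|$ only polylogarithmic in $L$, and associate to each $y\in F_L$ a nearest neighbor $z(y)\in F_K$, so that $d(y,z(y))\ll r_l$ for $l=0,1$. Using \eqref{eq:LawOfBMInTorusAndR2Same} and the rotational invariance \eqref{eq:RotationalInvarianceInTorusBall} to line up the concentric ball structures around $y$ and around $z(y)$, one establishes that, with probability tending to $1$, the comparison
\[
D_{t_s-\Delta}^{z(y),0}-\mathrm{err}\;\le\;D_{t_s}^{y,0}\;\le\;D_{t_s+\Delta}^{z(y),0}+\mathrm{err}
\]
holds for every $y\in F_L$ simultaneously, with slack $\Delta=o(L\log L)$ absorbing the mismatch between the two excursion counts. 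Applying the single-point bound to each $z\in F_K$ and taking a union bound, the probability in \eqref{eq:UpperBoundConcentration} is at most $|F_K|\exp(-c_s(\log L)^2)\to 0$.

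The main obstacle will be making the packing comparison quantitative with a slack genuinely smaller than the $sL\log L$ difference between $t_s$ and $t_{\pm 2s}$ (resp.\ $t_{\pm s/2}$). This amounts to controlling the number of times Brownian motion crosses the thin symmetric differences $B(y,r_l)\triangle B(z(y),r_l)$ for $l\in\{0,1\}$ during the first $t_s$ excursions, a quantity one expects to be $o(L\log L)$ with overwhelming probability since these sets have very small capacity when $d(y,z(y))\ll r_l$. Proving this uniformly over $y\in F_L$, most likely through a further auxiliary packing/union bound at an intermediate scale, is the technically demanding part, even though the final statement is of pure large-deviation flavor.
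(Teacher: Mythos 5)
Your plan correctly identifies the two ingredients — a single-point large-deviation bound for $D_n^{y,0}$ and a packing-based union bound — but the large-deviation step, as you have set it up, has a genuine gap, and the paper explicitly flags it.

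You telescope the exponential moment of $D_n^{y,0}$ over the increments $\tau_i = D_i^{y,0}-D_{i-1}^{y,0}$ via the strong Markov property, which requires an inequality of the form $E_v[e^{\lambda\tau}]\le 1+\lambda\mu+C\lambda^2$ \emph{uniformly} over the entry points $v\in\partial B(y,r_0)$, with $\mu=\tfrac{1}{\pi}+o(\log L/L)$. But $E_v[H_{B(y,r_1)}]$ is \emph{not} constant over $\partial B(y,r_0)$ (only the average against the stationary entry distribution $\mu_{r_1}^{r_0}$ equals $\tfrac{1}{\pi}-\tfrac{r_0^2-r_1^2}{2}$); the variation comes from the anisotropy of the torus potential kernel and is of polynomial order in $r_0$, far larger than the precision $\log L/L$ needed once you multiply by $n=t_s\asymp L^2$ and compare against the gap $t_{2s}-t_s\asymp sL\log L$. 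The authors state this explicitly in Section~\ref{sec:Concentration}: ``Such a strong uniform bound turns out to [be] unattainable.'' Their fix is a renewal argument: decompose $D_n$ into $D_1 + \sum(D_i-R_i) + \sum(R_{i+1}-D_i)$; the first two pieces \emph{do} admit uniform Khasminskii bounds, while for the ``out-to-in'' piece one observes that the chain of entry points $W_{D_i^{y,0}}$ has stationary law $\mu_{r_1}^{r_0}$, computes $E_{\mu_{r_1}^{r_0}}[D_1]=\tfrac{1}{\pi}\log(r_0/r_1)$ \emph{exactly} via a theorem of Maruyama--Tanaka/Ueno on invariant measures (Lemma~\ref{lem:OutInOutExpectation}), and regenerates the chain from $\mu_{r_1}^{r_0}$ with probability $q\ge\bigl((R-r)/(R+r)\bigr)^2$ at each step (Lemmas~\ref{lem:LawOFXiandLawOfExcOfBMCoincide}--\ref{lem:GLD}). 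This produces i.i.d.\ renewal blocks with exactly known mean, sidestepping the non-uniformity entirely. Your blind proposal simply asserts the uniform bound; you would need to either prove it (it is false at the required precision) or replace it with something like the renewal device.

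Two smaller points. First, the packing comparison need not involve controlling crossings of symmetric differences $B(y,r_l)\triangle B(z(y),r_l)$: the paper instead perturbs the radii to $r_l^{\pm}=(1\mp 100/L)^{\mp1}r_l$, so that $B(y_{\log L},r_1^-)\subset B(y,r_1)\subset B(y_{\log L},r_1^+)\subset B(y_{\log L},r_0^-)\subset B(y,r_0)\subset B(y_{\log L},r_0^+)$, giving the \emph{deterministic} sandwich $D_{\lfloor t_s\rfloor}(y_{\log L},r_0^-,r_1^+)\le D_{t_s}^{y,0}\le D_{\lfloor t_s\rfloor}(y_{\log L},r_0^+,r_1^-)$ with no slack $\Delta$ or error term at all; the change of mean $\tfrac{1}{\pi}\log(r_0^{\pm}/r_1^{\mp})=\tfrac{1}{\pi}(1+O(1/L))$ is then absorbed because the relative error is $O(1/L)\ll\log L/L$. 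There is no hard quantitative comparison to make; what you identified as ``the technically demanding part'' is in fact the easy part, and the genuine difficulty is the non-uniformity just described. Second, the neighbourhood of $\lambda=0$ in which $E_v[e^{\lambda\tau}]$ is finite is \emph{not} fixed: the return time has exponential tail rate $\asymp 1/\log r_1^{-1}\asymp 1/\log\log L\to 0$, so $\lambda$ must shrink; this is harmless here since the optimizing $\lambda\asymp\log L/L$ is inside that range, but it should not be stated as a fixed neighbourhood. The choice of packing scale is more flexible than you indicate: the paper takes $F_{\log L}$, of size $\asymp L^2(\log L)^{3/2}$ (polynomial, not polylogarithmic), and this suffices since the single-point bound decays super-polynomially.
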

\prettyref{prop:Concentration} will be proven in \prettyref{sec:Concentration}
using a packing argument and a large deviation bound for $D_{t}^{y,0}$.
We now derive \prettyref{thm:MainResult} from propositions \ref{prop:UpperBound}-\ref{prop:Concentration}. 
\begin{proof}[Proof of \prettyref{thm:MainResult}]
We first reduce the proof of the convergence in \prettyref{eq:MainResult}
to convergence along the subsequence $\varepsilon=r_{L}$. Assume
we have shown that for all $s>0$
\begin{equation}
\lim_{L\to0}P_{x}\left[C_{r_{L}}>m\left(r_{L},s\right)\right]=0\mbox{ and }\label{eq:RedUpBound}
\end{equation}
\begin{equation}
\lim_{L\to0}P_{x}\left[C_{r_{L}}<m\left(r_{L},-s\right)\right]=0.\label{eq:RedLowerBound}
\end{equation}
Then for $\varepsilon>0$ we may set
\begin{equation}
L_{\pm}=\log\varepsilon^{-1}-\frac{3}{4}\log\log\log\varepsilon^{-1}\pm1000,\label{eq:DefOfLpm}
\end{equation}
so that $c\varepsilon\le r_{L_{+}}\le\varepsilon\le r_{L_{-}}\le c\varepsilon$
(see \prettyref{eq:DefOfRadii}). This in turn gives that $C_{r_{L_{-}}}\le C_{\varepsilon}\le C_{r_{L_{+}}}$
and $m\left(r_{L_{\pm}},s\right)=m\left(\varepsilon,s\right)\left(1+O\left(1/\log\varepsilon^{-1}\right)\right)$
(see \prettyref{eq:DefinitionOfm}), so that $m\left(\varepsilon,-s\right)\le m\left(r_{L_{-}},-s/2\right)$
and $m\left(r_{L_{+}},s/2\right)\le m\left(\varepsilon,s\right)$
for small enough $\varepsilon$. Therefore \prettyref{eq:MainResult}
follows from \prettyref{eq:RedUpBound} with $L_{+}$ in place of
$L$ and \prettyref{eq:RedLowerBound} with $L_{-}$ in place of $L$,
and $s/2$ in place of $s$ in both instances.

We thus turn our attention to \prettyref{eq:RedUpBound} and \prettyref{eq:RedLowerBound}.
For \prettyref{eq:RedUpBound} we first reduce the a bound where the
supremum in $C_{r_{L}}$ (recall \prettyref{eq:FormalDefOfCoverTime})
is taken over $F_{L}$ and not all of $\mathbb{T}$. We have that
\begin{equation}
\left\{ C_{r_{L}}>m\left(r_{L},s\right)\right\} =\left\{ H_{B\left(y,r_{L}\right)}>m\left(r_{L},s\right)\mbox{ for some }y\in\mathbb{T}\right\} .\label{eq:CTtoHT}
\end{equation}
Since $r_{L+1}/1000+r_{L+1}\le2r_{L+1}\le r_{L}$ each ball of radius
$r_{L}$ in $\mathbb{T}$ contains a ball of radius $r_{L+1}$ centered
at some $z\in F_{L+1}$ (recall \prettyref{eq:GridDefinition}). Also
similarly to above $m\left(r_{L},s\right)\ge m\left(r_{L+1},s/2\right)$
for $L\ge c$. Therefore the probability in \prettyref{eq:RedUpBound}
is bounded above by 
\[
P_{x}\left[H_{B\left(z,r_{L+1}\right)}>m\left(r_{L+1},s/2\right)\mbox{ for some }z\in F_{L+1}\right],
\]
so that to show \prettyref{eq:RedUpBound} it suffices to prove that
for all $s>0$
\begin{equation}
\lim_{L\to\infty}P_{x}\left[H_{B\left(z,r_{L}\right)}>m\left(r_{L},s\right)\mbox{ for some }z\in F_{L}\right]=0.\label{eq:rlFiniteSet}
\end{equation}

We have that (see \eqref{eq:DefinitionOfm} and \eqref{eq:defofts})
\begin{equation}
m\left(r_{L},s\right)=\frac{1}{\pi}t_{s}\left(1-O\left(\log\log L/L\right)\right)\ge\frac{1}{\pi}t_{s/2},\label{eq:msandthalfsrelation}
\end{equation}
for $L$ large enough. Thus the probability in \prettyref{eq:rlFiniteSet}
is bounded above by
\[
P_{x}\left[H_{B\left(z,r_{L}\right)}>\frac{1}{\pi}t_{s/2}\mbox{ for some }z\in F_{L}\right],
\]
which in turn is bounded above by
\begin{equation}
P_{x}\left[H_{B\left(z,r_{L}\right)}>D_{t_{s/4}}^{z,0}\mbox{ for some }z\in F_{L}\right]+P_{x}\left[D_{t_{s/4}}^{z,0}>\frac{1}{\pi}t_{s/2}\mbox{ for some }z\in F_{L}\right].\label{eq:MainProofUnionBound}
\end{equation}
Now since 
\[
\left\{ H_{B\left(z,r_{L}\right)}>D_{t_{s/4}}^{z,0}\mbox{ for some }z\in F_{L}\right\} \overset{\eqref{eq:ExtinctionImpliesNotHit}}{=}\left\{ T_{L-1}^{z,t_{s/4}}=0\mbox{\,\ for some }z\in F_{L}\right\} ,
\]
the two probabilities in \eqref{eq:MainProofUnionBound} tend to zero
when $L\uparrow\infty$, by \prettyref{prop:UpperBound} and \prettyref{eq:UpperBoundConcentration}.
This proves \prettyref{eq:rlFiniteSet}, and therefore also \prettyref{eq:RedUpBound}
and the upper bound of \prettyref{eq:MainResult}.

We now turn our attention to \prettyref{eq:RedLowerBound}. We have
\begin{equation}
\left\{ C_{r_{L}}<m\left(r_{L},s\right)\right\} \overset{\eqref{eq:FormalDefOfCoverTime}}{=}\left\{ H_{B\left(y,r_{L}\right)}<m\left(r_{L},s\right)\mbox{ for all }y\in\mathbb{T}\right\} .\label{eq:LBCTtoHT}
\end{equation}
For $L$ large enough we have that (cf \prettyref{eq:msandthalfsrelation})
 $m\left(r_{L},-s\right)\le\frac{1}{\pi}t_{-\frac{1}{2}s}$. Thus
for such $L$, \prettyref{eq:LBCTtoHT} is included in 
\[
\left\{ H_{B\left(y,r_{L}\right)}<\frac{1}{\pi}t_{-\frac{1}{2}s}\mbox{ for all }y\in F_{L}\right\} ,
\]
which in turn is included in
\[
\left\{ H_{B\left(y,r_{L}\right)}<D_{t_{-s}}^{y,0}\mbox{ for all }y\in F_{L}\right\} \cup\left\{ D_{t_{-s}}^{y,0}<\frac{1}{\pi}t_{-\frac{1}{2}s}\mbox{ for some }y\in F_{L}\right\} .
\]
But
\[
\left\{ H_{B\left(y,r_{L}\right)}<D_{t_{-s}}^{y,0}\mbox{ for all }y\in F_{L}\right\} \overset{\eqref{eq:ExtinctionImpliesNotHit}}{=}\left\{ T_{L-1}^{y,t_{-s}}>0\mbox{\,\ for all }y\in F_{L}\right\} ,
\]
so that we obtain for $L$ large enough
\[
\begin{array}{ccl}
P_{x}\left[C_{r_{L}}<m\left(r_{L},s\right)\right] & \le & P_{x}\left[T_{L-1}^{y,t_{-s}}>0\mbox{\,\ for all }y\in F_{L}\right]\\
 &  & +P_{x}\left[D_{t_{-s}}^{y,0}<\frac{1}{\pi}t_{-\frac{1}{2}s}\mbox{ for some }y\in F_{L}\right].
\end{array}
\]
Taking the limit $L\uparrow\infty$ we see that \prettyref{eq:RedLowerBound}
follows from \prettyref{prop:LowerBound} and \prettyref{eq:LowerBoundConcentration},
so the lower bound of \eqref{eq:MainResult} follows.
\end{proof}
In this section we have reduced the proof of the main result \prettyref{thm:MainResult}
to the three main propositions \ref{prop:UpperBound}-\ref{prop:Concentration}.
The rest of the article is devoted to their derivation.

\section{\label{sec:UpperBound}Upper bound on cover time in terms of excursions}

In this section we prove \prettyref{prop:UpperBound}, which is the
first of the three main propositions used to prove the main result
\prettyref{thm:MainResult}, and which gives the upper bound of that
result ``in terms of excursions from scale $1$ to scale $0$''.
More precisely, recall the claim \eqref{eq:UppberBound} of \prettyref{prop:UpperBound}
that 
\[
\lim_{L\to\infty}P_{x}\left[T_{L-1}^{y,t_{s}}=0\mbox{ for some }y\in F_{L}\right]=0\mbox{ for all }s>0,\tag{\ref{eq:UppberBound}'}
\]
(recall also the definitions from \eqref{eq:TraversalsDef}, \eqref{eq:GridDefinition}
and \eqref{eq:defofts}).

For technical reasons, we will consider rather than $T_{l}^{y,t_{s}},l\ge0,$
a modified traversal process $\tilde{T}_{l}^{y,t},l\ge0,$ which counts
only traversals that take place after leaving $B\left(y,r_{0}\right)$
for the first time (if the starting point of the Brownian motion lies
inside $B\left(y,r_{1}\right)$ then this modified traversal process
and the original traversal process may not coincide). Formally we
let (cf. \eqref{eq:TraversalsDef})
\begin{equation}
\tilde{T}_{l}^{y,t_{s}}=\sup\left\{ n\ge0:R_{n}^{y,l}\circ\theta_{T_{B\left(y,r_{0}\right)}}\le D_{t}^{y,0}\circ\theta_{T_{B\left(y,r_{0}\right)}}\right\} ,y\in F_{L},l\ge0,t\ge0.\label{eq:ModifiedTraversals}
\end{equation}
We will prove that
\begin{equation}
\lim_{L\to\infty}P_{x}\left[\tilde{T}_{L-1}^{y,t_{s}}=0\mbox{ for some }y\in F_{L}\right]=0\mbox{ for all }s>0,\label{eq:UBInUBSectionIgnoring}
\end{equation}
which is a slightly stronger statement than \prettyref{eq:UppberBound},
because $\tilde{T}_{L-1}^{y,t_{s}}\le T_{L-1}^{y,t_{s}}$ almost surely
for all $y$. The modifed traversal process is used because \prettyref{lem:BranchingProc}
and the strong Markov property imply that
\begin{equation}
\mbox{the }P_{x}-\mbox{law of }\left(\tilde{T}_{l}^{y,t_{s}}\right)_{l\ge0}\mbox{ is }\mathbb{G}_{t_{s}}\mbox{ for all }x,y\in\mathbb{T},\label{eq:LawOfTTilde}
\end{equation}
(this is not exactly true for $T_{l}^{y,t_{s}},l\ge0,$ such that
$x\in B\left(y,r_{1}\right)^{\circ}$).

A previously discussed, a natural approach to proving \prettyref{eq:UBInUBSectionIgnoring}
is the simple first moment upper bound using the counting random variable
$\sum_{y\in F_{L}}1_{\left\{ \tilde{T}_{L-1}^{y,t_{s}}=0\right\} }$,
but this however would yield \prettyref{eq:UBInUBSectionIgnoring}
only for $s<-1$ (cf. \prettyref{eq:Sec3CountingRV}). We therefore
introduce a truncation which is given in terms of the barrier
\begin{equation}
\alpha\left(l\right)=\alpha\left(l,L,s\right)=\beta\left(l\right)-\left(\log L\right)^{2},\label{eq:AlphaBarrierDef}
\end{equation}
where $\beta\left(l\right)$ given by
\begin{equation}
\beta\left(l\right)=\beta\left(l,L,s\right)=\left(1-\frac{l}{L}\right)\sqrt{t_{s}}\mbox{\,\ for }l\in\left[0,L\right].\label{eq:BetaDef}
\end{equation}
The line $\beta\left(l\right)$ turns out to essentially be the mean
of the process $\sqrt{T_{l}^{y,t_{s}}}$ conditioned on $T_{L-1}^{y,t_{s}}=0$.
See \prettyref{fig:Illustration-of-barriers}. We consider the truncated
counting random variable which imposes an additional ``barrier condition''
\begin{equation}
\sum_{y\in F_{L}}1_{\left\{ \tilde{T}_{L-1}^{y,t_{s}}=0\right\} \cap\left\{ \sqrt{\tilde{T}_{l}^{y,t_{s}}}\ge\alpha\left(l\right)\mbox{\,\ for }l=0,\ldots,L\right\} }.\label{eq:UBSecTruncatedZ}
\end{equation}
Our claim \prettyref{eq:UBInUBSectionIgnoring} will follow from two
main propositions: \prettyref{prop:UpperBoundOneProfile} and \prettyref{prop:SmartMarkov}
below. \prettyref{prop:UpperBoundOneProfile} will show that the expectation
of \prettyref{eq:UBSecTruncatedZ} goes to zero for all $s>0$. \prettyref{prop:SmartMarkov}
will show that with high probability there are no $y\in F_{L}$ such
that $\sqrt{\tilde{T}_{l}^{y,t_{s}}}$ violates the barrier condition.

The key to bounding the expectation in \prettyref{eq:UBSecTruncatedZ}
is bounding the conditional probability 
\begin{equation}
P_{x}\left[\sqrt{\tilde{T}_{l}^{y,t_{s}}}\ge\alpha\left(l\right)\mbox{\,\ for }l=0,\ldots,L-1|\tilde{T}_{L-1}^{t_{s}}=0\right].\label{eq:UBCondProb}
\end{equation}
By \prettyref{eq:LawOfTTilde} this amounts to an estimate purely
in terms of the Galton-Watson law $\mathbb{G}_{t_{s}}$. For this
law, \prettyref{lem:GWBarrierAlpha} gives a bound on the probability
of the form $\left(\log L\right)^{4}/L$ (it is this extra factor
that gives rise to the subleading correction). \prettyref{lem:GWBarrierAlpha}
will be proven together with other barrier estimates in \prettyref{sec:BoundaryCrossingProofs}.

To prove in \prettyref{prop:SmartMarkov} that no traversal process
violates the barrier condition we will use a union bound over the
scales: we aim to bound
\begin{equation}
\sum_{l=1}^{L-1}P_{x}\left[\sqrt{\tilde{T}_{l}^{y,t_{s}}}<\alpha\left(l\right)\mbox{ for some }y\in F_{L}\right].\label{eq:UBSecMarkovOverScales}
\end{equation}
We then use a packing argument that defines a further modifed traversal
counter $\hat{T}_{l}^{y,t_{s}}$ for each $y\in F_{l+\log L}$ where
the radii $r_{0},r_{1},r_{l}$ and $r_{l+1}$ have been slightly modfied
to ensure that if $y\in F_{L}$ and $y'\in F_{l+\log L}$ is the point
in $F_{l+\log L}$ closest to $y$ then, roughly speaking,
\begin{equation}
\hat{T}_{l}^{y^{'},t_{s}}\le\tilde{T}_{l}^{y,t_{s}}\mbox{ almost surely},\label{eq:Domination}
\end{equation}
(see \prettyref{fig:AnnulusPacking}). The only slightly modified
radii will mean that $\hat{T}_{l}^{y,t_{s}}$ has almost the same
law as $\tilde{T}_{l}^{y,t_{s}}$, and in particular in \prettyref{lem:LDForBinSumOfGeo}
we will derive a large deviation bound for $\hat{T}_{l}^{y,t_{s}}$
which is almost the same as the corresponding bound for $\tilde{T}_{l}^{y,t_{s}}$
(see \prettyref{rem:AlsoAppliesToTHat}; essentially, both $\hat{T}_{l}^{y,t_{s}}$
and $\tilde{T}_{l}^{y,t_{s}}$ turn out to be compound binomial random
variables with geometric compounding, so deriving a large deviation
bound is straightforward). The domination in \prettyref{eq:Domination}
will allow us to bound \prettyref{eq:UBSecMarkovOverScales} by 
\[
c\sum_{l=1}^{L-1}\left|F_{l+\log L}\right|P_{x}\left[\sqrt{\hat{T}_{l}^{y,t_{s}}}<\alpha\left(l\right)\right]
\]
and the aforementioned large deviation bound on $\hat{T}_{l}^{y,t_{s}}$
will show that $P_{x}\left[\sqrt{\hat{T}_{l}^{y,t_{s}}}<\alpha\left(l\right)\right]=o\left(L^{-1}\left|F_{l+\log L}\right|^{-1}\right)$,
so that we will be able to conclude that \prettyref{eq:UBSecMarkovOverScales}
is $o\left(1\right)$. Note that without the packing argument we would
be bounding $\sum_{l=1}^{L-1}\left|F_{L}\right|P_{x}\left[\sqrt{\tilde{T}_{l}^{y,t_{s}}}<\alpha\left(l\right)\right]$,
a quantity that can be shown to tend to infinity.
\begin{figure}
\includegraphics[scale=0.5]{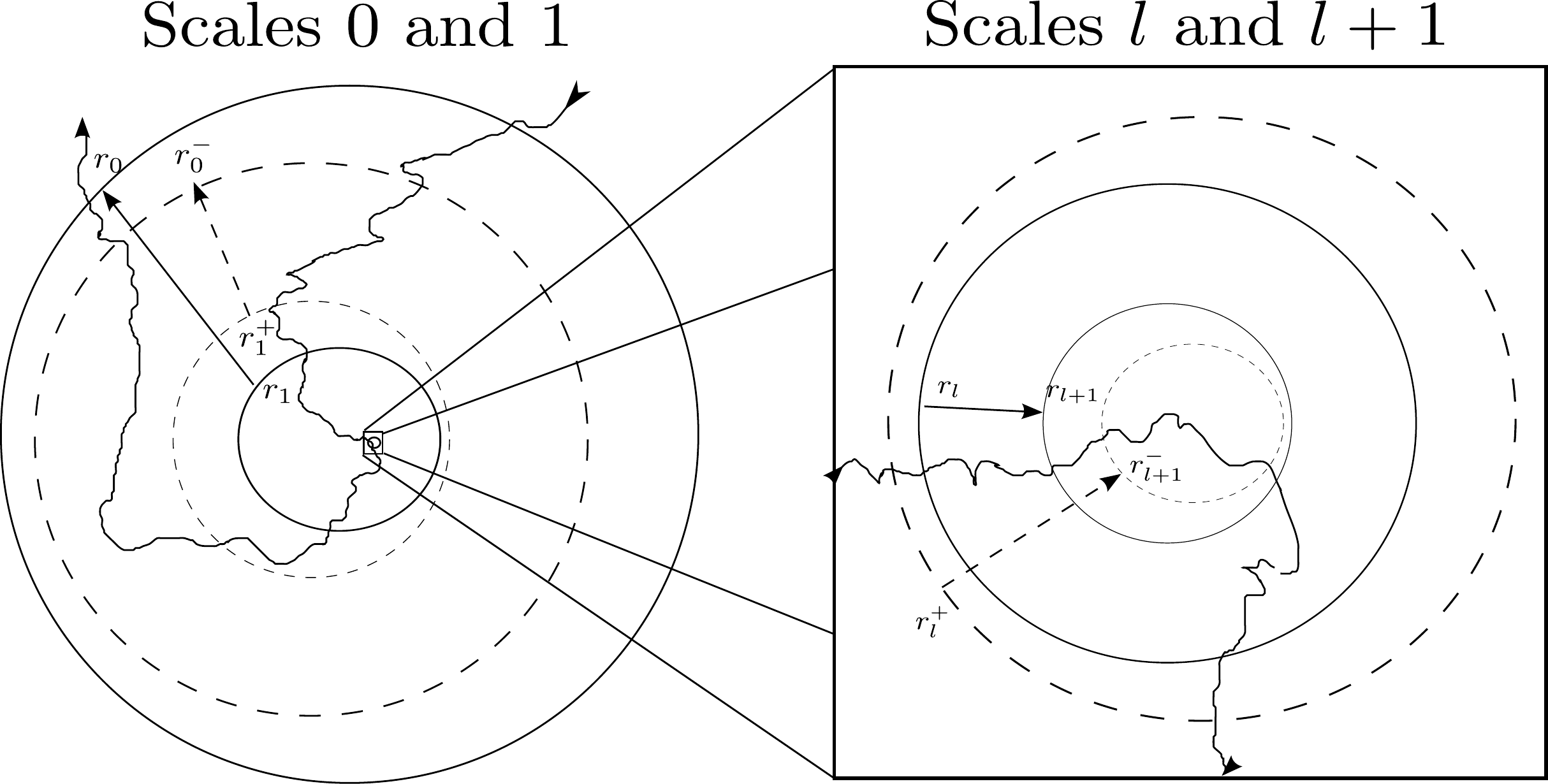}\caption{\label{fig:AnnulusPacking}An illustration of the packing used for
the proof of \prettyref{prop:SmartMarkov}. Each traversal counted
by $\hat{T}_{l}^{y,t_{s}}$ (that is a traversal between circles of
radii $r_{l}^{+}$ and $r_{l+1}^{-}$ before the $t_{s}-$th traversal
from the circle of radius $r_{1}^{+}$ to the circle of radius $r_{0}^{-}$,
dashed in the picture) gives rise to at least one traversal counted
by $\tilde{T}_{l}^{y,t_{s}}$ (that is a traversal between circles
of radii $r_{l}$ and $r_{l+1}$ before the $t_{s}-$th traversal
from the circle of radius $r_{1}$ to the circle of radius $r_{0}$,
solid in the picture). Therefore $\hat{T}_{l}^{y,t_{s}}\le T_{l}^{y,t_{s}}$.}
\end{figure}

We now state the barrier crossing bound for the Galton-Watson law
$\mathbb{G}_{t}$ (see \prettyref{fig:Illustration-of-barriers}).
\begin{lem}
\label{lem:GWBarrierAlpha}For any $s\in\left(-100,100\right)$ we
have that
\begin{equation}
\mathbb{G}_{t_{s}}\left[\sqrt{T_{l}}\ge\alpha\left(l\right)\mbox{ for }l\in\left\{ 0,\ldots,L-1\right\} |T_{L-1}=0\right]\le c\frac{\left(\log L\right)^{4}}{L}.\label{eq:GWBarrierAlpha}
\end{equation}

\end{lem}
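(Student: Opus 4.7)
The plan is to reduce the discrete Galton--Watson barrier problem to a Brownian-bridge barrier problem via an intermediate Bessel-bridge comparison, and then invoke the classical ballot identity for the bridge.

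First, I would exploit the tree structure of $\mathbb{G}_{t_s}$: one writes $T_l = \sum_{i=1}^{\lfloor t_s \rfloor} T^{(i)}_l$ as a sum of i.i.d.\ single-ancestor critical geometric trees, and the conditioning on $\{T_{L-1}=0\}$ factorises, so that each $T^{(i)}$ is independently conditioned to die before generation $L-1$. Using the explicit geometric offspring distribution and a delta-method calculation (one generation of the unconditioned chain has conditional variance $2T_l$, hence $\sqrt{T_{l+1}}-\sqrt{T_l}$ has step variance of order one when $T_l$ is large), one sees that $\sqrt{T_l}$ is close in distribution to a Bessel bridge of dimension $2$ from $\sqrt{t_s}$ to $0$ over $[0,L]$, with mean $\beta(l)=(1-l/L)\sqrt{t_s}$. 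This is a discrete Ray--Knight statement in the spirit of the remark following Lemma \ref{lem:BranchingProc}.

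Second, since $\sqrt{t_s}\sim \sqrt{2}\,L\gg\sqrt{L}$, this Bessel bridge is, throughout the bulk of $[0,L]$, well approximated by its deterministic mean $\beta(l)$ plus a standard Brownian bridge $B_l$ on $[0,L]$ with $B_0=B_L=0$ and variance of order one per unit time. Under this identification the event $\{\sqrt{T_l}\ge\alpha(l)\text{ for all }l=0,\ldots,L-1\}$ becomes, up to controllable errors coming from the boundary of $[0,L]$, the event $\{B_l\ge-(\log L)^2\text{ for all }l\in[0,L]\}$. Then I would apply the classical closed-form identity for the minimum of a standard Brownian bridge,
\[
\mathbb{P}\!\left[\min_{l\in[0,L]} B_l\le -A\right]=e^{-2A^2/L},\qquad A>0,
\]
to get, with $A=(\log L)^2$,
\[
\mathbb{P}\!\left[\min_{l\in[0,L]} B_l\ge -(\log L)^2\right]=1-e^{-2(\log L)^4/L}\le c\,\frac{(\log L)^4}{L},
\]
for $L$ large, which is precisely the right-hand side of (\ref{eq:GWBarrierAlpha}).

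The main obstacle is the first step: rigorously controlling the Bessel/Brownian-bridge approximation uniformly across all scales $l=0,\ldots,L-1$, and not losing more than a constant factor in the $(\log L)^4/L$ target. The approximation is most delicate near $l=L-1$, where the process becomes small and the Gaussian picture degrades; there one must work directly with the geometric transition probabilities of the conditioned Galton--Watson chain (the explicit formula $\mathbb{P}_1[T_l\ge 1]=1/(l+1)$ and its counterparts can be used to compare single-generation transitions to those of a squared Bessel process). The hypothesis $s\in(-100,100)$ ensures $t_s\asymp L^2$ uniformly in $s$, so all constants in the comparison are uniform.
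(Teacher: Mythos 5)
Your high-level route (Galton--Watson $\to$ Bessel bridge $\to$ Brownian bridge $\to$ barrier identity) matches the paper's strategy, but there are two substantive errors and one unresolved gap.

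First, the Bessel dimension is wrong. The conditional variance computation you give is correct ($\mathrm{Var}(T_{l+1}\,|\,T_l)=2T_l$, so $\sqrt{T_{l+1}}-\sqrt{T_l}$ has increments of unit-order variance), but a critical Galton--Watson process is the discrete analogue of a \emph{zero}-dimensional squared Bessel process, not a two-dimensional one. The mean computation confirms this: $\mathbb{E}[\sqrt{T_{l+1}}\,|\,T_l]\approx\sqrt{T_l}-\tfrac{1}{4\sqrt{T_l}}$, i.e.\ a \emph{negative} inward drift, while a $2$-dimensional Bessel process has positive drift and never hits zero (yet $T_l$ must hit zero to satisfy $T_{L-1}=0$). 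This is not a cosmetic slip: the $0$-dimensional bridge and the $1$-dimensional bridge (which is the Brownian bridge modulus) have different laws, and the paper handles this by an explicit Radon--Nikodym change of dimension $0\to1$ (Lemma \ref{lem:RadonNikodymBesselBridge}) whose density factor $(\,\cdot\,)^{1/4}\exp(-\tfrac38\int X_t^{-1}dt)$ must be controlled on the barrier event. Your proposal omits this step entirely and cannot absorb it into ``controllable errors''.

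Second, and more seriously, the final barrier estimate goes in the wrong direction. The event in \eqref{eq:GWBarrierAlpha} involves the barrier being respected only at \emph{integer} scales $l\in\{0,\dots,L-1\}$. The continuous-time identity $\mathbb{P}[\min_{t\in[0,L]}B_t\ge -A]=1-e^{-2A^2/L}$ bounds the probability of staying above $-A$ at \emph{all} real times, which is a \emph{sub}event of the discretely-checked one. You have therefore proved $\mathbb{P}[\text{discrete event}]\ge 1-e^{-2(\log L)^4/L}$, a lower bound, whereas the lemma requires an \emph{upper} bound. To get the upper bound one must use a barrier estimate tailored to the lattice-checked event, such as Lemma 6.2 of \cite{WebbbExactAsymptoticsofFreezingTransitionOfLogCorrelatedREM} (quoted as \eqref{eq:WebbLemma} in the paper), which gives $\mathbb{P}_{0\to0}^T[B_h([0,T]\cap\mathbb{N})]\le c(1+a)(1+b)/T$ and yields the $(\log L)^4/L$ after setting $a=b\asymp(\log L)^2$. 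Without this, the argument does not close. Finally, the acknowledged ``main obstacle'' --- a rigorous, uniform-in-$l$ coupling between the conditioned Galton--Watson process and the Bessel bridge --- is exactly where the paper invests its effort, via an exact Ray--Knight identity for continuous-time random walk on $\{0,\dots,L\}$ (Lemmas \ref{lem:RayKnightCont}--\ref{lem:RayishKnightish}) plus conditional large-deviation bounds (\eqref{eq:LargeDeviationConditionedLocTime}) to transfer the discrete barrier to the continuous one. Flagging this as ``delicate near $l=L-1$'' is correct, but offering no mechanism for resolving it means the proposal remains a heuristic rather than a proof.
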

\prettyref{lem:GWBarrierAlpha} will be proven in \prettyref{sec:BoundaryCrossingProofs},
together with further barrier crossing bounds that will be needed
in the proof of the lower bound in sections \ref{sec:Lowerbound}-\ref{sec:BoundaryCrossingProofs}.
We can now state and prove \prettyref{prop:UpperBoundOneProfile}
(the first main ingredient in the proof of \prettyref{prop:UpperBound}),
which bounds the expectation of the counting random variable in \prettyref{eq:UBSecTruncatedZ}.
Note that the bound goes to zero for all $s>0$.
\begin{prop}
\label{prop:UpperBoundOneProfile}$\left(x\in\mathbb{T}\right)$ For
all $y\in\mathbb{T}$ , $s\in\left(-100,100\right)$ and $L\ge1$
\begin{equation}
E_{x}\left[\sum_{y\in F_{L}}1_{\left\{ \tilde{T}_{L-1}^{y,t_{s}}=0\right\} \cap\left\{ \sqrt{\tilde{T}_{l}^{y,t_{s}}}\ge\alpha\left(l\right)\mbox{\,\ for }l=0,\ldots,L\right\} }\right]\le c\left(\log L\right)^{11/2}L^{-s}.\label{eq:UpperBoundOneProfile}
\end{equation}
\end{prop}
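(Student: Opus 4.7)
The plan is to reduce the proposition to bookkeeping after exploiting three ingredients: the cardinality bound \prettyref{eq:SizeOFfl} for $F_L$, the extinction-probability computation from the proof of \prettyref{lem:NotHitByrL}, and the Galton-Watson barrier estimate \prettyref{lem:GWBarrierAlpha}. The key observation is \prettyref{eq:LawOfTTilde}: for every $y\in\mathbb{T}$ and every starting point $x$, the $P_x$-law of the modified traversal process $(\tilde{T}_l^{y,t_s})_{l\ge 0}$ is exactly $\mathbb{G}_{t_s}$. Consequently the probability of the event inside the sum depends on neither $x$ nor $y$, and by linearity of expectation the left-hand side of \prettyref{eq:UpperBoundOneProfile} equals
\begin{equation*}
|F_L|\cdot\mathbb{G}_{t_s}\!\left[T_{L-1}=0 \text{ and } \sqrt{T_l}\ge\alpha(l) \text{ for all } l=0,\dots,L-1\right].
\end{equation*}

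Next I would factor the Galton-Watson probability by conditioning on extinction. For the unconditional factor, the computation in the proof of \prettyref{lem:NotHitByrL} (which only uses the Markov structure of the process and the exit formula \prettyref{eq:HitLProbStartingFromk}, both intrinsic to $\mathbb{G}_{t_s}$) gives $\mathbb{G}_{t_s}[T_{L-1}=0]=(1-1/L)^{\lfloor t_s\rfloor}\asymp e^{-2L}L^{1-s}$ for $L\ge c(s)$, via \prettyref{eq:TsDivdedByL}. For the conditional factor, \prettyref{lem:GWBarrierAlpha} supplies the bound $c(\log L)^4/L$. Combining with $|F_L|\asymp(\log L)^{3/2}e^{2L}$ from \prettyref{eq:SizeOFfl} yields
\begin{equation*}
|F_L|\cdot \mathbb{G}_{t_s}[T_{L-1}=0]\cdot c\,\frac{(\log L)^4}{L}\;\le\; c(\log L)^{3/2}e^{2L}\cdot e^{-2L}L^{1-s}\cdot \frac{(\log L)^4}{L}\;=\;c(\log L)^{11/2}L^{-s},
\end{equation*}
which is the asserted bound. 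Note that the range $l=0,\ldots,L$ in \prettyref{eq:UpperBoundOneProfile} is equivalent to $l=0,\ldots,L-1$, since $\alpha(L)=-(\log L)^2<0$ makes the $l=L$ constraint vacuous. For the finitely many small values of $L$ not covered by the asymptotic in \prettyref{lem:NotHitByrL}, the inequality holds trivially after enlarging $c$, since the left-hand side is bounded by a constant depending only on those $L$ and on $s\in(-100,100)$.

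The only genuinely nontrivial input is the Galton-Watson barrier estimate \prettyref{lem:GWBarrierAlpha}, whose proof is deferred to \prettyref{sec:BoundaryCrossingProofs} and which, as indicated in the introduction, will be carried out via a comparison of a conditioned critical Galton-Watson process with a Bessel bridge. Once that estimate is granted, the present proposition is immediate from the three-factor decomposition above, with the $(\log L)^4/L$ contribution from the barrier lemma providing exactly the extra $\log L$ factor that beats the $L^{1-s}$ growth of the unconditional extinction probability for every $s>0$.
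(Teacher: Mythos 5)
Your proposal is correct and is essentially the paper's own proof: factor the expectation as $|F_L|$ times extinction probability times conditional barrier-crossing probability, then feed in \prettyref{eq:SizeOFfl}, \prettyref{lem:NotHitByrL}, and \prettyref{lem:GWBarrierAlpha}. Your phrasing is marginally cleaner in that you pass directly to the intrinsic $\mathbb{G}_{t_s}$ extinction probability $(1-1/L)^{\lfloor t_s\rfloor}$, sidestepping the paper's remark about applying the strong Markov property at $T_{B(y,r_1)}$ when $x\in B(y,r_1)^\circ$, and you explicitly handle the vacuous $l=L$ constraint and the small-$L$ regime; these are cosmetic improvements, not a different route.
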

\begin{proof}
The expectation in \prettyref{eq:UpperBoundOneProfile} equals (cf.
\eqref{eq:truncmean})
\[
\left|F_{L}\right|\cdot P_{x}\left[\tilde{T}_{L-1}^{y,t_{s}}=0\right]\cdot P_{x}\left[\sqrt{\tilde{T}_{l}^{y,t_{s}}}\ge\alpha\left(l\right)\mbox{ for }l\in\left\{ 0,\ldots,L-1\right\} |\tilde{T}_{L-1}^{y,t_{s}}=0\right],
\]
for some arbitary $y\in\mathbb{T}$, where we have used that $\tilde{T}_{l}^{y,t_{s}},l\ge0,$
has the same law for all $y$ (see \eqref{eq:LawOfTTilde}). By \prettyref{eq:SizeOFfl}
and \prettyref{lem:NotHitByrL} the first two quantities are bounded
by
\[
c\left(\log L\right)^{3/2}e^{2L}\times e^{-2L}L^{1-s}\le c\left(\log L\right)^{3/2}L^{1-s}
\]
(for the latter we use the strong Markov property at time $T_{B\left(y,r_{1}\right)}$
when $y$ is such that $x\in B\left(y,r_{1}\right)^{\circ}$), cf.
\prettyref{eq:Sec3CountingRV}. The last probability equals, by \prettyref{eq:LawOfTTilde},
\[
\mathbb{G}_{t_{s}}\left[\sqrt{T_{l}}\ge\alpha\left(l\right)\mbox{ for }l\in\left\{ 0,\ldots,L-1\right\} |T_{L-1}=0\right].
\]
Thus by \prettyref{lem:GWBarrierAlpha} the expectation in \prettyref{eq:UpperBoundOneProfile}
is bounded above by
\[
c\left(\log L\right)^{3/2}L^{1-s}\times\frac{\left(\log L\right)^{4}}{L}=c\left(\log L\right)^{11/2}L^{-s}.
\]

\end{proof}
The next major step of this section is to prove \prettyref{prop:SmartMarkov},
exluding the possiblity that some traversal process violates the barrier
condition. As mentioned at the start of the section, we use a packing
argument. To this end define modified radii $r_{l}^{\pm}$ by
\begin{equation}
r_{l}^{-}=\left(1-\frac{100}{L}\right)r_{l}\mbox{ and }r_{l}^{+}=\left(1-\frac{100}{L}\right)^{-1}r_{l}\mbox{ for }l\ge0,\label{eq:ModifiedRadiiDef}
\end{equation}
and count for each $y\in F_{l+\log L}$ the number of traversals from
$\partial B\left(y,r_{l+1}^{-}\right)$ to $\partial B\left(y,r_{l}^{+}\right)$
during the first $t$ excursions from $\partial B\left(y,r_{1}^{+}\right)$
to $\partial B\left(y,r_{0}^{-}\right)$ as follows (cf. \prettyref{eq:ModifiedTraversals})
\begin{equation}
\hat{T}_{l}^{y,t}=\sup\left\{ n\ge0:R_{n}\left(y,r_{l}^{+},r_{l+1}^{-}\right)\circ\theta_{T_{B\left(y,r_{0}\right)}}\le D_{\lfloor t\rfloor}\left(y,r_{0}^{-},r_{1}^{+}\right)\circ\theta_{T_{B\left(y,r_{0}\right)}}\right\} .\label{eq:defofthat}
\end{equation}
For each $y\in\mathbb{T}$ let $y_{l}$ denote the point in $F_{l}$
closest to $y$ (breaking ties in some arbitrary way), and define
\[
\hat{T}_{l}^{y,t}=\hat{T}_{l}^{y_{l+\log L},t},\mbox{\,\ for }y\in\mathbb{T}\backslash F_{l+\log L}\mbox{ for all }t\ge0,l\ge1.
\]
With this construction $\tilde{T}_{l}^{y,t}$ dominates $\hat{T}_{l}^{y,t}$.
\begin{lem}
\label{lem:TraversalPackingDomination}For all $y\in\mathbb{T},l\ge1$
and $t\ge0$ we have that
\begin{equation}
\hat{T}_{l}^{y,t}\le\tilde{T}_{l}^{y,t}.\label{eq:TraversalPackingDomination}
\end{equation}
\end{lem}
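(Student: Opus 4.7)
The plan is to derive the pathwise domination from two elementary monotone comparisons enabled by straightforward geometric inclusions. Set $y' := y_{l+\log L}$ if $y \notin F_{l+\log L}$ and $y' := y$ otherwise. Since $F_{l+\log L}$ has spacing $r_{l+\log L}/1000$, we have $d(y,y') \le \sqrt{2}\,r_{l+\log L}/1000$, which for $L$ large is much smaller than the slack $r_j^+ - r_j$ and $r_j - r_j^-$, both of order $r_j/L$ for each $j \in \{0,1,l,l+1\}$ (recall \eqref{eq:ModifiedRadiiDef} and \eqref{eq:DefOfRadii}). Consequently,
\begin{equation*}
B(y, r_j) \subset B(y', r_j^+) \quad (j = 1, l), \qquad B(y', r_j^-) \subset B(y, r_j) \quad (j = 0, l+1).
\end{equation*}

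Step one is a pathwise comparison at the inner annulus: for any time $\tau$, every traversal from $\partial B(y', r_l^+)$ to $\partial B(y', r_{l+1}^-)$ completed before $\tau$ induces a distinct traversal from $\partial B(y, r_l)$ to $\partial B(y, r_{l+1})$ completed before $\tau$. Indeed, exiting $B(y', r_l^+)$ forces exiting its subset $B(y, r_l)$, and hitting $B(y', r_{l+1}^-)$ forces hitting its superset $B(y, r_{l+1})$; moreover, by \eqref{eq:DefOfExcursionTimes} consecutive $\hat{T}$-traversals are separated by an additional exit of $B(y', r_l^+) \supset B(y, r_l)$, so distinct $\hat{T}$-traversals correspond to distinct $\tilde{T}$-traversals. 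Step two is the comparison of the outer ``excursion clocks,'' proved by induction on $n$:
\begin{equation*}
R_n(y', r_0^-, r_1^+) \le R_n(y, r_0, r_1), \qquad D_n(y', r_0^-, r_1^+) \le D_n(y, r_0, r_1),
\end{equation*}
where $B(y, r_1) \subset B(y', r_1^+)$ drives the $R_n$-step (``larger inner ball is hit sooner'') and $B(y', r_0^-) \subset B(y, r_0)$ drives the $D_n$-step (``smaller outer ball is exited sooner'').

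Combining step one at $\tau := D_{\lfloor t \rfloor}(y', r_0^-, r_1^+) \circ \theta_{T_{B(y', r_0)}}$ with step two yields $\hat{T}_l^{y, t} \le \tilde{T}_l^{y, t}$, as desired. The main technicality, rather than a real obstacle, is reconciling the two shift operators $\theta_{T_{B(y',r_0)}}$ and $\theta_{T_{B(y,r_0)}}$ appearing in the respective definitions of $\hat{T}$ and $\tilde{T}$; this is handled by restarting the comparison from the later of the two initial exit times and noting that the geometric inclusions above ensure no inner traversal can be in progress between these two (necessarily close) exit times, so the pathwise induction adapts without essential change.
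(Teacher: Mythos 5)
Your main argument --- establish the four ball inclusions, then pathwise-compare the inner returns (Step one) and the outer excursion clocks (Step two) --- is exactly what the paper does: the inclusions are \eqref{eq:UpperBoundSandwichA}--\eqref{eq:UpperBoundSandwichB}, Step one is \eqref{eq:Rdomination}, and Step two is \eqref{eq:Ddomination}. That part is correct.

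The closing paragraph, however, does not hold up. You dismiss the mismatch of the shifts $\theta_{T_{B(y',r_0)}}$ and $\theta_{T_{B(y,r_0)}}$ by asserting that the two exit times are ``necessarily close'' and that ``no inner traversal can be in progress'' between them; neither claim is true. The circles $\partial B(y',r_0)$ and $\partial B(y,r_0)$ are close \emph{in space}, but Brownian motion can spend an arbitrarily long \emph{time} in the lune $B(y,r_0)\setminus B(y',r_0)$, and during that interval it is free to dive all the way down to $B(y',r_{l+1}^-)$ and back, completing inner traversals. Concretely, if the starting point lies in $B(y,r_0)\setminus B(y',r_0)$ then $T_{B(y',r_0)}=0<T_{B(y,r_0)}$: the $\hat{T}$-clock is already running while the $\tilde{T}$-clock has not yet started, so $\hat{T}_l^{y,t}$ can record traversals that are never credited to $\tilde{T}_l^{y,t}$. ``Restarting'' the comparison at the later exit time would simply change what $\hat{T}$ counts, not reconcile it with the definition. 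In fairness, the paper's own \eqref{eq:Rdomination}--\eqref{eq:Ddomination} are displayed with $\theta_{T_{B(y,r_0)}}$ on \emph{both} sides, which proves the domination for a version of $\hat{T}$ taken with the $y$-shift rather than the $y'$-shift of \eqref{eq:defofthat}; the discrepancy is not addressed there either. So you land in the same place as the authors, but the specific justification you offer is false and should be replaced either by an honest acknowledgment that the shifts require separate treatment or by adjusting the shift in the definition of $\hat{T}_l^{y,t}$ for $y\notin F_{l+\log L}$ to $\theta_{T_{B(y,r_0)}}$.
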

\begin{proof}
By the definition \prettyref{eq:GridDefinition} of $F_{l+\log L}$
we have
\begin{equation}
d\left(y,y_{l+\log L}\right)\le r_{l+\log L}\overset{\eqref{eq:DefOfRadii}}{=}\frac{r_{l}}{L}=e\frac{r_{l+1}}{L}.\label{eq:Distance}
\end{equation}
Now because of the latter equality and the definition \eqref{eq:ModifiedRadiiDef}
of $r_{l}^{\pm}$ we have for $L$ large enough
\[
r_{l+1}^{-}+r_{l+\log L}\le r_{l+1}\mbox{ and }r_{l}+r_{l+\log L}\le r_{l}^{+},
\]
so that for all $y\in\mathbb{T}$
\begin{equation}
B\left(y_{l+\log L},r_{l+1}^{-}\right)\subset B\left(y,r_{l+1}\right)\subset B\left(y,r_{l}\right)\subset B\left(y_{l+\log L},r_{l}^{+}\right),\label{eq:UpperBoundSandwichA}
\end{equation}
and thus (recall \prettyref{eq:DefOfExcursionTimes} and \prettyref{eq:ExcursionTimeAbreviation})
\begin{equation}
R_{n}^{y,0}\circ\theta_{T_{B\left(y,r_{0}\right)}}\le R_{n}\left(y_{l+\log L},r_{l}^{+},r_{l+1}^{-}\right)\circ\theta_{T_{B\left(y,r_{0}\right)}}\mbox{\,\ for all }n\ge0,\label{eq:Rdomination}
\end{equation}
(see \prettyref{fig:AnnulusPacking}). Also \eqref{eq:ModifiedRadiiDef}
implies 
\[
r_{1}+r_{l+\log L}\le r_{1}^{+}\mbox{ and }r_{0}^{-}+r_{l+\log L}\le r_{0},
\]
so that
\begin{equation}
B\left(y,r_{1}\right)\subset B\left(y_{l+\log L},r_{1}^{+}\right)\subset B\left(y_{l+\log L},r_{0}^{-}\right)\subset B\left(y,r_{0}\right),\label{eq:UpperBoundSandwichB}
\end{equation}
and therefore (similarly to \prettyref{eq:Rdomination})
\begin{equation}
D_{n}\left(y_{l+\log L},r_{0}^{-},r_{1}^{+}\right)\circ\theta_{T_{B\left(y,r_{0}\right)}}\le D_{n}^{y,0}\circ\theta_{T_{B\left(y,r_{0}\right)}}\mbox{\,\ for all }n\ge0,\label{eq:Ddomination}
\end{equation}
Now by the definitions of $\tilde{T}_{t}^{y,l}$ and $\hat{T}_{l}^{y,t}$
(see \prettyref{eq:ModifiedTraversals} and \prettyref{eq:defofthat})
the claim \prettyref{eq:TraversalPackingDomination} now follows from
\prettyref{eq:Rdomination} and \prettyref{eq:Ddomination}.
\end{proof}
We now show that $\hat{T}_{l}^{y,t}$ has a binomial-geometric compound
distribution.
\begin{lem}
\label{lem:ModifedTraversalsAreCompoundGeo}Let
\begin{equation}
p=\frac{\log\left(r_{l}^{+}/r_{l+1}^{-}\right)}{\log\left(r_{0}^{-}/r_{l+1}^{-}\right)}\mbox{ and }q=\frac{\log\left(r_{0}^{-}/r_{1}^{+}\right)}{\log\left(r_{0}^{-}/r_{l+1}^{-}\right)}.\label{eq:pqdef}
\end{equation}
Let $G_{1},G_{2},\ldots$ be geometric random variables with support
$\left\{ 1,2,\ldots\right\} $ and success probability $p$ and let
$J_{1},J_{2},\ldots$ be Bernoulli random variables with success probability
$q$, all mutually independent. We have for all $y\in F_{l+\log L},t\ge0$
and $l\ge1$ that
\begin{equation}
\hat{T}_{l}^{y,t}\overset{\mbox{law}}{=}\sum_{i=1}^{\lfloor t\rfloor}J_{i}G_{i}.\label{eq:ModifedTraversalsAreCompoundGeo}
\end{equation}
\end{lem}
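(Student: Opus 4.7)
The plan is to decompose $\hat{T}_l^{y,t}$ as a sum of i.i.d.\ per-excursion contributions, and to identify each contribution with $JG$ by combining the annulus exit formula \eqref{eq:AnnulusExitProb} with the strong Markov property. The first step is to observe that any down-crossing from $\partial B(y,r_l^+)$ into $B(y,r_{l+1}^-)$ requires visiting $\partial B(y,r_{l+1}^-)$, which lies strictly inside $B(y,r_1^+)$; consequently every traversal counted by $\hat{T}_l^{y,t}$ takes place during one of the first $\lfloor t\rfloor$ excursions from $\partial B(y,r_1^+)$ to $\partial B(y,r_0^-)$ that start after time $T_{B(y,r_0)}$. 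Writing $N_i$ for the number of such down-crossings during the $i$-th excursion then gives $\hat{T}_l^{y,t} = \sum_{i=1}^{\lfloor t \rfloor} N_i$.

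Second, I would determine the law of a single $N_i$. An excursion starts at $\partial B(y,r_1^+)$; applying \eqref{eq:AnnulusExitProb} to the annulus $B(y,r_0^-)\setminus B(y,r_{l+1}^-)$, Brownian motion reaches $\partial B(y,r_{l+1}^-)$ before $\partial B(y,r_0^-)$ with probability exactly $q$, otherwise $N_i = 0$. If the first down-crossing does occur, then, since $r_{l+1}^- < r_l^+ < r_0^-$ and $|W_\cdot - y|$ is continuous, Brownian motion must subsequently hit $\partial B(y,r_l^+)$; from there a further application of \eqref{eq:AnnulusExitProb} to the same annulus (now with $r_l^+$ as starting radius) gives probability $p$ of escaping to $\partial B(y,r_0^-)$ and probability $1-p$ of producing another down-crossing. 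Iterating via the strong Markov property at each successive hit of $\partial B(y,r_l^+)$ shows that, conditional on at least one down-crossing, the total number of down-crossings is geometric on $\{1,2,\ldots\}$ with parameter $p$. Hence $N_i \overset{d}{=} JG$ with $J,G$ independent, Bernoulli$(q)$ and geometric$(p)$ respectively.

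Third, I would apply the strong Markov property at the successive excursion starting times $R_n(y,r_0^-,r_1^+) \circ \theta_{T_{B(y,r_0)}}$ to deduce the independence of the pairs $(J_i,G_i)$ across $i$. The essential point is that the probabilities $p$ and $q$ depend only on the \emph{radial} position of $W$ at the start of each excursion (namely $r_1^+$), not on its angular coordinate; this is justified by the rotational invariance \eqref{eq:RotationalInvarianceInTorusBall} applied inside $B(y,r_0) \subset \mathbb{T}$, valid since $r_0 < 1/2$ for all $L$ large. Combining the three steps yields \eqref{eq:ModifedTraversalsAreCompoundGeo}. I do not expect a serious obstacle: the argument is essentially routine, with the only minor technical point being the verification of the ordering $r_{l+1}^- < r_l^+ \le r_1^+ < r_0^-$ for all $l \ge 1$ and $L$ large, which is immediate from \eqref{eq:DefOfRadii} and \eqref{eq:ModifiedRadiiDef}.
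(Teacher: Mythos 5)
Your proof is correct and follows essentially the same route as the paper's: decompose $\hat{T}_{l}^{y,t}$ into per-excursion contributions, identify the Bernoulli factor $q$ (hitting $B(y,r_{l+1}^{-})$ before $\partial B(y,r_{0}^{-})$ from $\partial B(y,r_{1}^{+})$) and the geometric factor $p$ (escape from $\partial B(y,r_{l}^{+})$ to $\partial B(y,r_{0}^{-})$) via the annulus exit formula \eqref{eq:AnnulusExitProb}, and conclude independence across excursions from the strong Markov property. The paper's version is terser, but your extra observations on rotational invariance and the radius ordering are harmless elaborations of the same argument.
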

\begin{proof}
By \prettyref{eq:AnnulusExitProb} each excursion of Brownian motion
from scale $1$ to scale $0$ (from $\partial B\left(y,r_{1}^{+}\right)$
to $\partial B\left(y,r_{0}^{-}\right)$) has probability $q$ of
giving rise to at least one traversal from scale $l$ to $l+1$ (i.e.
of hitting $B\left(y,r_{l+1}^{-}\right)$ before leaving $\partial B\left(y,r_{0}^{-}\right)$).
After hitting $B\left(y,r_{l+1}^{-}\right)$ Brownian motion returns
to $\partial B\left(y,r_{l}^{+}\right)$, and from there it has a
probability $p$ to escape to $\partial B\left(y,r_{0}^{-}\right)$
(again by \prettyref{eq:AnnulusExitProb}) and end the traversal from
scale $1$ to scale $0$; otherwise it returns to $B\left(y,r_{l+1}^{-}\right)$
which gives rise to another traversal from scale $l$ to $l+1$. Each
of these ``coin flips'' are independent by the strong Markov property,
and thus \prettyref{eq:ModifedTraversalsAreCompoundGeo} follows.\end{proof}
\begin{rem}
\label{rem:AlsoAppliesToTHat}Note that by \prettyref{eq:DefOfRadii}
and \prettyref{eq:ModifiedRadiiDef}
\begin{equation}
p,q=\frac{1}{l+1}+O\left(L^{-1}\right)\mbox{\,\ for }p,q\mbox{ as in }\eqref{eq:pqdef},\label{eq:pqvalue}
\end{equation}
and that the argument giving \prettyref{eq:ModifedTraversalsAreCompoundGeo}
applies equally well to $\tilde{T}_{l}^{y,t}$ but with modifed $p$
and $q$ given by
\[
p=\frac{\log\left(r_{l}/r_{l+1}\right)}{\log\left(r_{0}/r_{l+1}\right)}\overset{\eqref{eq:DefOfRadii}}{=}\frac{1}{l+1}\overset{\eqref{eq:DefOfRadii}}{=}\frac{\log\left(r_{0}/r_{1}\right)}{\log\left(r_{0}/r_{l+1}\right)}=q.\qed
\]

\end{rem}

We will need a lemma on the large deviations of $\hat{T}_{l}^{y,t}$.
We state it for a general geometric distribution with binomial compounding,
and postpone the proof until the appendix.
\begin{lem}
\label{lem:LDForBinSumOfGeo}Let $G_{1},G_{2},\ldots,J_{1},J_{2},\ldots$
be as in \prettyref{lem:ModifedTraversalsAreCompoundGeo} for $p\in\left(0,1\right)$
and $q\in\left(0,1\right)$. Then for all integers $n\ge1$ and $\theta\le n\frac{q}{p}$
\begin{equation}
\mathbb{P}\left[\sum_{i=1}^{n}J_{i}G_{i}\le\theta\right]\le\exp\left(-\left(\sqrt{q\theta}-\sqrt{pn}\right)^{2}\right).\label{eq:LDForBinSumOfGeoLowerBound}
\end{equation}

\end{lem}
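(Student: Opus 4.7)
The approach is Chernoff's exponential Markov inequality applied to the lower tail of $S_n = \sum_{i=1}^n J_i G_i$, leveraging the explicit form of the Laplace transform of $J_1 G_1$. By conditioning on $J_1 \in \{0,1\}$ and using the MGF of the geometric distribution on $\{1,2,\ldots\}$,
$$\phi(\lambda) := \mathbb{E}\bigl[e^{-\lambda J_1 G_1}\bigr] = (1-q) + \frac{pq e^{-\lambda}}{1-(1-p)e^{-\lambda}}, \qquad \lambda \geq 0,$$
and by independence $\mathbb{E}[e^{-\lambda S_n}] = \phi(\lambda)^n$. Markov's inequality then yields, for every $\lambda \geq 0$,
$$\mathbb{P}[S_n \leq \theta] \leq e^{\lambda \theta}\phi(\lambda)^n,$$
reducing the proof to a one-dimensional optimization over $\lambda \geq 0$.

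To carry out the optimization cleanly, I would substitute $u = e^{\lambda}-1 \geq 0$; a short algebraic manipulation collapses $\phi$ into the compact rational form $\phi(\lambda) = 1 - qu/(p+u)$. Combining the elementary estimates $\log(1+u)\leq u$ and $\log(1-t) \leq -t$ for $t \in [0,1)$, the logarithm of the Chernoff bound is controlled by
$$\log\mathbb{P}[S_n \leq \theta] \leq \theta u - \frac{n q u}{p+u}.$$
Differentiating in $u$ and setting the derivative $\theta - nqp/(p+u)^2$ to zero identifies the critical point $u^{\ast} = \sqrt{nqp/\theta} - p$, which is non-negative exactly when the hypothesis $\theta \leq nq/p$ is in force. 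Substituting back and simplifying, the exponent telescopes to $2\sqrt{pqn\theta} - p\theta - nq = -(\sqrt{p\theta} - \sqrt{qn})^2$. This coincides with the claimed symmetric form $(\sqrt{q\theta} - \sqrt{pn})^2$ in the regime $p \approx q$ relevant for the applications to $\hat{T}_l^{y,t}$ and $\tilde{T}_l^{y,t}$ (cf.\ \prettyref{rem:AlsoAppliesToTHat}), where the two forms agree.

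The only real obstacle is the algebraic clean-up at the critical point -- keeping track of the interplay between $\lambda$, $u$, the quadratic minimization and the telescoping to the Hellinger-type exponent -- but no probabilistic subtlety arises beyond the standard Chernoff machinery. A useful alternative perspective, which makes the Hellinger form of the exponent natural, is the identity $\mathbb{P}[S_n \leq \theta] = \mathbb{P}[\mathrm{Bin}(\theta,p) \geq \mathrm{Bin}(n,q)]$ (with the two binomials independent), obtained by recognizing the conditional law of $\sum_{i=1}^K G_i$ as the waiting time to the $K$-th success in an i.i.d.\ Bernoulli$(p)$ sequence; the bound above then reads as a joint large-deviation estimate for two independent binomials with opposing means.
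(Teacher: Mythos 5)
Your Chernoff argument is correct, but it takes a genuinely different route from the paper. You work directly with the Laplace transform $\phi(\lambda)=\mathbb{E}[e^{-\lambda J_1G_1}]$, collapse it via the substitution $u=e^\lambda-1$ to the compact form $1-qu/(p+u)$, and then optimise after applying $\log(1+u)\le u$ and $\log(1-t)\le -t$. The paper instead first rewrites the event as a comparison of two independent binomials: conditionally on $J_1+\cdots+J_n=m$, the negative-binomial identity gives $\mathbb{P}\bigl[\sum_{i\le m}G_i\le\theta\bigr]=\mathbb{P}\bigl[\mathrm{Bin}(\theta,p)\ge m\bigr]$, so the whole probability becomes $\mathbb{P}\bigl[\mathrm{Bin}(\theta,p)\ge\mathrm{Bin}(n,q)\bigr]$, and only then applies the exponential Markov inequality (to the difference $I_1+\cdots+I_\theta-J_1-\cdots-J_n$) and the $1+x\le e^x$ bound, optimising at $\lambda=\tfrac12\log\tfrac{qn}{\theta p}$. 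Interestingly, the "alternative perspective" you mention at the end of your proposal is precisely the paper's route. Both arguments land on the same Hellinger-type exponent $-(\sqrt{p\theta}-\sqrt{qn})^2$; the stated form $-(\sqrt{q\theta}-\sqrt{pn})^2$ has $p$ and $q$ transposed relative to what either derivation actually produces, and you are right that this is harmless here because the lemma is applied via \prettyref{lem:ModifedTraversalsAreCompoundGeo} and \prettyref{rem:AlsoAppliesToTHat} only in the regime $p,q=\tfrac{1}{l+1}+O(L^{-1})$, where the two forms agree up to negligible error.
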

We can now use \prettyref{lem:TraversalPackingDomination}, \prettyref{lem:ModifedTraversalsAreCompoundGeo}
and \prettyref{lem:LDForBinSumOfGeo} to deduce \prettyref{prop:SmartMarkov},
proving that no traversal process violates the barrier condition.
\begin{prop}
\label{prop:SmartMarkov}$\left(x\in\mathbb{T}\right)$ For all $s\in\left(-100,100\right)$
\begin{equation}
\lim_{L\to\infty}P_{x}\left[\exists y\in F_{L}\mbox{ s.t. }\sqrt{\tilde{T}_{l}^{y,t_{s}}}\le\alpha\left(l\right)\mbox{ for }l\in\left\{ 0,\ldots,L-1\right\} \right]=0.\label{eq:SmartMarkov}
\end{equation}
\end{prop}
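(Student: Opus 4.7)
The strategy is a multi-scale union bound combined with the packing domination from \prettyref{lem:TraversalPackingDomination}. By a first union bound over scales,
\begin{equation*}
P_x\Bigl[\exists y \in F_L,\, l \in \{0,\ldots,L-1\}:\sqrt{\tilde T_l^{y,t_s}} \le \alpha(l)\Bigr] \le \sum_{l=0}^{L-1} P_x\bigl[\exists y \in F_L:\sqrt{\tilde T_l^{y,t_s}} \le \alpha(l)\bigr].
\end{equation*}
Using $\tilde T_l^{y,t_s} \ge \hat T_l^{y,t_s} = \hat T_l^{y_{l+\log L},t_s}$ from \prettyref{lem:TraversalPackingDomination} replaces the inner union over $F_L$ by a union over the much smaller grid $F_{l+\log L}$, giving the bound $|F_{l+\log L}|\cdot P_x[\hat T_l^{y',t_s}\le\alpha(l)^2]$ for any $y'\in F_{l+\log L}$. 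By \prettyref{lem:ModifedTraversalsAreCompoundGeo} (whose hypotheses hold irrespective of $x,y'$ by the strong Markov property applied at the shift $\theta_{T_{B(y',r_0)}}$ built into the definition of $\hat T_l^{y',t_s}$, together with rotational invariance), this probability equals $\mathbb{P}[\sum_{i=1}^{\lfloor t_s\rfloor} J_iG_i\le\alpha(l)^2]$ for the binomial-compounded geometric variables with parameters $p,q$ as in \eqref{eq:pqvalue}.

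I would next apply \prettyref{lem:LDForBinSumOfGeo} (its hypothesis $\alpha(l)^2\le\lfloor t_s\rfloor q/p$ holds for $L$ large since $\alpha(l)^2\le t_s$ and $q/p=1+O(L^{-1})$) to get
\begin{equation*}
P_x\bigl[\hat T_l^{y', t_s} \le \alpha(l)^2\bigr] \le \exp\Bigl(-\bigl(\sqrt{p\lfloor t_s \rfloor} - \sqrt{q}\,\alpha(l)\bigr)^2\Bigr).
\end{equation*}
With $p,q=(l+1)^{-1}+O(L^{-1})$ and $\sqrt{t_s}=(1+o(1))\sqrt{2}\,L$, the definitions \eqref{eq:AlphaBarrierDef}--\eqref{eq:BetaDef} yield $\sqrt{t_s}-\alpha(l)=(l/L)\sqrt{t_s}+(\log L)^2=(1+o(1))(\sqrt{2}\,l+(\log L)^2)$, and hence $\sqrt{p\lfloor t_s\rfloor}-\sqrt{q}\,\alpha(l)\ge (l+1)^{-1/2}(\sqrt{2}\,l+(\log L)^2)+o(1)$. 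Combined with $|F_{l+\log L}|\le c(\log L)^{3/2}L^2e^{2l}$ from \eqref{eq:SizeOFfl}, the scale-$l$ contribution is at most
\begin{equation*}
c(\log L)^{3/2}L^2\exp\!\Bigl(2l-\tfrac{(\sqrt{2}\,l+(\log L)^2)^2}{l+1}\Bigr),
\end{equation*}
which decays superpolynomially in $L$ uniformly in $l$ (those $l$ with $\alpha(l)<0$ contribute $0$). Summing over $l$ yields $o(1)$, as desired.

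The heart of the argument is a delicate calibration: the dominant quadratic term $2l^2/(l+1)\approx 2l$ of the large-deviation rate only just cancels the combinatorial factor $e^{2l}$ in $|F_{l+\log L}|$, so summability hinges on the cross-term $\exp(-2\sqrt{2}\,l(\log L)^2/(l+1))$ that emerges upon expanding the square---this is precisely what the ``$-(\log L)^2$'' bump attached to $\beta(l)$ in the definition \eqref{eq:AlphaBarrierDef} of $\alpha(l)$ is engineered to produce. Equally essential is the packing reduction from $F_L$ to $F_{l+\log L}$: without it one would face $e^{2L}$ rather than $e^{2l}$ in the combinatorial factor, and the exponential gain from the large-deviation estimate would no longer suffice.
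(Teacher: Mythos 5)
Your proposal is correct and follows essentially the same route as the paper: a union bound over scales, the packing domination $\hat T_l \le \tilde T_l$ from \prettyref{lem:TraversalPackingDomination} to cut the combinatorics from $|F_L|$ down to $|F_{l+\log L}|$, and then \prettyref{lem:ModifedTraversalsAreCompoundGeo} plus \prettyref{lem:LDForBinSumOfGeo} to produce the scale-$l$ probability, with the $-(\log L)^2$ shift supplying the cross term that makes the sum converge. One small point: \prettyref{lem:TraversalPackingDomination} is stated only for $l\ge1$, so the $l=0$ term should be dispatched separately (it is trivial since $\tilde T_0^{y,t_s}=\lfloor t_s\rfloor$ deterministically exceeds $\alpha(0)^2$); the paper handles this implicitly by starting its sum at $l=1$.
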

\begin{proof}
By \prettyref{lem:TraversalPackingDomination} and the definition
\prettyref{eq:defofthat} of $\hat{T}_{l}^{y,t}$ the probability
in \prettyref{eq:SmartMarkov} is bounded above by
\begin{equation}
\begin{array}{ccl}
{\displaystyle \sum_{l=1}^{L-1}}{\displaystyle \sum_{y\in F_{l+\log L}}}P_{x}\left[\sqrt{\hat{T}_{l}^{y,t_{s}}}\le\alpha\left(l\right)\right] & = & {\displaystyle \sum_{l=1}^{L-1}}{\displaystyle \left|F_{l+\log L}\right|}P_{x}\left[\sqrt{\hat{T}_{l}^{y,t_{s}}}\le\alpha\left(l\right)\right]\\
 & \overset{\eqref{eq:SizeOFfl}}{\le} & c\left(\log L\right)^{3/2}L^{2}{\displaystyle \sum_{l=1}^{L-1}}e^{2l}P_{x}\left[\sqrt{\hat{T}_{l}^{y,t_{s}}}\le\alpha\left(l\right)\right],
\end{array}\label{eq:FirstBound}
\end{equation}
for some arbitrary $y\in F_{l+\log L}$. Using \prettyref{lem:ModifedTraversalsAreCompoundGeo},
\prettyref{lem:LDForBinSumOfGeo} and \eqref{eq:pqvalue} it follows
that $P_{x}\left[\sqrt{\hat{T}_{l}^{y,t_{s}}}\le\alpha\left(l\right)\right]$
is bounded above by
\[
\begin{array}{cclcl}
ce^{-\frac{\left(\alpha\left(l\right)-\sqrt{t_{s}}+O\left(1\right)\right)^{2}}{l+1}} & \overset{\eqref{eq:AlphaBarrierDef},\eqref{eq:BetaDef}}{\le} & ce^{-\frac{\left(\frac{l}{L}\sqrt{t_{s}}+\left(\log L\right)^{2}-c\right)^{2}}{l+1}} & \le & ce^{-\frac{\left(\frac{l}{L}\right)^{2}t_{s}}{l+1}-c\left(\log L\right)^{2}}\\
 &  &  & \le & ce^{-2l-c\left(\log L\right)^{2}},
\end{array}
\]
where we have used that $t_{s}=2L^{2}\left(1+O\left(\log L/L\right)\right)$.
Thus the probability in \eqref{eq:SmartMarkov} is bounded above by
\[
c\left(\log L\right)^{3/2}L^{2}\sum_{l=1}^{L-1}e^{2l}\times e^{-2l-c\left(\log L\right)^{2}}\le c\left(\log L\right)^{3/2}L^{3}e^{-c\left(\log L\right)^{2}}=o\left(1\right).
\]

\end{proof}
We can now wrap up this section by proving \prettyref{prop:UpperBound}.
\begin{proof}[Proof of \prettyref{prop:UpperBound}]
Since the probability in \prettyref{eq:UppberBound} is decreasing
in $s$ it suffices to consider $s\in\left(0,100\right)$. For such
$s$ the statement \prettyref{eq:UBInUBSectionIgnoring} (and therefore
the claim \prettyref{eq:UppberBound}) follows immediately from the
Markov inequality, \prettyref{prop:UpperBoundOneProfile} (the right-hand
side tends to zero for $s>0$) and \prettyref{prop:SmartMarkov}.
\end{proof}
To complete the proof of our main result \prettyref{thm:MainResult}
it remains to show the lower bound in terms of excursions \prettyref{prop:LowerBound}
and the concentration of excursion times \prettyref{prop:Concentration},
in addition to the barrier estimate \prettyref{lem:GWBarrierAlpha}
and the large deviation bound \prettyref{lem:LDForBinSumOfGeo} used
in this section.

\section{\label{sec:Lowerbound}Lower bound on cover time in terms of excursions}

In this section we prove \prettyref{prop:LowerBound}, which gives
the lower bound of the main result \prettyref{thm:MainResult} ``in
terms of excursions'', and was used in the proof of that result in
\prettyref{sec:Reduction}. More precisely, our goal is to show the
claim \eqref{eq:LowerBoundInRedSec} from \prettyref{prop:LowerBound}
that
\[
\lim_{L\to\infty}P_{x}\left[T_{L-1}^{y,t_{-s}}=0\mbox{ for some }y\in F_{L}\right]=1\mbox{ for all }s>0\tag{\ref{eq:LowerBoundInRedSec}'},
\]
(see \eqref{eq:TraversalsDef}, \eqref{eq:GridDefinition} and \eqref{eq:defofts}
for the definitions).

As previously mentioned, a natural approach is to apply the second
moment method to the counting random variable $\sum_{y\in F_{L}}1_{\left\{ T_{L-1}^{y,t_{-s}}=0\right\} }$,
but this fails as the second moment of this sum is much larger than
the first moment squared. To get around this we introduce a truncation,
which takes the form of a barrier condition. The main point of the
condition is that we require $\sqrt{T_{l}^{y}}$ to stay above $\gamma\left(l\right)$,
where 
\begin{equation}
\gamma\left(l\right)=\gamma\left(l,L,s\right)=\beta\left(l\right)+f\left(l\right),\label{eq:GammaBarrierDef}
\end{equation}
$\beta\left(l\right)$ is the linear function from \prettyref{eq:BetaDef}
(essentially the mean of the $\sqrt{T_{l}^{t_{-s}}}$ when conditioned
on $T_{L-1}^{t_{-s}}=0$) and $f\left(l\right)$ is a convex ``bump''
function given by 
\begin{equation}
f\left(l\right)=f\left(l;L\right)=\min\left(l^{0.49},\left(L-l\right)^{0.49}\right),l\in\left[0,L\right],\label{eq:DefOffFunc}
\end{equation}
(see \prettyref{fig:Illustration-of-barriers}). It turns out that
with this condition, the summands in the counting random variable
decorrelate enough so that the variance should morally speaking not
explode with respect to the first moment squared.

For technical reasons it turns out to help to introduce a second barrier
\begin{equation}
\delta\left(l\right)=\delta\left(l,L,s\right)=\beta\left(l\right)+g\left(l\right),\label{eq:DeltaBarrierDef}
\end{equation}
where $g\left(l\right)\ge f\left(l\right)$ is the larger convex ``bump'',
\begin{equation}
g\left(l\right)=g\left(l;L\right)=\min\left(l^{0.51},\left(L-l\right)^{0.51}\right),l\in\left[0,L\right],\label{eq:DefOfgfunc}
\end{equation}
and require also that the square root of the traversal processes stay
below $\delta\left(l\right)$, or in other words that they stay in
the ``tube'' bounded by $\gamma\left(l\right)$ and $\delta\left(l\right)$.
Furthermore it turns out to be too much to ask for the barrier condition
to be satisfied for $l$ close to $0$ or $L-1$. We therefore introduce
a cutoff
\begin{equation}
l_{0}=l_{0}\left(L\right)=\lfloor\frac{1}{10}\log\log L\rfloor,\label{eq:DefOfCutOff}
\end{equation}
and arrive at the final form of the summands
\begin{equation}
I_{y}=\left\{ \gamma\left(l\right)\le\sqrt{T_{l}^{y,t_{-s}}}\le\delta\left(l\right)\mbox{ for }l=l_{0},\ldots,L-l_{0}\mbox{ and }T_{L-1}^{y,t_{-s}}=0\right\} ,\label{eq:TruncatedSummandLB}
\end{equation}
for $y\in F_{L}$. Finally, since the \prettyref{lem:BranchingProc}
gives the law of $T_{l}^{y,t_{-s}}$ technically speaking only applies
when $x\notin B\left(y,r_{1}\right)^{\circ}$ we sum not over $F_{L}$
but over the smaller set 
\begin{equation}
\tilde{F}_{L}=F_{L}\backslash B\left(x,r_{0}\right).\label{eq:FTildeDef}
\end{equation}
Our truncated counting random variable is thus 
\begin{equation}
Z=\sum_{y\in\tilde{F}_{L}}1_{I_{y}}.\label{eq:LBTruncatedSum}
\end{equation}
Note that $\tilde{F}_{L}$ is only marginaly smaller than $F_{L}$
since $\left|F_{L}\cap B\left(x,r_{0}\right)\right|/\left|F_{L}\right|\le cr_{0}^{2}\to0$
by \prettyref{eq:DefOfRadii} and \prettyref{eq:GridDefinition},
so that with \prettyref{eq:SizeOFfl} 
\begin{equation}
\left|\tilde{F}_{L}\right|=\left(1-o\left(1\right)\right)\left|F_{L}\right|\asymp c\left(\log L\right)^{3/2}e^{2L}.\label{eq:SizeOfSetOfFeps}
\end{equation}
Obviously
\begin{equation}
\left\{ Z>0\right\} \subset\left\{ T_{L-1}^{y,t_{-s}}=0\mbox{ for some }y\in F_{L}\right\} .\label{eq:ZPositiveMeansSomeTzero}
\end{equation}
We will show that in fact $Z>0$ with probability tending to one,
giving our goal \prettyref{eq:LowerBoundInRedSec}. This will be done
in two steps. First we will show in \prettyref{prop:LowerBoundOneProfile}
that for all $s>0$ 
\begin{equation}
E_{x}\left[Z\right]\asymp\left(\log L\right)^{3/2}l_{0}L^{s}\to\infty,\mbox{ as }L\to\infty.\label{eq:ExpOfZGoesToInfinity}
\end{equation}
For the second step (which is considerably more challenging) we show
in \prettyref{prop:MainTwoProfileBound} that for all $s>0$
\begin{equation}
E_{x}\left[Z^{2}\right]=\left(E_{x}\left[Z\right]\right)^{2}\left(1+o\left(1\right)\right),\mbox{ as }L\to\infty.\label{eq:SecondMomentOfZ}
\end{equation}
We will see that the lower bound \prettyref{prop:LowerBound} (i.e.
\prettyref{eq:LowerBoundInRedSec}) is an easy consequence of \prettyref{eq:SecondMomentOfZ},
via the Paley-Zygmund inequality.

The proof of \prettyref{eq:SecondMomentOfZ} is the heart of this
section. The second moment $E_{x}\left[Z^{2}\right]$ is a sum of
``two point probabilities'' $P_{x}\left[I_{y}\cap I_{z}\right]$
for $y,z\in\tilde{F}_{L}$, and bounding $E_{x}\left[Z^{2}\right]$
amounts to bounding these terms. Since $r_{0}\downarrow0$ most pairs
are at distance at least $2r_{0}$, and it turns out that for such
pairs the events $I_{y}$ and $I_{z}$ are exactly independent (essentially
because they depend on the behaviour of Brownian motion in disjoint
balls $B\left(y,r_{0}\right)$ and $B\left(z,r_{0}\right)$). Because
of this, the contribution of such terms to the second moment $E_{x}\left[Z^{2}\right]$
will be shown to be at most $E_{x}\left[Z\right]^{2}$. Thus the proof
\prettyref{eq:SecondMomentOfZ} is about showing that terms for pairs
at distance less than $2r_{0}$ are negligible, or in other words
\begin{equation}
{\displaystyle \sum_{y,z\in\tilde{F}_{L}:d\left(y,z\right)<2r_{0}}}P_{x}\left[I_{y}\cap I_{z}\right]=o\left(E_{x}\left[Z\right]^{2}\right).\label{eq:GoalToShow}
\end{equation}

\prettyref{lem:TwoProfileBoundEdgeCase} and \prettyref{prop:MainTwoProfileBound}
will provide bounds for $P_{x}\left[I_{y}\cap I_{z}\right]$ in this
regime that will allow us to show \prettyref{eq:GoalToShow}. We state
\prettyref{lem:TwoProfileBoundEdgeCase} and \prettyref{prop:MainTwoProfileBound}
in this section, but since their proofs are intricate (especially
that of \prettyref{prop:MainTwoProfileBound}, the main bound) they
are postponed until the next section.

Before starting the  proofs we state a bound on the probability that
a conditioned Galton-Watson process stays in the tube bounded by $\gamma\left(l\right)$
and $\delta\left(l\right)$. It will be proven (together with the
barrier bound \prettyref{lem:GWBarrierAlpha} from the previous section)
in \prettyref{sec:BoundaryCrossingProofs}.
\begin{lem}
\label{lem:GaltonWatsonBarrierBoundsSecLB}For all $s\in\left(-1,1\right)$
we have that\textup{
\begin{equation}
\mathbb{G}_{t_{s}}\left[\gamma\left(l\right)\le\sqrt{T_{l}}\le\delta\left(l\right)\mbox{ for }l\in\left\{ l_{0},\ldots,L-l_{0}\right\} |T_{L-1}=0\right]\asymp\frac{l_{0}}{L}.\label{eq:GWBarrierGammaDelta}
\end{equation}
}
\end{lem}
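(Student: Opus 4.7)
The plan is to translate the Galton--Watson barrier problem into a tube estimate for a Brownian bridge via the standard Lamperti--Feller diffusion approximation, and then carry out a reflection-principle computation. Write $Y_l := \sqrt{T_l} - \beta(l)$; under $\mathbb{G}_{t_s}[\,\cdot\,|T_{L-1} = 0]$ the event in \eqref{eq:GWBarrierGammaDelta} becomes $\{f(l) \le Y_l \le g(l) \text{ for all } l \in \{l_0,\ldots,L - l_0\}\}$. The starting observation is that, since $\sqrt{t_s} \asymp L$, the process $\sqrt{T_l}$ is of order $L$ throughout the bulk, so the square-root map linearises with controlled error and $Y_l$ is expected to resemble a Brownian-bridge-type fluctuation process whose variance parameter is determined by the offspring variance $\mathrm{Var}(\mathrm{Geom}(1/2)) = 2$.

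The first step is a two-sided comparison (via a coupling or, equivalently, quantitative matching of finite-dimensional marginals) between the conditioned Galton--Watson process and a Brownian bridge. I would exploit the explicit iterated generating function $f_n(s) = (n - (n-1)s)/((n+1) - ns)$ of the $\mathrm{Geom}(1/2)$ law, which yields closed-form expressions for $\mathbb{G}_{t_s}[T_{l} \in \cdot,\, T_{l'} \in \cdot \mid T_{L-1} = 0]$ at any pair of times $l < l'$. These can be matched to the multivariate Gaussian density of the target bridge with sufficient error control away from the right boundary $l = L$; near that boundary the cut-off $l_0 \gg 1$ is used, and the intervals $[0, l_0]$ and $[L - l_0, L-1]$ are then treated separately by the generating-function expressions rather than by the diffusion approximation.

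Once the Brownian-bridge approximation is in place, the estimate is obtained by splitting the barrier event at $l_0$ and $L - l_0$. Conditioning on $(B_{l_0}, B_{L - l_0})$, which are (approximately, as $l_0 \ll L$) nearly independent centred Gaussians of variance $\asymp l_0$, the event that each lies in $[f(l_0), g(l_0)] = [l_0^{0.49}, l_0^{0.51}]$ has probability of slowly-varying order, which is absorbed into the $\asymp$. Conditional on positive boundary values $x, y \asymp \sqrt{l_0}$, the middle portion is a Brownian bridge of length $L - 2l_0$, and the reflection-principle formula gives
\begin{equation*}
P\bigl[B_l > 0,\ l \in [l_0, L - l_0] \bigm| B_{l_0} = x,\, B_{L - l_0} = y\bigr] \,=\, 1 - e^{-2xy/(L - 2 l_0)} \,\asymp\, \frac{l_0}{L}.
\end{equation*}
The refinement from requiring $B_l \ge f(l) = \min(l^{0.49}, (L-l)^{0.49})$ instead of $B_l > 0$, as well as the upper-bound condition $B_l \le g(l)$, each contribute only factors that can be absorbed: by entropic repulsion a positive Brownian bridge with such endpoints typically rises to $\asymp \sqrt{l(L-l)/L}$ in the interior, sitting comfortably between $f$ and $g$, so both one-sided exits from the tube occur with probability strictly less than one.

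The principal obstacle is producing the matching \emph{lower} bound, i.e.\ an approximation sharp enough to extract the correct order $l_0/L$ and not just an upper bound $O(l_0/L)$. This requires a two-sided Gaussian approximation for the conditioned Galton--Watson increments whose errors sum to $o(l_0/L)$ over the $L$ generations, together with a genuine coupling at the cut-offs $l_0$ and $L - l_0$ rather than a one-sided comparison. This is where the explicit iterated generating function becomes decisive: it provides closed-form control on the boundary regions $[0, l_0]$ and $[L - l_0, L - 1]$, leaving only the bulk $[l_0, L - l_0]$ (where $T_l$ is of order $L^2$ and the central limit approximation is quantitatively sharp) to be handled by the diffusion-type matching.
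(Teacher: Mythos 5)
Your plan correctly identifies the final source of the $l_0/L$ order (the reflection-principle formula $1 - e^{-2xy/(L-2l_0)}$ with boundary values $x, y \asymp \sqrt{l_0}$), but the route you propose to rigorize it—a quantitative Lamperti–Feller diffusion approximation with errors controlled via the iterated generating function—has a genuine gap that you yourself flag as the ``principal obstacle'' and then do not resolve. The problem is that the target quantity is of order $l_0/L \to 0$, and you need a \emph{two-sided} approximation of the full path (not just finite-dimensional marginals) whose total error over $L$ generations is $o(l_0/L)$. Matching pairwise marginals via the generating function does not yield a tube-probability comparison by itself; and a generic Berry--Esseen or local-CLT estimate per step would accumulate an error far larger than $l_0/L$ after $L$ steps. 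You also do not address the drift correction incurred by the square-root map: for a Feller diffusion $dZ = \sigma\sqrt{Z}\,dW$, one has $d\sqrt{Z} = \tfrac{\sigma}{2}dW - \tfrac{\sigma^2}{8\sqrt{Z}}dt$, and that negative $O(1/\sqrt{Z}) = O(1/L)$-per-step drift sums to $O(1)$ over the whole bridge; this is precisely the correction that separates the dimension-$0$ squared Bessel bridge (the continuum limit of the conditioned GW process) from the dimension-$1$ one (Brownian bridge squared), and it cannot simply be ``absorbed into $\asymp$.''

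The paper's proof avoids these two difficulties entirely by replacing approximation with exact identities: the conditioned Galton--Watson process is realized as the discrete edge local times of a random walk on $\{0,\dots,L\}$, which after Poissonizing the jumps are, by a generalized Second Ray--Knight theorem, a dimension-$0$ squared Bessel bridge; the passage to Brownian bridge is then carried out by the explicit Radon--Nikodym derivative $\frac{d\mathbb{Q}^{0,T}_{x\to 0}}{d\mathbb{Q}^{1,T}_{x\to 0}}\big|_{\mathcal{F}_S \cap \{H_0 > S\}} = \left(\frac{(1-S/T)^2 x}{X_S}\right)^{1/4}\exp\!\left(-\tfrac38\int_0^S \frac{dt}{X_t}\right)$, which on the tube event is bounded above and below by constants. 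The discrete/continuous mismatch (edge vs.\ vertex local times) is controlled by conditional large-deviation bounds that are easily summable because the barrier bump $f(l)$ grows polynomially. So where you propose to approximate and then hope the errors sum benignly, the paper transports the problem through a chain of equalities in law and a bounded density, and only uses large deviations at a stage where the errors are exponentially small. If you want to pursue your route, you would at minimum need a strong (KMT-type) coupling of the GW bridge to a Bessel bridge with error $o(l_0/L)$ uniformly over $L$ generations, plus a careful accounting of the dimension correction—both of which are substantial projects in their own right.
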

We now start the proofs of this section by proving the estimate \prettyref{eq:ExpOfZGoesToInfinity}
on $E_{x}\left[Z\right]$.
\begin{prop}
\label{prop:LowerBoundOneProfile}($x\in\mathbb{T}$) For all $s\in\left(-1,1\right)$
\begin{equation}
E_{x}\left[Z\right]\asymp c\left(\log L\right)^{3/2}l_{0}L^{s},\label{eq:LowerBoundOneProfileExp}
\end{equation}
and for all $y,z\in\tilde{F}_{L}$,
\begin{equation}
P_{x}\left[I_{y}\right]=P_{x}\left[I_{z}\right]\asymp ce^{-2L}L^{s}l_{0}.\label{eq:LowerBoundOneProfile}
\end{equation}
\end{prop}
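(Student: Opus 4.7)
The plan is to reduce $P_x[I_y]$ to a computation purely within the Galton--Watson law $\mathbb{G}_{t_{-s}}$, factor it as an unhit-probability times a tube-crossing probability, and invoke the two already-stated estimates Lemmas~\ref{lem:NotHitByrL} and~\ref{lem:GaltonWatsonBarrierBoundsSecLB}. Because the truncation was designed precisely so that each factor could be read off from an already-proved piece, no new input is required here beyond the deferred barrier lemma.

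First I would identify the law. For any $y \in \tilde{F}_L = F_L \setminus B(x, r_0)$ we have $d(x, y) > r_0 > r_1$, hence $x \notin B(y, r_1)^{\circ}$, and Lemma~\ref{lem:BranchingProc} asserts that under $P_x$ the process $(T_l^{y, t_{-s}})_{l \ge 0}$ has law $\mathbb{G}_{t_{-s}}$. In particular $P_x[I_y]$ depends only on $L$ and $s$ and not on the specific $y \in \tilde{F}_L$, which immediately yields the equality $P_x[I_y] = P_x[I_z]$ in \eqref{eq:LowerBoundOneProfile}. Note that excising $B(x, r_0)$ from $F_L$ to form $\tilde{F}_L$ is precisely what makes this law identification uniform in $y$, sparing us the strong-Markov detour used in the proof of \prettyref{prop:UpperBoundOneProfile}.

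Next I would factorise
\[
P_x[I_y] = P_x\bigl[T_{L-1}^{y, t_{-s}} = 0\bigr]\cdot P_x\Bigl[\gamma(l) \le \sqrt{T_l^{y, t_{-s}}} \le \delta(l)\text{ for } l_0 \le l \le L - l_0 \,\Big|\, T_{L-1}^{y, t_{-s}} = 0\Bigr].
\]
Lemma~\ref{lem:NotHitByrL} applied with $-s$ in place of $s$ gives the first factor as $\asymp e^{-2L} L^{1+s}$. By the law identification above, the conditional factor equals the corresponding quantity under $\mathbb{G}_{t_{-s}}$; since $s \in (-1,1)$ iff $-s \in (-1,1)$, Lemma~\ref{lem:GaltonWatsonBarrierBoundsSecLB} with parameter $-s$ applies and yields $\asymp l_0/L$ (the barriers $\gamma,\delta$ used to define $I_y$ in \eqref{eq:TruncatedSummandLB} are the ones associated to $t_{-s}$ via \eqref{eq:GammaBarrierDef} and \eqref{eq:DeltaBarrierDef}, so the parameters match). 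Multiplying gives $P_x[I_y] \asymp e^{-2L} l_0 L^s$, which is \eqref{eq:LowerBoundOneProfile}.

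Finally, since the summands of $Z = \sum_{y \in \tilde{F}_L} 1_{I_y}$ share a common mean, linearity gives $E_x[Z] = |\tilde{F}_L| \cdot P_x[I_y]$; combining with the cardinality bound $|\tilde{F}_L| \asymp (\log L)^{3/2} e^{2L}$ from \eqref{eq:SizeOfSetOfFeps} produces
\[
E_x[Z] \asymp (\log L)^{3/2} e^{2L} \cdot e^{-2L} l_0 L^s = (\log L)^{3/2} l_0 L^s,
\]
which is \eqref{eq:LowerBoundOneProfileExp}. There is no genuine obstacle at this step: all the real work has been exported to Lemma~\ref{lem:GaltonWatsonBarrierBoundsSecLB} (to be proved in \prettyref{sec:BoundaryCrossingProofs}), which encodes the $l_0/L$ Brownian-bridge-in-a-tube asymptotic for the conditioned critical Galton--Watson process.
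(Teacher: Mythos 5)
Your proposal matches the paper's proof essentially line for line: identify the law via Lemma~\ref{lem:BranchingProc}, factor through the event $\{T_{L-1}^{y,t_{-s}}=0\}$, invoke Lemma~\ref{lem:NotHitByrL} and Lemma~\ref{lem:GaltonWatsonBarrierBoundsSecLB} for the two factors, and multiply by $|\tilde{F}_{L}|$ from \eqref{eq:SizeOfSetOfFeps}. You even handle the $s\mapsto-s$ substitution into the deferred barrier lemma more carefully than the paper's own write-up, which is slightly sloppy on this point.
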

\begin{proof}
By \prettyref{eq:SizeOfSetOfFeps} the first claim \prettyref{eq:LowerBoundOneProfileExp}
follows from \prettyref{eq:LowerBoundOneProfile}. The equality in
\prettyref{eq:LowerBoundOneProfile} holds since $T_{l}^{y,t_{-s}},l\ge0,$
and $T_{l}^{z,t_{-s}},l\ge0,$ have the same law by \prettyref{lem:BranchingProc}.
For the bound in \prettyref{eq:LowerBoundOneProfile} note that (recall
\prettyref{eq:TruncatedSummandLB})
\begin{equation}
P_{x}\left[I_{y}\right]=P_{x}\left[I_{y}|T_{L-1}^{y,t_{-s}}=0\right]P_{x}\left[T_{L-1}^{y,t_{-s}}=0\right].\label{eq:Conditioning}
\end{equation}
By \prettyref{lem:NotHitByrL} we have
\begin{equation}
P_{x}\left[T_{L-1}^{y,t_{-s}}=0\right]\asymp e^{-2L}L^{1+s},\label{eq:AlittleBitOfColumnA}
\end{equation}
and using \prettyref{lem:BranchingProc}
\[
P_{x}\left[I_{y}|T_{L-1}^{y,t_{-s}}=0\right]=\mathbb{G}_{t_{s}}\left[\gamma\left(l\right)\le\sqrt{T_{l}}\le\delta\left(l\right)\mbox{ for }l=l_{0},\ldots,L-l_{0}|T_{L-1}=0\right].
\]
Thus by \prettyref{eq:GWBarrierGammaDelta} it follows that
\begin{equation}
P_{x}\left[I_{y}|T_{L-1}^{y,t_{-s}}=0\right]\asymp\frac{l_{0}}{L}.\label{eq:ALittleBitOfColumnB}
\end{equation}
Plugging this into \prettyref{eq:Conditioning} together with \prettyref{eq:AlittleBitOfColumnA}
gives \prettyref{eq:LowerBoundOneProfile}.
\end{proof}
We now turn our attention to the main step of the proof of the lower
bound \prettyref{prop:LowerBound}, namely the second moment bound
\prettyref{eq:SecondMomentOfZ}. For this we will need bounds on the
two point probility $P_{x}\left[I_{y}\cap I_{z}\right]$.

We start with the case of $y$ and $z$ such that $B\left(y,r_{0}\right)$
and $B\left(z,r_{0}\right)$ are disjoint. The events will be independent
in this case, and to show this we need the independence result \prettyref{eq:IndepOfDisjoint}
which now follows (we also include \prettyref{eq:IndepOfStartingPoint}
since it will be used later in \prettyref{sec:TwoPointBound} and
its proof is similar).
\begin{lem}
\label{lem:IndepOfDisjoint}($x\in\mathbb{T}$) For all $t\ge0$ and
$y,z\in\tilde{F}_{L}$ 
\begin{equation}
\mbox{if }d\left(y,z\right)>2r_{0}\mbox{ then }\left(T_{l}^{y,t}\right)_{l\ge0}\mbox{ and }\left(T_{l}^{z,t}\right)_{l\ge0}\mbox{ are independent under }P_{x}.\label{eq:IndepOfDisjoint}
\end{equation}
Also 
\begin{equation}
\mbox{for }w,v\in\mathbb{T}\mbox{ the }P_{w}-\mbox{law of }\left(T_{l}^{v,t}\right)_{l\ge0,t\ge0}\mbox{\,\ depends only on }d\left(w,v\right).\label{eq:IndepOfStartingPoint}
\end{equation}
\end{lem}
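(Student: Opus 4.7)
The plan for both parts is to combine two observations: first, the process $(T_l^{y,t})_{l\ge 0,t\ge 0}$ is a measurable function of the trajectory of $W$ restricted to $B(y,r_0)$, and is moreover invariant under rotations of this restricted trajectory around $y$, since the defining balls $B(y,r_l)$ are all rotation-symmetric around $y$; second, for $L$ large one has $r_1<\frac{1}{2}$, so the rotational invariance \eqref{eq:RotationalInvarianceInTorusBall} together with the strong Markov property will let us ``quotient out'' the dependence of each ball-visit on its specific entry point.

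To establish \eqref{eq:IndepOfStartingPoint} we would first reduce to $v=0$ by translation invariance of Brownian motion on $\mathbb{T}$. If $d(0,w)<r_1$, any $w'$ with $d(0,w')=d(0,w)$ is the image of $w$ under some rotation $\rho$ of $B(0,\frac{1}{2})$ around $0$, and the two observations above give $P_w[(T_l^{0,t})\in\cdot]=P_{w'}[(T_l^{0,t})\in\cdot]$. For $d(0,w)\ge r_1$, we would apply the strong Markov property at $H_{B(0,r_1)}$: by the previous case the $P_u$-law of $(T_l^{0,t})$ is the same for every $u\in\partial B(0,r_1)$, so the $P_w$-law is in fact constant on $\{w:d(0,w)\ge r_1\}$. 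In both regimes the law depends on $w$ only through $d(0,w)$.

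For \eqref{eq:IndepOfDisjoint}, set $A=B(y,r_0)$ and $B=B(z,r_0)$; by hypothesis $A\cap B=\emptyset$. Let $\sigma_i^A<\tau_i^A$ (resp.\ $\sigma_i^B<\tau_i^B$) denote the successive entry/exit times of $A$ (resp.\ $B$); all are a.s.\ finite by recurrence of Brownian motion on $\mathbb{T}$. We would then condition on the $\sigma$-algebra $\mathcal{G}$ generated by the ``skeleton'': all $\sigma_i^\cdot,\tau_i^\cdot$, the entry points $W_{\sigma_i^\cdot}$, and the trajectory of $W$ outside $A\cup B$. Iterated use of the strong Markov property gives that the $A$-visits $W_{(\sigma_i^A+\cdot)\wedge\tau_i^A}$ and $B$-visits $W_{(\sigma_j^B+\cdot)\wedge\tau_j^B}$ are $\mathcal{G}$-conditionally independent, each distributed as Brownian motion in the corresponding ball started from its entry point and killed at exit. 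By \eqref{eq:RotationalInvarianceInTorusBall} the ``rotation classes'' of the visits (obtained by rotating each so the entry point sits at a fixed reference on the boundary) have $\mathcal{G}$-conditional laws that do not depend on the entry points, hence not on $\mathcal{G}$ at all. Therefore the rotation classes of $A$-visits form an i.i.d.\ sequence independent of the rotation classes of $B$-visits, and since $(T_l^{y,t})$ (resp.\ $(T_l^{z,t})$) depends on the $A$- (resp.\ $B$-)visits only through their rotation classes, independence follows.

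The main obstacle will be in \eqref{eq:IndepOfDisjoint}: although the $A$- and $B$-visit sequences are intricately coupled through the exterior trajectory (which determines when and where each ball is next re-entered), the ``rotation class'' reduction dissolves precisely this coupling, since the only dependence surviving the rotation class is on the entry points, which are exactly what rotational invariance allows us to discard.
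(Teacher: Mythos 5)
Your overall plan mirrors the paper's: reduce the traversal process to a function of individual ball-visits, pass to rotation classes, and invoke rotational invariance together with the strong Markov property. For \eqref{eq:IndepOfStartingPoint} the sketch is fine in spirit, provided you actually iterate excursion by excursion (a single application of \eqref{eq:RotationalInvarianceInTorusBall}, which only controls $W_{\cdot\wedge T_{B(0,R)}}$, does not by itself cover the whole trajectory). The problem is in \eqref{eq:IndepOfDisjoint}, at the conditioning step.

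The $\sigma$-algebra $\mathcal{G}$ you condition on contains the trajectory of $W$ on $(A\cup B)^c$, and by continuity of $W$ this already determines the exit points $W_{\tau_i^A}$ and $W_{\tau_j^B}$: immediately after $\tau_i^A$ the path spends positive time in $(A\cup B)^c$, so $W_{\tau_i^A}$ is recovered as a limit along such times. Consequently the $\mathcal{G}$-conditional law of an $A$-visit is not ``Brownian motion started at the entry point and killed at exit''; it is Brownian motion started at the entry point and conditioned to exit at the (now known) exit point, i.e.\ a bridge in the ball. Rotating so that the entry point sits at a reference does not remove this dependence: the conditional law of the rotation class still depends on the angular position of the exit point relative to the entry point, which is $\mathcal{G}$-measurable. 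So the rotation classes are $\mathcal{G}$-conditionally independent of each other, but they are \emph{not} $\mathcal{G}$-conditionally i.i.d.\ with a law that is independent of $\mathcal{G}$, and the claimed unconditional independence of the two families does not follow. This is not a harmless overstatement, since the traversal count of a single excursion really is correlated with the excursion's exit point.

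The fix is to avoid conditioning on the whole exterior at once and instead iterate the strong Markov property in time order, which is essentially what the paper does. List the excursions from $\partial B(v,r_1)$ to $\partial B(v,r_0)$, $v\in\{y,z\}$, in the order in which they occur, with start times $R^1<R^2<\cdots$. At each $R^k$, the strong Markov property and \eqref{eq:RotationalInvarianceInTorusBall} give that the rotation class (hence the traversal-count increment) of that excursion is independent of $\mathcal{F}_{R^k}$ with a fixed law. Since all previous increments and the label of the $k$-th excursion are $\mathcal{F}_{R^k}$-measurable, the increments in time order are mutually independent, and one can peel off the last-in-time increment at a suitable stopping time to deduce that the $y$-increments and the $z$-increments form two independent i.i.d.\ families; \eqref{eq:IndepOfDisjoint} follows because $(T_l^{y,t})$ and $(T_l^{z,t})$ are measurable functions of these respective families.
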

\begin{proof}
To see \eqref{eq:IndepOfStartingPoint} note that $T_{l}^{v,t},l\ge0,$
depends only on $T_{l}^{v,n}-T_{l}^{v,n-1}=T_{l}^{v,1}\circ\theta_{R_{n}^{v,0}},l\ge0,n\ge1,$
where $T_{l}^{v,1}\circ\theta_{R_{n}^{v,0}}$ counts the traversals
that happen during the $n$-th excursion from $\partial B\left(v,r_{1}\right)$
to $\partial B\left(v,r_{0}\right)$. The traversal count $T_{l}^{v,1}\circ\theta_{R_{n}^{v,0}}$
depends only on the excursion $W_{\left(R_{n}^{v,0}+\cdot\right)\wedge D_{n}^{v,0}}$,
and furthermore it is a rotationally invariant function of that excursion.
Let $\tilde{w}\in\mathbb{T}$ be any point such that $d\left(\tilde{w},v\right)=d\left(w,v\right)$
and let $u$ be any point in $\partial B\left(v,r_{1}\right)$. By
the rotational invariance \prettyref{eq:RotationalInvarianceInTorusBall}
of $W_{t}$ in $B\left(v,r_{0}\right)$ and the strong Markov property
the $P_{w}-$ and $P_{\tilde{w}}$-laws of $T_{l}^{v,1}\circ\theta_{R_{1}^{v,0}}$
coincide (that is the law depends only on $d\left(w,v\right)$), and
the $P_{w}-$ and $P_{u}-$laws of $T_{l}^{v,1}\circ\theta_{R_{n}^{v,1}}$
coincide for $n\ge2$ (that is the law does not even depend on $d\left(w,v\right)$).
Furthermore the strong Markov property implies that the $T_{l}^{v,1}\circ\theta_{R_{n}^{v,0}},n\ge1,$
are independent. This gives \eqref{eq:IndepOfStartingPoint}.

To see \eqref{eq:IndepOfDisjoint} we similarly use that for $v\in\left\{ y,z\right\} $
the process $T_{l}^{v,t},l\ge0,$ depends only on $T_{l}^{v,1}\circ\theta_{R_{n}^{y,0}},l\ge0$,
which in turn depend only on the excursions $W_{\left(R_{n}^{v,0}+\cdot\right)\wedge D_{n}^{v,0}},n\ge0$.
The latter excursions refer to disjoint intervals of time for each
$n\ge0$ and $v\in\left\{ y,z\right\} $, since $B\left(y,r_{0}\right)$
and $B\left(z,r_{0}\right)$ are disjoint. Therefore, using rotational
invariance and the strong Markov property as above, the processes
$l\to T_{l}^{v,1}\circ\theta_{R_{n}^{y,0}},v\in\left\{ y,z\right\} ,n\ge1,$
are mutually independent. This implies \eqref{eq:IndepOfDisjoint}.
\end{proof}
The two point probability for $y$ and $z$ such that $B\left(y,r_{0}\right)$
and $B\left(z,r_{0}\right)$ are disjoint can now be computed easily.
\begin{cor}
\label{cor:IndeptOfIyAndIz}($x\in\mathbb{T}$) For all $y,z\in\tilde{F}_{L}$
such that $d\left(y,z\right)>2r_{0}$
\[
P_{x}\left[I_{y}\cap I_{z}\right]=P_{x}\left[I_{y}\right]^{2}.
\]

\end{cor}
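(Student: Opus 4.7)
The plan is very direct: the event $I_y$ (see \eqref{eq:TruncatedSummandLB}) is a measurable function of the traversal process $(T_l^{y,t_{-s}})_{l \geq 0}$ alone, and likewise $I_z$ is a measurable function of $(T_l^{z,t_{-s}})_{l \geq 0}$. Under the hypothesis $d(y,z) > 2r_0$, Lemma~\ref{lem:IndepOfDisjoint} (specifically the independence statement \eqref{eq:IndepOfDisjoint}) tells us that these two processes are independent under $P_x$. Consequently $I_y$ and $I_z$ are independent, so $P_x[I_y \cap I_z] = P_x[I_y]\cdot P_x[I_z]$.

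To conclude, I would appeal to the equality $P_x[I_y] = P_x[I_z]$ already established in \prettyref{prop:LowerBoundOneProfile} (which is immediate from \prettyref{lem:BranchingProc}, since the law of $(T_l^{y,t_{-s}})_{l \ge 0}$ under $P_x$ does not depend on $y$ as long as $x \notin B(y,r_1)^\circ$, and this is guaranteed because $y \in \tilde{F}_L = F_L \setminus B(x,r_0)$). Combining the two gives $P_x[I_y \cap I_z] = P_x[I_y]^2$, which is the claim.

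There is essentially no obstacle here: all of the work has been done by \prettyref{lem:IndepOfDisjoint} and \prettyref{prop:LowerBoundOneProfile}. The only thing worth flagging is that in order to invoke the formula $P_x[I_y] = P_x[I_z]$ one uses that both $y$ and $z$ lie outside $B(x,r_0)$, i.e.\ in $\tilde{F}_L$ rather than in $F_L$; this is precisely the reason \tilde{F}_L was introduced in \eqref{eq:FTildeDef}. The entire argument therefore fits in a few lines.
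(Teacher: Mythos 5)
Your argument is correct and is exactly what the paper intends: the corollary follows immediately from \prettyref{lem:IndepOfDisjoint} (since $I_y$ and $I_z$ are measurable functions of the respective traversal processes, which are independent when $d(y,z)>2r_0$) together with the equidistribution $P_x[I_y]=P_x[I_z]$ from \prettyref{prop:LowerBoundOneProfile}. The remark that $y,z\in\tilde{F}_L$ (rather than $F_L$) is what guarantees $x\notin B(y,r_1)^\circ$ is a fair observation, though in fact the identity $P_x[I_y]=P_x[I_z]$ is already established for all $y,z\in\tilde{F}_L$ in \prettyref{eq:LowerBoundOneProfile}, so no extra work is needed.
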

Next we state a bound on the two point probability for $y$ and $z$
for which the largest non-overlapping balls $B\left(y,r_{k}\right)$
and $B\left(z,r_{k}\right)$ are of radius $r_{k}$ for $0\le k\le l_{0}$.
It will be proven in the next section. Note that the right-hand side
is almost that of \prettyref{eq:LowerBoundOneProfile} squared.
\begin{lem}
\label{lem:TwoProfileBoundEdgeCase}For all $y,z\in\tilde{F}_{L}$
such that $2r_{l_{0}}\le d\left(y,z\right)<2r_{0}$ and $s\in\left(-1,1\right)$
we have
\begin{equation}
P_{x}\left[I_{y}\cap I_{z}\right]\le c\left(e^{-\left(2L-2l_{0}\right)}L^{s}l_{0}^{0.51}g\left(l_{0}\right)\right)^{2}.\label{eq:TwoProfileBoundEdgeCase}
\end{equation}

\end{lem}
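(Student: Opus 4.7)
The plan is to exploit the approximate hierarchical structure granted by the distance condition: since $d(y,z) \ge 2 r_{l_0}$, the balls $B(y, r_{l_0})$ and $B(z, r_{l_0})$ are disjoint, so the sub-excursions of $W$ inside each of them (which determine the traversal counts at scales $l > l_0$ around the respective centre) are conditionally independent given the joint pattern of entries. Concretely, I would condition on the pair $(T_{l_0}^{y, t_{-s}}, T_{l_0}^{z, t_{-s}}) = (n_y, n_z)$ and, by iterating the strong Markov property at the successive entries of $W$ into $B(y, r_{l_0+1})$ and $B(z, r_{l_0+1})$, show that the conditional laws of the continuations $(T^{y,t_{-s}}_{l})_{l > l_0}$ and $(T^{z,t_{-s}}_{l})_{l > l_0}$ are independent critical geometric Galton--Watson processes started from $n_y$ and $n_z$ respectively. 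This is essentially Lemma \ref{lem:BranchingProc} applied separately inside each of the two disjoint scale-$l_0$ balls.

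Given the decoupling, the task reduces to bounding, uniformly in integer $n \in [\gamma(l_0)^2, \delta(l_0)^2]$, the single-process quantity
\[
Q(n) := \mathbb{G}_n\bigl[\gamma(l_0+l) \le \sqrt{T_l} \le \delta(l_0+l) \text{ for } l \in [1, L - 2l_0],\ T_{L-1-l_0} = 0\bigr].
\]
Following the template of Lemmas \ref{lem:NotHitByrL} and \ref{lem:GaltonWatsonBarrierBoundsSecLB}, I would first apply the extinction estimate $\mathbb{G}_n[T_{L-1-l_0}=0] \le c\, e^{-n/(L-l_0)}$; substituting $n \le \delta(l_0)^2 = (\beta(l_0)+g(l_0))^2$ and expanding using $\beta(l_0)^2/(L-l_0) = 2L - (1+s)\log L - 2l_0 + o(1)$, this is at most $c\, e^{-(2L-2l_0)} L^{1+s}$. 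Second, conditional on $T_{L-1-l_0}=0$, I would compare $\sqrt{T_\cdot}$ with a Brownian (or Bessel) bridge as in Section \ref{sec:BoundaryCrossingProofs}: the resulting bridge (after recentring around $\beta$) starts at $\sqrt{n} - \beta(l_0) \le g(l_0)$, ends near $-\beta(L-1)$, and must remain between the curves $f(l)$ and $g(l)$ on $[l_0, L-l_0]$. The classical ballot estimate for a Brownian bridge from $a>0$ to $b>0$ staying above $0$ then gives a probability bounded by $c\, g(l_0)\, g(L-l_0)/L = c\, l_0^{0.51}\, g(l_0)/L$. Combining the two estimates yields $Q(n) \le c\, e^{-(2L-2l_0)}\, L^s\, l_0^{0.51}\, g(l_0)$ uniformly in the relevant range of $n$.

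Assembling the decoupled bound,
\[
P_x[I_y \cap I_z] \le \sum_{n_y, n_z} P_x\bigl[T_{l_0}^{y, t_{-s}} = n_y,\ T_{l_0}^{z, t_{-s}} = n_z\bigr]\, Q(n_y)\, Q(n_z) \le \bigl(\sup_n Q(n)\bigr)^2,
\]
which is exactly \eqref{eq:TwoProfileBoundEdgeCase}. The hard part will be making the conditional independence above scale $l_0$ fully rigorous: the ``outside'' portions of $W$ glue together the inside-excursion sequences for $y$ and $z$, so one needs careful bookkeeping of entry times combined with the strong Markov property and rotational invariance inside each ball to verify that, after conditioning on the numbers of entries, the two inside sequences really are independent copies of geometric Galton--Watson processes with initial populations $n_y$ and $n_z$. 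A more delicate variant of this decoupling, with the splitting scale chosen according to $d(y,z)$, will be required for the main two-profile bound \ref{prop:MainTwoProfileBound}.
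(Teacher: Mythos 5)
Your overall strategy is the same as the paper's: decouple at the first scale where the balls around $y$ and $z$ become disjoint, and then bound a single-process quantity. The implementation, however, differs in a meaningful way. You propose conditioning on $(T_{l_0}^{y,t_{-s}}, T_{l_0}^{z,t_{-s}}) = (n_y, n_z)$ and arguing conditional independence of the continuations. The paper instead avoids conditioning entirely: it uses the compatibility identity \eqref{eq:TkandTCompatability} to pass from $I_v$ to the larger event $\tilde I_v$, which is a measurable function of the auxiliary processes $(T_l^{v,l_0+1,m})_{l,m}$, and then invokes \prettyref{lem:IndepOfDisjointTk} for \emph{unconditional} independence of $\tilde I_y$ and $\tilde I_z$, giving $P_x[I_y\cap I_z]\le P_x[\tilde I_y]^2$ directly. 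It then bounds $P_x[\tilde I_y]$ by decomposing over the value of $m$ via a strong-Markov argument at $R_{m+1}^{y,l_0+1}$ and summing a geometric series, rather than taking a supremum over the conditioning value. The paper's route buys you a cleaner proof: the ``hard part'' you correctly flag (the conditional independence of the two inside-excursion sequences given $(n_y,n_z)$, with all the interleaving bookkeeping) simply never arises, because the $T^{\cdot,l_0+1,\cdot}$ processes are already defined unconditionally in terms of excursions inside the disjoint balls. Your route can be made rigorous but essentially requires re-proving a conditioned variant of \prettyref{lem:IndepOfDisjointTk}; the fixed-$m$ union is the standard device to sidestep exactly that.

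Two small corrections. First, the decoupling scale should be $l_0+1$, not $l_0$: the hypothesis $d(y,z)\ge 2 r_{l_0}$ gives only $d(y,z)>2 r_{l_0+1}$ with a margin, which is what \prettyref{lem:IndepOfDisjointTk} requires (strict inequality), and what the barrier bound \eqref{eq:GWBarrierGammaDeltaFromk} needs (it is stated for $l_0<k$). Second, when bounding the extinction factor you wrote ``substituting $n\le\delta(l_0)^2$'', but an \emph{upper} bound on $n$ gives a \emph{lower} bound on $e^{-n/(L-l_0)}$; you should use $n\ge\gamma(l_0+1)^2\ge\beta(l_0+1)^2$ to get the upper bound $e^{-n/(L-l_0-1)}\le c\,e^{-(2L-2l_0)}L^{1+s}$. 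The final arithmetic you report is nonetheless correct.
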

Next we we state the two point probability bound for the most important
(and difficult) regime, which gives a bound for points $y$ and $z$
which are such that the largest non-overlapping ball is of radius
$r_{k}$ for $l_{0}<k<L-l_{0}$.
\begin{prop}
\label{prop:MainTwoProfileBound}($x\in\mathbb{T}$) For all $s\in\left(0,1\right)$,
$l_{0}<k<L-l_{0}$ and all $y,z\in\tilde{F}_{L}$ such that $2r_{k}<d\left(y,z\right)\le2r_{k-1}$
we have
\begin{equation}
P_{x}\left[I_{y}\cap I_{z}\right]\le c\left(s\right)e^{-\left(4L-2k\right)-cf\left(k\right)}L^{2s}l_{0}^{1.02}g\left(k\right)^{2}\left(\log L\right)^{1.02}.\label{eq:TwoProfileBoundInStatement}
\end{equation}
\end{prop}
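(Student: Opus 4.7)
The bound reflects the approximate hierarchical structure stressed in the introduction: at scales below $k$ the balls of radius $r_l \gg r_k$ around $y$ and $z$ almost coincide, while at scales above $k$ the balls of radius $r_l \ll r_k$ are disjoint. My plan is to decompose $I_y \cap I_z$ into an above-$k$ coupled piece and two conditionally independent below-$k$ pieces, then combine the resulting estimates using the barrier analysis developed for \prettyref{lem:GaltonWatsonBarrierBoundsSecLB} in \prettyref{sec:BoundaryCrossingProofs}.

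Concretely, I fix a buffer $k^{\pm}= k\pm O(1)$ and, mimicking the packing of \prettyref{lem:TraversalPackingDomination}, introduce a common ``coarse'' traversal process $\check T_l^{t}$ for $l\le k^-$ built from slightly dilated concentric balls around some point $w$ close to both $y$ and $z$. A geometric domination then gives $\tilde T_l^{y,t}\wedge \tilde T_l^{z,t}\ge \check T_l^{t}$ on this range, so the coupled barrier condition for both $y$ and $z$ is implied by the same condition on $\check T$. For $l\ge k^+$ I invoke the strong Markov property at successive entries of $W$ into $B(y,r_{k^+})\cup B(z,r_{k^+})$: these balls being disjoint, the contributions to $\tilde T^{y,\cdot}$ and $\tilde T^{z,\cdot}$ come from disjoint time intervals, and hence, conditionally on the coarse trajectory (which determines $\check T$) and on the pair $\bigl(\tilde T_{k^+}^{y,t_{-s}}, \tilde T_{k^+}^{z,t_{-s}}\bigr)$, the two fine-scale processes $\bigl(\tilde T_l^{y,t_{-s}}\bigr)_{l\ge k^+}$ and $\bigl(\tilde T_l^{z,t_{-s}}\bigr)_{l\ge k^+}$ become independent critical Galton--Watson processes starting from their scale-$k^+$ values. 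This is the two-point analogue of \prettyref{lem:IndepOfDisjoint} combined with \prettyref{lem:BranchingProc}.

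With this decomposition in hand, the bound is obtained by pairing one coupled barrier estimate with two decoupled ones. The coupled part requires $\sqrt{\check T_l}$ to remain in the tube $[\gamma(l), \delta(l)]$ for $l_0\le l\le k^-$ and to land near $\beta(k)^2$ at the right endpoint; the Bessel-bridge analysis underlying \prettyref{lem:GaltonWatsonBarrierBoundsSecLB} supplies the entropic-repulsion penalty $e^{-cf(k)}$ together with the Gaussian large-deviation factor $e^{-2k}$ for the coupled population to drop from $t_{-s}$ down to $\approx\beta(k)^2$ by scale $k$, plus a log-scale factor $g(k)^2$ encoding the width of the tube at its neck. Each of the two decoupled parts then contributes the truncated barrier estimate for a GW starting near $\beta(k)^2$ at scale $k^+$ and conditioned to die by scale $L-1$ while staying in the tube---an application of the one-sided barrier analysis behind \prettyref{prop:LowerBoundOneProfile} over the shortened range $[k^+,L]$, each contributing $e^{-2(L-k)} L^{s}$ up to log corrections. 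Multiplying, the exponential pieces combine as $e^{-2k}\cdot\bigl(e^{-2(L-k)}\bigr)^{2} = e^{-(4L-2k)}$, and the polynomial pieces assemble into the stated form.

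The main obstacle is making the conditional decoupling of the previous step genuinely rigorous. The processes $\tilde T_l^{y,\cdot}$ and $\tilde T_l^{z,\cdot}$ are clocked against \emph{different} reference annuli $\partial B(y,r_1)\to\partial B(y,r_0)$ and its $z$-counterpart, so the random ``time'' at which each one enters its fine-scale phase is not the same. Aligning them so that the conditional Galton--Watson starting populations truly match the conditioning, while simultaneously inserting enough buffer scales to absorb the $O(1)$ geometric mismatches between the balls around $y$ and $z$, requires careful use of the strong Markov property, rotational invariance \eqref{eq:RotationalInvarianceInTorusBall}, and packing-style domination in the spirit of \prettyref{lem:TraversalPackingDomination}. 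This is precisely the technical point flagged in the introduction as the most demanding step, and it is what \prettyref{sec:TwoPointBound} will be devoted to.
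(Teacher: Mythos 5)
Your plan is coherent at the level of heuristics, and the exponential accounting ($e^{-2k}$ from a coupled coarse part and $e^{-2(L-k)}$ from each decoupled fine part) does reproduce $e^{-(4L-2k)}$, but the route is genuinely different from the paper's and has two gaps as written.

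First, the domination direction for the coarse part is backwards. You write that the geometric packing gives $\check{T}_l^{t} \le \tilde{T}_l^{y,t}\wedge\tilde{T}_l^{z,t}$ and conclude that the barrier conditions for both $y$ and $z$ are \emph{implied by} the same condition on $\check{T}$. For an \emph{upper} bound on $P_x[I_y\cap I_z]$ you need the opposite containment: $I_y\cap I_z$ must \emph{imply} a barrier event for $\check{T}$. With one-sided domination $\check{T}\le\tilde{T}^y$, the lower constraint $\sqrt{\tilde{T}^y_l}\ge\gamma(l)$ transfers to nothing about $\check{T}$, only the upper constraint $\sqrt{\tilde{T}^y_l}\le\delta(l)$ yields $\sqrt{\check{T}_l}\le\delta(l)$. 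To carry the full tube you would need a two-sided sandwich $\check{T}^-\le\tilde{T}^y\le\check{T}^+$ with $\check{T}^{\pm}$ different processes, at which point there is no single coarse Galton--Watson process to apply the barrier lemmas to; \prettyref{lem:TraversalPackingDomination} gives only one direction. On top of that, your coarse part requires a new \emph{large-deviation} estimate (the cost $\approx e^{-2k}$ of dropping to $\beta(k)^2$ at scale $k$ \emph{while in the tube}) that the barrier lemmas in \prettyref{sec:BoundaryCrossingProofs} do not supply: \prettyref{lem:GWBarrierGammaDeltaUpTok} gives a tube probability for the process already conditioned to die at $L-1$, not an unconditioned large deviation to a prescribed low level.

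Second, you miss the paper's structural trick, which is precisely designed to avoid constructing a coarse process. The paper never imposes the barrier on both sides of the branching scale simultaneously: it splits into late branching ($k\ge(1-\tfrac{s}{10})L$), where it drops the barrier for $l\ge k-2$ entirely, keeps $J_y^{\uparrow}$ for $y$ alone, and extracts the cost of $z$'s avoidance of $B(z,r_L)$ one excursion at a time via a recursion (\prettyref{lem:RecursionStep}--\prettyref{cor:RecursionImplemented}); and early branching ($k\le(1-\tfrac{s}{10})L$), where it drops the barrier for $l\le k^+$ entirely and keeps only $J_v^{\downarrow}$ for both $v\in\{y,z\}$, decoupled via the harmonic-measure argument (\prettyref{lem:IndepOfEndPoint}, \prettyref{prop:UntanglingJy}). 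This case split is essential: for $k$ close to $L$ the fine-scale window $[k^+,L-l_0]$ is very short and the barrier estimates behave differently, which is exactly why the paper handles that regime without any fine-scale barrier on $z$ at all. Your symmetric scheme neither contains the case distinction nor the recursion and would have to re-develop both the coarse large deviation and the two-sided packing domination, so as it stands there is a genuine gap.

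The part you do get right is identifying the conditional decoupling of the $l\ge k^+$ traversals around $y$ and $z$ as the key technical obstacle, and the role of the strong Markov property, rotational invariance, and a $\Theta(\log L)$ buffer of intermediate scales; this is indeed what \prettyref{prop:UntanglingJy} accomplishes via the near-uniformity of harmonic measure in \eqref{eq:HarmonicMeasure}--\eqref{eq:HarmonicMeasureCond}.
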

\begin{rem}
\label{rem:2ndmomentboundremark}This bound is key to the whole approach.
Since the proof (which is carried out in the next section) is involved,
let us spend a few words on the heuristic which explains it. By \eqref{eq:LowerBoundOneProfile}
the claim \eqref{eq:TwoProfileBoundInStatement} is equivalent to
\begin{equation}
P_{x}\left[I_{y}|I_{z}\right]\le e^{-2\left(L-k\right)-cf\left(k\right)}L^{s}l_{0}^{0.02}g\left(k\right)^{2}\left(\log L\right)^{1.02}.\label{eq:ConditionedRigorousBound}
\end{equation}
Recall that because of the approximate hierarchical structure, we
expect that $T_{l}^{y,t_{-s}}$ and $T_{l}^{z,t_{-s}}$ roughly coincide
for $l\le k$ and ``decouple'' for $l\ge k$ (see \prettyref{fig:PosOfCirclesTWB}).
Therefore, avoiding the ball $B\left(z,r_{L}\right)$ in $t_{-s}$
excursions from $\partial B(z,r_{1})$ to $\partial B\left(z,r_{0}\right)$,
when conditioning on $I_{y}$, is essentially equivalent to avoiding
$B\left(z,r_{L}\right)$ in $\gamma\left(k\right)^{2}$ excursions
from $\partial B\left(z,r_{k+1}\right)$ to $\partial B\left(z,r_{k}\right)$.
By \prettyref{eq:BallExitProbForRadii} each such excursion avoids
$B\left(z,r_{L}\right)$ with probability $1-\frac{1}{L-k}$. We therefore
expect that $P_{x}\left[I_{z}|I_{y}\right]$ is essentially at most
\begin{equation}
\begin{array}{l}
\left(1-\frac{1}{L-k}\right)^{\gamma\left(k\right)^{2}}\\
\times P_{z}\left[\gamma\left(l\right)\le\sqrt{T_{l}^{z,t_{-s}}}\mbox{ for }l=k,\ldots,L-l_{0}|T_{k}^{z,t_{-s}}=\gamma\left(k\right)^{2},T_{L-1}^{z,t_{-s}}=0\right].
\end{array}\label{eq:MorallySpeaking}
\end{equation}
Straight-forward computation and the definition \eqref{eq:GammaBarrierDef}
of $\gamma\left(k\right)$ gives that the top line of \eqref{eq:MorallySpeaking}
is at most $e^{-2\left(L-k\right)-cf\left(k\right)}L^{\left(1+s\right)\left(1-\frac{k}{L}\right)}$.
Furthermore, the process $\sqrt{T_{\cdot}^{z,t_{-s}}}$ should behave
roughly as a Gaussian process, so that the conditional probability
in \eqref{eq:MorallySpeaking} should correspond to the probability
that a Brownian bridge starting at $\gamma\left(l\right)$ at time
$0$ and ending at $0$ at time $L-k$ stays above the linear function
with the same starting and ending points during the time interval
$\left[0,L-k-l_{0}\right]$. This probability is of order $\sqrt{l_{0}}/\left(L-k-l_{0}\right)$,
e.g. by the reflection principle. These considerations thus suggest
that $P_{x}\left[I_{y}|I_{z}\right]$ should essentially be upper-bounded
by $ce^{-\left(2L-2l\right)-cf\left(k\right)}L^{s}\sqrt{l_{0}}$,
which is (marginally) better than the bound we derive rigorously.
\end{rem}
Finally for the last case, that is when the largest non-overlapping
balls $B\left(y,r_{k}\right)$ and $B\left(z,r_{k}\right)$ have radius
$r_{k}$ for $k\ge L-l_{0}$, we have the following trivial bound
which follows directly from \prettyref{eq:LowerBoundOneProfile}
\begin{equation}
P_{x}\left[I_{y}\cap I_{z}\right]\le P_{x}\left[I_{y}\right]\le ce^{-2L}l_{0}L^{s}\mbox{ for all }y,z\in\tilde{F}_{l}.\label{eq:yzfarawaytrivial}
\end{equation}
We have now arrived at the heart of this section, which is the bound
on the second moment of the counting random variable $Z$.
\begin{prop}
\label{prop:SecondMomentOfA}$(x\in\mathbb{T}$) For all $s>0$
\begin{equation}
E_{x}\left[Z^{2}\right]=\left(E_{x}\left[Z\right]\right)^{2}\left(1+o\left(1\right)\right),\mbox{ as }L\to\infty.\label{eq:SecondMomentOfA}
\end{equation}
\end{prop}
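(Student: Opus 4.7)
The plan is to bound the variance $\mathrm{Var}(Z) = E_x[Z^2] - (E_x[Z])^2 = \sum_{y,z\in \tilde F_L}(P_x[I_y\cap I_z] - P_x[I_y]P_x[I_z])$ by splitting the double sum according to the distance $d(y,z)$. The diagonal $y=z$ contributes at most $\sum_{y}P_x[I_y]=E_x[Z]\asymp(\log L)^{3/2}l_0 L^s$, which by \prettyref{prop:LowerBoundOneProfile} is negligible compared to $(E_x[Z])^2 \asymp (\log L)^3 l_0^2 L^{2s}$. Off-diagonal pairs with $d(y,z)>2r_0$ will be discarded using \prettyref{cor:IndeptOfIyAndIz}, which yields $P_x[I_y\cap I_z]=P_x[I_y]P_x[I_z]$ exactly, so that their joint contribution to the variance is identically zero. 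It then remains to show that $\sum_{y,z\in \tilde F_L,\, d(y,z)\le 2r_0} P_x[I_y\cap I_z]$ is $o((\log L)^3 l_0^2 L^{2s})$.

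The close pairs will be stratified by the unique scale $k$ with $2r_k<d(y,z)\le 2r_{k-1}$. Using $|\tilde F_L|\asymp r_L^{-2}\asymp(\log L)^{3/2}e^{2L}$ and the fact that a ball of radius $2r_{k-1}$ in $\mathbb{T}$ contains $\asymp(r_{k-1}/r_L)^2=e^{2(L-k+1)}$ points of $F_L$, the shell at scale $k$ carries at most $\asymp (\log L)^{3/2}e^{4L-2k}$ pairs. The three scale windows $k\le l_0$, $l_0<k<L-l_0$ and $k\ge L-l_0$ will be treated with \prettyref{lem:TwoProfileBoundEdgeCase}, \prettyref{prop:MainTwoProfileBound} and the trivial bound \eqref{eq:yzfarawaytrivial}, respectively.

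In the bulk regime $l_0<k<L-l_0$, multiplying the estimate of \prettyref{prop:MainTwoProfileBound} by the shell count will yield a contribution of order $(\log L)^{2.52}L^{2s}l_0^{1.02}e^{-cf(k)}g(k)^2$; since $f(k)\asymp\min(k^{0.49},(L-k)^{0.49})$ and the stretched exponential $e^{-cf(k)}$ dominates the polynomial $g(k)^2$, the sum over $k$ is uniformly bounded in $L$, so the total divided by $(E_x[Z])^2$ is $O((\log L)^{-0.48}l_0^{-0.98})=o(1)$. In the small-scale regime $k\le l_0$, applying the uniform bound of \prettyref{lem:TwoProfileBoundEdgeCase} to the at most $(\log L)^{3/2}e^{4L}$ pairs with $d(y,z)<2r_0$ produces a ratio to $(E_x[Z])^2$ of order $(\log L)^{-3/2}e^{4l_0}l_0^{0.04}$, which is $o(1)$ thanks to the choice $l_0=\lfloor \frac{1}{10}\log\log L\rfloor$ that guarantees $e^{4l_0}\le(\log L)^{0.4}$. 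In the remaining regime $k\ge L-l_0$, the trivial bound $P_x[I_y\cap I_z]\le P_x[I_y]\asymp e^{-2L}l_0 L^s$ applied to the at most $(\log L)^{3/2}e^{2L+2l_0}$ pairs yields a contribution whose ratio to $(E_x[Z])^2$ is $O((\log L)^{-3/2}e^{2l_0}/(l_0 L^s))$, which is $o(1)$ precisely because $s>0$.

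Summing the three contributions then gives $\mathrm{Var}(Z)=o((E_x[Z])^2)$, equivalent to the claim $E_x[Z^2]=(E_x[Z])^2(1+o(1))$. The only non-routine ingredient in this plan is the sharp two-point estimate \prettyref{prop:MainTwoProfileBound}, whose proof (deferred to \prettyref{sec:TwoPointBound}) rests on decoupling the traversal processes $T_l^{y,\cdot}$ and $T_l^{z,\cdot}$ at scales $l>k$ conditionally on their common coarse behaviour at scales $l\le k$, and will be the main obstacle of the overall approach. Once that bound is granted, the present argument reduces, as sketched, to annular pair-counting combined with the verification that in each of the three regimes the exponential decay of the two-point probability narrowly beats the combinatorial growth of the pair count.
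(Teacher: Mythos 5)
Your proposal is correct and follows essentially the same route as the paper. The paper decomposes $E_x[Z^2]=\sum_{y,z}P_x[I_y\cap I_z]$ over the classes $G_k$ indexed by the scale $k$ of the separation $d(y,z)$, discards $G_0$ via the exact independence of \prettyref{cor:IndeptOfIyAndIz}, and bounds the remaining shells using the shell count $|G_k|\le c(\log L)^{3/2}e^{4L-2k}$ together with \prettyref{lem:TwoProfileBoundEdgeCase} for $k\le l_0$, \prettyref{prop:MainTwoProfileBound} for $l_0<k<L-l_0$ and the trivial bound \eqref{eq:yzfarawaytrivial} for $k\ge L-l_0$ — exactly as you do. Your reframing via $\mathrm{Var}(Z)$ and your separate treatment of the diagonal $y=z$ are cosmetic variants (the paper absorbs the diagonal into the $k\ge L-l_0$ shell), and your slightly coarser pair counts in the two end regimes only change constants; all three resulting ratios to $(E_x[Z])^2$ match the paper's to leading order.
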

\begin{proof}
Write
\[
E_{x}\left[Z^{2}\right]=\sum_{y,z\in\tilde{F}_{L}}P_{x}\left[I_{y}\cap I_{z}\right].
\]
Decompose the set of pairs of $y,z\in\tilde{F}_{L}$ by setting
\[
\begin{array}{ccl}
G_{0} & = & \left\{ \left(y,z\right):y,z\in\tilde{F}_{L}\mbox{ s.t. }d\left(y,z\right)>2r_{0}\right\} ,\\
G_{k} & = & \left\{ \left(y,z\right):y,z\in\tilde{F}_{L}\mbox{ s.t. }2r_{k}<d\left(y,z\right)\le2r_{k-1}\right\} \mbox{ for }1\le k<L,\\
G_{L} & = & \left\{ \left(y,z\right):y,z\in\tilde{F}_{L}\mbox{ s.t. }d\left(y,z\right)\le2r_{L-1}\right\} .
\end{array}
\]
We have that $\bigcup_{k=0}^{L}G_{k}=\tilde{F}_{L}\times\tilde{F}_{L}$
and therefore 
\begin{equation}
E_{x}\left[Z^{2}\right]=\sum_{\left\{ y,z\right\} \in G_{0}}P_{x}\left[I_{y}\cap I_{z}\right]+\sum_{k=1}^{L}\sum_{\left\{ y,z\right\} \in G_{k}}P_{x}\left[I_{y}\cap I_{z}\right].\label{eq:SecondMomentDecomp}
\end{equation}
By \prettyref{cor:IndeptOfIyAndIz} we have
\begin{equation}
\sum_{\left\{ y,z\right\} \in G_{0}}P_{x}\left[I_{y}\cap I_{z}\right]=\sum_{\left\{ y,z\right\} \in G_{0}}P_{x}\left[I_{y}\right]P_{x}\left[I_{z}\right]\le\sum_{y,z\in\tilde{F}_{L}}P_{x}\left[I_{y}\right]P_{x}\left[I_{z}\right]=\left(E_{x}\left[Z\right]\right)^{2},\label{eq:FirstTermInDecomp}
\end{equation}
so that \prettyref{eq:SecondMomentOfA} will follow once we have
shown that
\begin{equation}
\sum_{k=1}^{L}\sum_{\left\{ y,z\right\} \in G_{k}}P_{x}\left[I_{y}\cap I_{z}\right]=o\left(E_{x}\left[Z\right]^{2}\right).\label{eq:RestOfTerms}
\end{equation}
We first bound
\begin{equation}
\sum_{k=1}^{L}\sum_{\left\{ y,z\right\} \in G_{k}}P_{x}\left[I_{y}\cap I_{z}\right]\le c\sum_{k=1}^{L}e^{4L-2k}\sup_{\left\{ y,z\right\} \in G_{k}}P_{x}\left[I_{y}\cap I_{z}\right],\label{eq:SumWithSizeOfGk}
\end{equation}
where have used that for $1\le k\le L$
\begin{equation}
\begin{array}{ccccl}
\left|G_{k}\right| & \le & \left|\tilde{F}_{L}\right|{\displaystyle \sup_{v\in\tilde{F}_{L}}}\left|\tilde{F}_{L}\cap B\left(v,2r_{k-1}\right)\right| & \overset{\eqref{eq:GridDefinition}}{\le} & \left|\tilde{F}_{L}\right|\left(c\left|\tilde{F}_{L}\right|r_{k-1}^{2}\right)\\
 & \overset{\eqref{eq:DefOfRadii}}{=} & c\left|\tilde{F}_{L}\right|^{2}e^{-\frac{3}{2}\log\log L}e^{-2k} & \overset{\eqref{eq:SizeOfSetOfFeps}}{\le} & c\left(\log L\right)^{3/2}e^{4L-2k}.
\end{array}\label{eq:SizeOfJk}
\end{equation}
Next we split the sum on the right-hand side of \prettyref{eq:SumWithSizeOfGk}
into three parts
\begin{equation}
\sum_{k=1}^{L}\cdot\le\sum_{1\le k\le l_{0}}\cdot+\sum_{l_{0}<k<L-l_{0}}\cdot+\sum_{L-l_{0}\le k\le L}\cdot.\label{eq:ThreewayDecomp}
\end{equation}
For the first sum on the right-hand side we have by \prettyref{lem:TwoProfileBoundEdgeCase}
that it is at most, 
\begin{equation}
\begin{array}{l}
{\displaystyle c{\displaystyle \left(\log L\right)^{3/2}\sum_{1\le k\le l_{0}}}}e^{4L-2k}\left(e^{-\left(2L-2l_{0}\right)}L^{s}l_{0}^{0.51}g\left(l_{0}\right)\right)^{2}\\
=c\left(\log L\right)^{3/2}L^{2s}l_{0}^{1.02}g\left(l_{0}\right)^{2}e^{4l_{0}}\sum_{k=1}^{l_{0}}e^{-2k}\overset{\eqref{eq:DefOfCutOff}}{\le}cL^{2s}\left(\log L\right)^{19/10}l_{0}^{1.02}g\left(l_{0}\right)^{2}.
\end{array}\label{eq:HighK}
\end{equation}
For the middle sum on the right-hand side of \prettyref{eq:ThreewayDecomp}
we use \prettyref{prop:MainTwoProfileBound} to obtain an upper bound
of 
\begin{equation}
\begin{array}{l}
c\left(s\right)\left(\log L\right)^{3/2}{\displaystyle \sum_{l_{0}<k<L-l_{0}}}e^{4L-2k}l_{0}^{1.02}g\left(k\right)^{2}\left(\log L\right)^{1.02}e^{-\left(4L-2k\right)-cf\left(k\right)}L^{2s}\\
=c\left(s\right)\left(\log L\right)^{2.52}l_{0}^{1.02}L^{2s}{\displaystyle \sum_{l_{0}<k<L-l_{0}}}g\left(k\right)^{2}e^{-cf\left(k\right)}\le c\left(s\right)\left(\log L\right)^{2.52}l_{0}^{1.02}L^{2s},
\end{array}\label{eq:MiddleK}
\end{equation}
since
\[
{\displaystyle \sum_{l_{0}<k<L-l_{0}}}g\left(k\right)^{2}e^{-cf\left(k\right)}\to0\mbox{ by }\eqref{eq:DefOffFunc},\eqref{eq:DefOfgfunc}\mbox{ and }\eqref{eq:DefOfCutOff}.
\]
For the last sum on the right-hand side of \prettyref{eq:ThreewayDecomp}
we obtain from \prettyref{eq:yzfarawaytrivial} the following upper
bound
\begin{equation}
\begin{array}{ccl}
c{\displaystyle \sum_{L-l_{0}\le k\le L}}e^{4L-2k}e^{-2L}l_{0}L^{s} & = & c\left(\log L\right)^{3/2}l_{0}L^{s}{\displaystyle \sum_{L-l_{0}\le k\le L}}e^{2L-2k}\\
 & = & c\left(\log L\right)^{3/2}l_{0}L^{s}{\displaystyle \sum_{0\le k^{'}\le l_{0}}}e^{2k^{'}}\\
 & \le & c\left(\log L\right)^{3/2}l_{0}L^{s}e^{2l_{0}}\overset{\eqref{eq:DefOfCutOff}}{=}c\left(\log L\right)^{17/10}l_{0}L^{s}.
\end{array}\label{eq:LowK}
\end{equation}
Combining \eqref{eq:HighK}-\eqref{eq:LowK} we thus obtain this upper
bound on the right-hand side of \eqref{eq:SumWithSizeOfGk}:
\[
\begin{array}{cl}
 & cL^{2s}\left(\log L\right)^{19/10}l_{0}^{1.02}g\left(l_{0}\right)^{2}+c\left(s\right)\left(\log L\right)^{2.52}l_{0}^{1.02}L^{2s}+c\left(\log L\right)^{17/10}l_{0}L^{s}\\
 & \le c\left(s\right)L^{2s}\left(\log L\right)^{2.52}l_{0}^{1.02}g\left(l_{0}\right)^{2}\\
 & \overset{\eqref{eq:LowerBoundOneProfileExp}}{=}c\left(s\right)\left(E_{x}\left[Z\right]\right)^{2}\left(\log L\right)^{-0.48}l_{0}^{-0.98}g\left(l_{0}\right)^{2}\overset{\eqref{eq:DefOfgfunc},\eqref{eq:DefOfCutOff}}{=}o\left(E_{x}\left[Z\right]^{2}\right).
\end{array}
\]
This gives \eqref{eq:RestOfTerms}, so the claim \prettyref{eq:SecondMomentOfA}
follows.
\end{proof}
We have now reached the final goal of this section: the proof of \prettyref{prop:LowerBound}.
\begin{proof}[Proof of \prettyref{prop:LowerBound}]
By \prettyref{eq:ZPositiveMeansSomeTzero} it suffices to show that
$P_{x}\left[Z>0\right]\to1$, and by the Paley-Zygmund inequality
we have
\[
P_{x}\left[Z>0\right]\ge\frac{E_{x}\left[Z\right]^{2}}{E_{x}\left[Z^{2}\right]}.
\]
Thus the claim \eqref{eq:LowerBoundInRedSec} follows by \prettyref{prop:SecondMomentOfA}.
\end{proof}
Of three main ingredients (propositions \ref{prop:UpperBound}-\ref{prop:Concentration})
used to prove the main result \prettyref{thm:MainResult} we have
now derived most of the first two. Still missing are the proofs of
the barrier estimate \prettyref{lem:GWBarrierAlpha} and the large
deviation result \prettyref{lem:LDForBinSumOfGeo} (used but not proven
in \prettyref{sec:UpperBound} for the proof of \prettyref{prop:UpperBound}),
the barrier estimates \prettyref{lem:GaltonWatsonBarrierBoundsSecLB}
used in this section to prove \prettyref{prop:LowerBound}, and the
two point probability bounds \prettyref{lem:TwoProfileBoundEdgeCase}
and \prettyref{prop:MainTwoProfileBound} also used in this section.
The next section deals with these two point probability estimates.

\section{\label{sec:TwoPointBound}Bounds on two point probabilities}

In this section we will prove the crucial two point probabilty bounds
\prettyref{lem:TwoProfileBoundEdgeCase} and \prettyref{prop:MainTwoProfileBound},
which were used to prove the lower bound \prettyref{prop:LowerBound}
in the previous section. Recall these give a bounds on the probability
$P_{x}\left[I_{y}\cap I_{z}\right]$ where for $v\in\mathbb{T}$
\[
I_{v}=\left\{ \gamma\left(l\right)\le\sqrt{T_{l}^{y,t_{-s}}}\le\delta\left(l\right)\mbox{ for }l=l_{0},\ldots,L-l_{0}\mbox{ and }T_{L-1}^{y,t_{-s}}=0\right\} \tag{\ref{eq:TruncatedSummandLB}'}.
\]

We will need to consider certain traversal processes that ``start
at lower scales''. For each $k\ge1$ we define
\begin{equation}
T_{l}^{y,k,m}=\sup\left\{ n\ge0:R_{n}^{y,l}\le D_{m}^{y,k}\right\} ,l\ge k,m\in\mathbb{R}_{+},\label{eq:TraversalsDefStartingFromk}
\end{equation}
to be the number of traversals from scale $l$ to $l+1$ during the
first $t$ excursions from scale $k-1$ to scale $k$ (cf. the definition
\prettyref{eq:TraversalsDef} of $T_{l}^{y,t}$). The definitions
\prettyref{eq:TraversalsDef} and \prettyref{eq:TraversalsDefStartingFromk}
imply the crucial ``compatability'' property that
\begin{equation}
T_{l}^{y,k,m}=T_{l}^{y,t}\mbox{ for }l\ge k,\mbox{\,\ on }\left\{ m=T_{k}^{y,t}\right\} ,\label{eq:TkandTCompatability}
\end{equation}
since on the latter event $W_{t}$ does not visit $B\left(y,r_{k}\right)$
between $D_{m}^{y,k}$ and $D_{t}^{y,0}$. Furthermore, the process
$T_{l}^{y,k,t}$ satisfies essentially the same properties as $T_{l}^{y,t}$.
In particular:
\begin{lem}
\label{lem:LawOfTyt}If $y\in\mathbb{T}$, $k\ge1$, $v\notin B\left(y,r_{k}\right)^{\circ}$
and $t\ge0$, the $P_{v}-$law of $\left(T_{k+l}^{y,k,t}\right)_{l\ge0}$
is $\mathbb{G}_{t}$ .\end{lem}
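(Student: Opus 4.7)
The plan is essentially to transcribe the argument of \prettyref{lem:BranchingProc} with every scale index shifted up by $k$. The key observation is that the geometric spacing $r_{l+1}/r_l = e^{-1}$ is unchanged under this shift, so the annulus exit formula \eqref{eq:BallExitProbForRadii} continues to give probability $1/2$ for the Brownian motion starting at $\partial B(y,r_l)$ to reach $\partial B(y,r_{l+1})$ before $\partial B(y,r_{l-1})$, now at every scale $l \ge k+1$.

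Concretely I would first define, for $l \ge k+1$ and $n \ge 0$, the Bernoulli indicators
\[
I^y_{l,n} \;=\; 1\!\left\{\, T_{B(y,r_{l-1})}\circ\theta_{D^{y,l}_n} \,<\, H_{B(y,r_{l+1})}\circ\theta_{D^{y,l}_n} \,\right\},
\]
which record whether the Brownian motion, immediately after its $n$-th traversal from $\partial B(y,r_{l+1})$ to $\partial B(y,r_l)$, next visits scale $l-1$ or scale $l+1$. The strong Markov property applied successively at the stopping times $D^{y,l}_n$, together with \eqref{eq:BallExitProbForRadii}, will show that $\{I^y_{l,n}:l \ge k+1,\,n \ge 0\}$ are i.i.d.\ unbiased Bernoulli variables under $P_v$. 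Second, I would pin down the initial population: the interlacing $R^{y,k}_n \le D^{y,k}_n \le R^{y,k}_{n+1}$ built into the definition \eqref{eq:DefOfExcursionTimes}, together with $R^{y,k}_1 = H_{B(y,r_{k+1})} > 0$ (here the hypothesis $v \notin B(y,r_k)^\circ$ is used), forces $T^{y,k,t}_k = \lfloor t\rfloor$ almost surely. Third, I would reconstruct $T^{y,k,t}_{k+l+1}$ from $T^{y,k,t}_{k+l}$ exactly as in \prettyref{lem:BranchingProc}, namely as the number of zeros among the first $n$ indicators $I^y_{k+l+1,\cdot}$, where $n$ is the unique index at which the partial sums of the $I^y_{k+l+1,\cdot}$ first reach $T^{y,k,t}_{k+l}$. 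Conditional on the earlier generations, this is a sum of $T^{y,k,t}_{k+l}$ independent $\{0,1,\ldots\}$-valued geometric $1/2$ variables, which is the critical geometric branching step; hence $(T^{y,k,t}_{k+l})_{l\ge 0}$ has law $\mathbb{G}_t$ under $P_v$.

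I do not expect a real obstacle. The only point to check carefully is that the hypothesis $v \notin B(y,r_k)^\circ$ is precisely what is needed to get the base-scale excursion structure off the ground and to ensure $T^{y,k,t}_k = \lfloor t\rfloor$; once this base case is in place, the shift-in-$k$ symmetry makes every subsequent step identical to the original argument. Conceptually, the content of the lemma is a ``shift invariance'' of the Galton--Watson description in the scale index, which is what will allow the decoupling arguments of \prettyref{sec:TwoPointBound} to treat sub-trees rooted at scale $k$ on the same footing as the full tree rooted at scale $0$.
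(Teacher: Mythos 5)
Your proposal is correct and follows exactly the paper's intended route: the paper's own proof of this lemma is literally the single line ``Almost identical to the proof of \prettyref{lem:BranchingProc}'', and your shift-by-$k$ transcription of that argument --- the $\partial B(y,r_l)\to\partial B(y,r_{l\pm1})$ coin-flip indicators for $l\ge k+1$, the identification $T^{y,k,t}_{k}=\lfloor t\rfloor$ as initial population, and the geometric-offspring reconstruction of $T^{y,k,t}_{k+l+1}$ from $T^{y,k,t}_{k+l}$ --- is precisely the content behind that one-liner. One small observation: the stated hypothesis $v\notin B(y,r_k)^\circ$ is a bit stronger than what a literal $k$-shift of \prettyref{lem:BranchingProc}'s hypothesis $v\notin B(y,r_1)^\circ$ would give (namely $v\notin B(y,r_{k+1})^\circ$), and its role is not only to fix the initial population but to guarantee that at time zero no excursion into a scale deeper than $k$ is already underway, so that the indicator reconstruction is exact from generation $k$ onward --- but this is the same role the hypothesis plays in the original lemma, so your transcription handles it correctly.
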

\begin{proof}
Almost identical to the proof of \prettyref{lem:BranchingProc}.
\end{proof}
The $T_{l}^{y,k,t}$ also satisfy a similar independence property.
\begin{lem}
\label{lem:IndepOfDisjointTk}($x\in\mathbb{T}$) For all $t\ge0$
and $y,z\in\tilde{F}_{L}$ (see \eqref{eq:FTildeDef}) it holds that
\begin{equation}
\mbox{if }d\left(y,z\right)>2r_{k}\mbox{ then }\left(T_{l}^{y,k,t}\right)_{l\ge k}\mbox{ and }\left(T_{l}^{z,k,t}\right)_{l\ge k}\mbox{ are independent under }P_{x}.\label{eq:IndepOfDisjointTk}
\end{equation}
\end{lem}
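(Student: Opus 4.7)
The plan is to follow the same strategy as the proof of \eqref{eq:IndepOfDisjoint} in Lemma~\ref{lem:IndepOfDisjoint}, adapted to the fact that the relevant excursions now take place inside $B(v,r_k)$ rather than $B(v,r_0)$. The key structural observation is that for each $v \in \{y,z\}$ and $l \ge k$, the process $T_l^{v,k,t}$ is a functional only of the ``inner path chunks'' $W_{(R_n^{v,k}+\cdot)\wedge D_n^{v,k}}$, $n=1,\ldots,\lfloor t \rfloor$, of Brownian motion inside $B(v,r_k)$. Indeed, writing
\[
T_l^{v,k,t} \;=\; \sum_{n=1}^{\lfloor t\rfloor} \big(T_l^{v,k,n}-T_l^{v,k,n-1}\big),
\]
each increment counts the returns to $B(v,r_{l+1})$ during the $n$-th chunk, and since $B(v,r_{l+1}) \subset B(v,r_k)$ for $l\ge k$, every such return must occur strictly inside some chunk $[R_n^{v,k},D_n^{v,k}]$. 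Moreover, this count depends only on distances to $v$ and is therefore a rotationally invariant functional of the chunk (in the sense of \eqref{eq:RotationalInvarianceInTorusBall}).

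Next, the hypothesis $d(y,z)>2r_k$ implies $B(y,r_k)\cap B(z,r_k)=\emptyset$, so the two families of inner chunks for $y$ and for $z$ live in disjoint intervals of time. To extract independence, I would apply the strong Markov property iteratively at the interlaced sequence of times $R_n^{y,k},D_n^{y,k},R_n^{z,k},D_n^{z,k}$ (ordered chronologically). At each $R_n^{v,k}$ the motion enters $\partial B(v,r_{k+1})$ at a point which is a priori non-canonical, but by \eqref{eq:RotationalInvarianceInTorusBall} any rotationally invariant functional of the chunk $W_{(R_n^{v,k}+\cdot)\wedge D_n^{v,k}}$ has a law depending only on $r_k$ and $r_{k+1}$, in particular independent of the entry point and of the prior history. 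Applied to $v=y$ this shows that each $y$-chunk's contribution to $(T_l^{y,k,t})_{l\ge k}$ is independent of the $\sigma$-algebra generated by the Brownian motion up to $R_n^{y,k}$, which contains in particular all completed $z$-chunks. Iterating yields joint independence of the sequence of $y$-chunk contributions and the sequence of $z$-chunk contributions.

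The step most worth checking is the inductive decoupling in the previous paragraph, since the motion typically oscillates between the two balls many times, so that each successive $y$-chunk is preceded by additional $z$-chunks. The point is that the rotational invariance \eqref{eq:RotationalInvarianceInTorusBall} allows us to replace the actual entry point at each $R_n^{v,k}$ by any fixed reference point on $\partial B(v,r_{k+1})$ without changing the law of the ensuing rotationally invariant functional, so an appeal to the strong Markov property at each such time provides the desired conditional independence. Combining these conditional independences yields \eqref{eq:IndepOfDisjointTk}; this is precisely the same argument used in the proof of \eqref{eq:IndepOfDisjoint}, with $r_k$ in place of $r_0$ and $r_{k+1}$ in place of $r_1$.
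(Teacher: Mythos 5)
Your proposal is correct and follows essentially the same argument as the paper's proof, which is the one-line ``Almost identical to the proof of \eqref{eq:IndepOfDisjoint}'' referring back to Lemma~\ref{lem:IndepOfDisjoint}. You have correctly identified the substitution $r_0\to r_k$, $r_1\to r_{k+1}$, noted that the increments $T_l^{v,k,n}-T_l^{v,k,n-1}$ for $l\ge k$ are rotationally invariant functionals of the excursion chunks $W_{(R_n^{v,k}+\cdot)\wedge D_n^{v,k}}$ living inside $B(v,r_k)$, that $d(y,z)>2r_k$ places the $y$-chunks and $z$-chunks in disjoint balls (hence disjoint time intervals), and that the combination of the strong Markov property at the entry times $R_n^{v,k}$ with the rotational invariance \eqref{eq:RotationalInvarianceInTorusBall} decouples each chunk from its entry point and hence from the interlaced prior history.
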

\begin{proof}
Almost identical to the proof of \prettyref{eq:IndepOfDisjoint}.
\end{proof}
We will need the following barrier crossing estimates for the Galton-Watson
process $T_{l}$, which will be proven (together with the previously
used barrier estimates \prettyref{lem:GWBarrierAlpha} and \prettyref{lem:GaltonWatsonBarrierBoundsSecLB})
in \prettyref{sec:BoundaryCrossingProofs}. The first corresponds
to checking the barrier for $l\ge k$, and the second to checking
it for $l\le k$.
\begin{lem}
\label{lem:GWBarrierGammaDeltaUpTok}For all $l_{0}<k<L-l_{0}-1$,
$s\in\left(-1,1\right)$, and $\gamma\left(k\right)^{2}\le a\le\delta\left(k\right)^{2}$
we have \textup{
\begin{equation}
\mathbb{G}_{a}\left[\gamma\left(l\right)\le\sqrt{T_{l-k}}\mbox{ for }l=k,\ldots,L-l_{0}|T_{L-1-k}=0\right]\le c\frac{l_{0}^{0.51}g\left(k\right)}{L-k-l_{0}-1}.\label{eq:GWBarrierGammaDeltaFromk}
\end{equation}
}If $l_{0}+1<k<L-l_{0}$ and $s\in\left(-1,1\right)$
\begin{equation}
\mathbb{G}_{t_{s}}\left[\gamma\left(l\right)\le\sqrt{T_{l}}\le\delta\left(l\right)\mbox{ for }l=l_{0},\ldots,k|T_{L-1}=0\right]\le\frac{c\sqrt{l_{0}}g\left(k+1\right)}{k-l_{0}-1}.\label{eq:GWBarrierGammmaDeltaUpTok}
\end{equation}

\end{lem}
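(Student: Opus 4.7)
The plan is to follow the strategy outlined in \prettyref{sec:BoundaryCrossingProofs}: approximate the square-root of a critical geometric Galton--Watson process conditioned on extinction by a Brownian (or Bessel) bridge, then invoke classical barrier-crossing estimates for bridges. Under $\mathbb{G}_m$ conditioned on $\{T_N=0\}$, the process $\sqrt{T_l}$ has approximately linearly decreasing mean $\sqrt{m}(1-l/N)$ and increment variance of constant order, so a Lamperti-type argument produces a Brownian bridge $B_l$ on $[0,N]$ from $\sqrt{m}$ to $0$ which approximates $\sqrt{T_l}$ closely enough to transfer barrier probabilities. The square-root map relates the conditioned GW process more accurately to a Bessel bridge, which is needed to control the behaviour near extinction, where a Gaussian approximation for the increments of $\sqrt{T_l}$ would fail.

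For \eqref{eq:GWBarrierGammaDeltaFromk}, set $N = L-1-k$ and $m = a$, and center by the mean: $X_j := \sqrt{T_j} - \sqrt{a}(1-j/N)$ is approximately a Brownian bridge from $0$ to $0$ on $[0,N]$. Writing $\sqrt{a} = \beta(k) + c_1$ with $c_1 \in [f(k),g(k)]$ and using that $\sqrt{t_s}/L$ is $O(1)$, a short calculation shows that the constraint $\sqrt{T_j} \ge \gamma(k+j)$ for $j = 0, \ldots, N-l_0+1$ becomes $X_j \ge f(k+j) - c_1(1-j/N) + O(1)$. Applying the standard reflection-principle bound for Brownian bridges (the probability a bridge between heights $u, v \ge 0$ over time $N'$ stays nonnegative is $\asymp uv/N'$) on the restricted interval $[0, N-l_0]$ yields the claim: ``entry'' room $\asymp g(k)$ coming from $c_1$, ``exit'' room $\asymp f(L-l_0) \asymp l_0^{0.49}$, and length $\asymp L-k-l_0-1$, and one finally notes $l_0^{0.49} \le l_0^{0.51}$.

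For \eqref{eq:GWBarrierGammmaDeltaUpTok}, center again: $X_l := \sqrt{T_l}-\beta(l)$ is approximately a Brownian bridge from $0$ to $0$ on $[0,L-1]$, and the event becomes $\{f(l) \le X_l \le g(l)\}$ for $l \in [l_0,k]$. Split the analysis at $l = l_0$ and $l = k+1$: the Gaussian density of $X_{k+1}$ restricted to the narrow window of width $\asymp g(k+1)$ (centered on $f(k+1)$) contributes $\asymp g(k+1)/\sqrt{k+1}$, and the conditional probability that the bridge from $0$ at time $0$ to $X_{k+1}$ at $k+1$ stays above the positive barrier $f(l)$ on $[l_0,k+1]$ is $\asymp \sqrt{l_0}\,X_{k+1}/(k+1-l_0)$ by a ballot/reflection estimate, the $\sqrt{l_0}$ absorbing the constraint $X_{l_0} \ge f(l_0) \asymp l_0^{0.49}$ via the typical amplitude of positive bridge excursions of duration $l_0$. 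The upper barrier $g(l)$ is non-binding away from the endpoints and is absorbed into the constant. Combining yields $c\sqrt{l_0}\,g(k+1)/(k-l_0-1)$.

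The principal technical obstacle is making the Brownian/Bessel-bridge approximation uniform and quantitative at the polynomial scales $l^{0.49}, l^{0.51}$ of the barriers $f$ and $g$, especially near the conditioned extinction where the square-root of a small population is strongly non-Gaussian. The Bessel-bridge route is designed precisely to handle this regime: it exactly matches the square-root scaling near zero (via Lamperti--Feller theory) while preserving reflection-principle barrier probabilities away from zero, where the polynomial bumps of $f$ and $g$ are the dominant features. Tracking the coupling error alongside the ballot estimates, and verifying that the implicit constants do not blow up with $L$, completes both bounds.
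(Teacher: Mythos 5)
Your high-level plan — conditioned geometric Galton--Watson $\to$ Bessel bridge $\to$ Brownian bridge $\to$ reflection-principle barrier estimates — does match the paper's architecture, but the way you fill in the two key steps diverges from the paper, and in one place the divergence is a genuine gap.

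First, the passage from $\sqrt{T_l}$ to a bridge is not an approximation or coupling in the paper. You describe a ``Lamperti-type argument'' producing a Brownian bridge approximating $\sqrt{T_l}$ and speak of ``tracking the coupling error''; that route requires quantitative closeness at all scales, including the $l^{0.49}$ and $l^{0.51}$ magnitudes of the bumps, and is precisely what the paper is careful to avoid. Instead the paper uses an \emph{exact} identity: by \prettyref{lem:RayKnightDisc-1} the conditioned Galton--Watson process is the edge local time of a random walk on the path $\{0,\ldots,L\}$; large-deviation transfer (\prettyref{lem:LDConditionedTraverals} and its companion) moves between edge local times and vertex local times of the continuous-time walk; and by the second Ray--Knight theorem (via \prettyref{lem:RayKnightCont} and \prettyref{lem:RayishKnightish}) those vertex local times conditioned on $L_L^{D_t}=0$ are \emph{exactly} a $0$-dimensional squared Bessel bridge. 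No coupling error appears; the only errors are the LD error terms, which are handled by slightly widening/shrinking the barriers. This Ray--Knight identification is the core technical idea and your proposal does not supply a substitute for it.

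Second, your bookkeeping for \eqref{eq:GWBarrierGammmaDeltaUpTok} does not match the paper's and is not obviously correct. The paper first uses the LD step to replace the lower barrier $\gamma(l)$ by the straight line $\beta(l)$ (the bump $f$ is dropped) and uses the upper barrier only through the single endpoint constraint $\sqrt{L_k^{D_t}}\le\delta(k)+g(k)$; the endpoint is then handled by the Radon--Nikodym derivative of the $0$-dimensional versus $1$-dimensional Bessel bridge (\prettyref{lem:RadonNikodymBesselBridge}), which contributes only a constant factor $\sqrt{\bar u/h}$ here, and the resulting bound has the form $\asymp\sqrt{l_0}\,g(k)/(k-l_0)$ from a ballot estimate against a nonnegativity barrier with exit margin $\asymp g(k)$. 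Your proposed Gaussian-density-times-ballot argument instead integrates a density of order $g(k+1)/\sqrt{k+1}$ against a conditional ballot probability $\sqrt{l_0}\,X_{k+1}/(k+1-l_0)$; but with the bridge conditioned to end near $f(k+1)$ the relevant factor in a ballot estimate is the margin of the endpoint above the barrier $f$, not the raw value $X_{k+1}$, and the two formulas do not reconcile to the stated $c\sqrt{l_0}\,g(k+1)/(k-l_0-1)$ without further (nonobvious) cancellation. So even granting the bridge approximation, this step as written does not produce the claimed bound.

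For \eqref{eq:GWBarrierGammaDeltaFromk} the shape of your argument (center by the linear mean, treat the entry margin $\asymp g(k)$ coming from $\sqrt a\in[\gamma(k),\delta(k)]$, treat the exit margin $\asymp f(L-l_0)\le l_0^{0.51}$, and divide by the length) is essentially the right calculation and matches what the paper gets out of \prettyref{eq:BesselBridgeGeneralUpperbound}, so that half is fine in spirit; the missing ingredient is again the rigorous identification with the $0$-dimensional Bessel bridge via Ray--Knight, the additivity/change-of-dimension step (\prettyref{eq:AdditiveBesselBridge}, \prettyref{eq:1BesselBridgeBBSame}), and the discrete-time barrier estimate \prettyref{lem:DiscBarrierStartingLate}.
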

We are now ready to prove \prettyref{lem:TwoProfileBoundEdgeCase}
from the previous section, which gives the bound \prettyref{eq:TwoProfileBoundEdgeCase}
on the two point probability $P_{x}\left[I_{y}\cap I_{z}\right]$
for $y$ and $z$ such that $B\left(y,r_{k}\right)$ and $B\left(z,r_{k}\right)$
for $0\le k\le l_{0}$ are the largest non-overlapping balls around
$y$ and $z$.
\begin{proof}[Proof of \prettyref{lem:TwoProfileBoundEdgeCase}]
By \prettyref{eq:TruncatedSummandLB} and \prettyref{eq:TkandTCompatability}
with $k=l_{0}+1$ we have for $v\in\left\{ y,z\right\} $,
\[
I_{v}\subset\tilde{I}_{v}\overset{\mbox{def}}{=}\left\{ \begin{array}{c}
\gamma\left(l\right)\le\sqrt{T_{l}^{v,l_{0}+1,m}}\mbox{ for }l=l_{0}+1,\ldots,L-l_{0}\mbox{ and }T_{L-1}^{v,l_{0}+1,m}=0\\
\mbox{for some }\gamma^{2}\left(l_{0}+1\right)\le m\le\delta^{2}\left(l_{0}+1\right)
\end{array}\right\} .
\]
Thus by \prettyref{lem:IndepOfDisjointTk} with $k=l_{0}+1$ (recall
that $d\left(y,z\right)\ge2r_{l_{0}}$)
\[
P_{x}\left[I_{y}\cap I_{z}\right]\le P_{x}\left[\tilde{I}_{y}\cap\tilde{I}_{z}\right]\le\left(P_{x}\left[\tilde{I}_{y}\right]\right)^{2}.
\]
Thus to get \prettyref{eq:TwoProfileBoundEdgeCase} it suffices to
show
\begin{equation}
P_{x}\left[\tilde{I}_{y}\right]\le ce^{-\left(2L-2l_{0}\right)}L^{s}\sqrt{l_{0}}g\left(l_{0}\right).\label{eq:SufficesToShowASDASD}
\end{equation}
Now let
\[
\tilde{I}_{y,m}=\left\{ \gamma\left(l\right)\le\sqrt{T_{l}^{y,l_{0}+1,m}}\mbox{ for }l=l_{0}+1,\ldots,L-l_{0}\mbox{ and }T_{L-1}^{y,l_{0}+1,m}=0\right\} ,
\]
so that $\tilde{I}_{y}=\cup_{\gamma^{2}\left(l_{0}+1\right)\le m\le\delta^{2}\left(l_{0}+1\right)}\tilde{I}_{y,m}$.
If $\tilde{I}_{y,m}$ holds and $T_{L-1}^{y,l_{0}+1,m+1}=0$, then
also $\tilde{I}_{y,m+1}$ holds. Using this we obtain that 
\begin{equation}
\tilde{I}_{y}\subset\left(\cup_{\gamma\left(l_{0}+1\right)^{2}\le m<\delta\left(l_{0}+1\right)^{2}}\tilde{I}_{y,m}\cap\left\{ T_{L-1}^{y,l_{0}+1,m+1}>0\right\} \right)\cup\tilde{I}_{y,\delta^{2}\left(l_{0}+1\right)},\label{eq:EdgeCaseDecomp}
\end{equation}
For $y\notin B\left(x,r_{0}\right)$ we have by \prettyref{eq:TraversalsDefStartingFromk}
that 
\[
\begin{array}{ccl}
\left\{ T_{L-1}^{y,l_{0}+1,m}=0\mbox{ and }T_{L-1}^{y,l_{0}+1,m+1}>0\right\}  & = & \left\{ D_{m}^{y,l_{0}+1}<H_{B\left(y,r_{L}\right)}<D_{m+1}^{y,l_{0}+1}\right\} \\
 & = & \left\{ H_{B\left(y,r_{L}\right)}<T_{B\left(y,r_{l_{0}+1}\right)}\right\} \circ\theta_{R_{m+1}^{y,l_{0}+1}},
\end{array}
\]
so that by the strong Markov property at time $R_{m+1}^{y,l_{0}+1}$
we have
\begin{equation}
\begin{array}{ccl}
P_{x}\left[\tilde{I}_{y,m}\cap\left\{ T_{L-1}^{y,l_{0}+1,m+1}>0\right\} \right] & = & P_{x}\left[\tilde{I}_{y,m}P_{W_{R_{m+1}^{y,l_{0}+1}}}\left[H_{B\left(y,r_{L}\right)}<T_{B\left(y,r_{l_{0}+1}\right)}\right]\right]\\
 & = & P_{x}\left[\tilde{I}_{y,m}\right]\frac{1}{L-l_{0}-1},
\end{array}\label{eq:EdgeCaseMarkov}
\end{equation}
where we have used \prettyref{eq:HitLProbStartingFromk} and that
$\tilde{I}_{y,m}$ is $\mathcal{F}_{D_{m}^{y,l_{0}+1}}-$measurable.
Thus
\[
P_{x}\left[\tilde{I}_{y}\right]\le\frac{1}{L-l_{0}-1}\sum_{\gamma\left(l_{0}+1\right)^{2}\le m<\delta\left(l_{0}+1\right)^{2}}P_{x}\left[\tilde{I}_{y,m}\right]+P_{x}\left[\tilde{I}_{y,\delta\left(l_{0}+1\right)^{2}}\right].
\]
Also $P_{x}\left[\tilde{I}_{y,m}\right]$ equals
\begin{equation}
q_{m}P_{x}\left[T_{L-1}^{y,l_{0}+1,m}=0\right],\label{eq:EdgeCaseCond}
\end{equation}
where we write
\[
q_{m}=P_{x}\left[\gamma\left(l\right)\le\sqrt{T_{l}^{y,l_{0}+1,m}}\mbox{ for }l=l_{0}+1,\ldots,L-l_{0}|T_{L-1}^{y,l_{0}+1,m}=0\right].
\]
Similary to in the proof of \prettyref{lem:NotHitByrL} we have using
\prettyref{eq:HitLProbStartingFromk},
\begin{equation}
P_{x}\left[T_{L-1}^{y,l_{0}+1,m}=0\right]=\left(1-\frac{1}{L-l_{0}-1}\right)^{m}.\label{eq:EdgeCaseNotHit}
\end{equation}
We thus find that 
\begin{equation}
\begin{array}{ccl}
P_{x}\left[\tilde{I}_{y}\right] & \le & \frac{1}{L-l_{0}-1}{\displaystyle \sum_{\gamma\left(l_{0}+1\right)^{2}\le m<\delta\left(l_{0}+1\right)^{2}}}\left(1-\frac{1}{L-l_{0}-1}\right)^{m}q_{m}+\left(1-\frac{1}{L-l_{0}-1}\right)^{\delta\left(l_{0}+1\right)^{2}}q_{\delta\left(l_{0}+1\right)^{2}}\\
 & \le & \left({\displaystyle \sup_{\gamma\left(l_{0}+1\right)^{2}\le m<\delta\left(l_{0}+1\right)^{2}}q_{m}}\right)\left(1-\frac{1}{L-l_{0}-1}\right)^{\gamma\left(l_{0}+1\right)^{2}},
\end{array}\label{eq:A}
\end{equation}
where we have summed a geometric series. Now
\[
\left(1-\frac{1}{L-l_{0}-1}\right)^{\gamma\left(l_{0}+1\right)^{2}}\le e^{-\frac{\gamma\left(l_{0}+1\right)^{2}}{L-l_{0}-1}}\overset{\eqref{eq:GammaBarrierDef}}{\le}e^{-\frac{\beta\left(l_{0}+1\right)^{2}}{L-l_{0}-1}}\overset{\eqref{eq:BetaDef}}{=}e^{-\frac{t_{-s}}{L}\frac{L-l_{0}-1}{L}}\overset{\eqref{eq:TsDivdedByL}}{\le}ce^{-2\left(L-l_{0}\right)}L^{1+s}.
\]
Also by \prettyref{lem:LawOfTyt}
\begin{equation}
q_{m}\le\sup_{\gamma\left(l_{0}+1\right)^{2}\le a\le\delta\left(l_{0}+1\right)^{2}}\mathbb{G}_{a}\left[\gamma\left(l\right)\le\sqrt{T_{l-l_{0}-1}^{y}}\mbox{ for }l=l_{0}+1,\ldots,L-l_{0}|T_{L-1-l_{0}}^{y}=0\right].\label{eq:B}
\end{equation}
Thus \prettyref{eq:GWBarrierGammaDeltaFromk} with $k=l_{0}+1$ gives
that
\begin{equation}
q_{m}\le c\frac{g\left(l_{0}+1\right)l_{0}^{0.51}}{L-2l_{0}-2}\overset{\eqref{eq:DefOfgfunc},\eqref{eq:DefOfCutOff}}{\le}c\frac{g\left(l_{0}\right)l_{0}^{0.51}}{L}.\label{eq:C}
\end{equation}
Combinig \eqref{eq:A}, \eqref{eq:B} and \eqref{eq:C} we obtain
that
\[
P_{x}\left[\tilde{I}_{y}\right]\le ce^{-2\left(L-l_{0}\right)}L^{1+s}\times c\frac{l_{0}^{0.51}g\left(l_{0}\right)}{L},
\]
which is equivalent to \prettyref{eq:SufficesToShowASDASD}, so the
proof of \prettyref{lem:TwoProfileBoundEdgeCase} is complete.
\end{proof}
We now move to the more difficult bound, namely \prettyref{prop:MainTwoProfileBound},
which deals with $y$ and $z$ whose largest non-overlapping balls
has radius $r_{k}$ for $l_{0}<k<L-l_{0}$. More precisely, \prettyref{prop:MainTwoProfileBound}
claims that for any $s\in\left(-1,1\right)$ and $y,z\in\tilde{F}_{L}$
such that
\[
2r_{k}<d\left(y,z\right)\le2r_{k-1}\mbox{ for }l_{0}<k<L-l_{0},
\]
we have 
\[
P_{x}\left[I_{y}\cap I_{z}\right]\le ce^{-\left(4L-2k\right)-cf\left(k\right)}L^{2s}l_{0}^{1.02}g\left(k\right)^{2}\left(\log L\right)^{1.02}\tag{\ref{eq:TwoProfileBoundInStatement}'}.
\]
In the remainder of this section we consider $y,z,k$ and $s$ to
be fixed. Since $2r_{k-1}+r_{k}\le r_{k-2}$ (see \prettyref{eq:DefOfRadii})
we have
\begin{equation}
B\left(z,r_{k}\right)\subset B\left(y,r_{k-2}\right)\backslash B\left(y,r_{k}\right)\mbox{ and }B\left(y,r_{k}\right)\subset B\left(z,r_{k-2}\right)\backslash B\left(z,r_{k}\right),\label{eq:BranchingPoint}
\end{equation}
\begin{figure}
\includegraphics[scale=0.75]{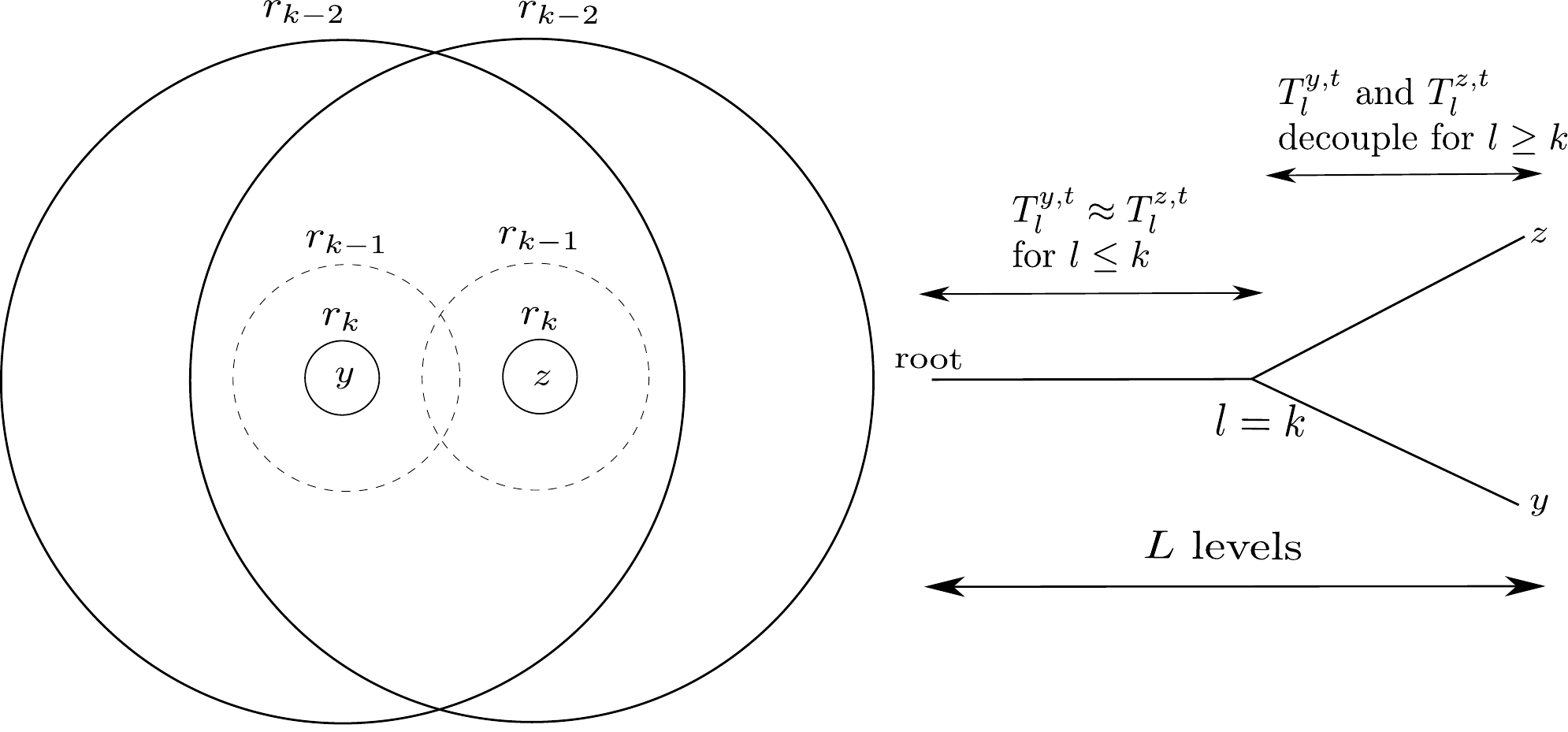}\caption{\label{fig:PosOfCirclesTWB}(Left) The position of $\partial B\left(v,r_{l}\right)$
for $v\in\left\{ y,z\right\} $ and $l\in\left\{ k-2,k-1,k\right\} $
assumed in \prettyref{prop:MainTwoProfileBound}, cf \prettyref{eq:BranchingPoint}.
(Right) An intuitive illustration of the pseudo-hierarchical structure
which underlies \prettyref{prop:MainTwoProfileBound}, for $y$ and
$z$ at distance roughly $r_{k}$.}
\end{figure}
(see \prettyref{fig:PosOfCirclesTWB}) and by the definition \prettyref{eq:FTildeDef}
of $\tilde{F}_{L}$ 
\[
x\notin B\left(y,r_{0}\right)\cup B\left(z,r_{0}\right).
\]
We will consider separately the cases 
\[
k\le\left(1-\frac{s}{10}\right)L\mbox{ and }k\ge\left(1-\frac{s}{10}\right)L.
\]

\subsection{Main bound: late branching}

Here we consider the case $\left(1-\frac{s}{10}\right)L\le k<L-l_{0}$.
It turns out that for this regime we can ignore the contribution from
the barrier condition on $T_{l}^{y,t_{-s}}$ and $T_{l}^{z,t_{-s}}$
for $l\ge k-2$ and still get a good enough bound. Therefore we let
\begin{equation}
J_{y}^{\uparrow}=\left\{ \gamma\left(l\right)\le\sqrt{T_{l}^{y,t_{-s}}}\le\delta\left(l\right)\mbox{ for }l=l_{0},\ldots,k-3\right\} ,\label{eq:JUp}
\end{equation}
denote the barrier condition applied only up to $k-3$. We will bound
the probability of
\begin{equation}
J_{y}^{\uparrow}\cap\left\{ H_{B\left(y,r_{L}\right)}\ge D_{t_{-s}}^{y,0}\right\} \cap\left\{ H_{B\left(z,r_{L}\right)}\ge D_{\gamma\left(k\right)^{2}}^{z,k}\right\} ,\label{eq:LargeKBiggerEvent}
\end{equation}
(which we will see contains the event $I_{y}\cap I_{z}$). We first
bound the contribution from the part of \prettyref{eq:LargeKBiggerEvent}
referring to $y$.
\begin{lem}
\label{lem:HighYsAlone}For all $\left(1-\frac{s}{10}\right)L\le k<L-l_{0}$,
\begin{equation}
P_{x}\left[J_{y}^{\uparrow}\cap\left\{ H_{B\left(y,r_{L}\right)}\ge D_{t_{-s}}^{y,0}\right\} \right]\le ce^{-2L}L^{s}\sqrt{l_{0}}g\left(k-2\right).\label{eq:HighYsAlone}
\end{equation}
\end{lem}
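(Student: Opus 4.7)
The plan is to factor the left-hand side of \prettyref{eq:HighYsAlone} as a product of an unconditional extinction probability and a conditional barrier probability, estimate the first by \prettyref{lem:NotHitByrL} and the second by the Galton--Watson barrier bound \prettyref{lem:GWBarrierGammaDeltaUpTok}, and then exploit the late-branching hypothesis $k \ge (1-s/10)L$ to convert the natural $1/k$ loss in the barrier estimate into the required $1/L$.

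First I would use \prettyref{eq:ExtinctionImpliesNotHit} to rewrite $\{H_{B(y,r_L)} \ge D_{t_{-s}}^{y,0}\}$ as $\{T_{L-1}^{y,t_{-s}} = 0\}$ (up to a null event) and condition:
\[
P_x\bigl[J_y^\uparrow \cap \{T_{L-1}^{y,t_{-s}} = 0\}\bigr] \;=\; P_x\bigl[T_{L-1}^{y,t_{-s}} = 0\bigr] \cdot P_x\bigl[J_y^\uparrow \,\big|\, T_{L-1}^{y,t_{-s}} = 0\bigr].
\]
Since $y \in \tilde F_L$ forces $x \notin B(y, r_0) \supset B(y, r_1)^\circ$ by \prettyref{eq:FTildeDef}, \prettyref{lem:NotHitByrL} applied with $-s$ in place of $s$ gives $P_x[T_{L-1}^{y,t_{-s}} = 0] \asymp e^{-2L} L^{1+s}$.

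For the conditional factor, \prettyref{lem:BranchingProc} identifies the $P_x$-law of $(T_l^{y,t_{-s}})_{l \ge 0}$ with $\mathbb{G}_{t_{-s}}$, so this factor equals
\[
\mathbb{G}_{t_{-s}}\bigl[\gamma(l) \le \sqrt{T_l} \le \delta(l) \text{ for } l = l_0, \ldots, k-3 \,\big|\, T_{L-1} = 0\bigr].
\]
I would apply the second bound of \prettyref{lem:GWBarrierGammaDeltaUpTok}, with parameter $-s$ and with the upper endpoint $k$ there replaced by $k-3$ (since $J_y^\uparrow$ only constrains the process up to level $k-3$), obtaining the upper bound $c\sqrt{l_0}\, g(k-2)/(k - l_0 - 4)$. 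The range hypothesis $l_0 + 1 < k-3 < L - l_0$ demanded by that lemma follows directly from $(1 - s/10)L \le k < L - l_0$ together with $l_0 = O(\log\log L)$, see \prettyref{eq:DefOfCutOff}.

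Finally, the late-branching assumption $k \ge (1 - s/10)L$ combined with $l_0 = O(\log\log L)$ yields $k - l_0 - 4 \ge c(s)\, L$ for $L$ large, so the conditional factor is at most $c(s)\, \sqrt{l_0}\, g(k-2)/L$. Multiplying through, the $L^{1+s}$ from the extinction probability combines with the $1/L$ from the barrier estimate to give the desired $c\, e^{-2L} L^s \sqrt{l_0}\, g(k-2)$. There is no substantive obstacle: the only delicate points are to match the upper index in the barrier bound to the actual range $[l_0, k-3]$ on which $J_y^\uparrow$ imposes a constraint, and to use $k \asymp L$ to upgrade the natural $1/k$ from the barrier estimate into the needed $1/L$.
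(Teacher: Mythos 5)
Your proof is correct and follows essentially the same route as the paper's: rewrite the avoidance event via \prettyref{eq:ExtinctionImpliesNotHit}, factor into extinction probability times conditional barrier probability, apply \prettyref{lem:NotHitByrL} and \prettyref{lem:BranchingProc} together with \prettyref{eq:GWBarrierGammmaDeltaUpTok} at level $k-3$, then use $k\ge cL$ to upgrade the denominator to $L$. The only minor difference is that you are a bit more explicit than the paper about verifying the range hypothesis $l_0+1 < k-3 < L-l_0$ and about tracking the exact argument of $g$ in the barrier bound.
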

\begin{proof}
Since $\left\{ H_{B\left(y,r_{L}\right)}\ge D_{t_{-s}}^{y,0}\right\} =\left\{ T_{L-1}^{y,t_{-s}}=0\right\} $
(recall \eqref{eq:ExtinctionImpliesNotHit})  the probability in \eqref{eq:HighYsAlone}
is
\[
P_{x}\left[T_{L-1}^{y,t_{-s}}=0\right]P_{x}\left[J_{y}^{\uparrow}|T_{L-1}^{y,t_{-s}}=0\right].
\]
By \prettyref{lem:NotHitByrL} the first of these is at most $ce^{-2L}L^{1+s}$.
By \prettyref{lem:BranchingProc} the second equals
\[
\mathbb{G}_{t_{-s}}\left[\gamma\left(l\right)\le\sqrt{T_{l}}\le\delta\left(l\right)\mbox{ for }l=l_{0},\ldots,k-3|T_{L-1}=0\right],
\]
and is thus bounded by $c\sqrt{l_{0}}g\left(k-3\right)/\left(k-4-l_{0}\right)$,
by \prettyref{eq:GWBarrierGammmaDeltaUpTok} with $k-3$ in place
of $k$. Since $k\ge cL$ this gives the claim.
\end{proof}
It remains to bound the contribution from the part of \prettyref{eq:LargeKBiggerEvent}
referring to $z$. This should be roughly independent of the part
referring to $y$. To make this decoupling rigorous we must bound
the probability of $\left\{ H_{B\left(z,r_{L}\right)}\ge D_{\gamma\left(k\right)^{2}}^{z,k}\right\} $
conditioned on avoiding $B\left(y,r_{L}\right)$. The next lemma is
a first step in this direction, and bounds the conditional probability
of hitting $B\left(z,r_{L}\right)$ from $\partial B\left(z,r_{k+1}\right)$
before escaping to to $\partial B\left(z,r_{k}\right)$.
\begin{lem}
\label{lem:ConditionedHittingProb}For any $v\in\partial B\left(z,r_{k+1}\right)$
\begin{equation}
P_{v}\left[H_{B\left(z,r_{L}\right)}<T_{B\left(z,r_{k}\right)}|H_{B\left(y,r_{L}\right)}>T_{B\left(y,r_{k-2}\right)}\right]\ge P_{v}\left[H_{B\left(z,r_{L}\right)}<T_{B\left(z,r_{k}\right)}\right]\left(1-c\frac{1}{L-k}\right).\label{eq:HitZBound}
\end{equation}
\end{lem}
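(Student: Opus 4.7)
The plan is to show the complementary bound $P_v[A \cap B^c] \leq (c/(L-k)) P_v[A]$, where $A = \{H_{B(z,r_L)} < T_{B(z,r_k)}\}$ and $B = \{H_{B(y,r_L)} > T_{B(y,r_{k-2})}\}$. Since $P_v[A \cap B] = P_v[A] - P_v[A \cap B^c] \geq P_v[A](1 - c/(L-k))$ and $P_v[B] \leq 1$, dividing immediately gives the claimed lower bound on $P_v[A|B] = P_v[A \cap B]/P_v[B]$. The core of the argument is therefore a bound on $P_v[A \cap B^c]$ saying that, on the event $A$, also requiring $B^c$ costs an additional factor of order $1/(L-k)$.

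The geometric input is \prettyref{eq:BranchingPoint}. Since $v \in \partial B(z,r_{k+1}) \subset B(z,r_k)$, Brownian motion remains in $B(z,r_k)$ throughout $[0, T_{B(z,r_k)}]$. Because $B(z,r_k) \subset B(y,r_{k-2})$ and $B(z,r_k)$ is disjoint from $B(y,r_k) \supset B(y,r_L)$, neither $H_{B(y,r_L)}$ nor $T_{B(y,r_{k-2})}$ can occur during this interval. On the event $A$ both times are thus strictly greater than $H_{B(z,r_L)}$, and $W_{H_{B(z,r_L)}} \in \partial B(z,r_L) \subset B(y,r_{k-2})\setminus B(y,r_L)$. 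One then applies the strong Markov property at the stopping time $H_{B(z,r_L)}$ (finite on $A$) to obtain
\[
P_v[A \cap B^c] = E_v\bigl[1_A \cdot P_{W_{H_{B(z,r_L)}}}[H_{B(y,r_L)} < T_{B(y,r_{k-2})}]\bigr].
\]

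For any $w \in \partial B(z,r_L)$, the triangle inequality together with $2r_k < d(y,z) \leq 2r_{k-1}$ gives $r_k \leq d(w,y) \leq r_{k-2}$ for $L-k$ sufficiently large, and the annulus exit formula \prettyref{eq:AnnulusExitProb} yields
\[
P_w[H_{B(y,r_L)} < T_{B(y,r_{k-2})}] = \frac{\log(r_{k-2}/d(w,y))}{\log(r_{k-2}/r_L)} \leq \frac{2}{L-k+2} \leq \frac{c}{L-k},
\]
using $r_{k-2}/d(w,y) \leq r_{k-2}/r_k = e^2$ and $r_{k-2}/r_L = e^{L-k+2}$ (from \prettyref{eq:DefOfRadii}). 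Substituting this uniform bound back produces $P_v[A \cap B^c] \leq (c/(L-k)) P_v[A]$, which is what was needed. There is no serious obstacle in this proof: the only subtle point is the geometric observation that $B(z,r_k)$ sits inside the annulus $B(y,r_{k-2}) \setminus B(y,r_L)$, which is precisely what lets us decouple the ``hit $B(z,r_L)$'' excursion from the ``avoid $B(y,r_L)$'' constraint via a single application of the strong Markov property.
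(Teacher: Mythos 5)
Your proof is correct and follows essentially the same approach as the paper: drop the denominator $P_v[B]\le1$, decouple via the strong Markov property, and apply the annulus exit formula \eqref{eq:AnnulusExitProb} together with the containment \eqref{eq:BranchingPoint}. The only cosmetic difference is that you stop at $H_{B(z,r_L)}$ and upper-bound $P_{W_{H_{B(z,r_L)}}}[B^c]$, whereas the paper stops at $T_{B(z,r_k)}$ and lower-bounds $P_{W_{T_{B(z,r_k)}}}[B]$; both give the factor $1-c/(L-k)$.
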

\begin{proof}
The right-hand is bounded below by
\[
P_{v}\left[H_{B\left(z,r_{L}\right)}<T_{B\left(z,r_{k}\right)},H_{B\left(y,r_{L}\right)}>T_{B\left(y,r_{k-2}\right)}\right].
\]
By the strong Markov property this equals
\[
P_{v}\left[H_{B\left(z,r_{L}\right)}<T_{B\left(z,r_{k}\right)},P_{T_{B\left(z,r_{k}\right)}}\left[H_{B\left(y,r_{L}\right)}>T_{B\left(y,r_{k-2}\right)}\right]\right].
\]
Since $\partial B\left(z,r_{k}\right)\subset A=B\left(y,r_{k-2}\right)\backslash B\left(r_{k}\right)$
we have that
\[
\begin{array}{ccl}
P_{T_{B\left(z,r_{k}\right)}}\left[H_{B\left(y,r_{L}\right)}>T_{B\left(y,r_{k-2}\right)}\right] & \ge & \inf_{v\in A}P_{v}\left[H_{B\left(y,r_{L}\right)}>T_{B\left(y,r_{k-2}\right)}\right]\\
 & = & P_{w}\left[H_{B\left(y,r_{L}\right)}>T_{B\left(y,r_{k-2}\right)}\right],
\end{array}
\]
for an arbitrary $w\in\partial B\left(y,r_{k}\right)$. Now \prettyref{eq:HitZBound}
follows since by \eqref{eq:HitLProbStartingFromk} the latter probability
is 
\[
\frac{L-k}{L-k+2}\ge1-\frac{c}{L-k}.
\]

\end{proof}
We now aim to ``decouple'' the event $J_{y}^{\uparrow}$ from the
part of \prettyref{eq:LargeKBiggerEvent} that referes to $z$. The
main tool for this is a recursion which we now describe. Let
\begin{equation}
J\mbox{ be an arbitrary }\left(T_{l}^{y,t_{-s}}\right)_{l\in\left\{ 0,\ldots,k-3\right\} }-\mbox{measurable event},\label{eq:WhichJs}
\end{equation}
(here we will apply it with $J=J_{y}^{\uparrow}$, but later we will
use also $J=C\left(\mathbb{R}_{+},\mathbb{T}\right)$), and
\begin{equation}
A_{n}=J\cap\left\{ H_{B\left(y,r_{L}\right)}\ge D_{t_{-s}}^{y,0}\right\} \cap\left\{ H_{B\left(z,r_{L}\right)}\ge D_{n}^{z,k}\right\} ,n\ge0.\label{eq:DefOfAn}
\end{equation}
We have the following bound, which ``extracts'' the cost of an excursion
from scale $k+1$ to $k$ avoiding $B\left(z,r_{L}\right)$, one at
a time. The idea is that whether an excursion hits $B\left(z,r_{L}\right)$
or not can only affect the event $J$ through the end point of the
excursion. But we will use \prettyref{eq:IndepOfStartingPoint} to
show that the end point does not affect $J$. Furthermore, we will
use \prettyref{lem:ConditionedHittingProb} to show that the cost
of avoiding $B\left(z,r_{L}\right)$ when conditioned on $\left\{ H_{B\left(y,r_{L}\right)}\ge D_{t_{-s}}^{y,0}\right\} $
is almost the same as the unconditioned cost.
\begin{lem}
\label{lem:RecursionStep}For all $n\ge1$
\begin{equation}
\begin{array}{l}
P_{x}\left[A_{n}\right]\le\left(1-\frac{1}{L-k}\left(1-\frac{c}{L-k}\right)\right)P_{x}\left[A_{n-1}\right].\end{array}\label{eq:RecursionStep}
\end{equation}
\end{lem}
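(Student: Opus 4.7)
The plan is to introduce the event
$$\mathcal{E} \;=\; \bigl\{H_{B(z, r_L)} < T_{B(z, r_k)}\bigr\} \circ \theta_{R_n^{z, k}},$$
that during the $n$-th excursion of $W$ from $\partial B(z, r_{k+1})$ back to $\partial B(z, r_k)$ the ball $B(z, r_L)$ is hit, so that $A_{n-1} \setminus A_n = A_{n-1} \cap \mathcal{E}$ and the desired inequality \prettyref{eq:RecursionStep} is equivalent to $P_x[A_{n-1} \cap \mathcal{E}] \ge \tfrac{1}{L-k}\bigl(1 - \tfrac{c}{L-k}\bigr) P_x[A_{n-1}]$. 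I would apply the strong Markov property at $R_n^{z, k}$, denoting by $\mathcal{U}$ the $\mathcal{F}_{R_n^{z,k}}$-measurable part of $A_{n-1}$ (containing the portion of $J$ resolved by time $R_n^{z,k}$ and the absence of hits of $B(y, r_L)$ and $B(z, r_L)$ before that time) and by $\mathcal{V}$ the "future" complement of $A_{n-1}$ (the continuation of $\{H_{B(y, r_L)} \ge D_{t_{-s}}^{y, 0}\}$ plus any remaining $J$-constraints). On $\mathcal{U}$ we have $v := W_{R_n^{z, k}} \in \partial B(z, r_{k+1})$, so by integrating against $1_{\mathcal{U}}$ it suffices to prove the ratio bound
$$P_v[\mathcal{E} \cap \mathcal{V}] \;\ge\; \frac{1}{L - k}\Bigl(1 - \frac{c}{L - k}\Bigr) P_v[\mathcal{V}] \qquad \text{uniformly in } v \in \partial B(z, r_{k+1}).$$

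To establish this ratio bound I would introduce the stopping time $T^{\ast} := T_{B(y, r_{k-2})}$ and factor $\mathcal{V} = \mathcal{V}_1 \cap \mathcal{V}_2$, where $\mathcal{V}_1 := \{H_{B(y, r_L)} > T^{\ast}\}$ is the piece resolved by time $T^{\ast}$ and $\mathcal{V}_2$ is the remaining post-$T^{\ast}$ requirement. The inclusion $B(z, r_k) \subset B(y, r_{k-2})$ from \prettyref{eq:BranchingPoint} (see \prettyref{fig:PosOfCirclesTWB}) forces $T_{B(z, r_k)} \le T^{\ast}$ starting from any $v \in B(z, r_k)^{\circ}$, so both $\mathcal{E}$ and $\mathcal{V}_1$ are $\mathcal{F}_{T^{\ast}}$-measurable. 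Applying the strong Markov property at $T^{\ast}$ together with a rotational-invariance argument in the spirit of the proof of \prettyref{eq:IndepOfStartingPoint} (combining the rotational symmetry of Brownian motion inside $B(y, r_0)$ with the strong Markov property at each return $R_m^{y, 0}$) shows that $P_u[\mathcal{V}_2]$ is a constant $c_0$ independent of $u \in \partial B(y, r_{k-2})$, since $\mathcal{V}_2$ is expressible purely in terms of the subsequent $y$-centred excursion data, whose joint law depends on the starting point only through $d(\cdot, y) = r_{k-2}$. Therefore $P_v[\mathcal{V}] = c_0\, P_v[\mathcal{V}_1]$ and $P_v[\mathcal{E} \cap \mathcal{V}] = c_0\, P_v[\mathcal{E} \cap \mathcal{V}_1]$, and the ratio reduces to $P_v[\mathcal{E} \mid \mathcal{V}_1]$.

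The conclusion is then a direct application of \prettyref{lem:ConditionedHittingProb}: since $P_v[\mathcal{E}] = 1/(L - k)$ by \prettyref{eq:HitLProbStartingFromk}, the lemma yields
$$P_v[\mathcal{E} \mid \mathcal{V}_1] \;\ge\; P_v[\mathcal{E}]\Bigl(1 - \frac{c}{L - k}\Bigr) \;=\; \frac{1}{L - k}\Bigl(1 - \frac{c}{L - k}\Bigr),$$
and unwinding the decomposition yields \prettyref{eq:RecursionStep}. The main obstacle is the rotational-invariance step that pulls $\mathcal{V}_2$ out as a constant: because $\mathcal{V}_2$ tracks all remaining $y$-excursions needed to reach the $t_{-s}$-th return to $\partial B(y, r_0)$ (and possibly also $J$-type traversal counts not yet determined at time $T^{\ast}$), one must repeat the argument of \prettyref{lem:IndepOfDisjointTk} to verify that the joint law of this entire future depends on the starting point on $\partial B(y, r_{k-2})$ only through the constant distance $r_{k-2}$ to $y$; once this is in hand, the rest is a clean two-step application of the strong Markov property combined with \prettyref{lem:ConditionedHittingProb}.
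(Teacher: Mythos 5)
Your proposal tracks the paper's own proof in both structure and ingredients: the identification $A_{n-1}\setminus A_n=A_{n-1}\cap\mathcal{E}$, the two applications of the strong Markov property (at $R_n^{z,k}$ and at $T^*=T_{B(y,r_{k-2})}\circ\theta_{R_n^{z,k}}+R_n^{z,k}$, which is the paper's $S$), the rotational-invariance decoupling of the post-$T^*$ $y$-traversal data, and the final appeal to \prettyref{lem:ConditionedHittingProb}. The reduction to $P_v[\mathcal{E}\mid\mathcal{V}_1]\ge\frac{1}{L-k}(1-\frac{c}{L-k})$ is exactly how the paper bounds $E_x[CD\mid\mathcal{F}_{R_n^{z,k}}]$ against $E_x[C\mid\mathcal{F}_{R_n^{z,k}}]$.

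The one place you oversimplify is in asserting that $A_{n-1}$ decomposes as a product $\mathcal{U}\cap\theta^{-1}_{R_n^{z,k}}\mathcal{V}$ with $\mathcal{V}$ a fixed future event; it does not. Both $J$ and $\{H_{B(y,r_L)}\ge D_{t_{-s}}^{y,0}\}$ depend on past and future \emph{jointly}, through the $y$-traversal counts which splice together the pre-$R_n^{z,k}$ trajectory with the post-$T^*$ one (skipping the interval $[R_n^{z,k},T^*]$, which is spent inside $B(y,r_{k-2})$). The paper handles this by writing $1_{A_{n-1}}=h(W_{\cdot\wedge R_n^{z,k}},(T_l^{y,t}(W_{S+\cdot}))_{t,l})BC$ and $1_{A_n}=h(\cdot)BCD$, then integrating out the future traversal variables to form $\bar h$, which turns the statement into a genuine conditional inequality given $\mathcal{F}_{R_n^{z,k}}$. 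Relatedly, your $\mathcal{V}_1=\{H_{B(y,r_L)}>T^*\}$ should only be imposed on the event $\{D_{t_{-s}}^{y,0}>R_n^{z,k}\}$ — this is what the paper's indicator $C$ encodes — though in the complementary case the bound is trivially better so no harm is done. You flag the need to ``repeat the argument of \prettyref{lem:IndepOfDisjointTk}'' as the main obstacle, which is accurate; once that bookkeeping is carried out (essentially the paper's $f,g,h,\bar h$ construction), your scheme coincides with the paper's.
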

\begin{proof}
Let
\begin{eqnarray}
 & B=1_{\left\{ H_{B\left(z,r_{L}\right)}\ge D_{n-1}^{z,k}\right\} },\mbox{ and let},\label{eq:BIndicatorFunc}\\
 & S=T_{B\left(y,k-2\right)}\circ\theta_{R_{n}^{z,k}}+R_{n}^{z,k},\label{eq:DefOfSLastTime}
\end{eqnarray}
be first time after $R_{n}^{z,k}$ that $W_{t}$ leaves $B\left(y,k-2\right)$.
By the assumption \prettyref{eq:WhichJs} the event $J$ only depends
on $T_{l}^{y,t_{-s}}$ for $l\le k-3$, which depend only ``on what
$W_{t}$ does in $B\left(y,r_{k-2}\right)^{c}$''. Therefore $J$
is measurable with respect to $W_{\cdot\wedge R_{n}^{z,k}}$ and $T_{l}^{y,t}\left(W_{S+\cdot}\right),t\ge0,l\ge0$
(where $T_{l}^{y,t_{-s}}\left(W_{S+\cdot}\right)$ counts the traversals
that take place after time $S$). Therefore there exists a measurable
function $f$ such that
\begin{equation}
1_{J}=f\left(W_{\cdot\wedge R_{n}^{z,k}},\left(T_{l}^{y,t}\left(W_{S+\cdot}\right)\right)_{t\ge0,l\ge0}\right).\label{eq:JIndicatorFunc}
\end{equation}
The event $\left\{ H_{B\left(y,r_{L}\right)}\ge D_{t_{-s}}^{y,0}\right\} $
depends only on the same random variables together with 
\begin{equation}
C=1_{\left\{ D_{t_{-s}}^{y,0}\le R_{n}^{z,k}\right\} }+1_{\left\{ D_{t_{-s}}^{y,0}\ge R_{n}^{z,k}\right\} \cap\left\{ H_{B\left(y,r_{L}\right)}\circ\theta_{R_{n}^{z,k}}+R_{n}^{z,k}\ge S\right\} },\label{eq:CIndicatorFunc}
\end{equation}
which gives encodes the dependence on $W_{(R_{n}^{z,k}+\cdot)\wedge S}$
(if $t_{-s}$ of $y's$ excursions from scale $1$ to $0$ have not
been completed by time $R_{n}^{z,k}$ then $W_{t}$ needs to avoid
$B\left(y,r_{L}\right)$ between $R_{n}^{z,k}$ and $S$). Thus there
is a function $g$ such that
\begin{equation}
1_{\left\{ H_{B\left(y,r_{L}\right)}\ge D_{t_{-s}}^{y,0}\right\} }=g\left(W_{\cdot\wedge R_{n}^{z,k}},\left(T_{l}^{y,t}\left(W_{S+\cdot}\right)\right)_{t\ge0,l\ge0}\right)C.\label{eq:gIndicatorFunc}
\end{equation}
Letting $h=fg$ we have
\begin{equation}
1_{A_{n-1}}=h\left(W_{\cdot\wedge R_{n}^{z,k}},\left(T_{l}^{y,t}\left(W_{S+\cdot}\right)\right)_{t\ge0,l\ge0}\right)BC.\label{eq:Anminus1indicatorfuncs}
\end{equation}
Furthermore 
\begin{equation}
1_{A_{n}}=h\left(W_{\cdot\wedge R_{n}^{z,k}},\left(T_{l}^{y,t}\left(W_{S+\cdot}\right)\right)_{t\ge0,l\ge0}\right)BCD,\mbox{ where,}\label{eq:Anindicatorfuncs}
\end{equation}
\begin{equation}
D=1_{\left\{ H_{B\left(z,r_{L}\right)}\circ\theta_{R_{n}^{z,k}}>T_{B\left(z,r_{k}\right)}\circ\theta_{R_{n}^{z,k}}\right\} }.\label{eq:DIndicatorFunc}
\end{equation}
Now by \prettyref{eq:IndepOfStartingPoint} and the strong Markov
property applied at time $S$, the collection $\left(T_{l}^{y,t}\left(W_{S+\cdot}\right)\right)_{t\ge0,l\ge0}$
is independent of $W_{\cdot\wedge S}$, since $W_{S}\in\partial B\left(y,r_{k-2}\right)$.
Thus letting
\[
\bar{h}\left(w_{\cdot}\right)=E_{v}\left[h\left(w_{\cdot},\left(T_{l}^{y,t}\right)_{t\ge0,l\ge0}\right)\right]\mbox{ for }w_{\cdot}\in C\left(\mathbb{R}_{+},\mathbb{T}\right),
\]
for some arbitrary $v\in\partial B\left(y,r_{k-2}\right)$, we have
from \prettyref{eq:Anminus1indicatorfuncs}
\begin{equation}
P_{x}\left[A_{n-1}\right]=E_{x}\left[\bar{h}\left(W_{\cdot\wedge R_{n}^{z,k}}\right)BC\right],\label{eq:Anminus1eq}
\end{equation}
and from \prettyref{eq:Anindicatorfuncs}
\begin{equation}
P_{x}\left[A_{n}\right]=E_{x}\left[\bar{h}\left(W_{\cdot\wedge R_{n}^{z,k}}\right)BCD\right].\label{eq:Aneq}
\end{equation}
Using the strong Markov property, \eqref{eq:DefOfSLastTime}, \eqref{eq:CIndicatorFunc}
and \eqref{eq:DIndicatorFunc},
\[
\begin{array}{l}
E_{x}\left[CD|\mathcal{F}_{R_{n}^{z,k}}\right]=1_{\left\{ D_{t_{-s}}^{y,0}\le R_{n}^{z,k}\right\} }P_{W_{R_{n}^{z,k}}}\left[H_{B\left(z,r_{L}\right)}>T_{B\left(z,r_{k}\right)}\right]\\
\quad+1_{\left\{ D_{t_{-s}}^{y,0}\ge R_{n}^{z,k}\right\} }P_{W_{R_{n}^{z,k}}}\left[H_{B\left(z,r_{L}\right)}>T_{B\left(z,r_{k}\right)},H_{B\left(y,r_{L}\right)}>T_{B\left(y,r_{k-2}\right)}\right].
\end{array}
\]
We have that $P_{W_{R_{n}^{z,k}}}\left[H_{B\left(z,r_{L}\right)}>T_{B\left(z,r_{k}\right)}\right]=\frac{1}{L-k}$
(recall \prettyref{eq:HitLProbStartingFromk} and $W_{R_{n}^{z,k}}\in\partial B(z,r_{k+1})$)
so that by \prettyref{lem:ConditionedHittingProb}
\[
\begin{array}{l}
P_{W_{R_{n}^{z,k}}}\left[H_{B\left(z,r_{L}\right)}>T_{B\left(z,r_{k}\right)},H_{B\left(y,r_{L}\right)}>T_{B\left(y,r_{k-2}\right)}\right]\\
\le\left(1-\frac{1}{L-k}\left(1-\frac{c}{L-k}\right)\right)P_{W_{R_{n}^{z,k}}}\left[H_{B\left(y,r_{L}\right)}>T_{B\left(y,r_{k-2}\right)}\right].
\end{array}
\]
Using this and the strong Markov property ``in reverse'' we have
\[
\begin{array}{l}
E_{x}\left[CD|\mathcal{F}_{R_{n}^{z,k}}\right]\\
\le\left(1-\frac{1}{L-k}\left(1-\frac{c}{L-k}\right)\right)\left\{ 1_{\left\{ D_{t_{-s}}^{y,0}\le R_{n}^{z,k}\right\} }+1_{\left\{ D_{t_{-s}}^{y,0}\ge R_{n}^{z,k}\right\} }P_{W_{R_{n}^{z,k}}}\left[H_{B\left(y,r_{L}\right)}>T_{B\left(y,r_{k-2}\right)}\right]\right\} \\
\le\left(1-\frac{1}{L-k}\left(1-\frac{c}{L-k}\right)\right)E_{x}\left[C|\mathcal{F}_{R_{n}^{z,k}}\right].
\end{array}
\]
Using this with \prettyref{eq:Anminus1eq} and \prettyref{eq:Aneq}
yields \prettyref{eq:RecursionStep} (note that $B$ is $\mathcal{F}_{R_{n}^{z,k}}$-measurable).
\end{proof}
The above lemma gives the following corollary which fully ``extracts''
the cost of the part of \prettyref{eq:LargeKBiggerEvent} referring
to $z$.
\begin{cor}
\label{cor:RecursionImplemented}For any event $J$ as in \prettyref{eq:WhichJs}
we have that
\begin{equation}
\begin{array}{l}
P_{x}\left[J\cap\left\{ H_{B\left(y,r_{L}\right)}\ge D_{t_{-s}}^{y,0}\right\} \cap\left\{ H_{B\left(z,r_{L}\right)}\ge D_{\gamma\left(k\right)^{2}}^{z,k}\right\} \right]\\
\le ce^{-2\left(L-k\right)-cf\left(k\right)}L^{\left(1+s\right)\left(1-\frac{k}{L}\right)}P_{x}\left[J\cap\left\{ H_{B\left(y,r_{L}\right)}\ge D_{t_{-s}}^{y,0}\right\} \right].
\end{array}\label{eq:RecursionImplemented}
\end{equation}
\end{cor}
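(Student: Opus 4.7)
The plan is to iterate the one-step recursion \eqref{eq:RecursionStep} of Lemma 6.7 applied to the given event $J$, which by assumption satisfies \eqref{eq:WhichJs}. With $A_n$ as in \eqref{eq:DefOfAn} and the convention $D_0^{z,k}=0$, the base case gives $A_0 = J \cap \{H_{B(y,r_L)} \ge D_{t_{-s}}^{y,0}\}$, which is exactly the second factor on the right-hand side of \eqref{eq:RecursionImplemented}. Iterating \eqref{eq:RecursionStep} $\lfloor \gamma(k)^2 \rfloor$ times yields
\begin{equation*}
P_x\bigl[A_{\lfloor \gamma(k)^2\rfloor}\bigr] \le \Bigl(1 - \tfrac{1}{L-k}\bigl(1 - \tfrac{c}{L-k}\bigr)\Bigr)^{\lfloor \gamma(k)^2 \rfloor} P_x[A_0],
\end{equation*}
so the corollary reduces to the purely deterministic estimate that the geometric prefactor is at most $c\, e^{-2(L-k) - c f(k)}\, L^{(1+s)(1-k/L)}$.

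The next step is the key computation. Using $\log(1-u)\le -u$, the exponent to control is $-\gamma(k)^2/(L-k) + c\, \gamma(k)^2/(L-k)^2$. I would expand $\gamma(k)^2 = \beta(k)^2 + 2\beta(k) f(k) + f(k)^2$ and plug in $\beta(k) = (1-k/L)\sqrt{t_{-s}}$ together with $t_{-s}/L = 2L - (1+s)\log L$ from \eqref{eq:BetaDef} and \eqref{eq:defofts}. The leading term yields the exact identity
\begin{equation*}
\frac{\beta(k)^2}{L-k} = \frac{(L-k)\, t_{-s}}{L^2} = 2(L-k) - (1+s)\Bigl(1-\frac{k}{L}\Bigr)\log L,
\end{equation*}
which produces precisely the prescribed factors $e^{-2(L-k)}$ and $L^{(1+s)(1-k/L)}$. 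The cross term contributes $2\beta(k) f(k)/(L-k) = 2 f(k)\sqrt{t_{-s}}/L \ge c f(k)$, delivering the extra decay $e^{-c f(k)}$, while the quadratic $f(k)^2/(L-k) \ge 0$ only helps.

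Finally, the correction $c\, \gamma(k)^2/(L-k)^2$ must be absorbed into an overall constant; this is fine because $\beta(k)/(L-k) = \sqrt{t_{-s}}/L = O(1)$ and $f(k)/(L-k) \le 1$ throughout the range $l_0 < k < L-l_0$, so $\gamma(k)^2/(L-k)^2$ is uniformly bounded. I do not anticipate any genuine obstacle at this stage: the hard work has already been done in Lemma 6.7, where the one-step decoupling of the cost of avoiding $B(z,r_L)$ from the conditioning on avoiding $B(y,r_L)$ takes place via \eqref{eq:IndepOfStartingPoint} and Lemma 6.6. Corollary 6.8 is essentially bookkeeping that neatly separates the exponential cost $e^{-2(L-k)}$ coming from the leading-order hitting probability, the polynomial factor $L^{(1+s)(1-k/L)}$ coming from the logarithmic correction in $t_{-s}$, and the gain $e^{-cf(k)}$ coming from the bump $f$ in the barrier $\gamma$.
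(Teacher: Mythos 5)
Your proposal is correct and follows essentially the same route as the paper: iterate Lemma 6.7 down to $A_0$, then turn the geometric prefactor into $c\,e^{-\gamma(k)^2/(L-k)}$ and use $\gamma(k)^2/(L-k)\ge \tfrac{t_{-s}}{L}(1-\tfrac{k}{L})+cf(k)$ together with $e^{-t_{-s}/L}=e^{-2L}L^{1+s}$ and the boundedness of $\gamma(k)^2/(L-k)^2$. The only difference is cosmetic — you expand $\gamma(k)^2$ explicitly and plug in the identity for $\beta(k)^2/(L-k)$, whereas the paper keeps the ratio intact and bounds it below — but the arithmetic is the same and your accounting of all the error terms ($\lfloor\cdot\rfloor$, the $c\gamma(k)^2/(L-k)^2$ correction, the nonnegative $f(k)^2/(L-k)$) is accurate.
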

\begin{proof}
The probability on the left hand-side is $P_{x}\left[A_{\lfloor\gamma\left(k\right)^{2}\rfloor}\right]$.
Applying \prettyref{lem:RecursionStep} recursively we have
\begin{equation}
P_{x}\left[A_{\lfloor\gamma\left(k\right)^{2}\rfloor}\right]\le\left(1-\frac{1}{L-k}\left(1-\frac{c}{L-k}\right)\right)^{\lfloor\gamma\left(k\right)^{2}\rfloor}P_{x}\left[J\cap\left\{ H_{B\left(y,r_{L}\right)}\ge D_{t_{-s}}^{y,0}\right\} \right],\label{eq:RecrusionAn}
\end{equation}
since $A_{0}=J\cap\left\{ H_{B\left(y,r_{L}\right)}\ge D_{t_{-s}}^{y,0}\right\} $.
Now 
\begin{equation}
\left(1-\frac{1}{L-k}\left(1-\frac{c}{L-k}\right)\right)^{\lfloor\gamma\left(k\right)^{2}\rfloor}\le ce^{-\frac{\gamma\left(k\right)^{2}}{L-k}\left(1-\frac{c}{L-k}\right)}\le ce^{-\frac{\gamma\left(k\right)^{2}}{L-k}},\label{eq:ETT}
\end{equation}
since $\gamma\left(k\right)^{2}\le2\beta\left(k\right)^{2}=2t_{-s}\left(1-k/L\right)^{2}\le4L^{2}\left(1-k/L\right)^{2}\le4\left(L-k\right)^{2}$
(recall \prettyref{eq:defofts}, \prettyref{eq:BetaDef} and \prettyref{eq:GammaBarrierDef}).
Also 
\[
\frac{t_{-s}}{L}\left(1-\frac{k}{L}\right)+cf\left(k\right)\le\frac{\gamma\left(k\right)^{2}}{L-k},
\]
so that since $e^{-t_{-s}/L}=e^{-2L}L^{1+s}$ (recall \prettyref{eq:TsDivdedByL})
\begin{equation}
e^{-\frac{\gamma\left(k\right)^{2}}{L-k}}\le ce^{-2\left(L-k\right)-cf\left(k\right)}L^{\left(1+s\right)\left(1-\frac{k}{L}\right)}.\label{eq:TVA}
\end{equation}
Using \prettyref{eq:ETT} and \prettyref{eq:TVA} in \prettyref{eq:RecrusionAn}
we obtain \prettyref{eq:RecursionImplemented}.
\end{proof}
We are now ready to prove the two point probability estimate for large
$k$ (we will see later that the event bounded below contains $I_{y}\cap I_{z}$).
Recall the definition \prettyref{eq:JUp} of $J_{y}^{\uparrow}$. 
\begin{prop}
\label{prop:LargeKCase}If $\left(1-\frac{s}{10}\right)L\le k<L-l_{0}$
then
\begin{equation}
\begin{array}{l}
P_{x}\left[J_{y}^{\uparrow}\cap\left\{ H_{B\left(y,r_{L}\right)}\ge D_{t_{-s}}^{y,0}\right\} \cap\left\{ H_{B\left(z,r_{L}\right)}\ge D_{\gamma\left(k\right)^{2}}^{z,k}\right\} \right]\\
\le ce^{-\left(4L-2k\right)-cf\left(k\right)}L^{2s}\sqrt{l_{0}}g\left(k-2\right).
\end{array}\label{eq:LargeKCase}
\end{equation}
\end{prop}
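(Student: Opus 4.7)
The plan is to combine the two preceding ingredients directly, without introducing any new idea. First, observe that the event $J_{y}^{\uparrow}$ defined in \eqref{eq:JUp} depends only on $T_{l}^{y,t_{-s}}$ for $l\in\{l_{0},\ldots,k-3\}$, and so satisfies the measurability hypothesis \eqref{eq:WhichJs} for applying Corollary~\ref{cor:RecursionImplemented}. Taking $J=J_{y}^{\uparrow}$ there extracts the cost of forcing the $\gamma(k)^{2}$ excursions of $z$ from scale $k+1$ to scale $k$ to avoid $B(z,r_{L})$, at the price of the factor $ce^{-2(L-k)-cf(k)}L^{(1+s)(1-k/L)}$, leaving behind only the ``$y$-part'' $P_{x}[J_{y}^{\uparrow}\cap\{H_{B(y,r_{L})}\ge D_{t_{-s}}^{y,0}\}]$.

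Next I would plug in the estimate $P_{x}[J_{y}^{\uparrow}\cap\{H_{B(y,r_{L})}\ge D_{t_{-s}}^{y,0}\}]\le ce^{-2L}L^{s}\sqrt{l_{0}}\,g(k-2)$ supplied by Lemma~\ref{lem:HighYsAlone}. Multiplying the two bounds, the exponentials combine to $e^{-(4L-2k)-cf(k)}$ as required, the factor $\sqrt{l_{0}}\,g(k-2)$ carries through unchanged, and the polynomial-in-$L$ prefactor becomes $L^{(1+s)(1-k/L)+s}$.

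The only remaining item is to check that this prefactor is at most $L^{2s}$. This is equivalent to $(1+s)k/L\ge 1$, and here is the unique place where the ``late branching'' hypothesis $k\ge (1-s/10)L$ is used: substituting gives $(1+s)(1-s/10)=1+s(9-s)/10\ge 1$ for $s\in(0,1)$, which is the required inequality. Assembling these steps yields \eqref{eq:LargeKCase}.

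I do not expect a serious obstacle in this proposition: the real work (the strong Markov/decoupling argument based on Lemma~\ref{lem:ConditionedHittingProb}) is already packaged inside Corollary~\ref{cor:RecursionImplemented}, and the ``marginal'' large-deviation estimate for $y$ is already packaged inside Lemma~\ref{lem:HighYsAlone}. Conceptually, the reason the late-branching regime is cheap is that when $k$ is close to $L$ the $L$-factor $L^{(1+s)(1-k/L)}$ coming from the recursion is already small, so it is affordable to throw away the barrier condition on $T_{l}^{y,t_{-s}}$ for $l\ge k-2$; the surviving cutoff at $l=k-3$ is still strong enough to give the $\sqrt{l_{0}}\,g(k-2)$ gain via Lemma~\ref{lem:GWBarrierGammmaDeltaUpTok}.
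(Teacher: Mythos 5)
Your proposal is correct and follows the paper's proof exactly: apply Corollary~\ref{cor:RecursionImplemented} with $J=J_{y}^{\uparrow}$ (after noting the measurability requirement \eqref{eq:WhichJs} is satisfied), multiply by the bound from Lemma~\ref{lem:HighYsAlone}, and verify $(1+s)(1-k/L)\le s$ in the regime $k\ge(1-s/10)L$. Your algebraic check of the exponent inequality and your heuristic for why the late-branching regime is "cheap" are both accurate.
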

\begin{proof}
By \prettyref{cor:RecursionImplemented} with $J=J_{y}^{\uparrow}$
and \prettyref{lem:HighYsAlone} the probability in question is bounded
by
\[
ce^{-2\left(L-k\right)-cf\left(k\right)}L^{\left(1+s\right)\left(1-\frac{k}{L}\right)}\times ce^{-2L}L^{s}\sqrt{l_{0}}g\left(k-2\right).
\]
Thus \prettyref{eq:LargeKCase} follows since $\left(1+s\right)\left(1-\frac{k}{L}\right)\le s$
for $k\ge\left(1-\frac{s}{10}\right)L$ and $s\in\left(-1,1\right)$.
\end{proof}
We now turn to the bound for smaller $k$.

\subsection{Main bound: early branching}

Here we consider the case $l_{0}<k\le\left(1-\frac{s}{10}\right)L$.
It tuns out that in this regime we can ignore the contribution from
the barrier condition for $l\le k$. To deal with the condition for
$l\ge k$, we will need to decouple the contribution due to $y$ and
that due to $z$. To do this we will need to ``give ourselves a bit
of space'' , and we therefore define
\begin{equation}
k^{+}=k+\lceil100\log L\rceil,\label{eq:DefOfKPlus}
\end{equation}
and let for $v\in\left\{ y,z\right\} $ 
\begin{equation}
J_{v}^{\downarrow}=\left\{ \gamma\left(l\right)\le\sqrt{T_{l}^{y,t_{-s}}}\le\delta\left(l\right)\mbox{ for }l=k^{+},\ldots,L-l_{0}\right\} ,\label{eq:JDown}
\end{equation}
be the barrier conditioned applied only for $l\ge k^{+}$. To obtain
the two point bound for $k\le\left(1-\frac{s}{10}\right)L$ we will
bound the probability of
\begin{equation}
J_{y}^{\downarrow}\cap\left\{ H_{B\left(y,r_{L}\right)}\ge D_{t_{-s}}^{y,0}\right\} \cap J_{z}^{\downarrow}\cap\left\{ \gamma\left(k\right)\le\sqrt{T_{k}^{z,t_{-s}}},H_{B\left(z,r_{L}\right)}\ge D_{t_{-s}}^{z,0}\right\} ,\label{eq:SmallKBiggerEvent}
\end{equation}
which we will see contains the event $I_{y}\cap I_{z}$.  When bounding
$J_{v}^{\downarrow}$ for $v\in\left\{ y,z\right\} $ we will compare
the law of $T_{l}^{v,t_{-s}},l\ge k^{+},$ conditioned on the other
events of \prettyref{eq:SmallKBiggerEvent} to $\mathbb{G}_{a}$ for
$\gamma\left(l\right)^{2}\le a\le\delta\left(l\right)^{2}$, so that
we can apply the barrier crossing bound \prettyref{eq:GWBarrierGammaDeltaFromk}
for the law $\mathbb{G}_{a}$. As a first step in this direction we
let, recalling the definition \eqref{eq:DefOfExcursionTimes}, 
\begin{equation}
X_{\cdot}^{i}=X_{\cdot}^{i}\left(v\right)=W_{\left(R_{i}\left(v,r_{k},r_{k^{+}+1}\right)+\cdot\right)\wedge D_{i}\left(v,r_{k},r_{k^{+}+1}\right)},i=1,\ldots,\label{eq:DefOFXiExcursions}
\end{equation}
be the excursions of $W_{t}$ from $\partial B\left(v,r_{k^{+}+1}\right)$
to $\partial B\left(v,r_{k}\right)$. Let 
\begin{equation}
N=N\left(v\right)=\sup\left\{ n\ge1:D_{n}\left(v,r_{k},r_{k^{+}+1}\right)<D_{t_{-s}}^{v,0}\right\} ,\label{eq:NumberOfExcursions}
\end{equation}
be the number of excursions $X_{\cdot}^{i}$ that take place before
time $D_{t_{-s}}^{v,0}$. Note that 
\begin{equation}
\sum_{i=1}^{N}T_{l}^{v,\infty}\left(X_{\cdot}^{i}\right)=T_{l}^{v,t_{-s}}\mbox{\,\ for }l\ge k^{+},\label{eq:XExcursionsGiveTproc}
\end{equation}
where $T_{l}^{v,\infty}\left(X_{\cdot}^{i}\right)$ counts traversals
that take place during the excursion $X_{\cdot}^{i}$. Let
\[
J_{v,n}^{\downarrow}=\left\{ \gamma\left(l\right)\le\sqrt{\sum_{i=1}^{n}T_{l}^{v,\infty}\left(X_{\cdot}^{i}\right)}\le\delta\left(l\right)\mbox{\,\ for }l=k^{+},\ldots,L-l_{0}\right\} ,n\ge0,
\]
and note that by \eqref{eq:XExcursionsGiveTproc} and \eqref{eq:JDown}
\begin{equation}
1_{J_{v}^{\downarrow}}=1_{J_{v,N}^{\downarrow}}.\label{eq:JvNequalsJv}
\end{equation}
We thus aim to bound $P_{x}\left[J_{v,n}^{\downarrow}\right]$ and
are therefore interested in the law of $\sum_{i=1}^{n}T_{l}^{v,\infty}\left(X_{\cdot}^{i}\right)$.
This will be given by a Galton-Watson process with immigration: let
$\tilde{\mathbb{G}}_{n}$ denote the law such that $\left(T_{l}\right)_{\ge0}$
is a critical branching process with $T_{k-1}=0$ and immigration
of $n$ individuals in generations $k,k+1,\ldots,k^{+}$. That is,
\begin{equation}
\mbox{let }\tilde{\mathbb{G}}_{n}\mbox{ be the law of }\left(\sum_{p=k}^{k^{+}}T_{l}^{p}\right)_{l\ge0},\label{eq:DefOfGtilde}
\end{equation}
where $T_{k+\cdot}^{k},T_{k+1+\cdot}^{k+1},\ldots,T_{k^{+}+\cdot}^{k^{+}}$
are iid with law $\mathbb{G}_{n}$, and where we set $T_{l}^{p}=0$
for $l<p$.

To show that $\sum_{i=1}^{n}T_{l}^{v,\infty}\left(X_{\cdot}^{i}\right)$
has this law the first step is the following lemma giving the law
of an individual $T_{l}^{v,\infty}\left(X_{\cdot}^{i}\right)$.
\begin{lem}
\label{lem:BPWithImmigrationLaw}For $v\in\left\{ y,z\right\} $ and
any $u\in\partial B\left(v,r_{k^{+}+1}\right)$ 
\[
\mbox{the }P_{u}-\mbox{law of }\left(T_{l}^{y,\infty}\left(W_{\cdot\wedge T_{B\left(v,r_{k}\right)}}\right)\right)_{l\ge k^{+}}\mbox{ is }\tilde{\mathbb{G}}_{1}.
\]
\end{lem}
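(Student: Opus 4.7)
The plan is to reduce the statement to the discrete Ray--Knight type argument used in the proof of \prettyref{lem:BranchingProc}. By iterating \prettyref{eq:BallExitProbForRadii} and the strong Markov property, the process of successive scales of $W$ around $v$ is a simple random walk $Z$ on $\mathbb{Z}$: under $P_u$ with $u\in\partial B(v,r_{k^{+}+1})$ we have $Z_{0}=k^{+}+1$, and because $T_{B(v,r_{k})}=H_{\partial B(v,r_{k})}$ for two-dimensional Brownian motion starting inside $B(v,r_k)$, the stopping corresponds to the first visit of $Z$ to level $k$, which I denote $\tau_{k}$. More generally let $\tau_{p}$ be the first hit of $\partial B(v,r_{p})$, so that $0=\tau_{k^{+}+1}<\tau_{k^{+}}<\cdots<\tau_{k}$.

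First I would decompose the stopped path into the sub-paths $S_{p}:=W_{(\tau_{p+1}+\cdot)\wedge\tau_{p}}$ for $p=k,k+1,\ldots,k^{+}$. Each $S_{p}$ starts on $\partial B(v,r_{p+1})$ and ends at the first hit of $\partial B(v,r_{p})$. By the strong Markov property applied at each $\tau_{p}$, the sub-paths $(S_{p})_{p=k,\ldots,k^{+}}$ are mutually independent.

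The central step is to show that for each $p$ the process $\bigl(T_{l}^{v,\infty}(S_{p})\bigr)_{l\ge p}$ has exactly the law $\mathbb{G}_{1}$ shifted so that $T_{p}^{v,\infty}(S_{p})=1$ and $(T_{p+m}^{v,\infty}(S_{p}))_{m\ge 0}$ is a critical Galton--Watson process with geometric offspring; in the notation of the definition of $\tilde{\mathbb{G}}_{1}$, this is precisely the law of $(T_{l}^{p})_{l\ge p}$. After shifting scale indices by $p$, the sub-path $S_{p}$ is one excursion of $W$ starting on $\partial B(v,r_{1})$ (in shifted coordinates) and running until the first hit of $\partial B(v,r_{0})$, i.e.\ exactly the trajectory analysed in \prettyref{lem:BranchingProc} with $t=1$. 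The same indicator-function argument as in the proof of \prettyref{lem:BranchingProc} then goes through: the indicators recording whether $W$, after each successive traversal $l+2\to l+1$, next exits to $\partial B(v,r_{l})$ or returns to $\partial B(v,r_{l+2})$ are i.i.d.\ fair Bernoullis by \prettyref{eq:BallExitProbForRadii} and the strong Markov property, and their counts produce the negative-binomial offspring recursion of a Galton--Watson process with geometric offspring.

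Finally, since each $R_{n}^{v,l}$-cycle of the full stopped path lies in exactly one sub-path, for every $l\ge k^{+}$ one has the additive identity
\[
T_{l}^{v,\infty}\bigl(W_{\cdot\wedge T_{B(v,r_{k})}}\bigr)=\sum_{p=k}^{k^{+}}T_{l}^{v,\infty}(S_{p}),
\]
and combining this with the independence and the individual laws of the $S_{p}$ yields that the joint law of $(T_{l}^{v,\infty}(W_{\cdot\wedge T_{B(v,r_{k})}}))_{l\ge k^{+}}$ equals that of $(\sum_{p=k}^{k^{+}}T_{l}^{p})_{l\ge k^{+}}$, which is $\tilde{\mathbb{G}}_{1}$. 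I expect the main obstacle to be the careful bookkeeping at the level $l=k^{+}$: one must verify that the initial entry $R_{1}^{v,k^{+}}=0$ is charged entirely to $S_{k^{+}}$ (so that it matches the deterministic ``immigrant'' $T_{k^{+}}^{k^{+}}=1$ in $\tilde{\mathbb{G}}_{1}$), while $D_{1}^{v,k^{+}}=\tau_{k^{+}}$ cleanly separates this first cycle from all subsequent $R_{n}$-cycles, which lie in later sub-paths $S_{p}$ with $p<k^{+}$ and populate the genuine Galton--Watson trees $T^{p}$.
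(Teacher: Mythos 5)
Your proposal is correct and follows the paper's proof essentially verbatim: both decompose $W_{\cdot\wedge T_{B(v,r_k)}}$ at the successive exit times from $B(v,r_p)$, identify each sub-path's (index-shifted) traversal process with $\mathbb{G}_1$ via the argument of \prettyref{lem:BranchingProc}, and combine the resulting independence (strong Markov together with rotational invariance of the traversal counts) with the additivity $T_l^{v,\infty}(W_{\cdot\wedge T_{B(v,r_k)}})=\sum_{p=k}^{k^{+}} T_l^{v,\infty}(S_p)$ for $l\ge k^{+}$. The bookkeeping you flag at $l=k^{+}$ does resolve exactly as you describe, since $\tau_{k^{+}}=D_1^{v,k^{+}}$ for the full stopped path.
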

\begin{proof}
Let
\[
S_{l}=T_{B\left(v,r_{l}\right)},l\ge0,
\]
and consider for $k\le p\le k^{+}$ the number of traversals at each
scale which happen between $S_{p+1}$ and $S_{p}$,
\[
T_{l}^{p}\overset{\mbox{def}}{=}T_{l}^{v,\infty}\left(W_{\left(S_{p+1}+\cdot\right)\wedge S_{p}}\right),l\ge0.
\]
Since $T_{B\left(v,r_{k}\right)}=S_{k}>\ldots>S_{k^{+}}>S_{k^{+}+1}=0$
we have
\begin{equation}
T_{l}^{v,\infty}\left(W_{\cdot\wedge T_{B\left(v,r_{k}\right)}}\right)=\sum_{p=k}^{k^{+}}T_{l}^{p}.\label{eq:WrritenasSum}
\end{equation}
A proof similar to that of \prettyref{lem:BranchingProc} shows that
the law of $T_{p+\cdot}^{p},$ is $\mathbb{G}_{1}$, and the strong
Markov property shows that the $T_{l}^{p},l\ge0,$ are independent.
Thus the claim follows by \prettyref{eq:WrritenasSum} and the definition
\prettyref{eq:DefOfGtilde} of $\tilde{\mathbb{G}}_{1}$.
\end{proof}
From this we easily get the law of the sum $\sum_{i=1}^{n}T_{l}^{v,\infty}\left(X_{\cdot}^{i}\right)$:
\begin{cor}
\textup{($v\in\left\{ y,z\right\} $) We have
\[
\mbox{the }P_{x}-\mbox{law of }\left(\sum_{i=1}^{n}T_{l}^{v,\infty}\left(X_{\cdot}^{i}\right)\right)_{l\ge0}\mbox{ is }\tilde{\mathbb{G}}_{n}.
\]
}\end{cor}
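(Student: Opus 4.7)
\textbf{The plan is} to chain Lemma~\ref{lem:BPWithImmigrationLaw} with the strong Markov property and the branching (or ``additivity'') property of critical Galton--Watson processes. First, by the strong Markov property applied at each return time $R_i(v,r_k,r_{k^{+}+1})$, the excursion $X^i_{\cdot}$ defined in \eqref{eq:DefOFXiExcursions} is, conditionally on $\mathcal{F}_{R_i(v,r_k,r_{k^{+}+1})}$, distributed as a Brownian motion started at the random point $W_{R_i(v,r_k,r_{k^{+}+1})}\in\partial B(v,r_{k^{+}+1})$ and run until it first hits $\partial B(v,r_k)$. Applying Lemma~\ref{lem:BPWithImmigrationLaw}, the conditional law of $(T_l^{v,\infty}(X^i_{\cdot}))_{l\ge 0}$ under $P_x$ is $\tilde{\mathbb{G}}_1$, \emph{uniformly} in the starting point on $\partial B(v,r_{k^{+}+1})$ (the coordinates for $l<k$ are trivially zero both for the excursion and for the immigration law).

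Since this conditional law is deterministic, a standard argument shows that the processes $(T_l^{v,\infty}(X^i_{\cdot}))_{l\ge 0}$ for $i=1,\ldots,n$ are mutually independent under $P_x$, each with law $\tilde{\mathbb{G}}_1$. The conclusion then reduces to the purely distributional claim that a sum of $n$ independent copies of $\tilde{\mathbb{G}}_1$ has law $\tilde{\mathbb{G}}_n$. Unpacking the definition \eqref{eq:DefOfGtilde}, a draw from $\tilde{\mathbb{G}}_n$ is the coordinate-wise sum over $p=k,\ldots,k^{+}$ of independent ``shifted'' Galton--Watson processes, each with $n$ initial individuals in generation $p$. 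The branching property of the law $\mathbb{G}$ asserts that $\mathbb{G}_n$ equals the law of the sum of $n$ independent $\mathbb{G}_1$-processes; applying this at every immigration generation $p$ converts the sum of $n$ iid $\tilde{\mathbb{G}}_1$-variables into a draw from $\tilde{\mathbb{G}}_n$, as required.

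\textbf{No substantive obstacle is anticipated}: the corollary is a short bookkeeping consequence of the preceding lemma. The only care needed is to pass from the conditional independence of the excursions $X^i_{\cdot}$ given their starting points to genuine independence of the traversal counts $(T_l^{v,\infty}(X^i_{\cdot}))_{l\ge 0}$, which is immediate because the conditional law provided by Lemma~\ref{lem:BPWithImmigrationLaw} does not depend on the starting point on $\partial B(v,r_{k^{+}+1})$.
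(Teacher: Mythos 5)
Your proposal is correct and follows essentially the same route as the paper's proof: strong Markov property at the return times combined with Lemma~\ref{lem:BPWithImmigrationLaw} (whose conclusion is independent of the starting point on the inner circle) yields that the traversal-count processes of the $X^i_\cdot$ are i.i.d.\ with law $\tilde{\mathbb{G}}_1$, and then the additivity (branching) property of $\mathbb{G}$ applied at each immigration generation $p\in\{k,\ldots,k^+\}$ identifies the sum as a draw from $\tilde{\mathbb{G}}_n$. You spell out the final additivity step a bit more explicitly than the paper, which simply invokes \eqref{eq:DefOfGtilde}, but this is the same argument.
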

\begin{proof}
By the strong Markov property applied at times $R_{i}\left(v,r_{k},r_{k^{+}+1}\right),i=1,\ldots,n,$
and \prettyref{lem:BPWithImmigrationLaw} $\left(T_{l}^{v,\infty}\left(X_{\cdot}^{i}\right)\right)_{l\ge0}$
are iid for $i=1,\ldots,n$ with law $\mathbb{\tilde{G}}_{1}$. Thus
clearly $\left(\sum_{i=1}^{n}T_{l}^{v,\infty}\left(X_{\cdot}^{i}\right)\right)_{l\ge0}$
has law $\tilde{\mathbb{G}}_{n}$ by the definition \prettyref{eq:DefOfGtilde}.
\end{proof}
We now provide a bound on the barrier crossing event corresponding
to $J_{v,n}^{\downarrow}$ for the Galton-Watson law $\tilde{\mathbb{G}}_{n}$.
\begin{lem}
\label{lem:ImmigrationBPBarrier}For any $n\ge0$ we have that
\begin{equation}
\mathbb{\tilde{G}}_{n}\left[\gamma\left(l\right)\le\sqrt{T_{l}}\le\delta\left(l\right)\mbox{ for }l=k^{+},\ldots,L-l_{0}|T_{L-1}=0\right]\le c\frac{g\left(k^{+}\right)l_{0}^{0.51}}{L-l_{0}-k^{+}-1}.\label{eq:ImmigrationBPBarrier}
\end{equation}
\end{lem}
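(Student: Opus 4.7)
The key observation is that under $\tilde{\mathbb{G}}_n$ (cf.\ \prettyref{eq:DefOfGtilde}) immigration ceases after generation $k^+$: for $l \ge k^+$ the sum $T_l = \sum_{p=k}^{k^+} T_l^p$ is a sum of independent critical Galton--Watson processes with geometric offspring distribution, and by the branching property this sum itself evolves as a single critical geometric Galton--Watson process starting from the total population $T_{k^+}$. Thus, under $\tilde{\mathbb{G}}_n$ conditionally on $T_{k^+} = a$, the shifted process $(T_{k^+ + j})_{j \ge 0}$ has law $\mathbb{G}_a$.

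The tube constraint at level $k^+$ forces $\gamma(k^+)^2 \le T_{k^+} \le \delta(k^+)^2$, which matches exactly the range of starting populations to which \prettyref{lem:GWBarrierGammaDeltaUpTok} applies (with $k$ replaced by $k^+$). I will condition on $T_{k^+}$ and use the Markov property at generation $k^+$ to rewrite
\[
\tilde{\mathbb{G}}_n\bigl[\text{tube},\, T_{L-1}=0\bigr] = \sum_{\gamma(k^+)^2 \le a \le \delta(k^+)^2} \tilde{\mathbb{G}}_n[T_{k^+}=a]\,\mathbb{G}_a\bigl[\text{shifted tube},\, T_{L-1-k^+}=0\bigr],
\]
where the ``shifted tube'' is $\gamma(l) \le \sqrt{T_{l-k^+}} \le \delta(l)$ for $l = k^+,\ldots,L-l_0$. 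Discarding the (now useless) upper barrier $\le \delta(l)$ and factoring $\mathbb{G}_a[T_{L-1-k^+}=0]$ out of the inner probability, \prettyref{lem:GWBarrierGammaDeltaUpTok} with $k \to k^+$ bounds the corresponding conditional barrier crossing probability by $c\, l_0^{0.51} g(k^+)/(L-l_0-k^+-1)$, uniformly in $a$ in the allowed range.

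Finally the identity $\sum_a \tilde{\mathbb{G}}_n[T_{k^+}=a]\,\mathbb{G}_a[T_{L-1-k^+}=0] = \tilde{\mathbb{G}}_n[T_{L-1}=0]$ cancels the conditioning and yields \prettyref{eq:ImmigrationBPBarrier}. The only technical check is that $l_0 < k^+ < L - l_0 - 1$ so that \prettyref{lem:GWBarrierGammaDeltaUpTok} applies; this holds for $L$ large in the relevant regime $l_0 < k \le (1 - s/10)L$ of \prettyref{prop:MainTwoProfileBound}, since $k^+ = k + \lceil 100 \log L\rceil$. There is no substantial obstacle here: the proof reduces to a single application of the Markov property at the last immigration time, and this is precisely why the resulting bound does not depend on the initial immigration size $n$.
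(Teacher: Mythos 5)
Your proposal is correct and takes essentially the same approach as the paper: both condition on $T_{k^+}$, use the branching property to reduce the shifted process to law $\mathbb{G}_a[\cdot\,|\,T_{L-1-k^+}=0]$, and then apply \eqref{eq:GWBarrierGammaDeltaFromk} with $k^+$ in place of $k$. The only cosmetic difference is that the paper bounds the resulting expression by a $\sup$ over $a$ while you keep the weighted sum and cancel the normalization $\tilde{\mathbb{G}}_n[T_{L-1}=0]$ at the end; both yield the same bound.
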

\begin{proof}
By definition of $\tilde{\mathbb{G}}_{n}$ the law of $\left(T_{l+k^{+}}\right)_{l\ge0}$
under $\tilde{\mathbb{G}}_{n}\left[\cdot|T_{L-1}=0,T_{k^{+}}=a\right]$
is the $\mathbb{G}_{a}\left[\cdot|T_{L-k^{+}-1}=0\right]$ law of
$\left(T_{l}\right)_{l\ge0}$. Thus the probability in question is
bounded above by 
\[
\sup_{\gamma\left(k^{+}\right)^{2}\le a\le\delta\left(k^{+}\right)^{2}}\mathbb{G}_{a}\left[\gamma\left(l\right)\le\sqrt{T_{l-k^{+}}}\le\delta\left(l\right)\mbox{\,\ for }l=k^{+},\ldots,L-l_{0}|T_{L-k^{+}-1}=0\right],
\]
The required bound therefore follows by \prettyref{eq:GWBarrierGammaDeltaFromk}
with $k^{+}\le\left(1-\frac{s}{5}\right)L$ in place of $k$.
\end{proof}
We now summarize our work so far for the regime $k\le\left(1-\frac{s}{10}\right)L$
in the form of a bound on the conditional probability of $J_{v,n}^{\downarrow}$.
We will see that the conditioning essentially corresponds to conditioning
on $\left\{ H_{B\left(v,r_{L}\right)}>D_{t_{-s}}^{v,0}\right\} $.
\begin{lem}
\label{lem:BarrierPartLow}($v\in\left\{ y,z\right\} $) For all $n\ge1$,
\begin{equation}
\sup_{n\ge0}P_{x}\left[J_{v,n}^{\downarrow}|H_{B\left(v,r_{L}\right)}>D_{n}\left(v,r_{k},r_{k^{+}+1}\right)\right]\le c\frac{g\left(k^{+}\right)l_{0}^{0.51}}{L-l_{0}-k-1}.\label{eq:BarrierPartLow}
\end{equation}
\end{lem}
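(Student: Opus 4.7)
The plan is to reduce the statement to the pure Galton--Watson-with-immigration bound \prettyref{lem:ImmigrationBPBarrier}, via the identification of the law $\tilde{\mathbb{G}}_n$ supplied by the corollary immediately preceding the lemma. The main work is encoding both the conditioning event and $J_{v,n}^{\downarrow}$ in terms of the excursions $X_{\cdot}^{1},\ldots,X_{\cdot}^{n}$.

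First I would argue that $B(v,r_{L})\subset B(v,r_{k^{+}+1})\subset B(v,r_{k})$ forces both hitting $B(v,r_{L})$ and completing a traversal at any scale $l\ge k^{+}$ to occur strictly inside the excursions $X_{\cdot}^{i}$. Indeed, by the definition of the return and departure times $R_{i}=R_{i}(v,r_{k},r_{k^{+}+1})$ and $D_{i}=D_{i}(v,r_{k},r_{k^{+}+1})$, outside the intervals $[R_{i},D_{i}]$ (and before $D_{n}$) the Brownian motion either has not yet entered $B(v,r_{k^{+}+1})$ (on $[0,R_{1}]$) or lies entirely outside $B(v,r_{k})\supset B(v,r_{k^{+}+1})$ (on $[D_{i},R_{i+1}]$); but completing a traversal at scale $l\ge k^{+}$ requires entering $B(v,r_{l+1})\subset B(v,r_{k^{+}+1})$, and hitting $B(v,r_{L})\subset B(v,r_{k^{+}+1})$ likewise requires entering this ball. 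This yields the identifications
\[\{H_{B(v,r_{L})}>D_{n}(v,r_{k},r_{k^{+}+1})\}=\Bigl\{\textstyle\sum_{i=1}^{n}T_{L-1}^{v,\infty}(X_{\cdot}^{i})=0\Bigr\}\]
together with the representation $J_{v,n}^{\downarrow}=\bigl\{\gamma(l)\le\sqrt{\sum_{i=1}^{n}T_{l}^{v,\infty}(X_{\cdot}^{i})}\le\delta(l)\text{ for }l=k^{+},\ldots,L-l_{0}\bigr\}$.

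Next, the corollary preceding the lemma identifies the $P_{x}$-law of $(\sum_{i=1}^{n}T_{l}^{v,\infty}(X_{\cdot}^{i}))_{l\ge0}$ as $\tilde{\mathbb{G}}_{n}$. Combining with the identifications above, the conditional probability in question equals
\[\tilde{\mathbb{G}}_{n}\bigl[\gamma(l)\le\sqrt{T_{l}}\le\delta(l)\text{ for }l=k^{+},\ldots,L-l_{0}\bigm|T_{L-1}=0\bigr],\]
and \prettyref{lem:ImmigrationBPBarrier} bounds this by $c\,g(k^{+})l_{0}^{0.51}/(L-l_{0}-k^{+}-1)$, uniformly in $n$. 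The degenerate case $n=0$ is immediate: $\gamma(l)>0$ on $[k^{+},L-l_{0}]$ makes $J_{v,0}^{\downarrow}$ empty.

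Finally I would absorb the shift from $k^{+}$ to $k$ in the denominator: since $k^{+}-k=\lceil100\log L\rceil$ and the early-branching regime forces $k\le(1-s/10)L$, we have $L-k\ge sL/10\gg\log L$, so $L-l_{0}-k^{+}-1\ge c(s)(L-l_{0}-k-1)$ for large $L$, yielding the claimed bound. The step that will require the most care is the first one -- the accounting of which parts of the pre-$D_{n}$ trajectory can contribute to traversals at scale $\ge k^{+}$ or to a visit to $B(v,r_{L})$ -- but this bookkeeping becomes transparent once the nested inclusions among the scales $r_{l+1}\le r_{k^{+}+1}<r_{k}$ are made explicit, after which the rest of the proof is a direct application of machinery already in place.
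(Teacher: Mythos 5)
Your proof follows the same route as the paper's: rewrite the conditioning event as $\{\sum_{i=1}^{n}T_{L-1}^{v,\infty}(X_{\cdot}^{i})=0\}$, identify the $P_{x}$-law of $(\sum_{i=1}^{n}T_{l}^{v,\infty}(X_{\cdot}^{i}))_{l\ge0}$ as $\tilde{\mathbb{G}}_{n}$, and apply \prettyref{lem:ImmigrationBPBarrier}. Your explicit handling of the denominator shift from $L-l_{0}-k^{+}-1$ (what \prettyref{lem:ImmigrationBPBarrier} gives) to $L-l_{0}-k-1$ (what the lemma claims) --- absorbing the gap $k^{+}-k=\lceil100\log L\rceil$ into the constant via the early-branching restriction $k\le(1-\frac{s}{10})L$ --- is a real detail that the paper's one-line proof glosses over, and you are right to spell it out; without that restriction the claimed bound with $k$ in the denominator would not follow from the $k^{+}$ bound.
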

\begin{proof}
The event that we condition on in \eqref{eq:BarrierPartLow} can be
rewritten as $\left\{ \sum_{i=1}^{n}T_{L-1}^{v,\infty}\left(X_{\cdot}^{i}\right)=0\right\} $.
Therefore by \prettyref{lem:BPWithImmigrationLaw} the probability
in \eqref{eq:BarrierPartLow} equals that in \eqref{eq:ImmigrationBPBarrier},
so that the required bound follows by \prettyref{lem:ImmigrationBPBarrier}.
\end{proof}
The above lemma will be used to give the contribution from $J_{y}^{\downarrow}$
and $J_{z}^{\downarrow}$ to our bound on the probability of \eqref{eq:SmallKBiggerEvent}.
These contributions should be roughly independent, but to obtain a
rigorous bound we will need a decoupling. Our approach is inspired
by Lemma 7.4 \cite{Demboetal_LatePoints}. The first step in obtaining
the decoupling is the next lemma which essentially speaking shows
that the exit distribution of Brownian motion from a ball (both unconditioned
and conditioned to avoid a smaller ball) does not depend much on the
starting point, as long as the starting point is not close to the
boundary.
\begin{lem}
($v\in\left\{ y,z\right\} $) Let $\lambda$ be the uniform distribution
on $\partial B\left(v,r_{k}\right)$. For any $u\in\partial B\left(v,k^{+}\right)\cup\partial B\left(v,k^{+}+1\right)$
and measurable $B$,
\begin{equation}
P_{u}\left[W_{T_{B\left(v,r_{k}\right)}}\in B\right]=\left(1+O\left(L^{-100}\right)\right)\lambda\left(B\right),\label{eq:HarmonicMeasure}
\end{equation}
and for any $u\in\partial B\left(v,k^{+}\right)$,\textup{
\begin{equation}
P_{u}\left[W_{T_{B\left(v,r_{k}\right)}}\in B|H_{B\left(v,r_{k^{+}+1}\right)}>T_{B\left(v,r_{k}\right)}\right]=\left(1+O\left(L^{-99}\right)\right)\lambda\left(B\right).\label{eq:HarmonicMeasureCond}
\end{equation}
}\end{lem}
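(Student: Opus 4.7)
My plan is to reduce both statements to the classical Poisson kernel formula for planar Brownian motion. By \eqref{eq:LawOfBMInTorusAndR2Same}, the law of Brownian motion in $B(v,r_k) \subset \mathbb{T}$ coincides with that of Brownian motion in a Euclidean disk of radius $r_k$ (note $r_k < 1/2$); the density of its exit distribution, starting at distance $\rho < r_k$ from the center, with respect to the uniform probability measure $\lambda$ on $\partial B(v,r_k)$ is the classical Poisson kernel, which is uniformly $1 + O(\rho/r_k)$ whenever $\rho/r_k$ is small. For $u \in \partial B(v,r_{k^+}) \cup \partial B(v, r_{k^++1})$ one has $\rho/r_k \le e^{-\lceil 100 \log L \rceil} \le L^{-100}$ by \eqref{eq:DefOfRadii} and \eqref{eq:DefOfKPlus}, yielding \eqref{eq:HarmonicMeasure} at once.

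For \eqref{eq:HarmonicMeasureCond}, writing $T = T_{B(v,r_k)}$ and $H = H_{B(v, r_{k^++1})}$, I would decompose
\[
P_u[W_T \in B] = P_u[W_T \in B,\, H \le T] + P_u[W_T \in B,\, H > T],
\]
and handle the first summand by the strong Markov property at $H$: the restarted Brownian motion begins on $\partial B(v, r_{k^++1})$, so the already-proved \eqref{eq:HarmonicMeasure} applies to its exit from $B(v,r_k)$ and gives
\[
P_u[W_T \in B,\, H \le T] = P_u[H \le T] \cdot \lambda(B)\bigl(1 + O(L^{-100})\bigr).
\]
Subtracting this from \eqref{eq:HarmonicMeasure} applied to the full probability $P_u[W_T \in B]$ yields
\[
P_u[W_T \in B,\, H > T] = \lambda(B)\bigl(P_u[H > T] + O(L^{-100})\bigr),
\]
and dividing by $P_u[H > T]$ produces the conditional probability on the left of \eqref{eq:HarmonicMeasureCond}.

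The final step is to note that by the annulus exit formula \eqref{eq:AnnulusExitProb} and \eqref{eq:DefOfRadii}, $P_u[H > T] = 1/(\lceil 100 \log L \rceil + 1)$, which is of order $1/\log L$. Consequently, dividing the $O(L^{-100})$ error from the numerator by this small probability amplifies it by a factor $O(\log L)$, giving precisely the $O(L^{-99})$ appearing in \eqref{eq:HarmonicMeasureCond}. There is essentially no substantive obstacle in the argument; the only subtlety is the one-power-of-$L$ loss introduced by this amplification, which is the reason \eqref{eq:DefOfKPlus} is defined with a large constant ($100$) in front of $\log L$: it guarantees $r_{k^+}/r_k \le L^{-100}$, leaving ample room for the amplification to still produce a negligible error on any polynomial scale.
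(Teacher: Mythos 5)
Your proposal is correct and follows essentially the same route as the paper: establish \eqref{eq:HarmonicMeasure} directly from the two-dimensional Poisson kernel formula, then prove \eqref{eq:HarmonicMeasureCond} by decomposing $P_u[W_T \in B]$ into the contributions from $\{H \le T\}$ (handled by strong Markov plus \eqref{eq:HarmonicMeasure}) and $\{H > T\}$, solving for the latter, and dividing by $P_u[H>T]$. The only cosmetic difference is that you identify $P_u[H>T]$ exactly as $1/(\lceil 100\log L\rceil+1)\asymp 1/\log L$ via \eqref{eq:AnnulusExitProb}, while the paper merely uses the weaker bound $P_u[H>T]\ge L^{-1}$; both suffice for the stated $O(L^{-99})$ (your sharper version actually yields the slightly better $O(L^{-100}\log L)$, so ``precisely $O(L^{-99})$'' is a harmless overstatement).
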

\begin{proof}
A classical result on the harmonic measure of Brownian motion says
that for $R>0$ and $u\in B\left(0,R\right)\subset\mathbb{R}^{2}$
\begin{equation}
P_{u}^{\mathbb{R}^{2}}\left[W_{T_{B\left(0,R\right)}}\in db\right]=\frac{R^{2}-\left|u\right|^{2}}{\left|u-b\right|^{2}}\tilde{\lambda}\left(db\right),\label{eq:HarmonicMeasureR2}
\end{equation}
where $\tilde{\lambda}$ is the uniform distribution on $\partial B\left(0,R\right)$
(see Theorem 3.43 \cite{PeresMoertersBrownianMotion}). With $R=r_{k}$
and $\left|u\right|=r_{k^{+}}$ or $\left|u\right|=r_{k^{+}+1}$ this
implies \prettyref{eq:HarmonicMeasure}, since $r_{k^{+}+1}/r_{k}\le cL^{-100}$
(by \prettyref{eq:DefOfRadii} and \prettyref{eq:DefOfKPlus}, and
using also that $B\left(0,r_{k}\right)\subset\mathbb{R}^{2}$ can
be identified with $B\left(v,r_{k}\right)\subset\mathbb{T}$; see
\eqref{eq:LawOfBMInTorusAndR2Same}). To get \prettyref{eq:HarmonicMeasureCond}
note that $P_{u}\left[W_{T_{B\left(v,r_{k}\right)}}\in B,H_{B\left(v,r_{k^{+}+1}\right)}>T_{B\left(v,r_{k}\right)}\right]$
equals 
\[
P_{u}\left[W_{T_{B\left(v,r_{k}\right)}}\in B\right]-P_{u}\left[W_{T_{B\left(v,r_{k}\right)}}\in B,H_{B\left(v,r_{k^{+}+1}\right)}<T_{B\left(v,r_{k}\right)}\right].
\]
The first term equals $\left(1+O\left(L^{-100}\right)\right)\lambda\left(B\right)$
by \prettyref{eq:HarmonicMeasure}. Also by the strong Markov property
applied at time $H_{B\left(v,r_{k^{+}+1}\right)}$ and \prettyref{eq:HarmonicMeasure}
the second term equals
\[
P_{u}\left[H_{B\left(v,r_{k^{+}+1}\right)}<T_{B\left(v,r_{k}\right)}\right]\left(1+O\left(L^{-100}\right)\right)\lambda\left(B\right).
\]
Thus $P_{u}\left[W_{T_{B\left(v,r_{k}\right)}}\in B,H_{B\left(v,r_{k^{+}+1}\right)}>T_{B\left(v,r_{k}\right)}\right]$
equals
\[
\lambda\left(B\right)\left\{ P_{u}\left[H_{B\left(v,r_{k^{+}+1}\right)}>T_{B\left(v,r_{k}\right)}\right]+O\left(L^{-100}\right)\right\} .
\]
But by \prettyref{eq:HitLProbStartingFromk}
\[
P_{u}\left[H_{B\left(v,r_{k^{+}+1}\right)}>T_{B\left(v,r_{k}\right)}\right]=\frac{1}{k^{+}+1-k}\ge L^{-1},
\]
so that in fact $P_{u}\left[W_{T_{B\left(0,R\right)}}\in B,H_{B\left(v,r_{k^{+}+1}\right)}>T_{B\left(v,r_{k}\right)}\right]$
is equal to
\[
\lambda\left(B\right)P_{u}\left[H_{B\left(v,r_{k^{+}+1}\right)}>T_{B\left(v,r_{k}\right)}\right]\left(1+O\left(L^{-99}\right)\right).
\]
This gives \prettyref{eq:HarmonicMeasureCond}.
\end{proof}
As a step in the ``decoupling'' of $J_{y}^{\downarrow}$ and $J_{z}^{\downarrow}$
we will now use the previous lemma to show that the part of an excursion
from $\partial B\left(v,r_{k^{+}+1}\right)$ to $\partial B\left(v,r_{k}\right)$
that takes place within $B\left(v,r_{k^{+}+1}\right)$ is almost independent
from the end point of the excursion. This will be used to show that
$J_{y}^{\downarrow}$, when conditioned to avoid $B\left(y,r_{L}\right)$,
is almost independent of the parts of \eqref{eq:SmallKBiggerEvent}
that refer to $z$ (note that $J_{y}^{\downarrow}$ only depends on
the parts of the excursions that take place in $B\left(y,r_{k^{+}+1}\right)$),
and vice versa with $y$ and $z$ swapped.

To this end, let 
\[
S=S\left(v\right)=\sup\left\{ D_{n}^{v,k^{+}}:D_{n}^{v,k^{+}}<T_{B\left(v,r_{k}\right)}\right\} ,
\]
be the time the last excursion from scale $k^{+}+1$ to scale $k^{+}$
before $T_{B\left(v,r_{k}\right)}$ ends. For $a\in\partial B\left(v,r_{k^{+}+1}\right)$
and $b\in\partial B\left(v,r_{k}\right)$ let
\begin{equation}
\mu_{a,b}\left[\cdot\right]=P_{a}\left[W_{\cdot\wedge T_{B\left(v,r_{k}\right)}}\in\cdot|W_{T_{B\left(v,r_{k}\right)}}=b\right],\label{eq:muab}
\end{equation}
be the law of an excursion starting in $a$ conditioned to end in
$b$. Let
\begin{equation}
\begin{array}{ccl}
\tilde{\mu}_{a,b}\left[\cdot\right] & = & \mu_{a,b}\left[\cdot|H_{B\left(v,r_{L}\right)}>T_{B\left(v,r_{k}\right)}\right]\\
 & = & P_{a}\left[W_{\cdot\wedge T_{B\left(v,r_{k}\right)}}\in\cdot|H_{B\left(v,r_{L}\right)}>T_{B\left(v,r_{k}\right)},W_{T_{B\left(v,r_{k}\right)}}=b\right],
\end{array}\label{eq:mutildeab}
\end{equation}
be the law of an excursion conditioned to end in $b$ and avoid $B\left(v,r_{L}\right)$,
and let
\begin{equation}
\tilde{\mu}_{a}\left[\cdot\right]=P_{a}\left[W_{\cdot\wedge T_{B\left(v,r_{k}\right)}}\in\cdot|H_{B\left(v,r_{L}\right)}>T_{B\left(v,r_{k}\right)}\right],\label{eq:mutildea}
\end{equation}
be the law of an excursion avoiding $B\left(v,r_{L}\right)$, without
conditioning on the end point. The result says that:
\begin{lem}
\label{lem:IndepOfEndPoint}($v\in\left\{ y,z\right\} $) For any
$u\in\partial B\left(v,k^{+}+1\right)$, $w\in B\left(v,r_{k}\right)$
we have
\begin{equation}
\tilde{\mu}_{u,w}\left[W_{\cdot\wedge S}\in\cdot\right]=\tilde{\mu}_{u}\left[W_{\cdot\wedge S}\in\cdot\right]\left(1+O\left(L^{-99}\right)\right).\label{eq:IndepOfEndPoint}
\end{equation}
\end{lem}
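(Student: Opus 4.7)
The plan is to show that the joint $P_u$-law of the pair $(W_{\cdot\wedge S},\,W_{T_{B(v,r_k)}})$ conditioned on $\{H_{B(v,r_L)}>T_{B(v,r_k)}\}$ factorises, up to a multiplicative error $1+O(L^{-99})$, into its two marginals, with the second marginal approximately $\lambda$. Since $\tilde{\mu}_u$ (cf.\ \eqref{eq:mutildea}) is the first marginal of this joint law and $\tilde{\mu}_{u,w}$ (cf.\ \eqref{eq:mutildeab}) is the conditional given $W_{T_{B(v,r_k)}}=w$, disintegrating over $w\in\partial B(v,r_k)$ will immediately yield \eqref{eq:IndepOfEndPoint}.

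The technical subtlety is that $S$ is not a stopping time: by definition it is the largest $D_n^{v,k^+}$ below $T_{B(v,r_k)}$. I would handle this by decomposing over the random index $N'=\sup\{n\ge 1:D_n^{v,k^+}<T_{B(v,r_k)}\}$, so that on $\{N'=n\}$ we have $S=D_n^{v,k^+}$, which is a stopping time. By maximality of $N'$,
\[
\{N'=n\}=\{D_n^{v,k^+}<T_{B(v,r_k)}\}\cap\bigl(\{H_{B(v,r_{k^++1})}>T_{B(v,r_k)}\}\circ\theta_{D_n^{v,k^+}}\bigr),
\]
and since $B(v,r_L)\subset B(v,r_{k^++1})$, on this event the condition $\{H_{B(v,r_L)}>T_{B(v,r_k)}\}$ is equivalent to $\{H_{B(v,r_L)}>D_n^{v,k^+}\}$ conjoined with the no-re-entry event above.

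Fix now a measurable $A\in\sigma(W_{\cdot\wedge S})$ and a measurable $B\subset\partial B(v,r_k)$. Applying the strong Markov property at $D_n^{v,k^+}$ to the $\{N'=n\}$ slice yields
\[
P_u\bigl[W_{\cdot\wedge S}\in A,\,H_{B(v,r_L)}>T_{B(v,r_k)},\,W_{T_{B(v,r_k)}}\in B,\,N'=n\bigr]=E_u\bigl[G_n\cdot\psi_n\bigr],
\]
where $G_n=1_{\{W_{\cdot\wedge D_n^{v,k^+}}\in A,\,H_{B(v,r_L)}>D_n^{v,k^+},\,D_n^{v,k^+}<T_{B(v,r_k)}\}}$ is $\mathcal{F}_{D_n^{v,k^+}}$-measurable and $\psi_n=P_{W_{D_n^{v,k^+}}}[W_{T_{B(v,r_k)}}\in B,\,H_{B(v,r_{k^++1})}>T_{B(v,r_k)}]$. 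Since $W_{D_n^{v,k^+}}\in\partial B(v,r_{k^+})$, \eqref{eq:HarmonicMeasureCond} applies and gives $\psi_n=(1+O(L^{-99}))\,\lambda(B)\,P_{W_{D_n^{v,k^+}}}[H_{B(v,r_{k^++1})}>T_{B(v,r_k)}]$ with error uniform in $W_{D_n^{v,k^+}}$. Reversing the Markov step and summing over $n\ge 1$ recombines the right-hand side into
\[
P_u\bigl[W_{\cdot\wedge S}\in A,\,H_{B(v,r_L)}>T_{B(v,r_k)},\,W_{T_{B(v,r_k)}}\in B\bigr]=(1+O(L^{-99}))\,\lambda(B)\,P_u\bigl[W_{\cdot\wedge S}\in A,\,H_{B(v,r_L)}>T_{B(v,r_k)}\bigr].
\]
Specialising $A=C(\mathbb{R}_+,\mathbb{T})$ and dividing the two identities produces the asserted factorisation, and a standard disintegration over $w$ gives \eqref{eq:IndepOfEndPoint}. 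The main obstacle is the bookkeeping forced by $S$ not being a stopping time, which is resolved by the decomposition over $N'$ together with the uniform-in-starting-point character of \eqref{eq:HarmonicMeasureCond}.
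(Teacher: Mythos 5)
Your argument is correct and essentially coincides with the paper's: the same decomposition over the index $n$ of the last departure $D_n^{v,k^+}$ before $T_{B(v,r_k)}$ (your $N'$), the same application of the strong Markov property at that stopping time, and the same invocation of \eqref{eq:HarmonicMeasureCond} to decouple the conditioned exit distribution from what happens before $S$. The only organisational difference is that the paper first proves the unconditional factorisation \eqref{eq:UnCond} and then transfers it to the conditioned laws via the $W_{\cdot\wedge S}$-measurability of $\{H_{B(v,r_L)}>T_{B(v,r_k)}\}$, while you carry that event through the whole computation and divide at the end; this incidentally spares you the explicit appeal to \eqref{eq:HarmonicMeasure} that the paper uses to convert $\lambda(B)$ back into $P_u[W_{T_{B(v,r_k)}}\in B]$.
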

\begin{proof}
We will show that
\begin{equation}
\mu_{u,w}\left[W_{\cdot\wedge S}\in\cdot\right]=\left(1+O\left(L^{-99}\right)\right)P_{w}\left[W_{\cdot\wedge S}\in\cdot\right].\label{eq:UnCond}
\end{equation}
The claim then follows, since the left-hand side of \prettyref{eq:IndepOfEndPoint}
equals
\begin{equation}
\frac{\mu_{u,w}\left[W_{\cdot\wedge S}\in\cdot,H_{B\left(v,r_{L}\right)}>T_{B\left(v,r_{k}\right)}\right]}{\mu_{u,w}\left[H_{B\left(v,r_{L}\right)}>T_{B\left(v,r_{k}\right)}\right]},\label{eq:DenomAndNum}
\end{equation}
so that we can apply \prettyref{eq:UnCond} to the denominator and
numerator of \prettyref{eq:DenomAndNum} (note that $H_{B\left(v,r_{L}\right)}>T_{B\left(v,r_{k}\right)}$
is $W_{\cdot\wedge S}-$measurable) to get that \prettyref{eq:DenomAndNum}
equals
\[
\frac{P_{w}\left[W_{\cdot\wedge S}\in\cdot,H_{B\left(v,r_{L}\right)}>T_{B\left(v,r_{k}\right)}\right]}{P_{w}\left[H_{B\left(v,r_{L}\right)}>T_{B\left(v,r_{k}\right)}\right]}\left(1+O\left(L^{-99}\right)\right),
\]
which equals the right-hand side of \prettyref{eq:IndepOfEndPoint}.

To show \prettyref{eq:UnCond} we note that $P_{u}\left[W_{\cdot\wedge S}\in A,S=D_{n}^{y,k^{+}},W_{T_{B\left(y,r_{k}\right)}}\in B\right]$
equals
\[
P_{u}\left[W_{\cdot\wedge D_{n}^{y,k^{+}}}\in A,T_{B\left(y,r_{k}\right)}\circ\theta_{D_{n}^{y,k^{+}}}<H_{B\left(y,r_{k^{+}+1}\right)}\circ\theta_{D_{n}^{y,k^{+}}},W_{T_{B\left(y,r_{k}\right)}}\in B\right].
\]
By the strong Markov property this probability can be written as
\begin{equation}
P_{u}\left[W_{\cdot\wedge D_{n}^{v,k^{+}}}\in A,P_{W_{D_{n}^{v,k^{+}}}}\left[T_{B\left(v,r_{k}\right)}<H_{B\left(v,r_{k^{+}+1}\right)},W_{T_{B\left(y,r_{k}\right)}}\in B\right]\right].\label{eq:MarkovApplied}
\end{equation}
But
\[
\begin{array}{l}
P_{W_{D_{n}^{v,k^{+}}}}\left[T_{B\left(v,r_{k}\right)}<H_{B\left(v,r_{k^{+}+1}\right)},W_{T_{B\left(v,r_{k}\right)}}\in B\right]\\
=P_{W_{D_{n}^{v,k^{+}}}}\left[W_{T_{B\left(v,r_{k}\right)}}\in B|T_{B\left(v,r_{k}\right)}<H_{B\left(v,r_{k^{+}+1}\right)}\right]P_{W_{D_{n}^{v,k^{+}}}}\left[T_{B\left(v,r_{k}\right)}<H_{B\left(v,r_{k^{+}+1}\right)}\right]\\
=\left(1+O\left(L^{-99}\right)\right)\lambda\left(B\right)P_{W_{D_{n}^{v,k^{+}}}}\left[T_{B\left(v,r_{k}\right)}<H_{B\left(v,r_{k^{+}+1}\right)}\right],
\end{array}
\]
by \prettyref{eq:HarmonicMeasureCond}. Thus the probability \prettyref{eq:MarkovApplied}
equals
\[
\begin{array}{l}
\left(1+O\left(L^{-99}\right)\right)\lambda\left(B\right)P_{u}\left[W_{\cdot\wedge D_{n}^{v,k^{+}}}\in A,P_{W_{D_{n}^{v,k^{+}}}}\left[T_{B\left(v,r_{k}\right)}<H_{B\left(v,r_{k^{+}+1}\right)}\right]\right]\\
=\left(1+O\left(L^{-99}\right)\right)\lambda\left(B\right)P_{u}\left[W_{\cdot\wedge D_{n}^{v,k^{+}}}\in A,S=D_{n}^{y,k^{+}}\right],
\end{array}
\]
and we get that
\[
\begin{array}{l}
P_{u}\left[W_{\cdot\wedge S}\in A,S=D_{n}^{y,k^{+}},W_{T_{B\left(y,r_{k}\right)}}\in B\right]\\
=\left(1+O\left(L^{-99}\right)\right)\lambda\left(B\right)P_{u}\left[W_{\cdot\wedge D_{n}^{v,k^{+}}}\in A,S=D_{n}^{y,k^{+}}\right].
\end{array}
\]
Thus summing over $n$ we obtain
\[
P_{u}\left[W_{\cdot\wedge S}\in A,W_{T_{B\left(y,r_{k}\right)}}\in B\right]=\left(1+O\left(L^{-99}\right)\right)\lambda\left(B\right)P_{u}\left[W_{\cdot\wedge S}\in A\right].
\]
Using \prettyref{eq:HarmonicMeasure} this gives
\[
P_{u}\left[W_{\cdot\wedge S}\in A,W_{T_{B\left(y,r_{k}\right)}}\in B\right]=\left(1+O\left(L^{-99}\right)\right)P_{u}\left[W_{T_{B\left(y,r_{k}\right)}}\in B\right]P_{u}\left[W_{\cdot\wedge S}\in A\right],
\]
from which \prettyref{eq:UnCond} follows (recall \prettyref{eq:muab}).
\end{proof}
We now prove a bound that deals with the contribution from the event
$J_{v}^{\downarrow}$, even when conditioned on ``what goes on outside
$B\left(v,r_{k}\right)$''. To this end let
\[
Y_{\cdot}^{i}=Y_{\cdot}^{i}\left(v\right)=W_{\left(D_{i}\left(v,r_{k},r_{k^{+}+1}\right)+\cdot\right)\wedge R_{i+1}\left(v,r_{k},r_{k^{+}+1}\right)},i\ge1,
\]
be the excursions from $\partial B\left(v,r_{k^{+}+1}\right)$ to
$\partial B\left(v,r_{k}\right)$. Define the $\sigma$-algebra 
\[
\mathcal{G}=\mathcal{G}\left(v\right)=\sigma\left(W_{\cdot\wedge R_{1}\left(v,r_{k},r_{k^{+}+1}\right)},Y_{\cdot}^{i}:i\ge1\right).
\]
The bound says that:
\begin{prop}
\label{prop:UntanglingJy}For any $l_{0}<k<L-l_{0}$ and $v\in\left\{ y,z\right\} $
we have that \textup{
\begin{equation}
P_{x}\left[J_{v}^{\downarrow}\cap\left\{ H_{B\left(v,r_{L}\right)}\ge D_{t_{-s}}^{v,0}\right\} |\mathcal{G}\right]\le c\frac{g\left(k^{+}\right)l_{0}^{0.51}}{L-l_{0}-k-1}P_{x}\left[H_{B\left(v,r_{L}\right)}\ge D_{t_{-s}}^{v,0}|\mathcal{G}\right].\label{eq:Blaja}
\end{equation}
}\end{prop}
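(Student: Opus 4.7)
The idea is to upgrade the unconditional bound of \prettyref{lem:BarrierPartLow} to a $\mathcal{G}$-conditional one by exploiting that the event $J_v^\downarrow$ depends only on the portions of each excursion $X^i$ lying inside $B(v,r_{k^+})$, whereas $\mathcal{G}$ fixes only their entry/exit points (on $\partial B(v,r_{k^++1})$ and $\partial B(v,r_k)$ respectively) together with the ``exterior'' pieces $Y^i$. The plan is to condition on $\mathcal{G}$, use the strong Markov property to exhibit the $X^i$ as conditionally independent excursions with prescribed endpoints, then decouple the endpoints from $J_v^\downarrow$ via \prettyref{lem:IndepOfEndPoint}, and finally invoke \prettyref{lem:BarrierPartLow} once the law has been reduced to one depending on $\mathcal{G}$ only through $N$.

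Concretely, by the strong Markov property at the successive times $R_i(v,r_k,r_{k^++1})$ and $D_i(v,r_k,r_{k^++1})$, conditional on $\mathcal{G}$ the excursions $X^1,\ldots,X^N$ are independent, with $X^i$ distributed as $\mu_{a_i,b_i}$, where $N$ and the points $a_i:=W_{R_i(v,r_k,r_{k^++1})}$, $b_i:=W_{D_i(v,r_k,r_{k^++1})}$ are all $\mathcal{G}$-measurable. Since any visit of $W$ to $B(v,r_L)\subset B(v,r_{k^++1})$ before $D_{t_{-s}}^{v,0}$ must occur inside one of the $X^i$ (a continuity/connectedness argument shows that the definition $D_{N+1}\ge D_{t_{-s}}^{v,0}$ precludes a partial entry into $B(v,r_{k^++1})$ between $D_N$ and $D_{t_{-s}}^{v,0}$), the event $\{H_{B(v,r_L)}\ge D_{t_{-s}}^{v,0}\}$ coincides exactly with ``each $X^i$ avoids $B(v,r_L)$'', under which the conditional law of $X^i$ becomes $\tilde\mu_{a_i,b_i}$. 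Now $J_v^\downarrow=J_{v,N}^\downarrow$ depends on $X^i$ only through the counts $T_l^{v,\infty}(X^i)$ for $l\ge k^+$, and these are measurable with respect to $W_{\cdot\wedge S^i}$, where $S^i$ is the analogue of $S$ for $X^i$ (the last exit from $\partial B(v,r_{k^+})$ before leaving $B(v,r_k)$). \prettyref{lem:IndepOfEndPoint} therefore permits replacing the $\tilde\mu_{a_i,b_i}$-law of $W_{\cdot\wedge S^i}$ by its $\tilde\mu_{a_i}$-law at multiplicative cost $1+O(L^{-99})$ per excursion.

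After this replacement the conditional law of $(W_{\cdot\wedge S^i})_{1\le i\le N}$ depends on $\mathcal{G}$ only through $N$ (rotational invariance of Brownian motion in $B(v,r_k)$ removes the residual dependence on the entry points $a_i$), so it coincides with the unconditional law of the $N$-excursion system conditioned to avoid $B(v,r_L)$; \prettyref{lem:BarrierPartLow} then applies directly and yields the factor $cg(k^+)l_0^{0.51}/(L-l_0-k-1)$. Multiplying back by $P_x[H_{B(v,r_L)}\ge D_{t_{-s}}^{v,0}\mid\mathcal{G}]$ gives the claim. The main technical obstacle is propagating the per-excursion error $1+O(L^{-99})$ across all $N$ excursions simultaneously: one needs $N\le L^{50}$, say, uniformly on a $\mathcal{G}$-set of overwhelming probability, so that the accumulated factor $(1+O(L^{-99}))^N$ is $1+o(1)$. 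This follows from a moment bound on $N$ (via a compound-geometric description of $N$ analogous to that of \prettyref{lem:ModifedTraversalsAreCompoundGeo}); the exceptional atypical-$N$ set is handled by Chebyshev and absorbed into the constant $c$.
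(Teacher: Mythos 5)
Your overall strategy matches the paper's: condition on $\mathcal{G}$, exhibit the excursions $X^i$ as conditionally independent with laws $\mu_{a_i,b_i}$, peel off the avoidance cost so the conditional laws become $\tilde\mu_{a_i,b_i}$, decouple the endpoints via \prettyref{lem:IndepOfEndPoint}, kill the residual dependence on the $a_i$ by rotational invariance, and finally invoke \prettyref{lem:BarrierPartLow}. All of these steps are the right moves, and your treatment of them is essentially correct.

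The genuine gap is in how you control the accumulated multiplicative error $(1+O(L^{-99}))^N$. You propose to bound $N$ by moment estimates and Chebyshev, discarding an ``atypical-$N$'' exceptional set. This does not work as stated. The proposition asserts a $\mathcal G$-a.s.\ pointwise inequality, and since $N$ is $\mathcal G$-measurable, after conditioning $N$ is a fixed number; so "$N\le L^{50}$ on a set of overwhelming probability'' cannot be absorbed into the constant $c$: on the bad set your argument gives nothing, and when the bound is later integrated over $\mathcal G$ (in \prettyref{prop:SmallKCase}) the trivial estimate on the bad set contributes $P_x[\text{bad}]$, which one cannot a priori compare with the exponentially small right-hand side. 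The paper's resolution is both simpler and sharper: on the event $J_v^\downarrow$ itself one has $N\le T_{k^+}^{v,t_{-s}}$ (every excursion from $\partial B(v,r_{k^++1})$ to $\partial B(v,r_k)$ produces at least one traversal $k^+\to k^++1$), and $J_v^\downarrow$ forces $T_{k^+}^{v,t_{-s}}\le\delta(k^+)^2\le 2L^2$. Thus the indicator $1_{\{N\le 2L^2\}}$ may be inserted for free, and $(1+cL^{-99})^{2L^2}\le c$ with no probabilistic argument at all. This deterministic truncation, exploited through the very event you are trying to bound, is the missing ingredient in your proposal.
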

\begin{proof}
Recall the definitions \prettyref{eq:NumberOfExcursions} of $N=N\left(v\right)$
and \prettyref{eq:DefOFXiExcursions} of $X_{\cdot}^{i}$. We have
that
\begin{equation}
\left\{ H_{B\left(v,r_{L}\right)}\ge D_{t_{-s}}^{v,0}\right\} =\left\{ H_{B\left(v,r_{L}\right)}\left(X_{\cdot}^{i}\right)>T_{B\left(y,r_{k}\right)}\left(X_{\cdot}^{i}\right),i=1,\ldots,N\right\} \overset{\mbox{def}}{=}A.\label{eq:AvoidingSmallestBallInTermsOfExcursions}
\end{equation}
Also $N\le T_{k^{+}}^{y,t_{-s}}$ since each excursion $X_{\cdot}^{i}$
contains at least one traversal $k^{+}\to k^{+}+1$ (recall \prettyref{eq:TraversalsDef}).
Thus on the event $J_{v}^{\downarrow}$ (see \prettyref{eq:JDown})
we have 
\begin{equation}
N\left(v\right)\le\delta^{2}\left(k^{+}\right)\le2L^{2},\mbox{\,\ by }\eqref{eq:defofts},\eqref{eq:BetaDef},\eqref{eq:DeltaBarrierDef}.\label{eq:BoundOnN}
\end{equation}
Let $\rho_{u},u\in B\left(y,r_{k}\right)$ denote the map that rotates
$B\left(v,r_{k}\right)\subset\mathbb{T}$ around $v$ so that $u$
lies on the same horizontal line as $v$, and for any path $w\in C\left(\mathbb{R}_{+},\mathbb{T}\right)$
let $\rho\left(w\right)=\left(\rho_{w_{0}}\left(w_{t}\right)\right)_{t\ge0}$.
We have that $T_{l}^{y,\infty}\left(X_{\cdot}^{i}\right)=T_{l}^{y,\infty}\left(\rho\left(X_{\cdot\wedge S}^{i}\right)\right)$
(recall \prettyref{eq:DefOfSLastTime}), and therefore $J_{v}^{\downarrow}$
only depends only on 
\[
\bar{X}_{\cdot}^{i}=\rho\left(X_{\cdot\wedge S}^{i}\right),i=1,\ldots,n,
\]
so that there exists a family of measurable functions $f_{1},f_{2},\ldots$,
such that
\[
1_{J_{y,n}^{\downarrow}}=f_{n}\left(\bar{X}_{\cdot}^{1},\ldots,\bar{X}_{\cdot}^{n}\right).
\]
We thus have (recall \eqref{eq:JvNequalsJv} and \eqref{eq:AvoidingSmallestBallInTermsOfExcursions})
\begin{equation}
P_{x}\left[J_{v}^{\downarrow}\cap\left\{ H_{B\left(v,r_{L}\right)}\ge D_{t_{-s}}^{v,0}\right\} |\mathcal{G}\right]=E_{x}\left[f_{N}\left(\bar{X}_{\cdot}^{1},\ldots,\bar{X}_{\cdot}^{N}\right)1_{A}|\mathcal{G}\right].\label{eq:FirstEquality}
\end{equation}
Now under $P_{x}\left[\cdot|\mathcal{G}\right]$ the $X_{\cdot}^{i},i=1\ldots,N,$
are independent and $X_{\cdot}^{i}$ has the law $\mu_{X_{0}^{i},X_{\infty}^{i}}$
from \prettyref{eq:muab} (note that $X_{0}^{i}$ and $X_{\infty}^{i}$
are $\mathcal{G}-$measurable). Thus \prettyref{eq:FirstEquality}
in fact equals
\begin{equation}
1_{\left\{ N\le2L^{2}\right\} }\otimes_{i=1}^{N}\mu_{X_{0}^{i},X_{\infty}^{i}}\left[f_{N}\left(\bar{Z}_{\cdot}^{1},\ldots,\bar{Z}_{\cdot}^{N}\right)1_{\left\{ H_{B\left(y,r_{L}\right)}\left(Z_{\cdot}^{i}\right)>T_{B\left(y,r_{k}\right)}\left(Z_{\cdot}^{i}\right),i=1,\ldots,N\right\} }\right],\label{eq:SecondEquality}
\end{equation}
where the vector $\left(Z_{\cdot}^{1},\ldots,Z_{\cdot}^{N}\right)$
has law $\otimes_{i=1}^{N}\mu_{X_{0}^{i},X_{\infty}^{i}}$ and
\[
\bar{Z}_{\cdot}^{i}=\rho\left(Z_{\cdot\wedge S}^{i}\right).
\]
Now (recall \eqref{eq:muab} and \eqref{eq:mutildeab})
\[
\mu_{a,b}\left[\cdot1_{\left\{ H_{B\left(v,r_{L}\right)}>T_{B\left(y,r_{k}\right)}\right\} }\right]=\tilde{\mu}_{a,b}\left[\cdot\right]\mu_{a,b}\left[H_{B\left(v,r_{L}\right)}>T_{B\left(y,r_{k}\right)}\right],
\]
so that in fact \prettyref{eq:SecondEquality} equals
\begin{equation}
1_{\left\{ N\le2L^{2}\right\} }\left(\otimes_{i=1}^{N}\tilde{\mu}_{X_{0}^{i},X_{\infty}^{i}}\left[f_{N}\left(\bar{Z}_{\cdot}^{1},\ldots,\bar{Z}_{\cdot}^{N}\right)\right]\right)P_{x}\left[H_{B\left(v,r_{L}\right)}\ge D_{t_{-s}}^{v,0}|\mathcal{G}\right].\label{eq:Uno}
\end{equation}
Using \prettyref{eq:IndepOfEndPoint} together with $\left(1+cL^{-99}\right)^{2L^{2}}\le c$
this is bounded above by
\begin{equation}
c{\displaystyle \sup_{n\ge0}}\left(\otimes_{i=1}^{n}\tilde{\mu}_{u}\left[f_{n}\left(\bar{Z}_{\cdot}^{1},\ldots,\bar{Z}_{\cdot}^{n}\right)\right]\right)P_{x}\left[H_{B\left(v,r_{L}\right)}\ge D_{t_{-s}}^{v,0}|\mathcal{G}\right],\label{eq:BASVASFDASF}
\end{equation}
for an arbitrary $u\in\partial B\left(v,r_{k^{+}+1}\right)$ (the
law of $\rho\left(W_{\cdot\wedge S}\right)$ under $\tilde{\mu}_{u}$
is independent of $u$, see \prettyref{eq:RotationalInvarianceInTorusBall}).
Now consider the law of $\left(\bar{X}_{\cdot}^{1},\ldots,\bar{X}_{\cdot}^{n}\right)$
under $P_{x}\left[\cdot|H_{B\left(v,r_{L}\right)}>D_{n}\left(v,r_{k},r_{k^{+}+1}\right)\right]$.
By the strong Markov property and the rotational invariance \prettyref{eq:RotationalInvarianceInTorusBall}
this vector is iid with law $\tilde{\mu}_{u}$. Thus we have that
\begin{equation}
\begin{array}{l}
\otimes_{i=1}^{n}\tilde{\mu}_{u}\left[f_{N}\left(\bar{Z}_{\cdot}^{1},\ldots,\bar{Z}_{\cdot}^{N}\right)\right]\\
=P_{x}\left[f_{n}\left(\bar{X}_{\cdot}^{1},\ldots,\bar{X}_{\cdot}^{n}\right)|H_{B\left(v,r_{L}\right)}>D_{n}\left(v,r_{k},r_{k^{+}+1}\right)\right]\\
=P_{x}\left[J_{v,n}^{\downarrow}|H_{B\left(v,r_{L}\right)}>D_{n}\left(v,r_{k},r_{k^{+}+1}\right)\right]\le c\frac{g\left(k^{+}\right)l_{0}^{0.51}}{L-l_{0}-k-1},
\end{array}\label{eq:Tres}
\end{equation}
by \prettyref{lem:BarrierPartLow}. Combining this with \prettyref{eq:FirstEquality}-\prettyref{eq:BASVASFDASF}
gives the claim.
\end{proof}
We are now ready to prove the two point probability estimate for small
$k$.
\begin{prop}
\label{prop:SmallKCase}If $k\le\left(1-\frac{s}{10}\right)L$ then
\begin{equation}
\begin{array}{l}
P_{x}\left[J_{y}^{\downarrow}\cap\left\{ H_{B\left(y,r_{L}\right)}\ge D_{t_{-s}}^{y,0}\right\} \cap J_{z}^{\downarrow}\cap\left\{ \gamma\left(k\right)\le\sqrt{T_{k}^{z,t_{-s}}},H_{B\left(z,r_{L}\right)}\ge D_{t_{-s}}^{z,0}\right\} \right]\\
\le c\left(s\right)e^{-\left(4L-2k\right)-cf\left(k\right)}L^{2s}l_{0}^{1.02}\left(g\left(k^{+}\right)\right)^{2}.
\end{array}\label{eq:SmallKCase}
\end{equation}
\end{prop}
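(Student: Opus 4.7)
The plan is to apply \prettyref{prop:UntanglingJy} twice---once for $v=y$ and once for $v=z$---to extract the barrier contributions from $J_y^{\downarrow}$ and $J_z^{\downarrow}$, then to apply \prettyref{cor:RecursionImplemented} with the trivial choice $J=C(\mathbb{R}_+,\mathbb{T})$ to handle the joint avoidance of $B(y,r_L)$ and $B(z,r_L)$, and finally to invoke Lemma \ref{lem:NotHitByrL} for the residual scalar factor.

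Before applying \prettyref{prop:UntanglingJy} with $v=y$, I verify that the $z$-event
\[ E_z := J_z^{\downarrow}\cap\{\gamma(k)\le\sqrt{T_k^{z,t_{-s}}}\}\cap\{H_{B(z,r_L)}\ge D_{t_{-s}}^{z,0}\} \]
is $\mathcal{G}(y)$-measurable. All three factors in $E_z$ depend on the trajectory only through visits to $\partial B(z,r_0)$, $\partial B(z,r_1)$, the annuli defining $T_l^{z,t_{-s}}$ for $l\ge k$, and $B(z,r_L)\subset B(z,r_k)$. Since $d(y,z)>2r_k$ and $k>l_0\ge 3$ for $L$ large, one has $d(y,z)\le 2r_{k-1}=2r_1 e^{2-k}\le(1-c)r_1$, so $y\in B(z,r_1)$ at distance at least $cr_1$ from $\partial B(z,r_1)$, while $r_{k^{+}+1}\le r_k/L^{100}$ is minuscule. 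Hence $B(y,r_{k^{+}+1})$ is disjoint from $\partial B(z,r_0)$, $\partial B(z,r_1)$, $B(z,r_k)$, and the relevant $z$-annuli; the $z$-events thus depend only on the portion of the trajectory outside $B(y,r_{k^{+}+1})$, which is encoded in $\mathcal{G}(y)$. A symmetric argument shows that $\{H_{B(y,r_L)}\ge D_{t_{-s}}^{y,0}\}\cap\{\gamma(k)\le\sqrt{T_k^{z,t_{-s}}}\}$ is $\mathcal{G}(z)$-measurable. Multiplying the conditional inequality of \prettyref{prop:UntanglingJy} for $v=y$ by $\mathbf{1}_{E_z}$ and integrating extracts a factor $cg(k^+)l_0^{0.51}/(L-l_0-k-1)$; the analogous step for $v=z$ extracts a second such factor, reducing the task to bounding
\[ P_x[\{H_{B(y,r_L)}\ge D_{t_{-s}}^{y,0}\}\cap\{\gamma(k)\le\sqrt{T_k^{z,t_{-s}}}\}\cap\{H_{B(z,r_L)}\ge D_{t_{-s}}^{z,0}\}]. \]

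For this residual probability, observe that $T_k^{z,t_{-s}}\ge\gamma(k)^2$ forces $R_{\lceil\gamma(k)^2\rceil}^{z,k}\le D_{t_{-s}}^{z,0}$ and hence $D_{\lceil\gamma(k)^2\rceil-1}^{z,k}\le D_{t_{-s}}^{z,0}$; combined with $H_{B(z,r_L)}\ge D_{t_{-s}}^{z,0}$ this yields $H_{B(z,r_L)}\ge D_{\lceil\gamma(k)^2\rceil-1}^{z,k}$, which (up to a harmless $\pm 1$ in the index, absorbed into constants) is exactly the form handled by \prettyref{cor:RecursionImplemented}. Applying that corollary with trivial $J$ produces a factor $ce^{-2(L-k)-cf(k)}L^{(1+s)(1-k/L)}$ times $P_x[H_{B(y,r_L)}\ge D_{t_{-s}}^{y,0}]$, and Lemma \ref{lem:NotHitByrL} bounds the latter by $ce^{-2L}L^{1+s}$. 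Collecting all factors gives a bound of order $g(k^+)^2 l_0^{1.02}(L-l_0-k-1)^{-2}e^{-(4L-2k)-cf(k)}L^{(1+s)(2-k/L)}$. Under the hypothesis $k\le(1-s/10)L$, we have $L-l_0-k-1\ge c(s)L$ and $(1+s)(2-k/L)-2 = 2s-(1+s)k/L\le 2s$, which together yield the claimed bound.

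The \emph{main obstacle} is the $\mathcal{G}$-measurability verification: a priori the trajectory of $W$ inside $B(y,r_{k^{+}+1})$ could cross $\partial B(z,r_1)$ and alter the count of $z$-excursions, invalidating the measurability of $E_z$. The geometric estimate $d(y,z)\le 2r_{k-1}\le(1-c)r_1$ rules this out, but crucially requires $k>l_0\ge 3$---the cutoff $l_0$ in the definition of $J_v^{\downarrow}$ is essential here. The rest of the proof is a routine combination of existing estimates.
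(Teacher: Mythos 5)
Your proof follows essentially the same route as the paper: two applications of \prettyref{prop:UntanglingJy} (for $v=y$ then $v=z$), followed by \prettyref{cor:RecursionImplemented} with the trivial choice $J=C(\mathbb{R}_+,\mathbb{T})$, and finally \prettyref{lem:NotHitByrL}, with the same algebra in the final collection of factors. The one thing that needs correction is the measurability reasoning. You assert that the $z$-events are $\mathcal{G}(y)$-measurable because they depend only on the trajectory outside $B(y,r_{k^++1})$, ``which is encoded in $\mathcal{G}(y)$.'' That last clause is false: $\mathcal{G}(y)$ is generated by the initial segment up to the first hit of $B(y,r_{k^++1})$ and by the outbound excursions $Y^i_\cdot(y)$, and so the inbound excursions $X^i_\cdot(y)$ --- which live in $\bar{B}(y,r_k)$ but wander freely through the annulus $B(y,r_k)\setminus B(y,r_{k^++1})$ --- are invisible to $\mathcal{G}(y)$. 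Thus the trajectory outside $B(y,r_{k^++1})$ is \emph{not} $\mathcal{G}(y)$-measurable. What \emph{is} $\mathcal{G}(y)$-measurable is the trajectory outside the larger ball $B(y,r_k)$, and this is what the argument should invoke: $d(y,z)>2r_k$ (together with \prettyref{eq:BranchingPoint}) gives that $B(z,r_k)$, and a fortiori all the $z$-spheres at scales $\ge k$ as well as $\partial B(z,r_0)$ and $\partial B(z,r_1)$, are disjoint from $B(y,r_k)$, so the $z$-events depend only on the trajectory outside $B(y,r_k)$ and are therefore $\mathcal{G}(y)$-measurable. Your geometric estimates in fact contain this stronger disjointness, so the conclusion is unaffected, but the measurability step as written does not hold. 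The rest of the argument (the $\pm1$ index shuffle feeding into \prettyref{cor:RecursionImplemented}, and the final exponent bookkeeping using $k\le(1-s/10)L$) is correct and matches the paper up to cosmetic differences.
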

\begin{proof}
We first use \prettyref{prop:UntanglingJy} with $v=y$. Note that
with this choice of $v$ we have
\[
J_{z}^{\downarrow}\cap\left\{ \gamma\left(k\right)\le\sqrt{T_{k}^{z,t_{-s}}},H_{B\left(z,r_{L}\right)}\ge D_{t_{-s}}^{z,0}\right\} \in\mathcal{G}\left(y\right),
\]
since these events depend only ``on what goes on outside $B\left(y,r_{k}\right)$''
(recall \prettyref{eq:BranchingPoint} and \prettyref{eq:JDown}).
Thus it follows by \prettyref{prop:UntanglingJy} that the probability
in \prettyref{eq:SmallKCase} is bounded above by 
\[
c\frac{g\left(k^{+}\right)l_{0}^{0.51}}{L-l_{0}-k-1}P_{x}\left[\left\{ H_{B\left(y,r_{L}\right)}\ge D_{t_{-s}}^{y,0}\right\} \cap J_{z}^{\downarrow}\cap\left\{ \gamma\left(k\right)\le\sqrt{T_{k}^{z,t_{-s}}},H_{B\left(z,r_{L}\right)}\ge D_{t_{-s}}^{z,0}\right\} \right].
\]
Next we let $v=z$ in \prettyref{prop:UntanglingJy}. We now have
that
\[
\left\{ H_{B\left(y,r_{L}\right)}\ge D_{t_{-s}}^{y,0}\right\} \cap\left\{ \gamma\left(k\right)\le\sqrt{T_{k}^{z,t_{-s}}}\right\} \in\mathcal{G}\left(z\right),
\]
so that again by \prettyref{prop:UntanglingJy} the probability in
\prettyref{eq:SmallKCase} is bounded above by
\[
\left(c\frac{g\left(k^{+}\right)l_{0}^{0.51}}{L-l_{0}-k-1}\right)^{2}P_{x}\left[\left\{ H_{B\left(y,r_{L}\right)}\ge D_{t_{-s}}^{y,0}\right\} \cap\left\{ \gamma\left(k\right)\le\sqrt{T_{k}^{z,t_{-s}}},H_{B\left(z,r_{L}\right)}\ge D_{t_{-s}}^{z,0}\right\} \right].
\]
Now $\left\{ \gamma\left(k\right)\le\sqrt{T_{k}^{z,t_{-s}}},H_{B\left(z,r_{L}\right)}\ge D_{t_{-s}}^{z,0}\right\} \subset\left\{ H_{B\left(z,r_{L}\right)}\ge D_{\gamma\left(k\right)^{2}}^{z,k}\right\} $
(recall \prettyref{eq:ExcursionTimeAbreviation} and \prettyref{eq:TraversalsDef})
so that by \prettyref{cor:RecursionImplemented} with $J=C\left(\mathbb{R}_{+},\mathbb{T}\right)$
the probability in \prettyref{eq:SmallKCase} is at most
\begin{equation}
\left(c\frac{g\left(k^{+}\right)l_{0}^{0.51}}{L-l_{0}-k-1}\right)^{2}\times ce^{-2\left(L-k\right)-cf\left(k\right)}L^{1+s}\times P_{x}\left[H_{B\left(y,r_{L}\right)}\ge D_{t_{-s}}^{y,0}\right].\label{eq:AlmostDone}
\end{equation}
The latter probability is bounded above by $ce^{-2L}L^{1+s}$ by \prettyref{lem:NotHitByrL},
so \prettyref{eq:AlmostDone} simplifies to the right-hand side of
\prettyref{eq:SmallKCase} by noting that $L-l_{0}-k-1\ge c\left(s\right)L$.
\end{proof}
We can now finish the section by deriving the two point probability
estimate \prettyref{prop:MainTwoProfileBound} using \prettyref{prop:LargeKCase}
and \prettyref{prop:SmallKCase}.
\begin{proof}[Proof of \prettyref{prop:MainTwoProfileBound}]
We have (recall \prettyref{eq:TraversalsDef}, \prettyref{eq:ExtinctionImpliesNotHit},
\prettyref{eq:TruncatedSummandLB}, \prettyref{eq:JUp} and \prettyref{eq:JDown})
\[
I_{y}\subset J_{y}^{\uparrow}\cap\left\{ H_{B\left(y,r_{L}\right)}\ge D_{t_{-s}}^{y}\right\} \mbox{ and }I_{y}\subset J_{y}^{\downarrow}\cap\left\{ H_{B\left(y,r_{L}\right)}\ge D_{t_{-s}}^{y}\right\} .
\]
Similarly
\[
I_{z}\subset\left\{ H_{B\left(z,r_{L}\right)}\ge D_{\gamma\left(k\right)^{2}}^{z,k}\right\} \mbox{ and }I_{z}\subset J_{z}^{\downarrow}\cap\left\{ \gamma\left(k\right)\le\sqrt{T_{k}^{z,t_{-s}}},H_{B\left(z,r_{L}\right)}\ge D_{t_{-s}}^{z,0}\right\} .
\]
Thus by \prettyref{prop:LargeKCase} and \prettyref{prop:SmallKCase}
we have
\[
P_{x}\left[I_{y}\cap I_{z}\right]\le\begin{cases}
ce^{-\left(4L-2k\right)-cf\left(k\right)}L^{2s}\sqrt{l_{0}}g\left(k-2\right) & \mbox{ if }k\ge\left(1-\frac{s}{10}\right)L,\\
c\left(s\right)e^{-\left(4L-2k\right)-cf\left(k\right)}L^{2s}l_{0}^{1.02}\left(g\left(k^{+}\right)\right)^{2} & \mbox{ if }k\le\left(1-\frac{s}{10}\right)L.
\end{cases}
\]
Thus \prettyref{eq:TwoProfileBoundInStatement} follows, since $g\left(k-2\right)\le cg\left(k\right)$
and $g\left(k^{+}\right)\le cg\left(k\right)\left(\log L\right)^{0.51}$.
\end{proof}
This also completes the proof the lower bound \prettyref{prop:LowerBound},
modulo the barrier crossing results \prettyref{lem:GaltonWatsonBarrierBoundsSecLB}
and \prettyref{lem:GWBarrierGammaDeltaUpTok} which we have as of
yet only stated. Recall that the proof of the upper bound \prettyref{prop:UpperBound}
was also completed in \prettyref{sec:UpperBound} modulo the barrier
crossing result \prettyref{lem:GWBarrierAlpha}. The next section
gives the proof of these results.

\section{\label{sec:BoundaryCrossingProofs}Barrier estimate proofs}

In this section we prove the barrier crossing estimates \prettyref{lem:GWBarrierAlpha},
\prettyref{lem:GaltonWatsonBarrierBoundsSecLB} and \prettyref{lem:GWBarrierGammaDeltaUpTok}
for the Galton-Watson process $\left(T_{l}\right)_{l\ge0}$ that were
crucial in the proofs of the upper bound \prettyref{prop:UpperBound}
in \prettyref{sec:UpperBound} and the lower bound \prettyref{prop:LowerBound}
in sections \ref{sec:Lowerbound} and \ref{sec:TwoPointBound}.

The kind of barrier bounds we need appear in the literature for the
Brownian bridge process (indeed they are an integral part of the analysis
of branching Brownian motion that provides the inspiration for the
proof of our main result, see \cite{ABKGenealogy,Bramson1ConvergenceofSolutionsOfKolmogorovEqn,KistlerDerridasRandomEnergyModelsBeyondSpinGlasses}).
Our approach is to derive the needed bounds for the Galton-Watson
process from these Brownian bridge results, via a comparison to the
Bessel bridge. Roughly speaking the squared Bessel process of dimension
\emph{zero} is the continuous state space version of the Galton-Watson
process $\left(T_{l}\right)_{l\ge0}$, so that the $\mathbb{G}_{t_{s}}\left[\cdot|T_{L-1}=0\right]-$law
of $T_{l}$ should be similar to a squared Bessel bridge on $\left[0,L\right]$
of dimension \emph{zero}. A squared Bessel bridge of dimension \emph{one}
is a Brownian bridge squared. Our approach to get barrier bounds for
$T_{l}$ from bounds for Brownian bridge is thus to first translate
from ``discrete to continuous state space'' and then make a ``change
of dimension''.

For the first step we exploit that $\left(T_{l}\right)_{l\ge0}$ is
the law of the discrete edge local times of random walk on the path
$\left\{ 0,1,\ldots,L\right\} $, while the law of the continuous
local times of the vertices is a squared Bessel process of dimension
one. For the second step we use an explicit expression for the Radon-Nikodym
derivative of law of the squared Bessel bridge of dimension one with
respect to the law of the bridge with dimension zero.

\begin{figure}
\includegraphics{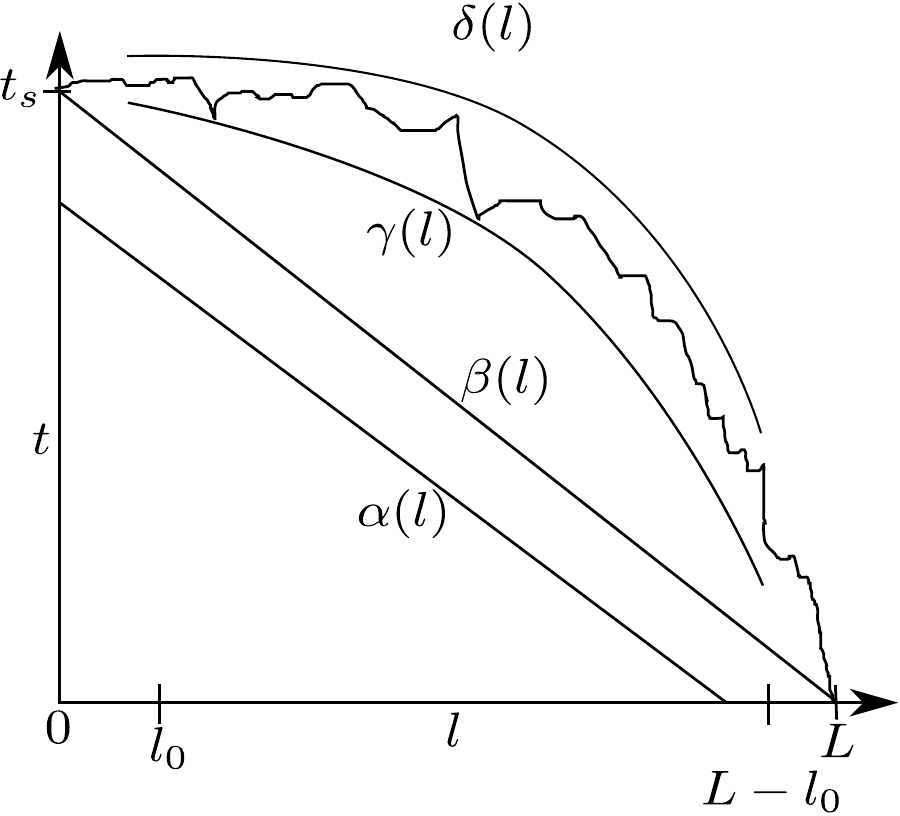}\\

\caption{\label{fig:Illustration-of-barriers}Illustration of the functions
$\alpha\left(l\right),\beta\left(l\right),\gamma\left(l\right)$ and
$\delta\left(l\right)$, and a sample paths that stays in the ``tube''
bounded by $\gamma\left(l\right)$ and $\delta\left(l\right)$.}

\framebox{\begin{minipage}[t]{1\columnwidth}%
\emph{In the proof of the upper bound \prettyref{prop:UpperBound}
one shows that with high probability there is no point $y\in F_{L}$
such that $T_{L-1}^{y,t_{s}}=0$ and $\sqrt{T_{l}^{y,t_{s}}}$ stays
above $\alpha\left(l\right)$. In the proof of lower bound \prettyref{prop:LowerBound}
one shows that with high probability there is a point $y\in F_{L}$
such that $T_{L-1}^{y,t_{-s}}=0$ and $\sqrt{T_{l}^{y,t_{-s}}}$ stays
in the aforementioned tube. This is done using bounds on the probability
that $\sqrt{T_{l}}$ starting at $T_{0}=t_{s}$ stays above $\alpha$,
when conditioned on $T_{L-1}=0$, and bounds on the probability that
this process stays in the tube. Note that $\beta\left(l\right)$ is
roughly speaking the mean of the conditioned process. (See \prettyref{lem:GWBarrierAlpha},
\prettyref{lem:GaltonWatsonBarrierBoundsSecLB}, and \prettyref{prop:BarrierSecGWProp}).}%
\end{minipage}}
\end{figure}

For convenience, let us now restate \prettyref{lem:GWBarrierAlpha},
\prettyref{lem:GaltonWatsonBarrierBoundsSecLB} and \prettyref{lem:GWBarrierGammaDeltaUpTok}
as one proposition. Recall first the definitions of $t_{s}=t_{s}\left(L\right)$
from \eqref{eq:defofts} and of the straight line $\beta\left(l\right)$
from \eqref{eq:BetaDef} (giving, roughly speaking, the mean of the
$T_{l}$ when $T_{0}=t_{s}$ and conditioned on $T_{L-1}=0$). Also
recall the definitions of the barriers $\alpha\left(l\right),\gamma\left(l\right)$
and $\delta\left(l\right)$ from \eqref{eq:AlphaBarrierDef}, \eqref{eq:GammaBarrierDef},
and \eqref{eq:DeltaBarrierDef} and the cut-off $l_{0}=l_{0}\left(L\right)$
from \eqref{eq:DefOfCutOff} (see also \prettyref{fig:Illustration-of-barriers}).
In the interest of brevity we introduce the following notation. For
any $T>0$, set $I\subset\left[0,T\right]$ and function $\eta:\left[0,T\right]\to\mathbb{R}$
we let $B_{\eta}\left(I\right)$ denote the event that a process is
above $\eta\left(t\right)$ for all $t\in I$. We let $B^{\eta}\left(I\right)$
denote the event that a process is below $\eta\left(t\right)$ for
all $t\in I$. For two functions $\eta$ and $\psi$ we let $B_{\eta}^{\psi}\left(I\right)=B_{\eta}\left(I\right)\cap B^{\psi}\left(I\right)$.
With this notation, we can now restate \prettyref{lem:GWBarrierAlpha}
as \prettyref{eq:GWBarrierAlphaNewNotation}, \prettyref{lem:GaltonWatsonBarrierBoundsSecLB}
as \prettyref{eq:GWBarrierGammaDeltaNewNotation} and \prettyref{lem:GWBarrierGammaDeltaUpTok}
as \prettyref{eq:GWBarrierGammaDeltaUptoKNewNotation}-\prettyref{eq:GWBarrierGammaDeltaFromkNewNotaiton}.
\begin{prop}
\label{prop:BarrierSecGWProp}For all $L\ge1$ and $s\in\left(-100,100\right)$\textup{
\begin{eqnarray}
\mathbb{G}_{t_{s}}\left[B_{\alpha^{2}}\left(\left\{ 0,\ldots,L-1\right\} \right)|T_{L-1}=0\right] & \le & c\frac{\left(\log L\right)^{4}}{L},\label{eq:GWBarrierAlphaNewNotation}\\
\mathbb{G}_{t_{s}}\left[B_{\gamma^{2}}^{\delta^{2}}\left(\left\{ l_{0},\ldots L-l_{0}\right\} \right)|T_{L-1}=0\right] & \asymp & \frac{l_{0}}{L}.\label{eq:GWBarrierGammaDeltaNewNotation}
\end{eqnarray}
}If also $l_{0}+1<k<L-l_{0}$ then
\begin{equation}
\mathbb{G}_{t_{s}}\left[B_{\gamma^{2}}^{\delta^{2}}\left(\left\{ l_{0},\ldots,k\right\} \right)|T_{L-1}=0\right]\le\frac{c\sqrt{l_{0}}g\left(k\right)}{k-l_{0}}.\label{eq:GWBarrierGammaDeltaUptoKNewNotation}
\end{equation}
If $l_{0}<k<L-l_{0}-1$ and $\gamma\left(k\right)^{2}\le a\le\delta\left(k\right)^{2}$
then \textup{
\begin{equation}
\mathbb{G}_{a}\left[B_{\gamma\left(k+\cdot\right)^{2}}\left(\left\{ 0,\ldots,L-k-l_{0}\right\} \right)|T_{L-1-k}=0\right]\le c\frac{l_{0}^{0.51}g\left(k+1\right)}{L-k-l_{0}-1}.\label{eq:GWBarrierGammaDeltaFromkNewNotaiton}
\end{equation}
}
\end{prop}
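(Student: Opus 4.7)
The plan follows the roadmap laid out at the opening of the section. By the discrete Ray--Knight identity noted in the remark after \prettyref{lem:BranchingProc}, $\mathbb{G}_n$ is the law of the sequence of directed edge local times of a simple random walk on $\mathbb{Z}_+$ started at $0$ after $n$ returns to the origin, and the conditioning $T_{L-1}=0$ selects only those excursions that do not reach $L$. The continuous-state analogue of this law is a squared Bessel bridge, and the usual squared-Brownian-bridge barrier estimates can be reached via the explicit Radon--Nikodym derivative between squared Bessel bridges of dimensions zero and one. The overall strategy is therefore to set up the chain $\mathbb{G}_{t_s}[\,\cdot\,|T_{L-1}=0]\;\rightsquigarrow\;\text{squared Bessel bridge of dim.\ 0}\;\rightsquigarrow\;\text{squared Bessel bridge of dim.\ 1}=(\text{Brownian bridge})^2$, and then to read off the four bounds from classical reflection-principle or ballot-theorem estimates for Brownian bridges.

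Concretely, after setting $X_l := \sqrt{T_l}-\beta(l)$, i.e.\ the square-root trajectory with its conditional mean subtracted, the event in \prettyref{eq:GWBarrierAlphaNewNotation} becomes $\{X_l \ge -(\log L)^2 \text{ for } l\in\{0,\ldots,L-1\}\}$, the tube in \prettyref{eq:GWBarrierGammaDeltaNewNotation} becomes $\{f(l)\le X_l\le g(l) \text{ for } l\in\{l_0,\ldots,L-l_0\}\}$, and similarly for the remaining two bounds. Under the comparison above, $X_l$ is approximately a Brownian bridge on $[0,L-1]$ whose endpoints are of order $L^{-1/2}$. For \prettyref{eq:GWBarrierAlphaNewNotation}, the reflection principle gives that such a bridge stays above $-(\log L)^2$ with probability $O((\log L)^2/L)$, and the additional $(\log L)^2$ in the stated right-hand side comfortably absorbs the comparison loss. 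For the $\asymp$ in \prettyref{eq:GWBarrierGammaDeltaNewNotation}, the upper bound comes from a ballot/Bismut argument applied to the lower barrier $f(l)=\min(l^{0.49},(L-l)^{0.49})$, which costs $\asymp l_0/L$---the boundary layer of width $\sim l_0^{0.49}$ is the typical shape of a bridge conditioned to stay positive, by entropic repulsion---while the matching lower bound is obtained by an explicit construction in which a Brownian bridge first climbs to height $\sim l_0^{0.49}$ on $[0,l_0]$, performs a standard bridge trajectory on $[l_0,L-l_0]$ inside the band $[f(l),g(l)]$ (whose width $g-f\to\infty$ on the bulk, so the upper barrier $X_l\le g(l)=\min(l^{0.51},(L-l)^{0.51})$ is essentially free), and descends symmetrically. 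The bounds \prettyref{eq:GWBarrierGammaDeltaUptoKNewNotation} and \prettyref{eq:GWBarrierGammaDeltaFromkNewNotaiton} are one-sided versions of the same calculation on sub-intervals ($[l_0,k]$, respectively $[0,L-k-l_0]$ with initial value $\sqrt{a}-\beta(k)\in[f(k),g(k)]$), and yield the stated rates $c\sqrt{l_0}g(k)/(k-l_0)$ and $c\,l_0^{0.51}g(k+1)/(L-k-l_0-1)$ by a direct reflection/ballot computation.

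The main obstacle will be making the chain of comparisons quantitative uniformly on the barrier event. The Radon--Nikodym derivative between the dimension-zero and dimension-one squared Bessel bridges is a product of an explicit Bessel-$I$ factor and an $(xy)^{-1/2}$-type prefactor, and is of order one only on paths that stay away from the axis in the bulk---fortunately this is precisely what the barriers enforce on $[l_0,L-l_0]$. My plan is to perform the change of measure only after restricting to the ``good'' event $\{T_l\asymp l(L-l) \text{ for } l\in[l_0,L-l_0]\}$, whose complement is readily controlled by a large-deviation estimate for the conditioned Galton--Watson process analogous to \prettyref{lem:LDForBinSumOfGeo}, and which contributes negligibly to all four bounds. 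On the good event the density is $O(1)$, and the remaining discrete-to-continuous comparison of the walk's edge local times with a squared Bessel bridge is a routine diffusive approximation (after the $\sqrt{\cdot}$-transformation, the mean becomes linear and the increments sub-Gaussian at the relevant scale); both approximation errors are then absorbed by the polylogarithmic margins already present in the stated bounds.
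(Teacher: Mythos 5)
Your proposal follows exactly the route taken in the paper: interpret $\mathbb{G}_{t}$ via the discrete Ray--Knight identity as edge local times of a simple random walk on the path $\{0,\ldots,L\}$, pass to the continuous-state picture where the bridge conditioning yields a squared Bessel bridge of dimension zero (the paper does this by inserting the continuous-time walk and its vertex local times as an intermediary, then bridging discrete and continuous local times with conditional large-deviation bounds that play the role of your ``restriction to a good event''), apply the explicit Radon--Nikodym derivative between the dimension-zero and dimension-one squared Bessel bridges on the barrier event, and read off the four bounds from reflection-principle and Bramson-type estimates for Brownian bridges. The only imprecision worth flagging is that your stated good event $\{T_l\asymp l(L-l)\}$ is not quite the right one --- the conditional mean of $T_l$ is $\approx (1-l/L)^2 t_s$, which does not vanish at $l=0$ --- but the substantive choice the paper makes (fluctuations of $\sqrt{T_l}$ around $\beta(l)$ stay within the polylogarithmic margins already built into the barriers) is exactly the refinement your sketch anticipates.
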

We start the proof of \prettyref{prop:BarrierSecGWProp} by recalling
and proving some barrier crossing bounds for the Brownian bridge.
To state these we let $\mathbb{P}_{x},x\in\mathbb{R}$, be the law
on $\left(C\left(\mathbb{R}_{+},\mathbb{R}\right),\mathcal{B}\left(\mathbb{R}_{+},\mathbb{R}\right)\right)$
which turns $X_{t},t\ge0,$ into a standard Brownian motion starting
at $x\in\mathbb{R}$. For $T>0$ and $a,b\in\mathbb{R}$ we write
$\mathbb{P}_{a\to b}^{T}$ for the law of Brownian bridge on $\left(C_{0}\left(\left[0,T\right],\mathbb{R}\right),\mathcal{B}\left(\left[0,T\right],\mathbb{R}\right)\right)$
starting at $a\in\mathbb{R}$ and ending in $b\in\mathbb{R}$ at time
$T$, that is
\[
\mathbb{P}_{a\to b}^{T}\left[\cdot\right]\overset{\mbox{def}}{=}\mathbb{P}_{a}\left[\cdot|X_{T}=b\right].
\]
Equivalently, $\mathbb{P}_{a\to b}^{T}$ is the law of the Gaussian
process on $\left[0,T\right]$ with
\begin{equation}
E_{x}\left[X_{t}\right]=h\left(t\right)\mbox{ and }\mbox{Cov}\left[X_{t},X_{s}\right]=\frac{t\left(T-s\right)}{T}\mbox{ for }0\le s\le t\le T,\label{eq:BrownianBridgeGaussianProc}
\end{equation}
where $h$ is the linear function with $h\left(0\right)=a$ and $h\left(T\right)=b$.
Recall that shifting a Brownian bridge by a linear function results
in a Brownian bridge with a shifted starting and ending point, that
is
\begin{equation}
\begin{array}{c}
\mbox{the }\mathbb{P}_{a\to b}^{T}-\mbox{law of }X_{t}+h\left(t\right)\mbox{ is }\mathbb{P}_{a+h\left(0\right)\to b+h\left(T\right)}^{T}\\
\mbox{for any linear }h:\left[0,T\right]\to\mathbb{R}\mbox{ and }a,b\in\mathbb{R}.
\end{array}\label{eq:ShiftBB}
\end{equation}
We now recall some barrier estimates from the literature. The probability
that Brownian bridge stays above (or below) a linear barrier throughout
its lifetime can be explicitly computed using the reflection principle;
we have
\begin{equation}
\mathbb{P}_{0\to0}^{T}\left[B_{h}\left(\left[0,T\right]\right)\right]=1-\exp\left(-\frac{2ab}{T}\right),\label{eq:BBContLinearBarrier}
\end{equation}
for all $T,a,b>0$ where $h\left(t\right)$ is the linear function
such that $h\left(0\right)=-a<0$ and $h\left(b\right)=-b<0$ (see
Proposition 3 \cite{ScheikeABoundarCrossResForBM}). For a linear
barrier that is ``checked'' only at integer times we have the following
bound 
\begin{equation}
\mathbb{P}_{0\to0}^{T}\left[B_{h}\left(\left[0,T\right]\cap\mathbb{N}\right)\right]\le c\frac{\left(1+a\right)\left(1+b\right)}{T},\label{eq:WebbLemma}
\end{equation}
for all $T,a,b>0$ (see Lemma 6.2 \cite{WebbbExactAsymptoticsofFreezingTransitionOfLogCorrelatedREM}).
Note that for $T$ much larger than $a$ and $b$, the right-hand
side of \prettyref{eq:BBContLinearBarrier} and the right-hand side
of \prettyref{eq:WebbLemma} have the same order. Also note that \prettyref{eq:BBContLinearBarrier}
is trivially a lower bound for the probability in \prettyref{eq:WebbLemma}.

For a linear barrier $h$ which is ``checked'' only during the interval
$\left[t_{1},T-t_{2}\right]$ for $t_{1}+t_{2}<T$ we have the upper
bound 
\begin{equation}
\mathbb{P}_{0\to0}^{T}\left[B_{h}\left(\left[t_{1},T-t_{2}\right]\right)\right]\le\frac{\left(a+\sqrt{t_{1}}\right)\left(b+\sqrt{t_{2}}\right)}{T-t_{1}-t_{2}},\label{eq:ABKLemma}
\end{equation}
where $h\left(t_{1}\right)=-a$ and $h\left(t_{2}\right)=-b$ (see
Lemma 3.4 \cite{ABKGenealogy}). We now adapt the proof of \eqref{eq:ABKLemma}
to give a version of that result for a barrier checked only at integer
times.
\begin{lem}
\label{lem:DiscBarrierStartingLate}For any $T>0$ and $t_{1},t_{2}\ge0$
such that $t_{1}+t_{2}<T$ and any $a,b>0$
\begin{equation}
\mathbb{P}_{0\to0}^{T}\left[B_{h}\left(\left[t_{1},T-t_{2}\right]\cap\mathbb{N}\right)\right]\le c\frac{\left(c+a+\sqrt{t_{1}}\right)\left(c+b+\sqrt{t_{2}}\right)}{T-t_{1}-t_{2}},\label{eq:DiscBarrierStartingLate}
\end{equation}
where where $h\left(t\right)$ is the linear function such that $h\left(t_{1}\right)=-a$
and $h\left(T-t_{2}\right)=-b$.\end{lem}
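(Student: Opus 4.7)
The plan is to mirror the proof of the continuous analogue \eqref{eq:ABKLemma} but to replace the reflection-principle identity \eqref{eq:BBContLinearBarrier} with the discrete barrier bound \eqref{eq:WebbLemma} at the decisive step. To begin, I would dispose of the degenerate regimes where either $T-t_1-t_2$ is smaller than an absolute constant, or $a+b \ge T-t_1-t_2$: in either case a sufficiently large choice of $c$ makes the right-hand side of \eqref{eq:DiscBarrierStartingLate} at least one, so the bound holds trivially. In the remaining regime I set $\tau_1 = \lceil t_1 \rceil$ and $\tau_2 = \lfloor T-t_2 \rfloor$, so that $[t_1,T-t_2]\cap\mathbb{N}=[\tau_1,\tau_2]\cap\mathbb{N}$ and $\tau_2-\tau_1 \ge c(T-t_1-t_2)$; writing $a' = -h(\tau_1)$ and $b' = -h(\tau_2)$, the slope bound $|h'| \le (a+b)/(T-t_1-t_2) \le 1$ combined with $\tau_1-t_1,\,(T-t_2)-\tau_2 \le 1$ yields $a' \le a+c$ and $b' \le b+c$.

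Next I would condition on the pair $(X_{\tau_1},X_{\tau_2})$. By the Markov property of the bridge, given these two values the restriction $(X_s)_{s\in[\tau_1,\tau_2]}$ is an independent Brownian bridge of length $\tau_2-\tau_1$ between $X_{\tau_1}$ and $X_{\tau_2}$. Subtracting the linear interpolation between these endpoint values via \eqref{eq:ShiftBB} turns it into a standard bridge from $0$ to $0$ that must stay above the linear function with endpoint values $-(X_{\tau_1}+a')$ and $-(X_{\tau_2}+b')$; applying \eqref{eq:WebbLemma} then gives
\[
\mathbb{P}_{0\to 0}^{T}\!\left[B_h([\tau_1,\tau_2]\cap\mathbb{N}) \,\big|\, X_{\tau_1},X_{\tau_2}\right] \;\le\; \frac{c\,\bigl(1+(X_{\tau_1}+a')_+\bigr)\bigl(1+(X_{\tau_2}+b')_+\bigr)}{\tau_2-\tau_1}.
\]

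It remains to control the expectation of the numerator. Under $\mathbb{P}_{0\to 0}^T$ the pair $(X_{\tau_1},X_{\tau_2})$ is centred Gaussian with marginal variances $\tau_i(T-\tau_i)/T$, which are bounded respectively by $\tau_1 \le t_1+1$ and $T-\tau_2 \le t_2+1$. Combining Cauchy--Schwarz with the elementary estimate
\[
\mathbb{E}\bigl[(1+(Z+\lambda)_+)^2\bigr] \;\le\; 3(1+\lambda+\sigma)^2,\qquad Z\sim N(0,\sigma^2),\ \lambda\ge 0,
\]
gives $\mathbb{E}\bigl[(1+(X_{\tau_1}+a')_+)(1+(X_{\tau_2}+b')_+)\bigr] \le c(1+a'+\sqrt{t_1+1})(1+b'+\sqrt{t_2+1})$, and together with $\tau_2-\tau_1 \ge c(T-t_1-t_2)$, $a' \le a+c$ and $b' \le b+c$ this yields \eqref{eq:DiscBarrierStartingLate}. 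The only real obstacle is the bookkeeping needed to reduce to the non-degenerate regime and to pass from the endpoints $t_1,T-t_2$ to their integer counterparts $\tau_1,\tau_2$; the heart of the proof is the conditioning on the endpoint values, a direct invocation of \eqref{eq:WebbLemma}, and a routine Gaussian moment estimate.
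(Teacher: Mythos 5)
Your proof is correct and follows essentially the same route as the paper: condition on the bridge values at (near) the endpoints of the barrier interval, reduce the restricted piece to a standard Brownian bridge via \eqref{eq:ShiftBB}, apply the discrete-time Ballot bound \eqref{eq:WebbLemma}, and finish with a Gaussian moment estimate on the conditioning variables using \eqref{eq:BrownianBridgeGaussianProc}. The only substantive difference is that you round $t_1$ and $T-t_2$ to integer times $\tau_1,\tau_2$ before conditioning, which makes the invocation of \eqref{eq:WebbLemma} literal (its index set is $[0,\cdot]\cap\mathbb{N}$), whereas the paper conditions on $X_{t_1},X_{T-t_2}$ directly and so its shifted index set $[0,T-t_1-t_2]\cap\mathbb{N}$ matches $[t_1,T-t_2]\cap\mathbb{N}$ only when $t_1$ is an integer — which it always is in the paper's applications, but your version covers the statement as literally quoted. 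One small point to tidy: the auxiliary Gaussian estimate $\mathbb{E}[(1+(Z+\lambda)_+)^2]\le 3(1+\lambda+\sigma)^2$ is stated for $\lambda\ge 0$, but $a'=-h(\tau_1)$ can be slightly negative when $a<1$; this is harmless since $(Z+\lambda)_+\le (Z+\lambda_+)_+$ lets you replace $\lambda$ by $\lambda_+\le a+c$, so the final bound is unaffected.
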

\begin{proof}
We may condition on $X_{t_{1}}$, $X_{T-t_{2}}$ to get that the left-hand
side of \prettyref{eq:DiscBarrierStartingLate} equals
\begin{equation}
\mathbb{P}_{0\to0}^{T}\left[\mathbb{P}_{X_{t_{1}}\to X_{T-t_{2}}}^{T-t_{1}-t_{2}}\left[B_{h\left(t_{1}+\cdot\right)}\left(\left[0,T-t_{1}-t_{2}\right]\cap\mathbb{N}\right)\right]\right],\label{eq:conditioninontoneTminusttwo}
\end{equation}
Now by \eqref{eq:ShiftBB} and \eqref{eq:WebbLemma} we have for $u,v\in\mathbb{R}$,
\begin{equation}
\mathbb{P}_{u\to v}^{T-t_{1}-t_{2}}\left[B_{h\left(t_{1}+\cdot\right)}\left(\left[0,T-t_{1}-t_{2}\right]\cap\mathbb{N}\right)\right]\le c\frac{\left(1+\left|u+a\right|\right)\left(1+\left|v+b\right|\right)}{T-t_{1}-t_{2}}.\label{eq:Ineq}
\end{equation}
We have (see \prettyref{eq:BrownianBridgeGaussianProc})
\begin{equation}
\mbox{Var}\left[X_{t}\right]=\mbox{Var}\left[X_{T-t}\right]=\frac{t\left(T-t\right)}{T}\asymp t,\label{eq:VarOfBB}
\end{equation}
so that $\mathbb{P}_{0\to0}^{T}\left[\left|X_{t_{1}}+a\right|\right]\le c\sqrt{t_{1}}+a$,
$\mathbb{P}_{0\to0}^{T}\left[\left|X_{T-t_{2}}+b\right|\right]\le c\sqrt{t_{2}}+b$,
and by Hölder's inequality
\[
\begin{array}{ccl}
\mathbb{P}_{0\to0}^{T}\left[\left|X_{t_{1}}+a\right|\left|X_{T-t_{2}}+b\right|\right] & \le & \sqrt{\mathbb{P}_{0\to0}^{T}\left[\left|X_{t_{1}}+a\right|^{2}\right]\mathbb{P}_{0\to0}^{T}\left[\left|X_{T-t_{2}}+b\right|^{2}\right]}\\
 & \le & \sqrt{\left(t_{1}+a^{2}\right)\left(t_{2}+b^{2}\right)}\le\left(\sqrt{t_{1}}+a\right)\left(\sqrt{t_{2}}+b\right).
\end{array}
\]
Thus \eqref{eq:DiscBarrierStartingLate} follows by plugging \prettyref{eq:Ineq}
into \prettyref{eq:conditioninontoneTminusttwo} and taking the expectation.
\end{proof}
Now consider the non-linear barrier $h_{\delta}:\left[0,T\right]\to\mathbb{R}$
given by $h_{\delta}\left(t\right)=\min\left(t^{\delta},\left(T-t\right)^{\delta}\right)$
(note that with $T=L$ we have $f=h_{0.49}$ and $g=h_{0.51}$, see
\prettyref{eq:DefOffFunc} and \prettyref{eq:DefOfgfunc}). Bramson
shows that
\begin{equation}
\mathbb{P}_{0\to0}^{T}\left[B_{h_{\delta}}\left(\left[t,T-t\right]\right)|B_{0}\left(\left[t,T-t\right]\right)\right]\to1\mbox{ as }t\to\infty,\mbox{ for }\delta<\frac{1}{2},\label{eq:BramsonSandwichEpsSmall}
\end{equation}
uniformly in $T$ (see Proposition 6.1 \cite{Bramson1ConvergenceofSolutionsOfKolmogorovEqn})
and
\begin{equation}
\mathbb{P}_{0\to0}^{T}\left[B^{h_{\delta}}\left(\left[t,T-t\right]\right)|B_{0}\left(\left[t,T-t\right]\right)\right]\to1\mbox{ as }t\to\infty,\mbox{ for }\delta>\frac{1}{2},\label{eq:BramsonEpsLarge}
\end{equation}
uniformly in $T$ (see Lemma 2.7 \cite{Bramson1ConvergenceofSolutionsOfKolmogorovEqn}).
Intuitively, \prettyref{eq:BramsonSandwichEpsSmall} and \prettyref{eq:BramsonEpsLarge}
indicate that when conditioned on $B_{0}\left(\left[t,T-t\right]\right)$
Brownian bridge stays close to $h_{0.5}$.  Also, they can be used
to give the following lower bound on the probability that Brownian
bridge manages to stay in a ``tube'' $\left[h_{1/2-c},h_{1/2+c}\right]$
for small $c$, which will be needed for the lower bound of \prettyref{eq:GWBarrierGammaDeltaNewNotation}.
For technical reasons related to how we later apply the result we
let the starting point of Brownian bridge deviate somewhat from $0$,
and require that it also stays above $-\frac{T}{10000}$ during the
initial time interval $\left[0,t\right]$.
\begin{lem}
For any $T>0$, $v\in\left(-1000,1000\right)$ and $\frac{T}{3}>t\ge c$
we have that
\begin{equation}
c\frac{t}{T-2t}\le\mathbb{P}_{v\to0}^{T}\left[B_{h_{0.499}}^{h_{0.501}}\left(\left[t,T-t\right]\right)\cap B_{-\frac{T}{10000}}\left(\left[0,t\right]\right)\right].\label{eq:TubeProbBB}
\end{equation}

\begin{proof}
Let $I=\left[t,T-t\right]$. We will show that
\begin{equation}
c\frac{t}{T-2t}\le\mathbb{P}_{0\to0}^{T}\left[B_{h_{0.4999}}^{h_{0.5001}}\left(I\right)\cap B_{-\frac{T}{20000}}\left(\left[0,t\right]\right)\right].\label{eq:TubeStartingFromZero}
\end{equation}
This implies \eqref{eq:TubeProbBB}, since even if we shift the process
and the barriers by $s\to v\frac{T-s}{T}$ (see \eqref{eq:ShiftBB})
we still have for $s\in I$ and $T$ and $t$ large enough
\[
-\frac{T}{10000}\le-\frac{T}{20000}+v\frac{T-s}{T}\mbox{ and},
\]
\[
h_{0.499}\left(s\right)\le h_{0.4999}\left(s\right)+v\frac{T-s}{T}\le h_{0.5001}\left(s\right)+v\frac{T-s}{T}\le h_{0.501}\left(s\right).
\]
From \eqref{eq:BBContLinearBarrier} we have that
\begin{equation}
\mathbb{P}_{0\to0}^{T}\left[B_{0}\left(I\right)\right]=\mathbb{E}_{0\to0}^{T}\left[\left(1-e^{-\frac{2X_{t}X_{T-t}}{T-2t}}\right)1_{\left\{ X_{t},X_{T-t}\ge0\right\} }\right].\label{eq:OnlyFBarrierProb-1-1}
\end{equation}
Since $1-e^{-x}\ge x/2$ for $x\in\left[0,1\right]$ we thus have
\begin{equation}
\mathbb{P}_{0\to0}^{T}\left[B_{0}\left(I\right)\right]\ge c\frac{t}{T-2t}\mathbb{P}_{0\to0}^{T}\left[\sqrt{t}\ge X_{t},X_{T-t}\ge\frac{1}{1000}\sqrt{t}\right]\ge c\frac{t}{T-2t},\label{eq:StayAboveZero}
\end{equation}
where in the last step we have used \prettyref{eq:VarOfBB} and $\mbox{Cov}\left[X_{t},X_{T-t}\right]=\frac{t^{2}}{T}>0$
(see \prettyref{eq:BrownianBridgeGaussianProc}).

Now since $X_{t}\ge0$ on $B_{0}\left(I\right)$ we have $\mathbb{P}_{0\to0}^{T}\left[B_{-\frac{1}{20000}T}\left(\left[0,t\right]\right)|B_{0}\left(I\right)\right]\ge\mathbb{P}_{0\to0}^{T}\left[B_{-\frac{1}{20000}T}\left(\left[0,t\right]\right)\right]$,
which equals $1-e^{-c\frac{T^{2}}{t}}$ by \prettyref{eq:BBContLinearBarrier}.
Thus
\begin{equation}
\mathbb{P}_{0\to0}^{T}\left[\left(B_{-\frac{1}{20000}T}\left(\left[0,t\right]\right)\right)^{c}|B_{0}\left(I\right)\right]\le\frac{1}{4}\mbox{ for }t\ge c.\label{eq:Un-1-1}
\end{equation}
Also by \eqref{eq:BramsonSandwichEpsSmall} and \eqref{eq:BramsonEpsLarge}
we have for $t\ge c$, 
\begin{equation}
\mathbb{P}_{0\to0}^{T}\left[\left(B_{0.4999}\left(I\right)\right)^{c}|B_{0}\left(I\right)\right]\le\frac{1}{4}\mbox{ and }\mathbb{P}_{0\to0}^{T}\left[\left(B^{0.5001}\left(I\right)\right)^{c}|B_{0}\left(I\right)\right]\le\frac{1}{4}\label{eq:Deux-1-1}
\end{equation}
Now using \eqref{eq:Un-1-1}, \eqref{eq:Deux-1-1} and a union bound
we have that 
\[
\mathbb{P}_{0\to0}^{T}\left[B_{-\frac{1}{20000}T}\left(\left[0,t\right]\right)\cap B_{h_{0.4999}}^{h_{0.5001}}\left(I\right)|B_{0}\left(I\right)\right]\ge\frac{3}{4},
\]
when $t\ge c$. Thus the claim \eqref{eq:TubeStartingFromZero} follows
from \eqref{eq:StayAboveZero}.
\end{proof}
\end{lem}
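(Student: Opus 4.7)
The plan is to reduce to the centred case $v=0$ with slightly tighter tube $[h_{0.4999},h_{0.5001}]$ and barrier $-T/20000$, prove the bound there, and then absorb the deviation into the slack between the two sets of exponents by shifting by a linear function. Concretely, writing $\tilde X_s = X_s - v(T-s)/T$, the invariance \eqref{eq:ShiftBB} sends the $\mathbb{P}_{v\to 0}^T$-law to the $\mathbb{P}_{0\to 0}^T$-law; and because $|v|<1000$ while $T\ge 3t\ge c$, the linear correction $v(T-s)/T$ is uniformly bounded, so for $T,t$ large enough it fits inside the gaps $h_{0.501}-h_{0.5001}$, $h_{0.4999}-h_{0.499}$, and $T/10000-T/20000$. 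Hence it suffices to prove $\mathbb{P}_{0\to 0}^T[B_{h_{0.4999}}^{h_{0.5001}}(I)\cap B_{-T/20000}([0,t])]\ge c\,t/(T-2t)$, writing $I=[t,T-t]$.

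For the centred case, first I would obtain a matching lower bound on the ``simpler'' event $B_0(I)$ of staying non-negative on $I$. Conditioning on $(X_t,X_{T-t})$ and applying the explicit linear-barrier formula \eqref{eq:BBContLinearBarrier} to the Brownian bridge on $[t,T-t]$ gives
\[
\mathbb{P}_{0\to 0}^T[B_0(I)]=\mathbb{E}_{0\to 0}^T\Bigl[\bigl(1-e^{-2X_tX_{T-t}/(T-2t)}\bigr)\mathbf{1}_{\{X_t,X_{T-t}\ge 0\}}\Bigr].
\]
Using $1-e^{-x}\ge x/2$ on $[0,1]$, and restricting to the event $\{X_t,X_{T-t}\in[c\sqrt t,\sqrt t]\}$ (which has probability bounded below by a positive constant by \eqref{eq:BrownianBridgeGaussianProc} and the positivity of $\mathrm{Cov}(X_t,X_{T-t})=t^2/T$), produces the lower bound $c\,t/(T-2t)$.

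Next I would upgrade the event $B_0(I)$ to the full event by showing that the additional constraints hold with overwhelming conditional probability. By Bramson's two convergences \eqref{eq:BramsonSandwichEpsSmall} and \eqref{eq:BramsonEpsLarge}, applied with $\delta=0.4999$ and $\delta=0.5001$ respectively (both strictly away from $1/2$), the conditional probabilities $\mathbb{P}_{0\to 0}^T[(B_{h_{0.4999}}(I))^c\mid B_0(I)]$ and $\mathbb{P}_{0\to 0}^T[(B^{h_{0.5001}}(I))^c\mid B_0(I)]$ each drop below $1/4$ uniformly in $T$ once $t\ge c$. For the initial-interval barrier, on $B_0(I)$ we have $X_t\ge 0$, so the conditional probability that $X_\cdot$ dips below $-T/20000$ on $[0,t]$ is bounded by the unconditional one, which \eqref{eq:BBContLinearBarrier} renders as $e^{-cT^2/t}\le 1/4$ for $t\ge c$ (since $T\ge 3t$). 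A union bound then yields $\mathbb{P}_{0\to 0}^T[B_{h_{0.4999}}^{h_{0.5001}}(I)\cap B_{-T/20000}([0,t])\mid B_0(I)]\ge 1/4$, and multiplying by the lower bound on $\mathbb{P}_{0\to 0}^T[B_0(I)]$ closes the argument.

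The main obstacle, and really the only genuinely subtle point, is ensuring that the shift argument in the first paragraph is compatible with the gap between the ``working'' barriers $h_{0.4999},h_{0.5001},-T/20000$ and the ``target'' barriers $h_{0.499},h_{0.501},-T/10000$; this forces the exponents in the non-linear barriers to be strictly bracketed around $1/2$, and is the reason one cannot prove the tube estimate directly with a single pair of exponents without invoking Bramson's asymptotics on both sides of $1/2$. Once the bracketing is set up, the reflection-principle computation and the union bound are routine.
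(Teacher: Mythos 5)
Your proposal is correct and follows essentially the same route as the paper's proof: reduce to the centred case with a narrower tube via the linear shift \eqref{eq:ShiftBB}, lower-bound $\mathbb{P}_{0\to 0}^T[B_0(I)]$ by conditioning on $(X_t,X_{T-t})$ and applying \eqref{eq:BBContLinearBarrier} together with $1-e^{-x}\ge x/2$, and then upgrade to the full tube event by showing that the three extra constraints each fail with conditional probability at most $1/4$ given $B_0(I)$, using Bramson's \eqref{eq:BramsonSandwichEpsSmall}--\eqref{eq:BramsonEpsLarge} and the linear barrier bound \eqref{eq:BBContLinearBarrier} on $[0,t]$. (Your final conditional probability $\ge 1/4$ is in fact the correct consequence of the three-fold union bound; the paper's stated $\ge 3/4$ is a harmless slip that does not affect the conclusion.)
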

To use these results to prove \prettyref{prop:BarrierSecGWProp} we
must now compare the law of the conditioned Galton-Watson process
to the law of a Brownian bridge. As mentioned above, this will go
via squared Bessel bridges. Let us introduce the necessary notation.
We let $\mathbb{Q}_{x}^{d},d\ge0,x\ge0,$ be the law on $\left(C\left(\mathbb{R}_{+},\mathbb{R}\right),\mathcal{B}\left(\mathbb{R}_{+},\mathbb{R}\right)\right)$
which turns $X_{t},t\ge0,$ into a $d-$dimensional squared Bessel
processes starting at $x$ (see Chapter XI.1 \cite{RevuzYor-ContMartAndBM}).
Recall that these are non-negative processes. For $d\ge0,T>0,a,b\in\mathbb{R}$
we denote the law of a $d-$squared Bessel bridge on $\left(C_{0}\left(\left[0,T\right]\right),\mathcal{B}\left(\left[0,T\right],\mathbb{R}\right)\right)$
starting at $a$ and ending in $b$ by
\begin{equation}
\mathbb{Q}_{a\to b}^{d,T}\left[\cdot\right]\overset{\mbox{def}}{=}\mathbb{Q}_{a}^{d}\left[\cdot|X_{T}=b\right],\label{eq:BesselBridgeDef}
\end{equation}
(see Chapter XI.3 \cite{RevuzYor-ContMartAndBM}). We will need some
well-known facts about the Bessel bridge. For integer $d\ge1$ the
$d-$dimensional squared Bessel process is simply the norm squared
of $d-$dimensional Brownian motion (Chapter XI.1 \cite{RevuzYor-ContMartAndBM}).
In particular 
\begin{equation}
\mathbb{Q}_{a^{2}}^{1}\left[\cdot\right]=\mathbb{P}_{a}\left[\left(\left|X_{t}\right|^{2}\right)_{t\ge0}\in\cdot\right]\mbox{ for }a\ge0.\label{eq:Bessel1andBMSame}
\end{equation}
Because of this, a $1-$dimensional squared Bessel bridge ending in
zero is the norm squared of a Brownian bridge ending in zero, i.e.
for any $T>0$ and $a\ge0$
\begin{equation}
\mathbb{Q}_{a^{2}\to0}^{1,T}\left[\cdot\right]=\mathbb{P}_{a\to0}^{T}\left[\left|X_{\cdot}\right|^{2}\in\cdot\right].\label{eq:1BesselBridgeBBSame}
\end{equation}
The squared Bessel processes satisfy a well-known addivitiy property
(see Theorem 1.2, Chapter XI.1 \cite{RevuzYor-ContMartAndBM}):
\begin{equation}
\begin{array}{c}
\mbox{If }X_{\cdot}^{1}\mbox{ has law }\mathbb{Q}_{a_{1}}^{d_{1}}\mbox{ and }X_{\cdot}^{2}\mbox{ is independent with law }\mathbb{Q}_{a_{2}}^{d_{2}}\\
\mbox{then }X_{\cdot}^{1}+X_{\cdot}^{2}\mbox{ has law }\mathbb{Q}_{a_{1}+a_{2}}^{d_{1}+d_{2}},\mbox{ for all }d_{1},d_{2},a_{1},a_{2}>0.
\end{array}\label{eq:AdditiveBessel}
\end{equation}
A similar property holds for Bessel bridges (see $(1.b){}_{0}$ \cite{PitmanYorADecompositionofBesselBridges}).
We will use the following special case:
\begin{equation}
\begin{array}{c}
\mbox{If }X_{\cdot}^{1}\mbox{ has law }\mathbb{Q}_{x\to0}^{0,T}\mbox{ and }X_{\cdot}^{2}\mbox{ is independent with law }\mathbb{Q}_{0\to0}^{1,T}\\
\mbox{then }X_{\cdot}^{1}+X_{\cdot}^{2}\mbox{ has law }\mathbb{Q}_{x\to0}^{1,T},\mbox{ for all }T,x>0.
\end{array}\label{eq:AdditiveBesselBridge}
\end{equation}
For the $0-$dimensional Bessel bridge $0$ is an absorbing state
(see (5.3) \cite{PitmanYorADecompositionofBesselBridges}),
\begin{equation}
\mathbb{Q}_{x\to0}^{0,T}\left[X_{s}=0\mbox{\,\ for all }s\ge H_{0}\right]=1\mbox{ for all }x,T>0.\label{eq:ZeroAbsorbing}
\end{equation}
Finally for $0<S<T$ we can write down the Radon-Nikodym derivate
of the laws under $\mathbb{Q}_{x\to0}^{0,T}$ and $\mathbb{Q}_{x\to0}^{1,T}$
of $\left(X_{s}\right)_{s\le S}$ on the event $\left\{ H_{0}>S\right\} $.
\begin{lem}
\label{lem:RadonNikodymBesselBridge}For all $0<S<T$
\begin{equation}
\frac{d\mathbb{Q}_{x\to0}^{0,T}}{d\mathbb{Q}_{x\to0}^{1,T}}|_{\mathcal{F}_{S}\cap\left\{ H_{0}>S\right\} }=\left(\frac{\left(1-\frac{S}{T}\right)^{2}x}{X_{S}}\right)^{1/4}\exp\left(-\frac{3}{8}\int_{0}^{S}\frac{dt}{X_{t}}\right).\label{eq:RadonNikodymBesselBridge}
\end{equation}
\end{lem}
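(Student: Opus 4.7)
The plan is to combine two standard ingredients: (i) the classical Girsanov-type change-of-measure between unconditioned squared Bessel processes of different dimensions, restricted to the event that $0$ has not yet been hit; and (ii) an h-transform representation of the Bessel bridge with endpoint $0$.

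For (i), from the SDE $dX_t = \delta\,dt + 2\sqrt{X_t}\,dB_t$ under $\mathbb{Q}_x^\delta$ (for $\delta > 0$, and with the usual care near $0$), a direct Girsanov computation (see Chapter XI of Revuz--Yor) shows
\begin{equation*}
\frac{d\mathbb{Q}_x^0}{d\mathbb{Q}_x^1}\bigg|_{\mathcal{F}_S\cap\{H_0>S\}} = \left(\frac{x}{X_S}\right)^{1/4}\exp\!\left(-\tfrac{3}{8}\int_0^S\tfrac{dt}{X_t}\right),
\end{equation*}
the exponents being $(\nu-\mu)/2 = -1/4$ and $(\nu^2-\mu^2)/2 = 3/8$ for the Bessel indices $\nu=-1$, $\mu=-1/2$ attached to $\delta=0$ and $\delta=1$. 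The event $\{H_0>S\}$ makes the local martingale defining the density a true martingale, since $\int_0^S ds/X_s < \infty$.

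For (ii), I would argue that for any $A\in\mathcal{F}_S$ with $A\subset\{H_0>S\}$ the strong Markov property at time $S$ gives
\begin{equation*}
\mathbb{Q}^{\delta,T}_{x\to 0}[A] \;=\; \frac{\mathbb{E}_x^\delta\!\left[1_A\,h^\delta(T-S,X_S)\right]}{h^\delta(T,x)},
\end{equation*}
where $h^\delta(t,u)$ is the appropriate ``density of landing at $0$ at time $t$ starting from $u$''. For $\delta=0$, since $0$ is absorbing, $h^0(t,u) = \mathbb{Q}^0_u[X_t=0] = e^{-u/(2t)}$, which one reads off the Laplace transform $\mathbb{E}_u^0[e^{-\lambda X_t}] = e^{-\lambda u/(1+2\lambda t)}$ by letting $\lambda\to\infty$. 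For $\delta=1$, \eqref{eq:Bessel1andBMSame} identifies $\sqrt{X_t}$ under $\mathbb{Q}^1_u$ with reflected Brownian motion started at $\sqrt{u}$, so the natural endpoint density at $0$ is $h^1(t,u) = \frac{2}{\sqrt{2\pi t}}\,e^{-u/(2t)}$.

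Finally, I would compose: the bridge RN derivative on $\mathcal{F}_S\cap\{H_0>S\}$ equals the process-level RN derivative from (i) multiplied by the ratio
\begin{equation*}
\frac{h^0(T-S,X_S)/h^0(T,x)}{h^1(T-S,X_S)/h^1(T,x)} \;=\; \sqrt{\tfrac{T-S}{T}},
\end{equation*}
the exponentials $e^{-X_S/(2(T-S))}$ and $e^{-x/(2T)}$ cancelling exactly between the two dimensions. Combining with the factor $(x/X_S)^{1/4}$ from (i) and rewriting
\begin{equation*}
\left(\tfrac{x}{X_S}\right)^{1/4}\!\left(1-\tfrac{S}{T}\right)^{1/2} = \left(\tfrac{(1-S/T)^2\,x}{X_S}\right)^{1/4}
\end{equation*}
yields \eqref{eq:RadonNikodymBesselBridge}. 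The one genuinely delicate point is the correct interpretation of $h^\delta$ in the two cases (a positive-mass absorption probability for $\delta=0$ versus a reflected-BM density at $0$ for $\delta=1$); everything else is a bookkeeping exercise, and the restriction to $\{H_0>S\}$ ensures all manipulations avoid the singular behaviour at the origin.
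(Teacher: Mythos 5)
Your proof is correct and takes a genuinely different route from the paper's. The paper derives \eqref{eq:RadonNikodymBesselBridge} by combining the Pitman--Yor space--time scaling representation \eqref{eq:LawOfBridgeFromProc} of the bridge (under which $\mathbb{Q}_{x\to0}^{d,T}$ is the $\mathbb{Q}_{x}^{d}$-law of $t\mapsto(1-t/T)^{2}X_{t/(1-t/T)}$) with the unconditioned RN derivative \eqref{eq:BesselProcRadonNikodym} quoted from Carr--Schr\"oder, and then performs the substitution $t'=t/(1-t/T)$ to rewrite everything in the bridge coordinates. You instead keep the original time scale and use an $h$-transform for each bridge separately, composing the process-level Girsanov density (which you re-derive rather than cite) with the ratio of endpoint kernels $h^{\delta}(T-S,X_S)/h^{\delta}(T,x)$; the exponential factors $e^{-X_S/(2(T-S))}$ and $e^{-x/(2T)}$ cancel between dimensions $0$ and $1$, leaving exactly the prefactor $\sqrt{(T-S)/T}$. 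This makes the geometric correction $(1-S/T)^{1/2}$ transparent as a ratio of absorption/hitting densities, which the paper's time-change argument produces somewhat more opaquely through the Jacobian of $t\mapsto t/(1-t/T)$. A small remark: your normalization of $h^{1}$ carries an extra factor $2$ relative to the Brownian-bridge transition density implicit in \eqref{eq:1BesselBridgeBBSame}, but this is harmless since any constant (independent of $t$ and $u$) cancels in the ratio $h^{1}(T-S,X_S)/h^{1}(T,x)$; similarly one should, strictly, note that the $\delta=0$ bridge defined by \eqref{eq:BesselBridgeDef}--\eqref{eq:LawOfBridgeFromProc} coincides with the classical conditioning on the positive-probability event $\{X_{T}=0\}$, which justifies your use of $h^{0}(t,u)=e^{-u/(2t)}$ as an honest $h$-function, but this is a routine check.
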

\begin{proof}
A basic property of Bessel bridges ending in zero is that they can
obtained from the Bessel process via (see (5.1) \cite{PitmanYorADecompositionofBesselBridges})
\begin{equation}
\mathbb{Q}_{x\to0}^{d,T}\mbox{ is the }\mathbb{Q}_{x}^{d}-\mbox{law of }\left(\left(1-\frac{t}{T}\right)^{2}X_{\frac{t}{1-t/T}}\right)_{0\le t\le T}.\label{eq:LawOfBridgeFromProc}
\end{equation}
Also  for all $R>0$
\begin{equation}
\frac{d\mathbb{Q}_{x}^{0}}{d\mathbb{Q}_{x}^{1}}|_{\mathcal{F}_{R}\cap\left\{ H_{0}>R\right\} }=\left(\frac{x}{X_{R}}\right)^{1/4}\exp\left(-\frac{3}{8}\int_{0}^{R}\frac{dt}{X_{t}}\right),\label{eq:BesselProcRadonNikodym}
\end{equation}
by the first lemma of Section 12 \cite{CarrSchroederBesselProcsIntegralOfGBMandAsianOptions}
(note that the index of a $d$-dimensional Bessel process is $d/2-1$).
The claim \eqref{eq:RadonNikodymBesselBridge} now follows from \eqref{eq:LawOfBridgeFromProc}
and \eqref{eq:BesselProcRadonNikodym} with $R=\frac{S}{1-S/T}$,
using the substitution $t^{'}=\frac{t}{1-t/T}$ in the integral.
\end{proof}
We are now ready to derive barrier bounds for the zero dimensional
Bessel bridge. We first derive upper bounds for squares of linear
barriers.
\begin{lem}
\label{lem:BesselBridgeUpperBounds}Let $T>0$ and $t_{1},t_{2}>0$
be integers such that $t_{1}<T-t_{2}$. For any linear function $h:\left[0,T\right]\to\mathbb{R}$
such $h\left(t\right)>0$ for $t\in\left[t_{1},T-t_{2}\right]$ and
any $u>0$
\begin{equation}
\begin{array}{l}
\mathbb{Q}_{u^{2}\to0}^{0,T}\left[B_{h^{2}}\left(\left\{ t_{1},\ldots,T-t_{2}\right\} \right)\right]\\
\le c\frac{\left(c+\sqrt{t_{1}}+\left|\bar{u}\left(t_{1}\right)-h\left(t_{1}\right)\right|\right)\left(c+\sqrt{t_{2}}+\left|\bar{u}\left(T-t_{2}\right)-h\left(T-t_{2}\right)\right|\right)}{T-t_{1}-t_{2}},
\end{array}\label{eq:BesselBridgeGeneralUpperbound}
\end{equation}
where $\bar{u}\left(t\right)=\frac{T-t}{T}u$. If also $v\ge h\left(T-t_{2}\right)$
then
\begin{equation}
\begin{array}{l}
\mathbb{Q}_{u^{2}\to0}^{0,T}\left[B_{h^{2}}\left(\left\{ t_{1},\ldots,T-t_{2}\right\} \right),\sqrt{X_{T-t_{2}}}\le v\right]\\
\le c\sqrt{\frac{\bar{u}\left(T-t_{2}\right)}{h\left(T-t_{2}\right)}}\frac{\left(c+\sqrt{t_{1}}+\left|\bar{u}\left(t_{1}\right)+v\frac{t_{1}}{T}-h\left(t_{1}\right)\right|\right)\left(c+v-h\left(T-t_{2}\right)\right)}{T-t_{1}-t_{2}}.
\end{array}\label{eq:BesselBridgeGeneralUpperBoundEndPoint}
\end{equation}
\end{lem}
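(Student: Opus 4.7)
The approach for both bounds is a dimension-shift from the $0$-dimensional to the $1$-dimensional squared Bessel bridge, which by \eqref{eq:1BesselBridgeBBSame} is the square of a Brownian bridge; once on the Brownian side I will apply the discrete linear-barrier estimate \eqref{eq:DiscBarrierStartingLate}. For \eqref{eq:BesselBridgeGeneralUpperbound} the dimension shift is carried out via the additivity property \eqref{eq:AdditiveBesselBridge}: if $Z_0\sim\mathbb{Q}_{u^2\to 0}^{0,T}$ and $Z\sim\mathbb{Q}_{0\to 0}^{1,T}$ are independent, then $Z_0+Z\sim\mathbb{Q}_{u^2\to 0}^{1,T}$, so $Z_0\le Z_0+Z$ pathwise. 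Since $B_{h^2}(\cdot)$ is monotone increasing in the path, this yields
\[
\mathbb{Q}_{u^2\to 0}^{0,T}\bigl[B_{h^2}(\{t_1,\ldots,T-t_2\})\bigr]\le\mathbb{P}_{u\to 0}^{T}\bigl[|X_t|\ge h(t)\;\forall\, t\in\{t_1,\ldots,T-t_2\}\bigr].
\]
A union bound over $\{X_t\ge h(t)\}$ and $\{X_t\le -h(t)\}$, followed by centering $X_t-\bar{u}(t)$ to turn the process into a Brownian bridge from $0$ to $0$ with linear barriers $\pm h-\bar{u}$, reduces the right-hand side to a Brownian-bridge discrete-barrier crossing probability, which I will estimate using \eqref{eq:DiscBarrierStartingLate}.

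For \eqref{eq:BesselBridgeGeneralUpperBoundEndPoint} additivity is unavailable because the constraint $\sqrt{X_{T-t_2}}\le v$ is monotone \emph{decreasing}. Instead I would change measure using Lemma \ref{lem:RadonNikodymBesselBridge} at $S=T-t_2$: on the barrier event $X_{T-t_2}\ge h(T-t_2)^2>0$ so $\{H_0>T-t_2\}$ holds, and the Radon-Nikodym derivative is bounded by $\bigl(\bar{u}(T-t_2)/h(T-t_2)\bigr)^{1/2}$ because the exponential factor is $\le 1$. This gives
\[
\mathbb{Q}_{u^2\to 0}^{0,T}[\,\cdot\,]\le\sqrt{\bar{u}(T-t_2)/h(T-t_2)}\;\mathbb{Q}_{u^2\to 0}^{1,T}[\,\cdot\,]
\]
on the relevant event. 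The same reduction to a centered Brownian bridge then applies, except that the extra constraint $|X_{T-t_2}|\le v$ forces the endpoint displacement at $T-t_2$ to be at most $v-h(T-t_2)$, which is what produces the sharper second factor in \eqref{eq:BesselBridgeGeneralUpperBoundEndPoint}.

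The main technical obstacle is that \eqref{eq:DiscBarrierStartingLate} assumes the linear barrier is negative at both endpoints, while after centering the shifted barriers $\pm h-\bar{u}$ need not have definite signs. I would resolve this in the style of the proof of \prettyref{lem:DiscBarrierStartingLate} itself, by first conditioning on $X_{t_1}$ and $X_{T-t_2}$ and then applying the unrestricted bound \eqref{eq:WebbLemma} on the middle interval $[t_1,T-t_2]$. The Gaussian tails of $X_{t_1}$ and $X_{T-t_2}$ (which have variances $\asymp t_1$, $\asymp t_2$) absorb the sign-ambiguous endpoint contributions and, upon integration via H\"older as in \eqref{eq:conditioninontoneTminusttwo}--\eqref{eq:VarOfBB}, reproduce the additive $|\bar{u}(t_1)-h(t_1)|$ and $|\bar{u}(T-t_2)-h(T-t_2)|$ terms in the final bounds.
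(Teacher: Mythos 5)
Your treatment of \eqref{eq:BesselBridgeGeneralUpperBoundEndPoint} matches the paper's, and the opening dimension-shift via \eqref{eq:AdditiveBesselBridge} for \eqref{eq:BesselBridgeGeneralUpperbound} is also correct. The gap is in the proposed ``union bound over $\{X_t\ge h(t)\}$ and $\{X_t\le -h(t)\}$.'' Because the barrier is checked only at integer times, the event $\{|X_t|\ge h(t)\;\forall\, t\in\{t_1,\ldots,T-t_2\}\}$ is \emph{not} contained in $\{X_t\ge h(t)\;\forall\, t\}\cup\{X_t\le -h(t)\;\forall\, t\}$: a Brownian bridge can cross zero between consecutive integers while keeping $|X_t|\ge h(t)$ at every checkpoint, so there are ``mixed-sign'' paths that neither one-sided event captures. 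Even if the inclusion did hold, the $\{X_t\le -h(t)\}$ term, once centred, would contribute endpoint factors of order $|\bar u(t_i)+h(t_i)|$ rather than $|\bar u(t_i)-h(t_i)|$, which for $\bar u,h>0$ is strictly larger — so the union bound would produce a \emph{weaker} estimate than \eqref{eq:BesselBridgeGeneralUpperbound} even where valid.

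The paper avoids this entirely, and so should you: by \eqref{eq:ZeroAbsorbing} the event $B_{h^2}(\{t_1,\ldots,T-t_2\})$ under $\mathbb{Q}_{u^2\to0}^{0,T}$ already forces $\{H_0>T-t_2\}$ (since $h>0$ on the interval and $0$ is absorbing). Carrying this event through the additivity step and the identification \eqref{eq:1BesselBridgeBBSame}, one lands on the event $B_h\cap\{H_0>T-t_2\}$ for $\mathbb{P}_{u\to0}^T$; because $u>0$ and the path is continuous, $\{H_0>T-t_2\}$ forces $X_t>0$ on $[0,T-t_2]$, so $|X_t|=X_t$ there and only the one-sided barrier $\{X_t\ge h(t)\}$ survives — no union bound needed. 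You essentially use this mechanism in your argument for \eqref{eq:BesselBridgeGeneralUpperBoundEndPoint} (where you explicitly invoke $\{H_0>T-t_2\}$ for the Radon--Nikodym step), so the fix is to apply the same observation in the first bound rather than dropping $\{H_0>T-t_2\}$ and compensating with a union bound.

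Your final remark about \eqref{eq:DiscBarrierStartingLate} requiring negative endpoint values is a real (if minor) point, and your proposed re-derivation (condition on $X_{t_1},X_{T-t_2}$, use \eqref{eq:WebbLemma} in the middle, integrate the Gaussian tails) is exactly how \prettyref{lem:DiscBarrierStartingLate} is proved and would go through. A shorter route: by monotonicity, replacing the barrier endpoint values by $-|\,\cdot\,|$ only lowers the barrier and hence only increases the crossing probability, which reduces to the case $a,b>0$ and explains the appearance of absolute values in \eqref{eq:BesselBridgeGeneralUpperbound}--\eqref{eq:BesselBridgeGeneralUpperBoundEndPoint}.
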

\begin{proof}
Let $I=\left[t_{1},T-t_{2}\right]$. By \prettyref{eq:ZeroAbsorbing}
the event $B_{h^{2}}\left(I\cap\mathbb{N}\right)$ implies $\left\{ H_{0}>T-t_{2}\right\} $,
since $h>0$ throughout $I$ by assumption. Using this we obtain that
\[
\begin{array}{ccl}
\mathbb{Q}_{u^{2}\to0}^{0,T}\left[B_{h^{2}}\left(I\cap\mathbb{N}\right)\right] & = & \mathbb{Q}_{u^{2}\to0}^{0,T}\left[B_{h^{2}}\left(I\cap\mathbb{N}\right)\cap\left\{ H_{0}>T-t_{2}\right\} \right]\\
 & \overset{\eqref{eq:AdditiveBesselBridge}}{\le} & \mathbb{Q}_{u^{2}\to0}^{1,T}\left[B_{h^{2}}\left(I\cap\mathbb{N}\right)\cap\left\{ H_{0}>T-t_{2}\right\} \right]\\
 & \overset{\eqref{eq:1BesselBridgeBBSame}}{=} & \mathbb{P}_{u\to0}^{T}\left[B_{h}\left(I\cap\mathbb{N}\right)\cap\left\{ H_{0}>T-t_{2}\right\} \right],
\end{array}
\]
where the inequality holds because adding a process with law $\mathbb{Q}_{0\to0}^{1,T}$
to $X_{\cdot}$ only makes the barrier condition easier to satisify,
and the last equality holds because under $\mathbb{P}_{u\to0}^{T}$
we have $\left|X_{t}\right|=X_{t}$ for $t\in I$ on $\left\{ H_{0}>T-t_{2}\right\} $.
Using \eqref{eq:ShiftBB} we thus have that
\[
\mathbb{Q}_{u^{2}\to0}^{0,T}\left[B_{h^{2}}\left(I\cap\mathbb{N}\right)\right]\le\mathbb{P}_{u\to0}^{T}\left[B_{h}\left(I\cap\mathbb{N}\right)\right]=\mathbb{P}_{0\to0}^{T}\left[B_{h-\bar{u}}\left(I\cap\mathbb{N}\right)\right],
\]
and by \eqref{eq:DiscBarrierStartingLate} the right-hand side is
bounded above by the bottom line of \eqref{eq:BesselBridgeGeneralUpperbound}.

For \eqref{eq:BesselBridgeGeneralUpperBoundEndPoint} note that similarly
\prettyref{eq:ZeroAbsorbing} implies that the probability in question
equals
\[
\mathbb{Q}_{u^{2}\to0}^{0,T}\left[B_{h^{2}}\left(I\cap\mathbb{N}\right)\cap\left\{ H_{0}>T-t_{2},\sqrt{X_{T-t_{2}}}\le v\right\} \right].
\]
By \prettyref{lem:RadonNikodymBesselBridge} with $S=T-t_{2}$ this
is bounded above by
\begin{equation}
c\sqrt{\frac{\bar{u}\left(T-t_{2}\right)}{h\left(T-t_{2}\right)}}\mathbb{Q}_{u^{2}\to0}^{1,T}\left[B_{h^{2}}\left(I\cap\mathbb{N}\right)\cap\left\{ H_{0}>T-t_{2},\sqrt{X_{T-t_{2}}}\le v\right\} \right],\label{eq:Bessel1GammaDeltaUpToK}
\end{equation}
since on $B_{h^{2}}\left(I\cap\mathbb{N}\right)$ we have that $\left(\left(\left(1-\frac{T-t_{2}}{T}\right)^{2}u^{2}\right)/X_{T-t_{2}}\right)^{1/4}=\left(\bar{u}\left(T-t_{2}\right)/\sqrt{X_{T-t_{2}}}\right)^{1/2}\le\left(\bar{u}\left(T-t_{2}\right)/h\left(T-t_{2}\right)\right)^{1/2}$.
But by \eqref{eq:1BesselBridgeBBSame} the probability in \eqref{eq:Bessel1GammaDeltaUpToK}
equals
\[
\mathbb{P}_{u\to0}^{T}\left[B_{h}\left(I\cap\mathbb{N}\right),X_{T-t_{2}}\le v\right]\le\mathbb{P}_{u\to v}^{T}\left[B_{h}\left(I\cap\mathbb{N}\right)\right].
\]
Thus the required bound follows by \prettyref{eq:ShiftBB} and \eqref{eq:DiscBarrierStartingLate}.
\end{proof}
We now provide a lower bound on the probability that the zero dimensional
squared Bessel bridge stays in a tube, cf. \eqref{eq:TubeProbBB}.
\begin{lem}
\label{lem:BesselBridgeTube}If $T>0$, \textup{$\frac{T}{3}>t\ge c$,}
$u\ge\frac{1}{1000}T$ and $v\in\left(-1000,1000\right)$ 
\begin{equation}
c\frac{t}{T-2t}\le\mathbb{Q}_{\left(u+v\right)^{2}\to0}^{0,T}\left[B_{\left(\bar{u}+h_{0.499}\right)^{2}}^{\left(\bar{u}+h_{0.501}\right)^{2}}\left(\left[t,T-t\right]\right)\right],\mbox{ where }\bar{u}\left(t\right)=\frac{T-t}{T}u.\label{eq:BesselBridgeTube}
\end{equation}
\end{lem}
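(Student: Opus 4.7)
The plan is to reduce the $0$-dimensional claim to the $1$-dimensional one by a Radon--Nikodym argument, then invoke the Brownian-bridge tube estimate \prettyref{eq:TubeProbBB}.

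First I would pass from dimension $0$ to dimension $1$. Let
\[
A' = B_{(\bar{u}+h_{0.499})^2}^{(\bar{u}+h_{0.501})^2}\!\left(\left[t,T-t\right]\right)\cap\{H_{0}>T-t\},
\]
which is $\mathcal{F}_{T-t}$-measurable and contained in $\{H_{0}>T-t\}$, so by \prettyref{lem:RadonNikodymBesselBridge} with $S=T-t$ and $x=(u+v)^{2}$,
\[
\mathbb{Q}_{(u+v)^{2}\to0}^{0,T}\!\left[A'\right]=\mathbb{E}_{(u+v)^{2}\to0}^{1,T}\!\left[\left(\tfrac{(t/T)^{2}(u+v)^{2}}{X_{T-t}}\right)^{1/4}\!\exp\!\Bigl(-\tfrac{3}{8}\!\int_{0}^{T-t}\!\tfrac{ds}{X_{s}}\Bigr)1_{A'}\right].
\]
Since $A'\subset B_{(\bar{u}+h_{0.499})^2}^{(\bar{u}+h_{0.501})^2}([t,T-t])$, proving the claim reduces to showing that the integrand above is bounded below by a positive constant on $A'$, and that $\mathbb{Q}_{(u+v)^{2}\to0}^{1,T}[A']\ge c\,t/(T-2t)$.

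For the lower bound on $\mathbb{Q}^{1,T}$, use \prettyref{eq:1BesselBridgeBBSame} (valid since $u+v\ge u-|v|>0$ for $T$ large): under $\mathbb{P}_{v\to0}^{T}$, the process $\bar{u}+Y$ is a Brownian bridge from $u+v$ to $0$ (by \prettyref{eq:ShiftBB}), and $(\bar{u}+Y)^{2}$ has law $\mathbb{Q}_{(u+v)^{2}\to0}^{1,T}$. On the event in \prettyref{eq:TubeProbBB}, namely
\[
A = B_{h_{0.499}}^{h_{0.501}}\!\left([t,T-t]\right)\cap B_{-T/10000}\!\left([0,t]\right),
\]
one checks that $\bar{u}(s)+Y_{s}>0$ on all of $[0,T-t]$ (since $\bar{u}(s)\ge\bar{u}(t)\ge(2/3)(T/1000)>T/10000$ on $[0,t]$, and $\bar{u}+Y>0$ on $[t,T-t]$ by the tube). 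Hence $(\bar{u}+Y)^{2}\in A'$, and by \prettyref{eq:TubeProbBB}
\[
\mathbb{Q}_{(u+v)^{2}\to0}^{1,T}[A']\ \ge\ \mathbb{P}_{v\to0}^{T}[A]\ \ge\ c\,\frac{t}{T-2t}.
\]

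The main (mildly) delicate step is controlling the Radon--Nikodym density on $A'$. At $s=T-t$ one has $X_{s}\le(ut/T+t^{0.501})^{2}$, and since $ut/T\ge t/1000$ dominates $t^{0.501}$ for $t\ge c$ large, the prefactor $((t/T)^{2}(u+v)^{2}/X_{T-t})^{1/4}$ is bounded below by a positive constant. For the exponential, split $\int_{0}^{T-t}ds/X_{s}$ at $t$. On $[0,t]$ the bound $\bar{u}(s)+Y_{s}\ge\bar{u}(t)-T/10000\ge cT$ yields $\int_{0}^{t}ds/X_{s}\le ct/T^{2}\le c/T$. On $[t,T-t]$ the tube gives $X_{s}\ge\bar{u}(s)^{2}=u^{2}(T-s)^{2}/T^{2}$, so
\[
\int_{t}^{T-t}\!\frac{ds}{X_{s}}\ \le\ \frac{T^{2}}{u^{2}}\!\int_{t}^{T-t}\!\frac{ds}{(T-s)^{2}}\ \le\ \frac{T^{2}}{u^{2}t}\ \le\ \frac{10^{6}}{t},
\]
which is at most a constant for $t\ge c$ large enough. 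Thus the exponential is bounded below by a positive constant on $A'$, and combining with the previous display gives the required lower bound $c\,t/(T-2t)$.
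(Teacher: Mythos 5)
Your proof is correct and follows the same route as the paper: first derive the tube estimate for the one-dimensional squared Bessel bridge from the Brownian-bridge bound \eqref{eq:TubeProbBB} via \eqref{eq:ShiftBB} and \eqref{eq:1BesselBridgeBBSame}, then transfer to the zero-dimensional bridge using \prettyref{lem:RadonNikodymBesselBridge} with $S=T-t$, checking that the Radon--Nikodym factor is bounded below on the tube event by controlling the prefactor and the integral $\int_{0}^{T-t}ds/X_{s}$. If anything, your write-up states the change-of-measure identity $\mathbb{Q}^{0}_{(u+v)^2\to0}[A']=\mathbb{E}^{1}[(\mathrm{RN})1_{A'}]$ in a cleaner direction than the paper does.
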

\begin{proof}
By \eqref{eq:ShiftBB}, \eqref{eq:TubeProbBB} and \eqref{eq:1BesselBridgeBBSame}
we have
\begin{equation}
c\frac{t}{L-2t}\le\mathbb{Q}_{\left(u+v\right)^{2}\to0}^{1,L}\left[B_{\left(\bar{u}+h_{0.409}\right)^{2}}^{\left(\bar{u}+h_{0.501}\right)^{2}}\left(\left[t,T-t\right]\right)\cap B_{\left(\bar{u}-\frac{1}{10000}T\right)^{2}}\left(\left[0,t\right]\right)\right].\label{eq:ettettettettett}
\end{equation}
By \prettyref{lem:RadonNikodymBesselBridge} with $S=T-t$ the right-hand
side of \eqref{eq:ettettettettett} is bounded above by
\begin{equation}
\mathbb{Q}_{\left(u+v\right)^{2}\to0}^{0,L}\left[A;B_{\left(\bar{u}+h_{0.499}\right)^{2}}^{\left(\bar{u}+h_{0.501}\right)^{2}}\left(\left[t,T-t\right]\right)\cap B_{\left(\bar{u}-\frac{1}{10000}T\right)^{2}}\left(\left[0,t\right]\right)\right]\label{eq:tvatvatvatvatva}
\end{equation}
for $A=\left(\left(1-\frac{T-t}{T}\right)^{2}u^{2}/X_{T-t}\right)^{1/4}\exp\left(-3/8\int_{0}^{T-t}X_{s}^{-1}ds\right)$.
On the event in \eqref{eq:tvatvatvatvatva} 
\[
\left(\frac{\left(1-\frac{T-t}{T}\right)^{2}u^{2}}{X_{T-t}}\right)^{1/4}\ge\left(\frac{\bar{u}\left(T-t\right)}{\bar{u}\left(T-t\right)+h_{0.501}\left(T-t\right)}\right)^{1/2}\ge c,
\]
provided $t\ge c$ (recall the assumption $u\ge\frac{1}{1000}T$),
and $\sqrt{X_{s}}\ge c\bar{u}\left(s\right)$ for $s\in\left[0,T-t\right]$
(note that $\bar{u}\left(s\right)-\frac{1}{10000}T\ge c\bar{u}\left(s\right)$
for $s\le t$) so that
\[
\int_{0}^{T-t}\frac{ds}{X_{s}}\le c\int_{0}^{T-t}\frac{ds}{\bar{u}\left(s\right)^{2}}\le c\left(\frac{T}{u}\right)^{2}\int_{0}^{T-t}\left(T-s\right)^{-2}ds\le c\int_{t}^{\infty}s^{-2}ds\le c.
\]
Thus $A\ge c$ on the event in \eqref{eq:tvatvatvatvatva}, so the
claim \eqref{eq:BesselBridgeTube} follows.
\end{proof}
It remains to derive our goal \prettyref{prop:BarrierSecGWProp} from
\prettyref{lem:BesselBridgeUpperBounds} and \prettyref{lem:BesselBridgeTube},
by comparing the law of $\left(T_{l}\right)_{l\ge0}$ under $\mathbb{G}_{t_{s}}\left[\cdot|T_{L-1}=0\right]$
and $\left(X_{t}\right)_{t\ge0}$ under $\mathbb{Q}_{t_{s}\to0}^{0,L}$.
To do this we exploit that that $\mathbb{G}_{t_{s}}\left[\cdot|T_{L-1}=0\right]$
is essentially the law of the edge local time of the discrete simple
random walk on $\left\{ 0,\ldots,L\right\} $ when conditioned not
to hit $L$, while $\mathbb{Q}_{t_{s}\to0}^{0,L}$ is the law of the
vertex local time of the continuous time version of the same random
walk. We can carry out the comparison using the natural coupling of
discerete and continuous time random walk on $\left\{ 0,\ldots,L\right\} $.

To this end, let $Y_{t},t\ge0,$ be continuous time simple random
walk on $\left\{ 0,\ldots,L\right\} $ with jump rate $1$, and let
$\tilde{\mathbb{P}}_{l}$ be its law when starting from $l\in\left\{ 0,\ldots,L\right\} $.
Let
\begin{equation}
d_{l}=\begin{cases}
1 & \mbox{ if }l=0,\\
2 & \mbox{ if }0<l<L,\\
1 & \mbox{\,\ if }l=L,
\end{cases}\label{eq:Normalization}
\end{equation}
be the degree of the vertices in the path $\left\{ 0,\ldots,L\right\} $
and let
\begin{equation}
L_{l}^{t}=\frac{1}{d_{l}}\int_{0}^{t}1_{\left\{ Y_{s}=l\right\} }ds\mbox{\,\ for }0\le l\le L,t\ge0,\label{eq:ContTimeLocTimeDef}
\end{equation}
be the local time of the random walk $Y_{t}$. Define the inverse
local time of $0$ by
\[
\tau\left(t\right)=\inf\left\{ s\ge0:L_{0}^{s}>t\right\} .
\]
The law of $L_{l}^{\tau\left(t\right)}$ has a nice characterisation
which can be derived from the Second Ray Knight Theorem (see the appendix
for the derivation).
\begin{lem}
\label{lem:RayKnightCont}For all $L\in\left\{ 1,2,\ldots\right\} $
and $t\ge0$ the $\tilde{\mathbb{P}}_{0}-$law of $\left(L_{l}^{\tau\left(t\right)}\right)_{l\in\left\{ 0,\ldots,L\right\} }$
is the $\mathbb{Q}_{2t}^{0}-$law of $\left(\frac{1}{2}X_{l}\right)_{l\in\left\{ 0,\ldots L\right\} }$.
\end{lem}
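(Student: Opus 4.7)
The plan is to combine excursion theory at vertex $0$ with the classical discrete Ray--Knight theorem for edge local times, and then match joint Laplace transforms with those of the $0$-dimensional squared Bessel process.

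First I would decompose the trajectory of $Y$ up to $\tau\left(t\right)$ into the alternating sequence of visits to $0$ and excursions away from $0$. Since $d_{0}=1$, each visit to $0$ lasts an $\mathrm{Exp}\left(1\right)$ amount of real time and contributes that same amount to $L_{0}^{\cdot}$, so $L_{0}^{\tau\left(t\right)}=t$ deterministically. By the standard Poisson characterisation of i.i.d.\ exponential sums, the number $N$ of complete excursions prior to $\tau\left(t\right)$ is Poisson$\left(t\right)$, and by the strong Markov property these $N$ excursions are i.i.d., each distributed as the path of $Y$ started at $1$ (the unique neighbour of $0$) and run until its first return to $0$; for $l\ge1$ the random variable $L_{l}^{\tau\left(t\right)}$ equals $\left(1/d_{l}\right)$ times the aggregate time spent at $l$ across these $N$ excursions.

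Within a single excursion, the number of visits to each vertex in $\{1,\ldots,L\}$ coincides with the vertex local time of the embedded discrete-time SRW on $\{0,\ldots,L\}$ with reflection at $L$ and absorption at $0$; by the classical discrete Ray--Knight theorem this visit-count profile is governed by a critical Galton--Watson structure, and each individual visit independently contributes an $\mathrm{Exp}\left(1\right)$ waiting time to $Y$'s occupation measure. The joint Laplace transform $\tilde{\mathbb{E}}_{0}\bigl[\exp\bigl(-\sum_{l=0}^{L}\lambda_{l}L_{l}^{\tau\left(t\right)}\bigr)\bigr]$ therefore factors as $\exp\bigl(-t\,\Phi\left(\lambda\right)\bigr)$, with $\Phi\left(\lambda\right)$ the single-excursion ``cost'' at potential $\lambda$, which can be evaluated recursively along the path $0,1,\ldots,L$ via the Kac moment formula. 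One then compares with $\mathbb{E}_{2t}^{0}\bigl[\exp\bigl(-\sum_{l=0}^{L}\tfrac{1}{2}\lambda_{l}X_{l}\bigr)\bigr]$, which factors in the same form by iterating the classical identity $\mathbb{E}_{x}^{0}[e^{-\lambda X_{s}}]=\exp\bigl(-x\lambda/\left(1+2\lambda s\right)\bigr)$ together with the Markov property of the squared Bessel process.

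The main obstacle, and the reason the derivation is relegated to the appendix, is bookkeeping the numerical factors consistently. The Poisson$\left(t\right)$ excursion count, the $\mathrm{Exp}\left(1\right)$ per-visit waiting times, the degree normalisation $1/d_{l}$, and the reflecting versus absorbing boundary behaviour must conspire to produce precisely the initial value $2t$ on the Bessel side (rather than $t$), together with the $\tfrac{1}{2}$ rescaling of $X_{l}$. Once these factors are correctly aligned, matching the two Laplace transforms reduces to a straightforward recursive computation using only the Bessel identity above and the exit distribution of the embedded SRW from the interval $\{0,l+1\}$ started at $l$.
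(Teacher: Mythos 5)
Your plan is conceptually sound but takes a genuinely different route from the paper's, and it stops short of the key computation. The paper does not re-derive the Ray--Knight isomorphism from scratch: it invokes the \emph{generalized second Ray--Knight theorem} (Eisenbaum--Kaspi--Marcus--Rosen--Shi; Theorem 8.2.2 of Marcus--Rosen), which asserts $\left(L_l^{\tau(t)}+\frac{1}{2}\eta_l^2\right)_l\overset{\mathrm{law}}{=}\left(\frac{1}{2}\left(\eta_l+\sqrt{2t}\right)^2\right)_l$, where $\eta$ is the Gaussian free field associated to the walk killed at $0$, independent of the local times. On the path $\{0,\ldots,L\}$ the killed Green function is $\min(a,b)$, so $\eta$ is Brownian motion sampled at integer times; hence $\frac{1}{2}\eta_l^2$ has law $\mathbb{Q}_0^1$, $\frac{1}{2}(\eta_l+\sqrt{2t})^2$ has law $\mathbb{Q}_{2t}^1$, and the additivity property of squared Bessel processes decomposes the latter as $\frac{1}{2}X_l^1+\frac{1}{2}X_l^2$ with $X^1\sim\mathbb{Q}_0^1$, $X^2\sim\mathbb{Q}_{2t}^0$ independent. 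Cancelling the common non-negative summand $\frac{1}{2}X^1$ from both sides gives the claim. All the factor-of-$2$ accounting you flag as the main obstacle is thereby absorbed into the cited theorem.

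Your excursion decomposition would in effect reprove that isomorphism in this particular case by matching joint Laplace transforms. The Poisson$(t)$ excursion count and the infinite-divisibility observation are correct ($L_0^{\tau(t)}=t$ deterministically, the $N\sim\mathrm{Pois}(t)$ excursions are i.i.d., so the joint transform is $e^{-t\Phi(\lambda)}$ for a per-excursion cost $\Phi$), and the $\mathbb{Q}_{2t}^0$ side likewise has exponential-affine transform $e^{-2t\Psi(\lambda)}$ because in dimension $0$ there is no drift. But the verification $\Phi=2\Psi$ is deferred to a ``straightforward recursive computation'', and your own hedging about the constants makes clear this step is not actually carried out --- and it is precisely where the content of the lemma lives. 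Your description of the single-excursion law is also imprecise: the discrete Ray--Knight theorem in the paper (\prettyref{lem:RayKnightDisc-1}) governs \emph{edge} local times, not vertex visit counts, and translating between them (visits to $l$ are sums of adjacent edge local times, with reflecting behaviour at $L$) introduces exactly the kind of bookkeeping you warn about but do not perform. So while the route is legitimate in principle, as written it is an outline with the central computation missing; the paper's choice to cite the isomorphism theorem and then cancel via Bessel additivity sidesteps that entire delicate verification.
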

Note that $L_{l}^{\tau\left(t\right)}$ counts the local time accumlated
at vertex $l$ until local time $t$ has accumulated at $0$. In proving
\prettyref{prop:BarrierSecGWProp} we will in fact need the law of
the local times accumlated during the first $t$ excursions from zero,
that is of $L_{l}^{D_{t}}$ where, 
\begin{equation}
D_{0}=0,D_{1}=H_{0}\circ\theta_{H_{1}}+H_{1}\mbox{\,\ and }D_{n}=D_{1}\circ\theta_{D_{n-1}}+D_{n-1},n\ge2,\label{eq:DeparturesFromZero}
\end{equation}
are the return times to $0$ of $Y_{t}$ (and $H_{l}$ and $\theta_{n}$
are defined on the space $C\left(\mathbb{R}_{+},\left\{ 0,\ldots,L\right\} \right)$
in the natural manner). Next we state a description of the law of
$L_{l}^{D_{t}}$ that follows from \prettyref{lem:RayKnightCont},
where we also condition on the processes hitting zero, since this
is what we do in \prettyref{prop:BarrierSecGWProp}. The proof is
given in the appendix.
\begin{lem}
\label{lem:RayishKnightish}For all $L\in\left\{ 1,2,\ldots\right\} $,
measurable $A\subset\mathbb{R}^{L}$ and $t\in\left\{ 1,2,\ldots\right\} $
\[
\tilde{\mathbb{P}}_{0}\left[\left(L_{l}^{D_{t}}\right)_{l\in\left\{ 1,\ldots,L\right\} }\in A|L_{L}^{D_{t}}=0\right]=\tilde{\mathbb{P}}_{0}\left[\mathbb{Q}_{2L_{1}^{D_{t}}\to0}^{0,L-1}\left[\left(\frac{1}{2}X_{l}\right)_{l\in\left\{ 0,\ldots,L-1\right\} }\in A\right]|L_{L}^{D_{t}}=0\right].
\]

\end{lem}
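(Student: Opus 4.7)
My plan is to derive the conditional law at $D_{t}$ from the unconditioned Ray--Knight theorem \prettyref{lem:RayKnightCont} in three steps: (i) apply \prettyref{lem:RayKnightCont} at the inverse local time $\tau(s)$ for arbitrary deterministic $s>0$ and condition on $\{L_{L}^{\tau(s)}=0\}$, turning the $\mathbb{Q}_{2s}^{0}$-law of $(X_{l}/2)_{l}$ into the squared Bessel bridge $\mathbb{Q}_{2s\to 0}^{0,L}$; (ii) invoke the spatial Markov property of the $0$-dimensional squared Bessel bridge at index $l=1$; (iii) invert the resulting $s$-indexed identity via a power-series argument to read off the conditional law at the random time $D_{t}$.

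For (i)--(ii), the Markov property at index $1$ gives that, conditionally on $X_{1}=2a$, the subprocess $(X_{1+l})_{l=0}^{L-1}$ has law $\mathbb{Q}_{2a\to 0}^{0,L-1}$; translating back,
\[
\tilde{\mathbb{P}}_{0}\!\left[(L_{l}^{\tau(s)})_{l=1}^{L}\in A \,\middle|\, L_{1}^{\tau(s)}=a,\ L_{L}^{\tau(s)}=0\right] \;=\; \Phi(a,A)
\]
for every $s>0$, where $\Phi(a,A):=\mathbb{Q}_{2a\to 0}^{0,L-1}[(X_{l}/2)_{l=0}^{L-1}\in A]$ depends only on $a$ and $A$, \emph{not} on $s$.

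For (iii), I decompose the walk at vertex $0$ into i.i.d.\ $\mathrm{Exp}(1)$ holding times $\xi_{1},\xi_{2},\ldots$ and i.i.d.\ excursions $E_{1},E_{2},\ldots$ from~$0$, all mutually independent. Then $N(s):=\max\{n:\xi_{1}+\cdots+\xi_{n}\le s\}\sim\mathrm{Poisson}(s)$ is independent of the $E_{i}$'s, $L_{l}^{\tau(s)}=\sum_{i=1}^{N(s)}L_{l}(E_{i})$ for $l\ge 1$, and $L_{l}^{D_{t}}=\sum_{i=1}^{t}L_{l}(E_{i})$. Writing the identity from (ii) as an equality of joint (rather than conditional) probabilities and expanding via the Poisson weights, one obtains for each Borel $A$
\[
\sum_{n=0}^{\infty}\frac{s^{n}}{n!}\Bigl(\mu_{n}(A)-\int \Phi(a,A)\,\nu_{n}(da)\Bigr) \;=\; 0 \qquad \text{for all } s>0,
\]
where $\nu_{n}(da):=\tilde{\mathbb{P}}_{0}[\sum_{i\le n}L_{1}(E_{i})\in da,\ L_{L}(E_{i})=0\ \forall i\le n]$ and $\mu_{n}(A):=\tilde{\mathbb{P}}_{0}[(\sum_{i\le n}L_{l}(E_{i}))_{l=1}^{L}\in A,\ L_{L}(E_{i})=0\ \forall i\le n]$. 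Uniqueness of analytic power series forces each coefficient to vanish, giving $\mu_{n}(A)=\int\Phi(a,A)\,\nu_{n}(da)$ for every $n\ge 0$. Specializing to $n=t$ (where $\mu_{t}$ and $\nu_{t}$ are exactly the joint laws associated with $D_{t}$) and dividing by $\tilde{\mathbb{P}}_{0}[L_{L}^{D_{t}}=0]$ yields the claim.

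The delicate step is (iii): one cannot simply substitute $s=S_{t}:=\sum_{i\le t}\xi_{i}$ into (ii), because the marginal laws of $(L_{l}^{\tau(s)})_{l}$ (a Poisson-compound sum of excursion contributions) and $(L_{l}^{D_{t}})_{l}$ (a sum of exactly $t$ contributions) genuinely differ. The power-series argument bypasses this by leveraging the $s$-independence of $\Phi$ from (ii) to match coefficients across \emph{all} excursion counts $n$ simultaneously, and then extracting the one corresponding to $n=t$.
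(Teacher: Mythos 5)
Your proof is correct but takes a genuinely different route from the paper's. The paper builds an explicit Poisson point process of excursions of $Y_t$ away from vertex $1$, splits it into three independent pieces (excursions to $0$, excursions to $\{2,\ldots,L\}$ that avoid $L$, and those that hit $L$), stitches two of the pieces into an auxiliary walk on $\{1,\ldots,L\}$, and applies \prettyref{lem:RayKnightCont} to that auxiliary walk. You instead apply \prettyref{lem:RayKnightCont} directly to the original walk at the inverse local time $\tau(s)$, condition on $\{L_L^{\tau(s)}=0\}$ to produce the $\mathbb{Q}_{2s\to 0}^{0,L}$-bridge, and exploit the spatial Markov property of that bridge at index $1$ to deduce that the conditional law of $(L_l^{\tau(s)})_{l=1}^L$ given $L_1^{\tau(s)}$ and $L_L^{\tau(s)}=0$ is $s$-independent; you then pass from $\tau(s)$ to $D_t$ by decomposing the walk at vertex $0$ into $\mathrm{Exp}(1)$ holding times and i.i.d.\ excursions, writing both sides as Poisson mixtures in $s$, and matching power-series coefficients. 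The coefficient-matching step is sound: the coefficients $\mu_n(A) - \int\Phi(a,A)\,\nu_n(da)$ are bounded in absolute value by $2$, so the series is entire, and vanishing on $(0,\infty)$ forces every coefficient to vanish, so the $n=t$ identity follows. What each approach buys: the paper's construction is more explicitly probabilistic (the independence structure is made concrete through the PPP decomposition, at the cost of setting up the auxiliary walk), while your de-Poissonization avoids the auxiliary walk and the PPP entirely, trading them for an analytic uniqueness argument plus the (standard, but not stated in the paper) Markov property of the squared Bessel bridge.
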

After applying \prettyref{lem:RayishKnightish} we will need a control
on $L_{1}^{D_{t}}$ conditioned on $L_{L}^{D_{t}}=0$ provided by
the following lemma, whose proof is also in the appendix.
\begin{lem}
\label{lem:LOneGivenLLisZero}For all $L\in\left\{ 1,2,\ldots\right\} $
and $t\in\left\{ 0,1,\ldots,10L^{2}\right\} $
\begin{equation}
\tilde{\mathbb{P}}_{0}\left[\sqrt{t}\frac{L-1}{L}-250\le\sqrt{L_{1}^{D_{t}}}\le\sqrt{t}\frac{L-1}{L}+250|L_{L}^{D_{t}}=0\right]\ge c>0.\label{eq:LowerBoundOnL1Prob}
\end{equation}
If $l\in\left\{ 1,\ldots,L-1\right\} $ then with $\tilde{u}\left(l\right)=\sqrt{2L_{L}^{D_{t}}}\frac{L-l}{L-1}$
and $u\left(l\right)=\sqrt{2t}\frac{L-l}{L}$ we have
\begin{equation}
\tilde{\mathbb{E}}_{0}\left[\left|\tilde{u}\left(l\right)-v\right|^{k}|L_{L}^{D_{t}}=0\right]\le c+\left|u\left(l\right)-v\right|^{k}\mbox{\,\ for }k\in\left\{ 1,2\right\} \mbox{ and }v\in\mathbb{R}.\label{eq:ExpectedDeviation}
\end{equation}

\end{lem}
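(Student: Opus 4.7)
My plan is to use the excursion decomposition of $Y$ at the vertex $0$ to write $L_1^{D_t}$ as a sum of $t$ i.i.d.\ random variables even under the conditional law $\tilde{\mathbb{P}}_0[\,\cdot\mid L_L^{D_t}=0]$, then to derive \prettyref{eq:LowerBoundOnL1Prob} by Chebyshev's inequality and \prettyref{eq:ExpectedDeviation} by a direct second-moment computation.

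First, by the strong Markov property at the return times $0=D_0<D_1<\cdots<D_t$, the excursion pieces $(Y_{(D_{i-1}+\cdot)\wedge D_i})_{i=1}^t$ are i.i.d.\ under $\tilde{\mathbb{P}}_0$; since $\{L_L^{D_t}=0\}$ factors as the intersection of the events that each such piece avoids $L$, the pieces remain i.i.d.\ under $\tilde{\mathbb{P}}_0[\,\cdot\mid L_L^{D_t}=0]$, each now distributed as the Doob $h$-transform of an excursion of $Y$ from $0$, with $h(x)=(L-x)/L$. Setting $\xi_i=\tfrac12\int_{D_{i-1}}^{D_i}\mathbf{1}_{\{Y_s=1\}}\,ds$ we obtain $L_1^{D_t}=\sum_{i=1}^t\xi_i$. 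A short Green's-function computation for the $h$-transformed walk (the number of visits to $1$ in one conditioned excursion is geometric with success parameter $L/(2(L-1))$, each visit being followed by an independent $\mathrm{Exp}(1)$ waiting time) yields $\tilde{E}_0[\xi_1\mid L_L^{D_t}=0]=(L-1)/L$ and $\tilde{\mathrm{Var}}(\xi_1\mid L_L^{D_t}=0)\le 1$ uniformly in $L\ge 2$.

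For \prettyref{eq:LowerBoundOnL1Prob}, Chebyshev's inequality then gives $|L_1^{D_t}-t(L-1)/L|\le M\sqrt{t}$ under the conditional law with probability $\ge 1-M^{-2}$; combining with the identity $|\sqrt a-\sqrt b|=|a-b|/(\sqrt a+\sqrt b)$ and the Taylor estimate $|\sqrt{t(L-1)/L}-\sqrt t\,(L-1)/L|\le\sqrt{t}/(2L)\le\sqrt{10}/2$ valid for $t\le 10L^2$, this yields $|\sqrt{L_1^{D_t}}-\sqrt t\,(L-1)/L|\le 250$ with positive conditional probability upon choosing $M$ large enough (the small-$t$ regime, where Chebyshev is weak, being treated separately, since both sides of the inequality are then $O(1)$). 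For \prettyref{eq:ExpectedDeviation}, which is only meaningful under the reading $\tilde u(l)=\sqrt{2L_1^{D_t}}\,(L-l)/(L-1)$ (the conditioning forces $L_L^{D_t}=0$, so the literal definition would trivialize the left-hand side), I would decompose $\tilde u(l)-v=\tfrac{L-l}{L-1}W+(u(l)-v)$ with $W=\sqrt{2L_1^{D_t}}-\sqrt{2t}\,(L-1)/L$, and reduce to $\tilde E_0[W^2\mid L_L^{D_t}=0]=O(1)$. The latter follows from the inequality $(\sqrt{2X}-b)^2\le(2X-b^2)^2/b^2$ (for $X\ge 0$, $b>0$) applied with $X=L_1^{D_t}$, $b=\sqrt{2t}\,(L-1)/L$, combined with $\tilde E_0[(2L_1^{D_t}-b^2)^2\mid L_L^{D_t}=0]\le 4t+O(t^2/L^4)$ from the variance bound. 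The estimates for $k=1,2$ then follow from the triangle inequality and an AM-GM bound $2|W|\,|u(l)-v|\le W^2+(u(l)-v)^2$ on the cross-term.

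The main technical obstacle lies in keeping the numerical constants -- in particular the $250$ in \prettyref{eq:LowerBoundOnL1Prob} -- uniformly bounded over the whole range $1\le t\le 10L^2$, $L\ge 1$. This requires a case split between the small-$t$ regime (where Chebyshev is vacuous but both quantities are anyway $O(1)$) and the large-$t$ regime, together with a careful accounting of the Taylor discrepancy between $\sqrt{t(L-1)/L}$ and $\sqrt t\,(L-1)/L$, which is precisely where the specific normalization in the statement of the lemma enters.
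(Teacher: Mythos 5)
Your proposal is correct and arrives at the same central distributional fact as the paper -- namely that under $\tilde{\mathbb{P}}_0[\cdot\mid L_L^{D_t}=0]$ the quantity $\frac{L}{L-1}L_1^{D_t}$ is a sum of $t$ i.i.d.\ standard exponentials -- but by a genuinely different route. The paper derives this from the Poisson-point-process decomposition of the excursions that it has already set up for the proof of Lemma~\ref{lem:RayishKnightish}, conditioning away the $\mu_3$-process and identifying the resulting Gamma density. You instead condition each excursion piece individually (using that $\{L_L^{D_t}=0\}$ factors over the i.i.d.\ excursions), identify the conditioned excursion as a Doob $h$-transform with $h(x)=(L-x)/L$, and read off that the visit count at vertex~$1$ is geometric with parameter $L/(2(L-1))$, giving $\xi_i\sim\mathrm{Exp}(L/(L-1))$; this is more elementary and self-contained, at the cost of not reusing the PPP machinery. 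Your correct observation that $\tilde u(l)$ must be read with $L_1^{D_t}$ in place of $L_L^{D_t}$ (a typo in the statement, as the usage in the proof of Proposition~\ref{prop:BarrierSecGWProp} confirms) is also needed for the paper's own argument to make sense.

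For \eqref{eq:LowerBoundOnL1Prob} you replace the paper's appeal to the (uniform) CLT with a Chebyshev bound, which is arguably cleaner since Chebyshev's uniformity in $t$ is immediate, whereas uniformity in the CLT statement requires a word of justification (Berry--Esseen). Your numerical bookkeeping works out: with the variance bound $\mathrm{Var}(\xi_1)\le 1$ and the Taylor discrepancy $|\sqrt{t(L-1)/L}-\sqrt t\,(L-1)/L|\le\sqrt{t}/L\le\sqrt{10}$ on the range $t\le 10L^2$, choosing any fixed moderate $M$ gives a bound well under $250$ with conditional probability bounded away from~$0$. For \eqref{eq:ExpectedDeviation} your decomposition through $W=\sqrt{2L_1^{D_t}}-\sqrt{2t}(L-1)/L$ together with $(\sqrt{2X}-b)^2\le(2X-b^2)^2/b^2$ is a clean substitute for the paper's three-term triangle inequality. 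One small caveat, which applies equally to the paper's own proof: the AM--GM handling of the cross term (for $k=2$) and the factor-$c$ triangle inequality in the paper's \eqref{eq:KindOfTriangle} both produce $c+c|u(l)-v|^k$ rather than the literal $c+|u(l)-v|^k$ stated in \eqref{eq:ExpectedDeviation}; this discrepancy is harmless since every downstream use of \eqref{eq:ExpectedDeviation} absorbs an extra multiplicative constant anyway, but it is worth noting that the statement should really be read as $c\bigl(1+|u(l)-v|^k\bigr)$.
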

Next we exhibit the connection with the law $\mathbb{G}_{t_{s}}$.
Let $J_{1},J_{2},...$ be the jump times of $Y_{t}$, and let $J_{0}=0$.
Let 
\[
Z_{n}=Y_{J_{n}},n\ge0,
\]
be the discrete skeleton of the random walk $Y_{t}$. Clearly $Z_{n}$
is a discrete time simple random walk. Let
\[
\tilde{D}_{n}=\inf\left\{ m>\tilde{D}_{n-1}:Z_{m}=0\right\} ,n\ge1,\mbox{ and }\tilde{D}_{0}=0,
\]
be the successive returns to $0$ of $Z_{n}$. Finally let
\begin{equation}
\tilde{T}_{l}^{t}=\sum_{m=1}^{\tilde{D}_{\lfloor t\rfloor}}1_{\left\{ Z_{m}=l+1,Z_{m-1}=l\right\} },l=0,\ldots,L-1,t\ge0,\label{eq:ZTraversals}
\end{equation}
be the number of traversals from $l$ to $l+1$ up to time $\tilde{D}_{\lfloor t\rfloor}$
(equivalently the edge local times of the edges $l\to l+1$ up to
time $\tilde{D}_{\lfloor t\rfloor}$). We have that
\begin{lem}
\label{lem:RayKnightDisc-1}For all $t\in\left\{ 0,1,\ldots\right\} $
the $\tilde{\mathbb{P}}_{0}-$law of $\tilde{T}_{l}^{t},l\in\left\{ 0,\ldots,L-1\right\} ,$
is the $\mathbb{G}_{t}-$law of $T_{l},l\in\left\{ 0,\ldots,L-1\right\} $.\end{lem}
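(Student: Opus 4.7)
The plan is to mimic the proof of Lemma \ref{lem:BranchingProc} by expressing the edge local times $(\tilde{T}_l^t)_{l \ge 0}$ in terms of an i.i.d.\ family of unbiased coin flips associated with the visits of $Z$ to the interior vertices of the path $\{0,\ldots,L\}$.

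First I would verify the initial condition $\tilde{T}_0^t = t$ almost surely: vertex $0$ has degree $1$, so $Z$ is forced to step $0 \to 1$ at the start of each excursion from $0$, and the excursion ends upon the next return to $0$. Hence each of the first $t$ excursions contributes exactly one $0 \to 1$ traversal, matching the deterministic initial population $t$ of $\mathbb{G}_t$.

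Next I would introduce, for each interior vertex $l+1 \in \{1,\ldots,L-1\}$ and each $n \ge 1$, the indicator $\tilde{I}_{l+1,n}$ of the event that the step taken by $Z$ immediately after its $n$-th visit to $l+1$ is to $l+2$ (rather than to $l$). Since interior vertices of the path have degree $2$, the strong Markov property of simple random walk gives that the entire family $\{\tilde{I}_{l+1,n}\}$ is i.i.d.\ Bernoulli$(1/2)$. The key observation is then that a single sub-excursion of $Z$ which begins with an $l \to l+1$ traversal and ends upon the next return to $l$ consists of some number $k \ge 1$ of visits to $l+1$, of which the first $k-1$ are followed by ``up'' flips (each giving one $l+1 \to l+2$ traversal) and the last by a ``down'' flip. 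The number of $l+1 \to l+2$ traversals produced by one such sub-excursion is therefore geometric on $\{0,1,\ldots\}$ with parameter $1/2$, i.e. the offspring distribution of $\mathbb{G}$.

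Summing over the $\tilde{T}_l^t$ sub-excursions associated with the $l \to l+1$ traversals completed within the first $t$ excursions from $0$, and using that the flips at different vertices are mutually independent, I would conclude that conditionally on $(\tilde{T}_{l'}^t)_{l' \le l}$ the variable $\tilde{T}_{l+1}^t$ is a sum of $\tilde{T}_l^t$ i.i.d.\ geometric$(1/2)$ random variables on $\{0,1,\ldots\}$, which is precisely the transition of the Galton--Watson process under $\mathbb{G}_t$. The only mildly delicate point is the boundary vertex $L$, which has degree $1$ and at which the flip is degenerate; but this never enters the recursion because $\tilde{T}_{L-1}^t$ is already generated by the coin flips at the interior vertex $L-1$, and no traversals beyond $l = L-1$ are counted. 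I do not anticipate any substantial obstacle: the argument is a direct translation of the construction in Lemma \ref{lem:BranchingProc}, with concentric circles replaced by vertices of the path.
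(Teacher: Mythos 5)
Your proposal is correct and follows essentially the same approach the paper intends: the paper omits the proof, noting it is ``very similar'' to that of \prettyref{lem:BranchingProc}, and your argument is exactly that translation, with the exit-probability identity \eqref{eq:BallExitProbForRadii} replaced by the fact that simple random walk at an interior vertex of the path steps left or right with probability $\tfrac{1}{2}$. The only cosmetic difference is that you index the fair-coin indicators by \emph{all} visits to a vertex rather than by completed traversals as in \prettyref{lem:BranchingProc}, which if anything makes the geometric-offspring bookkeeping cleaner.
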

\begin{proof}
The proof is omitted as it is very is similar to that of \prettyref{lem:BranchingProc}.
\end{proof}
To derive \prettyref{prop:BarrierSecGWProp} from \prettyref{lem:BesselBridgeUpperBounds}
and \prettyref{lem:BesselBridgeTube} we will have to ``translate''
between discrete and continuous local time. For this we will use the
following lemma, which gives a large deviation bound for $L_{l}^{D_{t}}$
conditioned on $\tilde{T}_{l}^{t},l\ge0$.
\begin{lem}
If $l\in\left\{ 1,\ldots,L-1\right\} ,t\ge0$ and $\theta>0$ then
with $\mu=\frac{1}{2}\left(\tilde{T}_{l-1}^{t}+\tilde{T}_{l}^{t}\right),$
\begin{equation}
\tilde{\mathbb{P}}_{0}\left[\left|L_{l}^{D_{t}}-\mu\right|\ge\theta|\sigma\left(\tilde{T}_{l}^{t}:l=0,\ldots,L-1\right)\right]\le ce^{-c\frac{\theta^{2}}{\mu}}.\label{eq:LargeDeviationConditionedLocTime}
\end{equation}
\end{lem}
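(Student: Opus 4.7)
The plan is to condition on the discrete skeleton $Z_0, Z_1, \ldots, Z_{\tilde{D}_t}$ and exploit the standard representation of $Y_t$ as $Z_n$ with i.i.d.\ $\mathrm{Exp}(1)$ holding times inserted at each visit, independent of $Z$. First I would verify the purely combinatorial identity that the number $N_l$ of visits of $Z$ to an interior vertex $l \in \{1,\ldots,L-1\}$ during the first $t$ excursions from $0$ equals $2\mu = \tilde{T}_{l-1}^t + \tilde{T}_l^t$: each visit to $l$ is an arrival either from $l-1$ (contributing to $\tilde{T}_{l-1}^t$ by definition) or from $l+1$, and in an excursion that returns to $0$ the number of $l+1 \to l$ crossings must equal the number of $l \to l+1$ crossings, i.e.\ $\tilde{T}_l^t$.

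Next I would rewrite
\[
d_l\, L_l^{D_t} \;=\; \int_0^{D_t} 1_{\{Y_s=l\}}\, ds \;=\; \sum_{i=1}^{N_l} \xi_i^{(l)},
\]
where $\xi_i^{(l)}$ are the holding times at the successive visits to $l$. Setting $\mathcal{G} = \sigma(\tilde{T}_0^t,\ldots,\tilde{T}_{L-1}^t)$, the count $N_l = 2\mu$ is $\mathcal{G}$-measurable, and since the family $\{\xi_i^{(l)}\}$ is i.i.d.\ $\mathrm{Exp}(1)$ independent of the entire skeleton $Z$ (hence of the coarser $\mathcal{G}$), the $\mathcal{G}$-conditional law of the right-hand side is exactly $\mathrm{Gamma}(2\mu, 1)$. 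Using $d_l = 2$ for interior $l$, this shows that, conditional on $\mathcal{G}$, $L_l^{D_t}$ has the law of $\tfrac{1}{2}\sum_{i=1}^{2\mu}\xi_i$ with i.i.d.\ $\mathrm{Exp}(1)$ summands, of mean exactly $\mu$.

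The final step is a routine Cram\'er--Chernoff estimate: from $\mathbb{E}[e^{\lambda(\xi-1)}] = e^{-\lambda}/(1-\lambda)$, optimizing $\lambda \in (0,1)$ yields
\[
\mathbb{P}\!\left[\Big|\sum_{i=1}^{n}(\xi_i - 1)\Big| \geq x\right] \;\leq\; 2\, e^{-c x^2/n}
\]
in the sub-exponential range $x \lesssim n$, and the stronger $e^{-cx}$ once $x \gtrsim n$. Plugging in $n = 2\mu$ and $x = 2\theta$ gives the claimed bound $c\, e^{-c\theta^2/\mu}$; the degenerate case $\mu = 0$ is trivial since then $L_l^{D_t} \equiv 0$. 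The only real obstacle is the conceptual subtlety behind the second paragraph: although $\mathcal{G}$ does \emph{not} determine the trajectory of $Z$, it does determine the visit count $N_l$, and it is precisely the $Z$-independence of the holding times that lets us integrate out the trajectory against its $\mathcal{G}$-conditional distribution without altering the $\mathrm{Gamma}(2\mu,1)$ law of the sum.
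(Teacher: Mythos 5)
Your argument is exactly the paper's: the visit count $N_l=\tilde T_{l-1}^t+\tilde T_l^t$ is measurable with respect to $\mathcal G=\sigma(\tilde T_0^t,\ldots,\tilde T_{L-1}^t)$, the $\mathrm{Exp}(1)$ holding times are i.i.d.\ and independent of the discrete skeleton, so conditionally on $\mathcal G$ the normalized local time $L_l^{D_t}=\tfrac1{d_l}\sum_{i=1}^{N_l}\xi_i^{(l)}$ is $\tfrac12\mathrm{Gamma}(2\mu,1)$, and a Chernoff bound finishes. The paper compresses this to three sentences; your extra detail on why $N_l=\tilde T_{l-1}^t+\tilde T_l^t$ (arrivals from the left are counted by $\tilde T_{l-1}^t$, arrivals from the right balance departures to the right) and on why conditioning on $\mathcal G\subset\sigma(Z)$ still yields an exact $\mathrm{Gamma}$ law is all correct. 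One small slip in the last paragraph: for $x\gtrsim n$ the Chernoff estimate for the upper Gamma tail gives only $e^{-cx}$, which is \emph{weaker}, not stronger, than $e^{-cx^2/n}$ in that range; in fact the asserted bound $ce^{-c\theta^2/\mu}$ genuinely fails for $\theta\gg\mu$, since the right tail of a Gamma only decays exponentially. The paper glosses over the same point, and it is harmless because the lemma is only ever invoked with $\theta$ of order at most $\mu$, but the word ``stronger'' has the comparison backwards.
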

\begin{proof}
The continuous time random walk $Y_{t}$ makes $\tilde{T}_{l-1}^{t}+\tilde{T}_{l}^{t}$
discrete visits to the vertex $l$ up to time $D_{t}$. The holding
times of the continuous time random walk $Y_{t}$ are iid standard
exponential random variables and are independent of the discrete skeleton
of the random walk, so we have that the $\tilde{\mathbb{P}}_{0}\left[\cdot|\sigma\left(\tilde{T}_{l}^{t}:l=0,\ldots,L-1\right)\right]-$law
of $L_{l}^{D_{t}}$ is that of a sum of $\tilde{T}_{l-1}^{t}+\tilde{T}_{l}^{t}$
iid exponentials with mean $1/2$ (from the normalizing factor in
\prettyref{eq:ContTimeLocTimeDef}). Thus the claim follows by a standard
large deviation bound.
\end{proof}
We now state a similar result for $\tilde{T}_{l}^{t}$ when conditioned
on $L_{l}^{D_{t}},l=0,\ldots,L,$ whose proof will be given in the
appendix.
\begin{lem}
\label{lem:LDConditionedTraverals}If $l\in\left\{ 1,\ldots,L-1\right\} ,t>0,$
and $\theta>0$ then with $\mu=\sqrt{L_{l}^{D_{t}}L_{l+1}^{D_{t}}}$,
\begin{equation}
\tilde{\mathbb{P}}_{0}\left[\left|\tilde{T}_{l}^{t}-\mu\right|\ge\theta|\sigma\left(L_{l}^{D_{t}}:l=0,\ldots,L\right)\right]\le ce^{-c\frac{\theta^{2}}{\mu}+c\frac{\theta}{u}}.\label{eq:LargeDeviationConditionedTraversal}
\end{equation}

\end{lem}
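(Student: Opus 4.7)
The strategy is an explicit Bayes computation of the conditional law of $\tilde{T}_{l}^{t}$ given $\mathcal{L}:=\sigma(L_{j}^{D_{t}}:j=0,\ldots,L)$. Two ingredients combine: by \prettyref{lem:RayKnightDisc-1}, $(\tilde{T}_{l}^{t})$ has law $\mathbb{G}_{t}$, so its transitions are negative binomial with $\tilde{\mathbb{P}}_{0}[\tilde{T}_{l}^{t}=k\mid\tilde{T}_{l-1}^{t}=m]=\binom{m+k-1}{k}2^{-m-k}$; and (from the proof of \eqref{eq:LargeDeviationConditionedLocTime}) conditional on $(\tilde{T}_{j}^{t})_{j}$, the variables $L_{j}^{D_{t}}$ are independent $\mathrm{Gamma}(N_{j},2)$ with $N_{j}=\tilde{T}_{j-1}^{t}+\tilde{T}_{j}^{t}$ for interior $j$. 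Multiplying these densities, the factors $2^{\pm N_{j}}$ and $(N_{j}-1)!$ cancel against $\binom{N_j-1}{\tilde T_j^t}$, leaving the joint density at interior vertices proportional to
\[
\prod_{j\ge1}\frac{(L_{j}^{D_{t}})^{N_{j}-1}e^{-2L_{j}^{D_{t}}}}{\tilde{T}_{j}^{t}!\,(\tilde{T}_{j-1}^{t}-1)!}.
\]

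Fixing all $\tilde{T}_{j}^{t}$ for $j\ne l$ and varying $\tilde{T}_{l}^{t}=k$, only the factors at vertices $l$ and $l+1$ depend on $k$, and their $k$-dependent parts reduce to
\[
\frac{(L_{l}^{D_{t}})^{k}}{k!}\cdot\frac{(L_{l+1}^{D_{t}})^{k}}{(k-1)!}=\frac{\mu^{2k}}{k!\,(k-1)!},\qquad\mu:=\sqrt{L_{l}^{D_{t}}L_{l+1}^{D_{t}}}.
\]
The crucial feature is that this expression is independent of $(\tilde{T}_{j}^{t})_{j\ne l}$, so the conditional law of $\tilde{T}_{l}^{t}$ given $\mathcal{L}$ alone coincides with the same law conditioned additionally on all other $\tilde{T}_{j}^{t}$. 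Normalizing by $\sum_{k\ge1}\mu^{2k}/[k!(k-1)!]=\mu I_{1}(2\mu)$ (with $I_{1}$ the modified Bessel function) yields
\[
\tilde{\mathbb{P}}_{0}[\tilde{T}_{l}^{t}=k\mid\mathcal{L}]=\frac{\mu^{2k}}{k!\,(k-1)!\,\mu I_{1}(2\mu)},\qquad k\ge1.
\]

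This distribution is essentially the law of $X$ for i.i.d.\ $X,Y\sim\mathrm{Poisson}(\mu)$ weighted to favor $X=Y$; its mean is $\mu I_{0}(2\mu)/I_{1}(2\mu)=\mu+O(1)$ for large $\mu$ by Bessel asymptotics, and a standard Chernoff bound applied to $\mu^{2k}/(k!)^{2}$ (the perturbation from $(k!)^{2}$ to $k!(k-1)!$ shifts the mode by $O(1)$ and is absorbed into constants) gives Gaussian tails $\exp(-c\theta^{2}/\mu)$ for $\theta\lesssim\mu$ smoothly transitioning into Poisson-type tails for $\theta\gtrsim\mu$, all subsumed in the advertised bound $c\exp(-c\theta^{2}/\mu+c\theta/\mu)$. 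The main bookkeeping hurdle will be the boundary vertices $j\in\{0,L\}$ (where $d_{j}=1$ and $\tilde{T}_{0}^{t}=\lfloor t\rfloor$ is prescribed) and the degenerate case $\tilde{T}_{l}^{t}=0$ where $(k-1)!$ formally diverges; both contribute only multiplicative constants and do not affect the exponential rate.
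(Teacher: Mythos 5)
Your proposal follows essentially the same route as the paper: you rederive the conditional distribution of $\tilde{T}_l^t$ given $\sigma(L_j^{D_t})$ (this is the paper's Lemma \ref{lem:ConditionedLawOfTraversals}, proved there by the same joint-density factorization and Bessel normalization you describe, including the key observation that the $k$-dependent factor $(u_l u_{l+1})^k/(k!(k-1)!)$ is free of the other $\tilde T_j^t$'s), and you then propose a Chernoff bound for this Bessel-weighted Poisson-like law, which is precisely what the paper's proof of \eqref{eq:LargeDeviationConditionedTraversal} does via the closed-form MGF $e^{\lambda/2}I_1(2e^{\lambda/2}\mu)/I_1(2\mu)$ and the asymptotic $I_1(z)\sim e^z/\sqrt{2\pi z}$.
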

We are now ready to the main result \prettyref{prop:BarrierSecGWProp}
of this section.
\begin{proof}[Proof of \prettyref{prop:BarrierSecGWProp}]
We start with the proof of \prettyref{eq:GWBarrierAlphaNewNotation}.
Let $I=\left\{ 1,\ldots,L-\lceil3\left(\log L\right)^{2}\rceil\right\} $.
By \prettyref{lem:RayKnightDisc-1} we have
\begin{equation}
\mathbb{G}_{t_{s}}\left[B_{\alpha^{2}}\left(I\right)|T_{L-1}=0\right]=\tilde{\mathbb{P}}\left[A|\tilde{T}_{L-1}^{t_{s}}=0\right],\mbox{ where},\label{eq:ApplicationOfDiscRK}
\end{equation}
\[
A=\left\{ \sqrt{\tilde{T}_{l}^{\lfloor t_{s}\rfloor}}\ge\alpha\left(l\right)\mbox{ for }l\in I\right\} .
\]
Define also
\[
B=\left\{ \sqrt{L_{l}^{D_{\lfloor t_{s}\rfloor}}}\ge\alpha_{-}\left(l\right)\mbox{ for }l\in I\right\} ,\mbox{ where},
\]
\begin{equation}
\alpha_{-}\left(l\right)=\beta\left(l\right)-2\left(\log L\right)^{2}=\alpha\left(l\right)-\left(\log L\right)^{2},l\in\left\{ 0,1,\ldots,L\right\} .\label{eq:AlphaMinusDef}
\end{equation}
Letting $\mathcal{A}=\sigma\left(\tilde{T}_{l}^{t_{s}}:l=0,\ldots,L-1\right)$
we have by \prettyref{eq:LargeDeviationConditionedLocTime} that for
$l\in\left\{ 1,2,\ldots,L-1\right\} $
\begin{equation}
\tilde{\mathbb{P}}_{0}\left[\left|L_{l}^{D_{\lfloor t_{s}\rfloor}}-\mu_{l}\right|\ge\sqrt{\mu_{l}}\left(\log L\right)^{2}|\mathcal{A}\right]\le ce^{-c\left(\log L\right)^{4}},\mbox{ for }\mu_{l}=\frac{\tilde{T}_{l-1}^{\lfloor t_{s}\rfloor}+\tilde{T}_{l}^{\lfloor t_{s}\rfloor}}{2}.\label{eq:LDApllic}
\end{equation}
On the event $A$ we have for $l\in I$
\[
\begin{array}{ccl}
\mu_{l}-\sqrt{\mu_{l}}\left(\log L\right)^{2} & = & \left(\sqrt{\mu_{l}}-\frac{1}{2}\left(\log L\right)^{2}\right)^{2}-\frac{1}{4}\left(\log L\right)^{4}\\
 & \ge & \left(\alpha\left(l\right)-\frac{1}{2}\left(\log L\right)^{2}\right)^{2}-\frac{1}{4}\left(\log L\right)^{4}\ge\alpha_{-}\left(l\right)^{2},
\end{array}
\]
where we have used that $\left(\log L\right)^{2}\le\alpha\left(l\right)\le\alpha\left(l-1\right)$
for $l\in I$ (see \prettyref{eq:AlphaBarrierDef}, \prettyref{fig:Illustration-of-barriers}).
Therefore \eqref{eq:LDApllic} implies that 
\[
\tilde{\mathbb{P}}_{0}\left[B|A\cap\left\{ \tilde{T}_{L-1}^{t_{s}}=0\right\} \right]\ge1-\sum_{l\in I}e^{-c\left(\log L\right)^{4}}\ge1-o\left(1\right),
\]
so that
\[
\tilde{\mathbb{P}}_{0}\left[A\cap\left\{ \tilde{T}_{L-1}^{t_{s}}=0\right\} \right]\le c\tilde{\mathbb{P}}_{0}\left[A\cap B\cap\left\{ \tilde{T}_{L-1}^{t_{s}}=0\right\} \right]\le c\tilde{\mathbb{P}}_{0}\left[B\cap\left\{ L_{L}^{D_{\lfloor t_{s}\rfloor}}=0\right\} \right],
\]
(note that $\left\{ \tilde{T}_{L-1}^{t_{s}}=0\right\} =\left\{ L_{L}^{D_{\lfloor t_{s}\rfloor}}=0\right\} $
by construction). Therefore we obtain that
\begin{equation}
\tilde{\mathbb{P}}_{0}\left[A|\tilde{T}_{L-1}^{t_{s}}=0\right]\le c\tilde{\mathbb{P}}_{0}\left[B|L_{L}^{D_{\lfloor t_{s}\rfloor}}=0\right].\label{eq:HarDuSettSatt}
\end{equation}
Using \prettyref{lem:RayishKnightish} the right-hand side equals
\begin{equation}
\tilde{\mathbb{E}}_{0}\left[\mathbb{Q}_{2L_{1}^{D_{\lfloor t_{s}\rfloor}}\to0}^{0,L-1}\left[B_{2\alpha_{-}\left(1+\cdot\right)^{2}}\left(I-1\right)\right]|L_{L}^{D_{\lfloor t_{s}\rfloor}}=0\right],\label{eq:ApplyRayishKnightish}
\end{equation}
where $I-1=\left\{ 0,\ldots,L-\lceil3\left(\log L\right)^{2}\rceil-1\right\} $.
By the Bessel bridge barrier bound \prettyref{eq:BesselBridgeGeneralUpperbound}
with $T=L-1$, $t_{1}=0$, $t_{2}=\lceil3\left(\log L\right)^{2}\rceil$,
$h\left(\cdot\right)=\sqrt{2}\alpha_{-}\left(1+\cdot\right)$ (which
is positive on $\left[0,T-t_{2}\right]$) and $u=\sqrt{2L_{1}^{D_{\lfloor t_{s}\rfloor}}}$
the quantity in the expectation is bounded above by
\[
\frac{\left(c+\left|\tilde{u}\left(1\right)-\sqrt{2}\alpha_{-}\left(1\right)\right|\right)\left(c+c\log L+\left|\tilde{u}\left(L-\lceil3\left(\log L\right)^{2}\rceil\right)-\sqrt{2}\alpha_{-}\left(L-\lceil3\left(\log L\right)^{2}\rceil\right)\right|\right)}{L-1-\lceil3\left(\log L\right)^{2}\rceil},
\]
where $\tilde{u}\left(l\right)=\bar{u}\left(l-1\right)=\sqrt{2L_{L}^{D_{\lfloor t_{s}\rfloor}}}\frac{L-l}{L-1}$.
Therefore using \prettyref{eq:ExpectedDeviation} and Hölder's inequality
(note that $\sqrt{2}\beta\left(l\right)=\sqrt{2\lfloor t_{s}\rfloor}\frac{L-l}{L}+O\left(1\right)$)
\prettyref{eq:ApplyRayishKnightish} is bounded above by
\[
c\frac{\left(c+\sqrt{2}\left|\beta\left(1\right)-\alpha_{-}\left(1\right)\right|\right)\left(c+c\log L+\sqrt{2}\left|\beta\left(L-\lceil3\left(\log L\right)^{2}\rceil\right)-\alpha_{-}\left(L-\lceil3\left(\log L\right)^{2}\rceil\right)\right|\right)}{L}.
\]
Thus by \prettyref{eq:AlphaMinusDef} in fact
\[
\tilde{\mathbb{P}}_{0}\left[A|\tilde{T}_{L-1}^{t_{s}}=0\right]\le c\frac{\left(c+c\left(\log L\right)^{2}\right)^{2}}{L}.
\]
Now \prettyref{eq:GWBarrierAlphaNewNotation} follows by \prettyref{eq:ApplicationOfDiscRK}.

The proof of the upper bound of \prettyref{eq:GWBarrierGammaDeltaNewNotation}
and \prettyref{eq:GWBarrierGammaDeltaFromkNewNotaiton} are similar.
For the upper bound of \prettyref{eq:GWBarrierGammaDeltaNewNotation}
we let
\[
I_{1}=\left\{ l_{0},\ldots,L-l_{0}\right\} \mbox{\,\ and }I_{2}=\left\{ l_{0}+1,\ldots,L-l_{0}\right\} ,
\]
and note that similarly to \prettyref{eq:LDApllic} the large deviation
bound \prettyref{eq:LargeDeviationConditionedLocTime} implies that,
\begin{equation}
\tilde{\mathbb{P}}_{0}\left[\left|L_{l}^{D_{\lfloor t_{s}\rfloor}}-\mu_{l}\right|\ge\frac{1}{2}\sqrt{\mu_{l}}f\left(l\right)|\mathcal{A}\right]\le ce^{-cf\left(l\right)^{2}},\label{eq:LDApllic-2}
\end{equation}
where $\mu_{l}$ is as in \prettyref{eq:LDApllic}. On the event $\left\{ \sqrt{\tilde{T}_{l}^{\lfloor t_{s}\rfloor}}\ge\gamma\left(l\right)\mbox{ for }l\in I_{1}\right\} $
we have for $l\in I_{2}$
\[
\mu_{l}-\frac{1}{2}\sqrt{\mu_{l}}f\left(l\right)=\left(\sqrt{\mu_{l}}-\frac{1}{4}f\left(l\right)\right)^{2}-\frac{1}{4}f\left(l\right)^{2}\ge\left(\gamma\left(l\right)-\frac{1}{4}f\left(l\right)\right)^{2}-\frac{1}{4}f\left(l\right)^{2}\ge\beta\left(l\right)^{2},
\]
for $L$ large enough, where we have used that $f\left(l\right)\le\gamma\left(l\right)\le\gamma\left(l-1\right)$
(see \prettyref{eq:GammaBarrierDef}, \prettyref{fig:Illustration-of-barriers}).
Therefore the argument which gave \prettyref{eq:HarDuSettSatt} now
gives 
\[
\begin{array}{l}
\tilde{\mathbb{P}}_{0}\left[\sqrt{\tilde{T}_{l}^{t_{s}}}\ge\gamma\left(l\right)\mbox{ for }l\in I_{1}|\tilde{T}_{L-1}^{t_{s}}=0\right]\\
\le\left(1-\sum_{l\in I_{2}}e^{-cf\left(l\right)^{2}}\right)^{-1}\tilde{\mathbb{P}}_{0}\left[\sqrt{L_{l}^{D_{\lfloor t_{s}\rfloor}}}\ge\beta\left(l\right)\mbox{ for }l\in I_{2}|L_{L}^{D_{\lfloor t_{s}\rfloor}}=0\right].
\end{array}
\]
We have $\sum_{l\in I_{2}}e^{-cf\left(l\right)^{2}}=o\left(1\right)$,
so that using \prettyref{lem:RayishKnightish} the bottom line equals
\begin{equation}
\left(1+o\left(1\right)\right)\tilde{\mathbb{P}}_{0}\left[\mathbb{Q}_{2L_{1}^{D_{\lfloor t_{s}\rfloor}}\to0}^{0,L-1}\left[B_{2\beta\left(1+\cdot\right)^{2}}\left(I_{2}-1\right)\right]|L_{L}^{D_{\lfloor t_{s}\rfloor}}=0\right].\label{eq:ApplyRayishKnigtish2}
\end{equation}
By \prettyref{eq:BesselBridgeGeneralUpperbound} with $T=L-1$, $t_{1}=t_{2}=l_{0}$,
$h\left(\cdot\right)=\sqrt{2}\beta\left(1+\cdot\right)$ and $u=\sqrt{2L_{1}^{D_{\lfloor t_{s}\rfloor}}}$
 the quantity in the expectation is bounded above by
\[
\frac{\left(c+c\sqrt{l_{0}}+\left|\tilde{u}\left(l_{0}+1\right)-\sqrt{2}\beta\left(l_{0}+1\right)\right|\right)\left(c+\sqrt{l_{0}}+\left|\tilde{u}\left(L-l_{0}\right)-\sqrt{2}\beta\left(L-l_{0}\right)\right|\right)}{L-1-\lceil3\left(\log L\right)^{2}\rceil},
\]
where $\tilde{u}\left(l\right)=\bar{u}\left(l-1\right)=\sqrt{2L_{L}^{D_{\lfloor t_{s}\rfloor}}}\frac{L-l}{L-1}$.
Using \prettyref{eq:ExpectedDeviation} and Hölder's inequality we
get that
\[
\tilde{\mathbb{P}}_{0}\left[\sqrt{\tilde{T}_{l}^{t_{s}}}\ge\gamma\left(l\right)\mbox{ for }l\in I_{1}|\tilde{T}_{L-1}^{t_{s}}=0\right]\le c\frac{\left(c+\sqrt{l_{0}}\right)^{2}}{L},
\]
since $\sqrt{2\lfloor t_{s}\rfloor}\frac{L-l}{L}=\sqrt{2}\beta\left(l\right)+O\left(1\right)$.
Thus \prettyref{eq:GWBarrierGammaDeltaNewNotation} follows by \prettyref{lem:RayKnightDisc-1}.

To show \prettyref{eq:GWBarrierGammaDeltaFromkNewNotaiton} we similarily
use \prettyref{eq:LargeDeviationConditionedLocTime} and \prettyref{lem:RayKnightDisc-1}
to prove that
\begin{equation}
\begin{array}{l}
\mathbb{G}_{a}\left[B_{\gamma\left(k+\cdot\right)^{2}}\left(\left\{ 0,\ldots,L-k-l_{0}\right\} \right)|T_{L-1-k}=0\right]\\
=\mathbb{G}_{a}\left[\gamma\left(l+k\right)\le\sqrt{T_{l}}\mbox{ for }l=0,\ldots,L-k-l_{0}|T_{L-k-1}=0\right]\\
\le\left(1-\sum_{l=1}^{L-k-l_{0}}e^{-cf\left(l+k\right)^{2}}\right)^{-1}\\
\quad\quad\times\tilde{\mathbb{P}}_{0}\left[\sqrt{L_{l}^{D_{a}}}\ge\beta\left(l+k\right)\mbox{ for }l=1,\ldots,L-k-l_{0}|L_{L-k}^{D_{a}}=0\right]\\
=\left(1+o\left(1\right)\right)\tilde{\mathbb{P}}_{0}\left[\sqrt{L_{l}^{D_{a}}}\ge\beta\left(l+k\right)\mbox{ for }l=1,\ldots,L-k-l_{0}|L_{L-k}^{D_{a}}=0\right].
\end{array}\label{eq:HarDuSettSatt2}
\end{equation}
By \prettyref{lem:RayishKnightish} with $L-k-1$ in place of $L$
the last probability equals
\[
\tilde{\mathbb{P}}_{0}\left[\mathbb{Q}_{2L_{1}^{D_{a}}\to0}^{0,L-k-1}\left[B_{2\beta\left(k+1+\cdot\right)^{2}}\left(\left\{ 0,\ldots,L-k-l_{0}-1\right\} \right)\right]|L_{L-k-1}^{D_{a}}=0\right],
\]
so that by \prettyref{eq:BesselBridgeGeneralUpperbound} with $T=L-k-1$,
$t_{1}=0$, $t_{2}=l_{0}$, $h\left(\cdot\right)=\beta\left(k+1+\cdot\right)$
and $u=\sqrt{2L_{1}^{D_{a}}}$ the right-hand side of \eqref{eq:HarDuSettSatt2}
is bounded above by
\[
\tilde{\mathbb{E}}_{0}\left[\frac{\left(c+\left|\tilde{u}\left(1\right)-\beta\left(k+1\right)\right|\right)\left(c+\sqrt{l_{0}}+\left|\tilde{u}\left(L-k-l_{0}\right)-\beta\left(L-l_{0}\right)\right|\right)}{L-k-1-l_{0}}|L_{L-k-1}^{D_{a}}=0\right]
\]
where $\tilde{u}\left(l\right)=\bar{u}\left(l-1\right)=\sqrt{2L_{1}^{D_{a}}}\frac{L-k-l}{L-k-1}$.
Using \prettyref{eq:ExpectedDeviation} with $L-k$ in place of $L$
and Hölder's inequality this is at most
\[
c\frac{\left(c+\left|u\left(1\right)-\beta\left(k+1\right)\right|\right)\left(c+\sqrt{l_{0}}+\left|u\left(L-k-l_{0}\right)-\beta\left(L-l_{0}\right)\right|\right)}{L-k-l_{0}-1},
\]
where $u\left(l\right)=\sqrt{a}\frac{L-k-l}{L-k}$. Now since $\gamma\left(k\right)^{2}\le a\le\delta\left(k\right)^{2}$
we have we have that $\beta\left(k+l\right)\le u\left(l\right)\le\beta\left(k+l\right)+g\left(k+l\right)$,
so that for $L\ge c$ this is at most
\[
c\frac{\left(c+g\left(k+1\right)\right)\left(c+\sqrt{l_{0}}+g\left(L-l_{0}\right)\right)}{L-k-l_{0}-1}\le c\frac{g\left(k+1\right)l_{0}^{0.51}}{L-k-l_{0}-1},
\]
(recall \prettyref{eq:DefOfgfunc}), so \prettyref{eq:GWBarrierGammaDeltaFromkNewNotaiton}
follows.

To show \prettyref{eq:GWBarrierGammaDeltaUptoKNewNotation} we similarily
use \prettyref{eq:LargeDeviationConditionedLocTime} prove that
\[
\begin{array}{l}
\mathbb{G}_{t_{s}}\left[\gamma\left(l\right)\le\sqrt{T_{l}}\le\delta\left(l\right)\mbox{ for }l=l_{0},\ldots,k|T_{L-1}=0\right]\\
\le c\tilde{\mathbb{P}}_{0}\left[\sqrt{L_{l}^{D_{\lfloor t_{s}\rfloor}}}\ge\beta\left(l\right)\mbox{ for }l=l_{0},\ldots,k,\sqrt{L_{k}^{D_{\lfloor t_{s}\rfloor}}}\le\delta\left(l\right)+g\left(k\right)|L_{L}^{D_{\lfloor t_{s}\rfloor}}=0\right].
\end{array}
\]
For this we use also that (cf. \prettyref{eq:LDApllic})
\[
\tilde{\mathbb{P}}_{0}\left[L_{k}^{D_{\lfloor t_{s}\rfloor}}\le\mu_{k}+\sqrt{\mu_{k}}\frac{1}{2}g\left(k\right)|\mathcal{A}\right]\le ce^{-g\left(k\right)^{2}}\to0,\mbox{ as }L\to\infty,
\]
and that on the event $\left\{ \gamma\left(l\right)\le\sqrt{\tilde{T}_{l}^{t_{s}}}\le\delta\left(l\right)\mbox{\,\ for }l\in\left\{ l_{0},\ldots,L-l_{0}\right\} \right\} $
we have 
\[
\mu_{k}+\sqrt{\mu_{k}}\frac{1}{2}g\left(k\right)\le\left(\delta\left(k-1\right)+\frac{1}{4}g\left(k\right)\right)^{2}\le\left(\delta\left(k\right)+g\left(k\right)\right)^{2},\mbox{\,\ see }\eqref{eq:DeltaBarrierDef}.
\]
We then use \prettyref{lem:RayishKnightish} to obtain that
\[
\begin{array}{l}
\mathbb{G}_{t_{s}}\left[\gamma\left(l\right)\le\sqrt{T_{l}}\le\delta\left(l\right)\mbox{ for }l=l_{0},\ldots,k|T_{L-1}=0\right]\le\\
c\tilde{\mathbb{E}}_{0}\left[\mathbb{Q}_{2L_{1}^{D_{\lfloor t_{s}\rfloor}}\to0}^{0,L-1}\left[B_{2\beta\left(1+\cdot\right)^{2}}\left(\left\{ l_{0},\ldots,k-1\right\} \right),\sqrt{X_{k-1}}\le\sqrt{2}\left(\delta\left(k\right)+g\left(k\right)\right)\right]|L_{L-1}^{D_{\lfloor t_{s}\rfloor}}=0\right].
\end{array}
\]
By \prettyref{eq:BesselBridgeGeneralUpperBoundEndPoint} with $T=L-1$,
$t_{1}=l_{0}$, $t_{2}=L-k$ and $h=\sqrt{2}\beta\left(1+\cdot\right)$
this is bounded above by
\[
c\tilde{\mathbb{E}}_{0}\left[\sqrt{\frac{\tilde{u}\left(k\right)}{\beta\left(k\right)}}\frac{\left(c+\sqrt{l_{0}}+\left|\tilde{u}\left(l_{0}+1\right)-\beta\left(l_{0}+1\right)\right|\right)\left(c+cg\left(k\right)\right)}{k-l_{0}-1}|L_{L}^{D_{\lfloor t_{s}\rfloor}}=0\right],
\]
where $\tilde{u}\left(l\right)=\bar{u}\left(l-1\right)=\sqrt{2L_{1}^{D_{a}}}\frac{L-l}{L-1}$.
By \prettyref{eq:ExpectedDeviation} and the Hölder inequality this
bounded above by
\[
\frac{c\frac{\sqrt{2\lfloor t_{s}\rfloor}\left(1-k/L\right)}{\beta\left(k\right)}\left(c+\sqrt{l_{0}}\right)\left(c+cg\left(k\right)\right)}{k-l_{0}-1},
\]
which is bounded above by the right-hand side of \prettyref{eq:GWBarrierGammaDeltaUptoKNewNotation},
so \prettyref{eq:GWBarrierGammaDeltaUptoKNewNotation} follows.

It remains to show the lower bound of \prettyref{eq:GWBarrierGammaDeltaNewNotation}.
For this we note that by \prettyref{lem:RayKnightDisc-1},
\begin{equation}
\mathbb{G}_{t_{s}}\left[B_{\gamma^{2}}^{\delta^{2}}\left(I_{1}\right)|T_{L-1}=0\right]=\tilde{\mathbb{P}}_{0}\left[A|\tilde{T}_{L-1}^{t_{s}}=0\right],\label{eq:ApplicationOfDiscRK-1}
\end{equation}
where $I_{1}=\left\{ l_{0},\ldots,L-l_{0}\right\} $ and, 
\[
A=\left\{ \gamma\left(l\right)\le\sqrt{\tilde{T}_{l}^{t_{s}}}\le\delta\left(l\right)\mbox{ for }l\in I_{1}\right\} .
\]
Define also $I_{2}=\left\{ \lfloor\frac{1}{2}l_{0}\rfloor,\ldots,L-\lfloor\frac{1}{2}l_{0}\rfloor\right\} $
and
\[
B=\left\{ \beta\left(l\right)+2f\left(l\right)\le\sqrt{\tilde{L}_{l}^{D_{t_{s}}}}\le\beta\left(l\right)+\frac{1}{2}g\left(l\right)\mbox{ for }l\in I_{2}\right\} .
\]
By \prettyref{eq:LargeDeviationConditionedTraversal} we have, letting
$\mathcal{A}=\sigma\left(L_{l}^{D_{\lfloor t_{s}\rfloor}}:l=0,\ldots,L-1\right)$,
that
\begin{equation}
\tilde{\mathbb{P}}_{0}\left[\left|\tilde{T}_{l}^{t_{s}}-\mu_{l}\right|\ge\sqrt{\mu_{l}}\frac{1}{2}f\left(l\right)|\mathcal{A}\right]\le ce^{-cf\left(l\right)^{2}},\label{eq:LDApllic-1}
\end{equation}
where $\mu_{l}=\sqrt{L_{l}^{D_{\lfloor t_{s}\rfloor}}L_{l+1}^{D_{\lfloor t_{s}\rfloor}}}$.
On the event $B$ we have for $l\in I_{2}$
\[
\begin{array}{ccl}
\mu_{l}-\sqrt{\mu_{l}}\frac{1}{2}f\left(l\right) & = & \left(\sqrt{\mu_{l}}-\frac{1}{4}f\left(l\right)\right)^{2}-\frac{1}{16}f\left(l\right)\\
 & \ge & \left(\sqrt{\left(\beta\left(l\right)+2f\left(l\right)\right)\left(\beta(l+1)+2f\left(l+1\right)\right)}-\frac{1}{4}f\left(l\right)\right)^{2}-\frac{1}{16}f\left(l\right)\\
 & \ge & \left(\beta\left(l\right)+2f\left(l\right)\right)^{2}-\frac{1}{16}f\left(l\right)\ge\gamma\left(l\right)^{2}.
\end{array}
\]
Furthermore on the event $B$, 
\[
\mu_{l}+\sqrt{\mu_{l}}\frac{1}{2}f\left(l\right)=\left(\sqrt{\mu_{l}}+\frac{1}{4}f\left(l\right)\right)^{2}-\frac{1}{16}f\left(l\right)\le\left(\beta\left(l\right)+\frac{1}{2}g\left(l\right)-\frac{1}{4}f\left(l\right)\right)^{2}\le\delta\left(l\right)^{2}.
\]
Therefore \eqref{eq:LDApllic-1} implies that
\[
\tilde{\mathbb{P}}_{0}\left[B|L_{L}^{D_{\lfloor t_{s}\rfloor}}=0\right]\le\left(1-\sum_{l\in I_{2}}e^{-cf\left(k\right)^{2}}\right)^{-1}\tilde{\mathbb{P}}_{0}\left[B|L_{L}^{D_{\lfloor t_{s}\rfloor}}=0\right]\le c\tilde{\mathbb{P}}_{0}\left[A|\tilde{T}_{L-1}^{t_{s}}=0\right],
\]
for $L$ large enough. Using \prettyref{lem:RayishKnightish} the
left-hand side equals
\[
c\tilde{\mathbb{P}}_{0}\left[\mathbb{Q}_{2L_{1}^{D_{\lfloor t_{s}\rfloor}}\to0}^{0,L-1}\left[B_{2\left(\beta\left(1+\cdot\right)+2f\left(1+\cdot\right)\right)^{2}}^{2\left(\beta\left(1+\cdot\right)+\frac{1}{2}g\left(1+\cdot\right)\right)^{2}}\left(I_{2}-1\right)\right]|L_{L}^{D_{\lfloor t_{s}\rfloor}}=0\right].
\]
This is bounded below by
\[
\begin{array}{l}
{\displaystyle \inf_{v\in\left(-500,500\right)}}\mathbb{Q}_{2\left(\beta\left(1\right)+v\right)^{2}\to0}^{0,L-1}\left[B_{2\left(\beta\left(1+\cdot\right)+2f\left(1+\cdot\right)\right)^{2}}^{2\left(\beta\left(1+\cdot\right)+\frac{1}{2}g\left(1+\cdot\right)\right)^{2}}\left(\left[\lfloor\frac{1}{2}l_{0}\rfloor-1,\ldots,L-\lfloor\frac{1}{2}l_{0}\rfloor-1\right]\right)\right]\\
\times\tilde{\mathbb{P}}_{0}\left[\left(\beta\left(1\right)-500\right)^{2}\le L_{1}^{D_{\lfloor t_{s}\rfloor}}\le\left(\beta\left(1\right)+500\right)^{2}|L_{L}^{D_{t_{s}}}=0\right].
\end{array}
\]
By \prettyref{eq:LowerBoundOnL1Prob} the second probability is bounded
below by $c>0$, and by \prettyref{lem:BesselBridgeTube} with $T=L-1,t=\lfloor\frac{1}{2}l_{0}\rfloor$
and $u=\sqrt{2}\beta\left(1\right)$, the first is bounded below by
$cl_{0}/\left(L-1-l_{0}\right)\ge cl_{0}/L$. Therefore the lower
bound of \prettyref{eq:GWBarrierGammaDeltaNewNotation} follows.
\end{proof}
By completing the demonstration of the barrier crossing bounds, we
have now proved all the ``ingredients'' that were used to prove
the upper bound \prettyref{prop:UpperBound} and the lower bound \prettyref{prop:LowerBound}
(except for the small proofs in the appendix). Thus of the tools that
were used to deduce the main result \prettyref{thm:MainResult} only
the concentration result \prettyref{prop:Concentration} remains to
be proven.

\section{\label{sec:Concentration}Concentration of excursion times}

In this section we will prove the concentration result \prettyref{prop:Concentration}
which bounds the total time $D_{t_{s}}^{y,0}$ (recall \eqref{eq:ExcursionTimeAbreviation})
needed to make $t_{s}$ traversals from $\partial B\left(y,r_{0}\right)$
to $\partial B\left(y,r_{1}\right)$. We need the error in the bound
to be smaller than the subleading correction term for $C_{\varepsilon}$,
which is already small compared to the leading order (cf. \prettyref{eq:MainResultIntro}),
and we therefore need a very precise estimate. Essentially, we must
show that 
\begin{equation}
D_{t_{s}}^{y,0}=\frac{1}{\pi}t_{s}\left(1+o\left(\log L/L\right)\right)\mbox{ simultaneously for all }y\in F_{L}.\label{eq:ConcSecInformalGoal}
\end{equation}
The time $D_{n}^{y,0}$ can be written as a sum of $n$ random variables,
namely the time each ``trip'' from $\partial B\left(y,r_{0}\right)$
to $\partial B\left(y,r_{1}\right)$ and back takes. Therefore the
natural approach to get \eqref{eq:ConcSecInformalGoal} - which we
employ - is to derive a Cramer-type large deviation bound on $P_{x}\left[\left|D_{n}^{y,0}-\frac{1}{\pi}n\right|\ge\theta\right]$.

However, several complications arise. Firstly, the typical way to
obtain \eqref{eq:ConcSecInformalGoal} from a large deviation bound
on $D_{n}^{y,0}$ for one $y$, is to use a union bound over $y\in F_{L}$.
This fails in our case, because the best upper bound one can hope
for is $ce^{-c\theta^{2}/n}$ (the bound one gets for sums of iid
random variables), and to obtain \eqref{eq:ConcSecInformalGoal} one
needs to set $n=t_{s}\asymp L^{2}$ and $\theta=c\left(\log L/L\right)n$
for a small constant $c$. This would give a bound of $e^{-c^{2}\left(\log L\right)^{2}}$
which does not ``kill'' $\left|F_{L}\right|\ge e^{2L}$ (recall
\eqref{eq:SizeOFfl}). The issue is similar to that from the proof
of \prettyref{prop:SmartMarkov} in \prettyref{sec:UpperBound} and
the solution is also similar: we take the union bound instead over
a packing of $\sim r_{0}^{-2}\approx\left(\log L\right)^{3/2}$ circles
of radius close to $r_{0}$, in such a way that the concentration
of excursion times for all $y$ in the packing implies the concentration
of excursion times for all $y\in F_{L}$.

Furthermore, the typical way to obtain a large deviation bound on
$D_{n}^{y,0}$ for one $y$ is to write $D_{n}^{y,0}$ as the sum
\[
D_{n}^{y,0}=\sum_{i=1}^{n}\left(D_{i}^{y,0}-R_{i}^{y,0}\right)+\sum_{i=0}^{n-1}\left(R_{i+1}^{y,0}-D_{i}^{y,0}\right),\mbox{ where }D_{0}^{y,0}=0,
\]
of the lengths of each of the $n$ excursions from $\partial B\left(y,r_{1}\right)$
to $\partial B\left(y,r_{0}\right)$ and the lengths of each of the
$n$ excursions from $\partial B\left(y,r_{0}\right)$ to $\partial B\left(y,r_{1}\right)$,
and then use Khasminskii's lemma/Kac's moment formula and the strong
Markov property to obtain large deviation bounds for each of the two
sums, by bounding their exponential moments (cf. \eqref{eq:EasyExpMoment}
and \eqref{eq:EasyBoundExpMoments}). This turns out to work fine
for the first sum, but a further complication arises when applying
this recipe to the second sum. Essentially speaking, the recipe requires
a bound on $E_{z}\left[H_{B\left(y,r_{1}\right)}\right]$ (for appropriate
random $z$ this is the expectation of the summands) that is uniform
over $z\in\partial B\left(y,r_{0}\right)$ and whose error is at most
as large as the $\theta$ which we wish to use. Such a strong uniform
bound turns out to unattainable. Instead, we employ a more sophisticated
technique which inolves considering the Markovian structure of the
starting points $W_{D_{i}^{y,0}},i\ge1,$ of each excursion from $\partial B\left(y,r_{0}\right)$
to $\partial B\left(y,r_{1}\right)$, and computing exactly the expected
length of an excursion when starting from the equilibrium distribution
on starting points.

Let us now start the proof of \prettyref{prop:Concentration}. Recall
\prettyref{eq:DefOfExcursionTimes} for the definition of $D_{n}\left(y,R,r\right)$
and $R_{n}\left(y,R,r\right)$. Most of the results of this section
will be stated for general $0<r<R<\frac{1}{2}$. At the end, when
we carry out the packing argument, we will use the results with $R=r_{0}^{\pm}$
and $r=r_{1}^{\pm}$, for $r_{l}^{\pm}$ as in \eqref{eq:ModifiedRadiiDef}
(therefore it is good keep in mind that in the end we will have $R/r\approx e$
and $R\downarrow0$ as $\left(\log L\right)^{-3/4}$). When it does
not cause confusion we will drop the arguments and write
\[
D_{n}=D_{n}\left(y,R,r\right)\mbox{ and }R_{n}=R_{n}\left(y,R,r\right).
\]
We first introduce rigorously the equilibrium distribution mentioned
above, which will be denoted by $\mu_{r}^{R}$. By Lemma 2.1 of \cite{UenoOnRecurrentMarkovProcesses}
there exists for all $y\in\mathbb{T}$ a pair of probability measures
$\mu_{r}^{R}$ on $\partial B\left(y,R\right)$ and $\mu_{R}^{r}$
on $\partial B\left(y,r\right)$ such that
\begin{equation}
\begin{array}{ccl}
\mu_{r}^{R}\left(\cdot\right) & = & \int_{\partial B\left(y,r\right)}P_{v}\left[W_{H_{\partial B\left(y,R\right)}}\in\cdot\right]\mu_{R}^{r}\left(dv\right)\mbox{, and}\\
\mu_{R}^{r}\left(\cdot\right) & = & \int_{\partial B\left(y,R\right)}P_{v}\left[W_{H_{\partial B\left(y,r\right)}}\in\cdot\right]\mu_{r}^{R}\left(dv\right).
\end{array}\label{eq:RhoAndRhoTilde}
\end{equation}
(Actually these measures are the stationary distributions of the discrete
time Markov chains $\left(W_{D_{n}}\right)_{n\ge1}$ and $\left(W_{R_{n}}\right)_{n\ge1}$).
Next we want to compute an exact formula for $E_{\mu_{r}^{R}}\left[D_{1}\right]$.
For this we will use Green functions. For any measurable $A\subset\mathbb{T}$
let, 
\[
p^{A}\left(t,x,y\right)=P_{x}\left[W_{t}\in dy,H_{A}>t\right],
\]
denote the transition density of $W_{t}$ under $P_{x}$ killed upon
hitting $A$. Recall that the killed Green functions $G^{A}\left(\cdot,\cdot\right)$
is defined by 
\[
G^{A}\left(x,y\right)=\int_{0}^{\infty}p^{A}\left(t,x,y\right)dt\mbox{ for }x,y\in\mathbb{T}.
\]
One can define a measure by 
\[
G^{A}\left(x,B\right)=\int_{B}G^{A}\left(x,y\right)dy\mbox{ for }x\in\mathbb{\mathbb{T}}\mbox{ and measurable }B\subset\mathbb{T}.
\]
Note that
\begin{equation}
G^{A}\left(x,B\right)=E_{x}\left[\int_{0}^{H_{A}}1_{\left\{ W_{t}\in B\right\} }dt\right]\mbox{ for }x\in\mathbb{T}\mbox{ and measurable }B\subset\mathbb{T}.\label{eq:GreenFunctionIsExpectedOccupationMeasure}
\end{equation}
A standard bound on killed Green functions for Brownian motion in
$\mathbb{R}^{2}$ imply the following bounds on the Green function
$G^{B\left(y,R\right)^{c}}\left(u,v\right)$ for $u,v\in B\left(y,r\right)$
(see Lemma 3.36, \cite{PeresMoertersBrownianMotion} and note that
$B\left(y,R\right)\subset\mathbb{T}$ can be identified with a ball
in $\mathbb{R}^{2}$, cf. \prettyref{eq:LawOfBMInTorusAndR2Same})
\begin{equation}
G^{B\left(y,R\right)^{c}}\left(u,v\right)=-\frac{1}{\pi}\log d\left(u,v\right)+\frac{1}{\pi}E_{u}\left[\log d\left(W_{T_{B\left(y,R\right)}},v\right)\right].\label{eq:GreenFunctionBound}
\end{equation}
We are now ready to compute $E_{\mu_{r}^{R}}\left[D_{1}\right]$.
\begin{lem}
\label{lem:OutInOutExpectation}($y\in\mathbb{T}$) For all $0<r<R<\frac{1}{2}$,
\begin{equation}
E_{\mu_{r}^{R}}\left[D_{1}\right]=\frac{1}{\pi}\log\frac{R}{r}.\label{eq:OutInOutExpectation}
\end{equation}
\end{lem}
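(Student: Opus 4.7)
The plan is to split $D_1$ at the time $R_1$ via the strong Markov property, compute the ``easy'' exit-time piece exactly using the Dirichlet problem on a disk, and then identify the hitting-time piece using the Green-function formula \eqref{eq:GreenFunctionBound} together with the invariance relations \eqref{eq:RhoAndRhoTilde}. Writing $D_1 = R_1 + T_{B(y,R)}\circ\theta_{R_1}$ and using the defining relation \eqref{eq:RhoAndRhoTilde} (which gives $W_{R_1}\sim\mu_R^r$ under $P_{\mu_r^R}$), the strong Markov property at time $R_1$ yields
\begin{equation*}
E_{\mu_r^R}[D_1] \;=\; E_{\mu_r^R}[R_1] \;+\; E_{\mu_R^r}\bigl[T_{B(y,R)}\bigr].
\end{equation*}

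For the second summand, since $R<\tfrac12$ the ball $B(y,R)\subset\mathbb{T}$ is isometric to a Euclidean disk (by \eqref{eq:LawOfBMInTorusAndR2Same}); the function $u(w)=(R^2-d(w,y)^2)/2$ solves $\tfrac12\Delta u=-1$ in $B(y,R)^\circ$ with $u\equiv 0$ on $\partial B(y,R)$, so Dynkin's formula gives $E_w[T_{B(y,R)}]=u(w)$ for $w\in B(y,R)$. Evaluating at any point with $d(w,y)=r$ yields $E_{\mu_R^r}[T_{B(y,R)}]=(R^2-r^2)/2$ regardless of the exact form of $\mu_R^r$.

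The heart of the proof is the identity $E_{\mu_r^R}[R_1] = \tfrac{1}{\pi}\log(R/r) - (R^2-r^2)/2$. Using \eqref{eq:GreenFunctionIsExpectedOccupationMeasure} with $A=B(y,r)$, one has $E_v[R_1] = \int_{\mathbb{T}} G^{B(y,r)}(v,z)\,dz$. To bring in the explicit formula \eqref{eq:GreenFunctionBound}, apply it instead to the exit-time computation in Step 2: writing $(R^2-r^2)/2 = \int_{B(y,R)} G^{B(y,R)^c}(\mu_R^r,z)\,dz$ and inserting \eqref{eq:GreenFunctionBound}, then using the second invariance relation in \eqref{eq:RhoAndRhoTilde} (which says $E_{\mu_R^r}[\log d(W_{T_{B(y,R)}},z)] = \int\log d(v,z)\,d\mu_r^R(v)$), one obtains an identity of the form
\begin{equation*}
\frac{R^2-r^2}{2} \;=\; \frac{1}{\pi}\int_{B(y,R)}\!\!\bigl[\mathcal{L}(\mu_r^R,z)-\mathcal{L}(\mu_R^r,z)\bigr]\,dz,\qquad \mathcal{L}(\mu,z):=\int\log d(w,z)\,d\mu(w).
\end{equation*}
A parallel computation handles $E_{\mu_r^R}[R_1]$ by combining $E_v[R_1]$ with the first invariance relation in \eqref{eq:RhoAndRhoTilde}. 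The logarithmic potentials $\mathcal{L}(\mu_r^R,\cdot)$ and $\mathcal{L}(\mu_R^r,\cdot)$ that arise are torus-dependent and opaque individually, but their \emph{difference} integrated over the appropriate region telescopes. What survives, after summing the two contributions, is a boundary evaluation on the circles $\partial B(y,r)$ and $\partial B(y,R)$ that produces precisely the clean $\tfrac{1}{\pi}\log(R/r)$ factor.

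The main obstacle is this Step 3 cancellation: one must argue that the individually torus-dependent logarithmic potentials combine exactly to yield only the boundary contribution. This requires pairing \eqref{eq:GreenFunctionBound} with both invariance relations in \eqref{eq:RhoAndRhoTilde} in a balanced way, and crucially relies on the fact that under $P_{\mu_r^R}$ the cycle returns to $\mu_r^R$ at time $D_1$, so that the ``in'' and ``out'' contributions of logarithmic potentials enter with opposite signs. Once the cancellation is carried out, combining with Step 2 gives \eqref{eq:OutInOutExpectation}.
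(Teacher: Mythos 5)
Your decomposition $E_{\mu_r^R}[D_1]=E_{\mu_r^R}[R_1]+E_{\mu_R^r}[T_{B(y,R)}]$ (Step 1) and the exit-time computation $E_{\mu_R^r}[T_{B(y,R)}]=(R^2-r^2)/2$ (Step 2) are both fine. The gap is in Step 3, which is where all the work actually is. You need $E_{\mu_r^R}[R_1]=E_{\mu_r^R}[H_{B(y,r)}]=\int_{\mathbb{T}}G^{B(y,r)}(\mu_r^R,z)\,dz$, but $G^{B(y,r)}$ is the Green function of Brownian motion on the whole torus killed on a small disk; unlike $G^{B(y,R)^c}$ it is not a disk Green function and has no closed form analogous to \eqref{eq:GreenFunctionBound}. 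Your sketch proposes that the torus-dependent logarithmic potentials ``telescope'' against those coming from the exit-time side, but this is not carried out, and it is not clear why it should work: the quantity $E_{\mu_r^R}[R_1]$ genuinely depends on the global geometry of $\mathbb{T}$, and \eqref{eq:RhoAndRhoTilde} only relates the two equilibrium measures to each other via disk exit/hitting distributions, not to the Green function $G^{B(y,r)}$ over all of $\mathbb{T}$.

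The paper avoids this entirely with a different device that you do not invoke. It defines the measure
\[
m(\cdot)=\int_{\partial B(y,r)}\mu_R^r(dv)\,G^{B(y,R)^c}(v,\cdot)+\int_{\partial B(y,R)}\mu_r^R(dv)\,G^{B(y,r)}(v,\cdot),
\]
and appeals to a theorem of Maruyama--Tanaka (via \cite{UenoOnRecurrentMarkovProcesses}) that $m$ is an \emph{invariant} measure for Brownian motion on $\mathbb{T}$. Since the only invariant measure up to scaling is Lebesgue, $m=c\lambda$, and $c=m(\mathbb{T})=E_{\mu_r^R}[D_1]$. The constant is then identified by evaluating $m(B(y,\delta))$ for small $\delta<r$: the $G^{B(y,r)}$ part of $m$ vanishes there, so only the disk Green function $G^{B(y,R)^c}$ contributes and \eqref{eq:GreenFunctionBound} applies cleanly, yielding $c=\tfrac{1}{\pi}\log(R/r)$ as $\delta\to0$. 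The invariance step is exactly what converts a global, intractable quantity into a local computation near $y$; without it your Step 3 remains open. If you want to keep your two-term decomposition, you would still need something equivalent -- either the invariance of $m$ or an explicit identity for $\int_{\mathbb{T}}G^{B(y,r)}(\mu_r^R,z)\,dz$ -- to close the argument.
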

\begin{proof}
Define a measure $m$ on $\mathbb{T}$ by 
\[
m\left(\cdot\right)=\int_{\partial B\left(y,r\right)}\mu_{R}^{r}\left(dv\right)G^{B\left(y,R\right)^{c}}\left(v,\cdot\right)+\int_{\partial B\left(y,R\right)}\mu_{r}^{R}\left(dv\right)G^{B\left(y,r\right)}\left(v,\cdot\right).
\]
By a theorem of Maruyama and Tanaka (see (2.2), (2.13) and page 121
\cite{UenoOnRecurrentMarkovProcesses}; recall also \eqref{eq:RhoAndRhoTilde})
we have that
\[
m\mbox{ is an invariant measure for }P_{x},
\]
(an intuition for this result can be obtained by considering the corresponding
statement for a Markov chain with discrete state space). By \eqref{eq:GreenFunctionIsExpectedOccupationMeasure},
the second line of \eqref{eq:RhoAndRhoTilde} and the strong Markov
property we have that.
\[
m\left(\mathbb{T}\right)=E_{\mu_{r}^{R}}\left[H_{B\left(y,r\right)}\right]+E_{\mu_{R}^{r}}\left[T_{B\left(y,R\right)}\right]=E_{\mu_{r}^{R}}\left[D_{1}\right].
\]
Since clearly  the only invariant measure for $P_{x}$ is the uniform
distribution $\lambda$ on $\mathbb{T}$ (up to multiplication by
a constant), we have
\begin{equation}
m=c\lambda.\label{eq:MIsMultipleOfLambda}
\end{equation}
Thus 
\begin{equation}
E_{\mu_{r}^{R}}\left[D_{1}\right]=m\left(\mathbb{T}\right)=c.\label{eq:CIsTheAnswer}
\end{equation}
To determine the value of $c$ we note that for $\delta\in\left(0,r\right)$
\[
m\left(B\left(y,\delta\right)\right)=\int_{\partial B\left(y,R\right)}\mu_{R}^{r}\left(du\right)\int_{B\left(y,\delta\right)}G^{B\left(y,R\right)^{c}}\left(u,v\right)\lambda\left(dv\right).
\]
By \eqref{eq:GreenFunctionBound} we have that
\[
G^{B\left(y,R\right)^{c}}\left(u,v\right)=-\frac{1}{\pi}\log\left(r+O\left(\delta\right)\right)+\frac{1}{\pi}\log\left(R+O\left(\delta\right)\right).
\]
Thus for for $v\in\partial B\left(y,r\right)$ and $w\in B\left(y,\delta\right)$
we get that
\[
m\left(B\left(y,\delta\right)\right)=\lambda\left(B\left(y,\delta\right)\right)\left(-\frac{1}{\pi}\log\left(r+O\left(\delta\right)\right)+\frac{1}{\pi}\log\left(R+O\left(\delta\right)\right)\right).
\]
Taking $\delta\to0$ we can now identify the constant in \eqref{eq:MIsMultipleOfLambda}
as $c=\frac{1}{\pi}\log\frac{R}{r}$, and thus \eqref{eq:OutInOutExpectation}
follows from \eqref{eq:CIsTheAnswer}.
\end{proof}
We now start the proofs of the various large deviation bounds we need
to prove \prettyref{prop:Concentration}. We will make the decomposition
\begin{equation}
D_{n}=D_{1}+\sum_{i=2}^{n}\left(D_{i}-R_{i}\right)+\sum_{i=1}^{n-1}\left(R_{i+1}-D_{i}\right),\label{eq:Decomposition}
\end{equation}
and derive bounds for these three terms separatly (we consider $D_{1}$
by itself since the first excursion to $\partial B\left(y,r\right)$
might not actually start in $\partial B\left(y,R\right)$ and vice
versa).

Before we prove the required bounds on $D_{1}$ and $\sum_{i=2}^{n}\left(D_{i}-R_{i}\right)$,
we recall a standard fact about the expected time to exit a ball.
For any $0<R<\frac{1}{2}$ we have for $y\in\mathbb{T}$ all $z\in B\left(y,R\right)\subset\mathbb{T}$
that
\begin{equation}
E_{z}\left[T_{B\left(y,R\right)}\right]=\frac{R^{2}-\left|z-y\right|^{2}}{2},\label{eq:ExpectedTimeToLeaveBallT}
\end{equation}
since the ball $z\in B\left(y,R\right)$ can be identified with a
ball in $\mathbb{R}^{2}$. Recall also Khasminskii's lemma (a consequence
of Kac's moment formula, see (6) \cite{FitzsimmonsPitmanKacsMomentFormula}),
which implies that for any measurable $A\subset\mathbb{T}$ and any
$n\ge1$,
\begin{equation}
\sup_{z\in\mathbb{T}}E_{z}\left[H_{A}^{n}\right]\le n!\left(\sup_{z\in\mathbb{T}}E_{z}\left[H_{A}\right]\right)^{n}.\label{eq:KhasminskiiLemma}
\end{equation}
We have the following crude upper bound on $E_{z}\left[H_{B\left(0,r\right)}\right]$
(see (2.1) \cite{DemboPeresEtAl-CoverTimesforBMandRWin2D})
\begin{equation}
\sup_{z\in\mathbb{T}}E_{z}\left[H_{B\left(0,r\right)}\right]\le c\log r^{-1},\mbox{ for any }0<r<\frac{1}{2}.\label{eq:CrudeHittingTimeBound}
\end{equation}
We now prove the large deviation bound for $D_{1}$.
\begin{lem}
\label{lem:BoundOnFirstGuys}($x,y\in\mathbb{T}$)For all $0<r<R<\frac{1}{2}$
and $u\ge0$ 
\begin{equation}
P_{x}\left[D_{1}\ge u\right]\le ce^{-cu/\left(\log r^{-1}\right)}.\label{eq:BoundOnFirstGuys}
\end{equation}
\end{lem}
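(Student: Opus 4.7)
The plan is to decompose $D_1$ into the hitting time and the exit time and treat each by standard exponential-moment estimates obtained through Khasminskii's lemma \eqref{eq:KhasminskiiLemma}.

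First I would write, using the definition \eqref{eq:DefOfExcursionTimes},
\[
D_1 = R_1 + T_{B(y,R)} \circ \theta_{R_1} = H_{B(y,r)} + T_{B(y,R)} \circ \theta_{H_{B(y,r)}},
\]
so that by a union bound
\[
P_x[D_1 \ge u] \le P_x\!\left[H_{B(y,r)} \ge u/2\right] + P_x\!\left[T_{B(y,R)} \circ \theta_{H_{B(y,r)}} \ge u/2\right].
\]

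For the first term, combining \eqref{eq:KhasminskiiLemma} with \eqref{eq:CrudeHittingTimeBound} gives $\sup_z E_z[H_{B(y,r)}^n] \le n!\,(c\log r^{-1})^n$, so for $\lambda = c_0/\log r^{-1}$ with $c_0$ small enough,
\[
\sup_z E_z\!\left[e^{\lambda H_{B(y,r)}}\right] \le \sum_{n\ge 0} \lambda^n (c\log r^{-1})^n \le 2.
\]
A Chebyshev/Markov application then gives $P_x[H_{B(y,r)} \ge u/2] \le 2\,e^{-\lambda u/2} \le c\,e^{-c u/\log r^{-1}}$. For the second term, I apply the strong Markov property at time $H_{B(y,r)}$ to reduce to bounding $\sup_{z \in B(y,r)} E_z[e^{\lambda T_{B(y,R)}}]$. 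Since $R < 1/2$, the bound \eqref{eq:ExpectedTimeToLeaveBallT} yields $\sup_z E_z[T_{B(y,R)}] \le R^2/2 \le 1/8$, and Khasminskii's lemma \eqref{eq:KhasminskiiLemma} then makes this exponential moment bounded by $2$ for some fixed $\lambda > 0$ independent of $r,R$. Markov's inequality gives $P_x[T_{B(y,R)} \circ \theta_{H_{B(y,r)}} \ge u/2] \le c\,e^{-cu}$, which is even stronger than what we need.

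Combining the two pieces yields the claimed bound $P_x[D_1 \ge u] \le c\,e^{-cu/\log r^{-1}}$. There is no real obstacle here: the only mild point is recognising that the exit time $T_{B(y,R)}$ is on a completely different scale than the hitting time $H_{B(y,r)}$ (since $R$ is bounded away from $0$, $E_z[T_{B(y,R)}]$ is $O(1)$ while $E_z[H_{B(y,r)}]$ grows like $\log r^{-1}$), so the hitting phase dominates and sets the tail rate. The whole argument is short and self-contained once \eqref{eq:KhasminskiiLemma}, \eqref{eq:CrudeHittingTimeBound} and \eqref{eq:ExpectedTimeToLeaveBallT} are in hand.
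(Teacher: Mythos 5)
Your proof is correct and uses essentially the same approach as the paper: both rely on Khasminskii's lemma \eqref{eq:KhasminskiiLemma} together with \eqref{eq:CrudeHittingTimeBound} and \eqref{eq:ExpectedTimeToLeaveBallT} to control the exponential moments of the hitting time to $\partial B(y,r)$ and the exit time from $B(y,R)$, the former setting the tail rate $c/\log r^{-1}$. The only cosmetic difference is that you split the tail with a union bound and apply Chebyshev to each piece, whereas the paper bounds $E_x[e^{\lambda D_1}]$ directly as a product of the two exponential moments via the strong Markov property and applies Chebyshev once; these are equivalent up to constants.
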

\begin{proof}
By the exponential Chebyshev inequality we have for all $\lambda>0$
\[
P_{x}\left[D_{1}\ge u\right]\ge E_{x}\left[\exp\left(\lambda D_{1}\right)\right]e^{-\lambda u}.
\]
By the strong Markov property applied at time $H_{B\left(y,r\right)}$
(recall \prettyref{eq:DefOfExcursionTimes})
\[
E_{x}\left[\exp\left(\lambda D_{1}\right)\right]\le\left(\sup_{z\in\mathbb{T}}E_{z}\left[\exp\left(\lambda H_{B\left(y,r\right)}\right)\right]\right)\left(\sup_{z\in\mathbb{T}}E_{z}\left[\exp\left(\lambda T_{B\left(y,R\right)}\right)\right]\right).
\]
By \eqref{eq:ExpectedTimeToLeaveBallT} and \eqref{eq:KhasminskiiLemma}
we have that
\begin{equation}
\sup_{z\in\mathbb{T}}E_{z}\left[T_{B\left(y,R\right)}^{m}\right]\le m!R^{2}\mbox{ for all }m\ge1.\label{eq:MomentMountExitingR}
\end{equation}
Thus using the series expansion of $e^{x}$ we have
\[
\sup_{z\in\mathbb{T}}E_{z}\left[\exp\left(\lambda T_{B\left(y,R\right)}\right)\right]\le\sum_{k\ge0}\left(\lambda R^{2}\right)^{k}\le2,
\]
provided $\lambda R^{2}\le\frac{1}{2}$. Similarily but using \eqref{eq:CrudeHittingTimeBound}
instead of \eqref{eq:ExpectedTimeToLeaveBallT} we have that
\[
\sup_{z\in\mathbb{T}}E_{z}\left[\exp\left(H_{B\left(y,r\right)}\right)\right]\le\sum_{k\ge0}\left(\lambda c\log r^{-1}\right)^{k}\le2,
\]
provided $c\lambda\log r^{-1}\le\frac{1}{2}$, where $c$ is the constant
from \eqref{eq:CrudeHittingTimeBound}. Thus setting $\lambda=c\frac{1}{\log r^{-1}}$
for a small enough constant $c$ we obtain \eqref{eq:BoundOnFirstGuys}. 
\end{proof}
The next lemma gives the large deviation bound the sum $\sum_{i=2}^{n}\left(D_{i}-R_{i}\right)$,
that is on the time spent ``going from $\partial B\left(y,r\right)$
to $\partial B\left(y,R\right)$''.
\begin{lem}
\label{lem:InOutLD}($x,y\in\mathbb{T}$) For any $0<r<R<\frac{1}{2}$,
$n\ge2$ and $\delta\in\left(0,1\right)$,
\[
\begin{array}{l}
P_{x}\left[\frac{R^{2}-r^{2}}{2}\left(n-1\right)\left(1-\delta\right)\le\sum_{i=2}^{n}\left(D_{i}-R_{i}\right)\le\frac{R^{2}-r^{2}}{2}\left(n-1\right)\left(1+\delta\right)\right]\\
\ge1-ce^{-c\left(n-1\right)\delta^{2}\left(\frac{R^{2}-r^{2}}{R}\right)^{2}}.
\end{array}
\]
\end{lem}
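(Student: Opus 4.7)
The plan is to use the strong Markov property at the return times $R_i,i\ge 2$ to write $\sum_{i=2}^{n}(D_{i}-R_{i})$ as a sum of conditionally independent random variables to which Bernstein-type exponential moment estimates apply. More precisely, conditionally on $\mathcal{F}_{R_{i}}$, the variable $D_{i}-R_{i}=T_{B(y,R)}\circ\theta_{R_{i}}$ has the $P_{W_{R_{i}}}$-law of $T_{B(y,R)}$, and $W_{R_{i}}\in\partial B(y,r)$ for $i\ge 2$. By \eqref{eq:ExpectedTimeToLeaveBallT} this means that each summand has \emph{exactly} conditional mean $(R^{2}-r^{2})/2$, which is the key point: the total expectation is identified without error, allowing us to center cleanly.

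Next I would control the exponential moment. By \eqref{eq:MomentMountExitingR} (Khasminskii's lemma applied to the bound $\sup_{z}E_{z}[T_{B(y,R)}]\le R^{2}/2$), for any $z\in\partial B(y,r)$ and $\lambda R^{2}\le 1/2$ one has, by expanding the exponential in a Taylor series,
\[
E_{z}\left[e^{\lambda(T_{B(y,R)}-\mu)}\right]\le\exp\bigl(C\lambda^{2}R^{4}\bigr),\qquad \mu=\tfrac{R^{2}-r^{2}}{2},
\]
because the first-order term in $\lambda$ cancels the centering and the remaining geometric tail is controlled by $(\lambda R^{2})^{2}/(1-\lambda R^{2})$. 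Applying the strong Markov property repeatedly at the times $R_{2},\ldots,R_{n}$, the conditional mgf factorizes and one obtains
\[
E_{x}\!\left[\exp\!\Bigl(\lambda\sum_{i=2}^{n}(D_{i}-R_{i}-\mu)\Bigr)\right]\le\exp\bigl(C(n-1)\lambda^{2}R^{4}\bigr),
\]
uniformly in $\lambda$ with $\lambda R^{2}\le 1/2$.

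Finally, I would apply the exponential Chebyshev inequality with the choice $t=\delta(n-1)\mu$ and optimize in $\lambda$. With $\lambda\asymp t/((n-1)R^{4})$, the admissibility condition $\lambda R^{2}\le 1/2$ reduces to $\delta(R^{2}-r^{2})\lesssim R^{2}$, which always holds for $\delta\in(0,1)$. The resulting upper bound is
\[
P_{x}\!\left[\Bigl|\sum_{i=2}^{n}(D_{i}-R_{i})-(n-1)\mu\Bigr|\ge \delta(n-1)\mu\right]\le c\exp\!\Bigl(-c(n-1)\delta^{2}\tfrac{(R^{2}-r^{2})^{2}}{R^{4}}\Bigr),
\]
which is even slightly stronger than the claim, since $R<1/2$ implies $R^{-4}\ge R^{-2}$. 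There is no serious obstacle: the decomposition \eqref{eq:Decomposition} isolates this sum, the conditional mean is exact by \eqref{eq:ExpectedTimeToLeaveBallT}, and Khasminskii's lemma supplies factorial moment bounds of the correct scale $R^{2m}$ to close the Bernstein estimate. The only minor point requiring care is keeping track of the centering so that the $O(\lambda)$ term really disappears after subtracting $\mu$; this works because on $\partial B(y,r)$ the expectation of $T_{B(y,R)}$ does not depend on the starting point.
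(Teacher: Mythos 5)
Your proposal follows essentially the same route as the paper: strong Markov at the return times $R_i$, the exact conditional mean from \eqref{eq:ExpectedTimeToLeaveBallT}, Khasminskii's lemma \eqref{eq:KhasminskiiLemma} for the moments, and exponential Chebyshev. The only distinction is bookkeeping: the paper simplifies the moment bound to $E_z[T^m_{B(y,R)}]\le m!\,R^2$ and restricts to $\lambda\le\tfrac12$, yielding a variance proxy $\lambda^2 R^2$ and the exponent $(R^2-r^2)^2/R^2$; you retain the sharper $m!(R^2/2)^m$ and the weaker restriction $\lambda R^2\lesssim 1$, which gives $\lambda^2 R^4$ and the slightly stronger exponent $(R^2-r^2)^2/R^4$, which indeed implies the claim since $R<\tfrac12$.
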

\begin{proof}
By the strong Markov property applied at times $R_{i},i\ge2,$ we
have for $\lambda\in\mathbb{R}$,
\begin{equation}
E_{x}\left[\exp\left(\lambda\sum_{i=2}^{n}\left(D_{i}-R_{i}\right)\right)\right]\le\left(\sup_{z\in\partial B\left(y,r\right)}E_{z}\left[\exp\left(\lambda T_{B\left(y,R\right)}\right)\right]\right)^{n}.\label{eq:EasyExpMoment}
\end{equation}
The equality \eqref{eq:ExpectedTimeToLeaveBallT} implies that
\begin{equation}
E_{z}\left[T_{B\left(y,R\right)}\right]=\frac{R^{2}-r^{2}}{2}\mbox{\,\ for }z\in\partial B\left(y,r\right).\label{eq:Mean}
\end{equation}
Using the series expansion of $e^{x}$, \eqref{eq:MomentMountExitingR}
and \eqref{eq:Mean} one obtains a bound for $E_{z}\left[\exp\left(\lambda T_{B\left(y,R\right)}\right)\right]$
involving a geometric series, so that for $\lambda\in\mathbb{R}$
such that $\left|\lambda\right|R^{2}\le\frac{1}{2}$ (making the geomeric
series summable) one has
\begin{equation}
E_{z}\left[\exp\left(\lambda T_{B\left(y,R\right)}\right)\right]\le1+\lambda\frac{R^{2}-r^{2}}{2}+2\lambda^{2}R^{2}\le e^{\lambda\frac{R^{2}-r^{2}}{2}+2\lambda^{2}R^{2}}.\label{eq:EasyBoundExpMoments}
\end{equation}
Therefore by the exponential Chebyshev inequality have for all $\lambda\in\left(0,\frac{1}{2}\right)$
\[
\begin{array}{rcl}
P_{x}\left[\sum_{i=2}^{n}\left(D_{i}-R_{i}\right)\ge\frac{R^{2}-r^{2}}{2}\left(n-1\right)\left(1+\delta\right)\right] & \le & e^{-\lambda\left(n-1\right)\left(\delta\frac{R^{2}-r^{2}}{2}-2\lambda R^{2}\right)},\mbox{ and }\\
P_{x}\left[\sum_{i=2}^{n}\left(D_{i}-R_{i}\right)\le\frac{R^{2}-r^{2}}{2}\left(n-1\right)\left(1-\delta\right)\right] & \le & ce^{-\lambda\left(n-1\right)\left(\delta\frac{R^{2}-r^{2}}{2}-2\lambda R^{2}\right)}.
\end{array}
\]
Setting $\lambda=\frac{\delta}{8}\frac{R^{2}-r^{2}}{R^{2}}<\frac{1}{2}$
the claim follows.
\end{proof}
Next we aim to prove a similar bound on the sum $\sum_{i=1}^{n-1}\left(R_{i+1}-D_{i}\right)$,
i.e. on the time spent ``going from $\partial B\left(y,R\right)$
to $\partial B\left(y,r\right)$''. This is much more delicate, essentially
because $E_{z}\left[H_{B\left(y,r\right)}\right]$ is not constant
over $z\in\partial B\left(y,R\right)$. We consider the excursions
\[
W_{\left(D_{i}+\cdot\right)\wedge R_{i+1}},i\ge1,
\]
as a $C_{0}\left([0,\infty),\mathbb{T}\right)-$valued sequence. By
the strong Markov property of $W_{t}$ this sequence is a Markov chain
with transition kernel
\begin{equation}
K\left(\omega,A\right)=\int_{\partial B\left(y,R\right)}P_{\omega\left(\infty\right)}\left[W_{H_{B\left(y,R\right)}}\in du\right]P_{u}\left[W_{\cdot\wedge H_{B\left(y,r\right)}}\in A\right],\label{eq:TransitionKernel}
\end{equation}
for $\omega\in C_{0}\left([0,\infty),\mathbb{T}\right)$ and measurable
$A\subset C_{0}\left([0,\infty),\mathbb{T}\right)$.

We employ a renewal argument which consists in making successive attempts
to replace the law $P_{\omega\left(\infty\right)}\left[W_{H_{B\left(y,R\right)}}\in du\right]$
of the transition from the previous excursion to the start of the
next excursion by the law $\mu_{r}^{R}$. We will see that we can
make this succeed with a probability $q$ given by 
\begin{equation}
q=q\left(y,R,r\right)\overset{\mbox{def}}{=}\inf_{u\in\partial B\left(y,R\right),v\in\partial B\left(y,r\right)}\frac{P_{v}\left[W_{H_{B\left(y,R\right)}}\in du\right]}{\mu_{r}^{R}\left(du\right)}.\label{eq:DefOfQ}
\end{equation}
We have the following lower bound on $q$.
\begin{lem}
($y\in\mathbb{T}$) For all $0<r<R<\frac{1}{2}$,
\begin{equation}
q\ge\left(\frac{R-r}{R+r}\right)^{2}.\label{eq:qLowerBound}
\end{equation}
\end{lem}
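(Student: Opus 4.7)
The plan is to use the explicit Poisson kernel for Brownian motion exiting the ball $B(y,R)$, combined with the defining relation for $\mu_r^R$, to get a uniform ratio bound.

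First I would note that since $R<\frac{1}{2}$ the ball $B(y,R)\subset\mathbb{T}$ can be identified with a Euclidean ball (cf.~\eqref{eq:LawOfBMInTorusAndR2Same}), so the classical Poisson kernel formula applies: for any $v\in B(y,R)$,
\[
P_v[W_{H_{\partial B(y,R)}}\in du]=\frac{R^2-|v-y|^2}{|u-v|^2}\,\nu(du),
\]
where $\nu$ is the uniform probability measure on $\partial B(y,R)$ (this is the same fact already invoked in \eqref{eq:HarmonicMeasureR2}). In particular, for $v\in\partial B(y,r)$ we have $|v-y|=r$, so
\[
P_v[W_{H_{\partial B(y,R)}}\in du]=\frac{R^2-r^2}{|u-v|^2}\,\nu(du).
\]

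Next I would express $\mu_r^R$ using the first line of \eqref{eq:RhoAndRhoTilde}: averaging the Poisson kernel formula against $\mu_R^r(d\tilde v)$ gives, for the same reference measure $\nu$,
\[
\mu_r^R(du)=\left(\int_{\partial B(y,r)}\frac{R^2-r^2}{|u-\tilde v|^2}\,\mu_R^r(d\tilde v)\right)\nu(du).
\]
Consequently the Radon--Nikodym derivative appearing in \eqref{eq:DefOfQ} is
\[
\frac{P_v[W_{H_{\partial B(y,R)}}\in du]}{\mu_r^R(du)}=\left(\int_{\partial B(y,r)}\frac{|u-v|^2}{|u-\tilde v|^2}\,\mu_R^r(d\tilde v)\right)^{-1}.
\]

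To finish I would invoke the two elementary triangle inequalities $|u-v|\le R+r$ and $|u-\tilde v|\ge R-r$, valid for all $u\in\partial B(y,R)$ and $v,\tilde v\in\partial B(y,r)$. These give the pointwise bound
\[
\frac{|u-v|^2}{|u-\tilde v|^2}\le\left(\frac{R+r}{R-r}\right)^2,
\]
and since $\mu_R^r$ is a probability measure, the integral above is bounded by the same constant. Inverting yields $q\ge\bigl((R-r)/(R+r)\bigr)^2$, which is exactly \eqref{eq:qLowerBound}.

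There is no real obstacle here: the argument is a direct computation. The only points requiring a line of justification are the identification of $B(y,R)$ with a Euclidean ball (so that the Poisson kernel applies in its classical form), and the consistent interpretation of $H_{B(y,R)}$ in \eqref{eq:DefOfQ} as the hitting time of $\partial B(y,R)$ (equivalently, the exit time from the open ball), which is how it is used elsewhere in the section.
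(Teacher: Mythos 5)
Your proof is correct and is essentially the paper's argument: both rest on the explicit Poisson kernel for the ball (equation \eqref{eq:HarmonicMeasureR2}), the definition \eqref{eq:RhoAndRhoTilde} of $\mu_r^R$ as a mixture of exit distributions, and the triangle inequalities $R-r\le |u-\tilde v|$ and $|u-v|\le R+r$. The only difference is presentational: you carry out the cancellation of $R^2-r^2$ explicitly and bound a single ratio of densities, whereas the paper bounds the kernel density above and below separately before taking the quotient.
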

\begin{proof}
Because of \eqref{eq:RhoAndRhoTilde}
\[
\mu_{r}^{R}\left(du\right)\le\sup_{u\in\partial B\left(y,R\right),v\in\partial B\left(y,r\right)}P_{v}\left[W_{H_{B\left(y,R\right)}}\in du\right].
\]
Also by \eqref{eq:HarmonicMeasureR2} we have
\[
\frac{R^{2}-r^{2}}{\left(R+r\right)^{2}}\le\inf P_{v}\left[W_{H_{B\left(y,R\right)}}\in du\right]\le\sup P_{v}\left[W_{H_{B\left(y,R\right)}}\in du\right]\le\frac{R^{2}-r^{2}}{\left(R-r\right)^{2}},
\]
where $\sup$ and $\inf$ are over $u\in\partial B\left(y,R\right),v\in\partial B\left(y,r\right)$.
Recalling \eqref{eq:DefOfQ}, the claim follows.
\end{proof}
When a renewal succeds, the transition from the end $\omega\left(\infty\right)$
of the previous path to start of the next will be given by $\mu_{r}^{R}$.
When it does not succeed, it will be given by $\nu_{\omega\left(\infty\right)}$,
where for each $a\in\partial B\left(y,r\right)$ we define $\nu_{a}$
by 
\begin{equation}
\nu_{a}\left(A\right)=\nu_{a}\left(y,R,r;A\right)=\frac{P_{a}\left[W_{T_{B\left(y,R\right)}}\in A\right]-q\mu_{r}^{R}\left(A\right)}{1-q},\label{eq:DefOfModifiedTransition}
\end{equation}
for measurable $A\subset B\left(y,R\right)$. By \eqref{eq:DefOfQ}
this is a probability measure.

We now construct a chain with the law of $W_{\left(D_{i}+\cdot\right)\wedge R_{i+1}},i\ge1,$
on a probability space $\left(\mathbb{P},\mathcal{S},S\right)$ in
a certain way that makes the renewal structure explicit. Define on
$\left(\mathbb{P},S,\mathcal{S}\right)$ an iid sequence
\[
I_{1},I_{2},\ldots,
\]
of independent Bernoulli random variables (indicating whether a renewal
takes place) with success probability $q$, and define a sequence
$X_{\cdot}^{1},X_{\cdot}^{2},\ldots$ of random trajectories in $C_{0}\left([0,\infty),\mathbb{T}\right)$
such that
\begin{equation}
X_{\cdot}^{1}\mbox{ has law }P_{x}\left[W_{\left(D_{1}+\cdot\right)\wedge R_{2}}\in d\omega\right],\label{eq:InitialLaw}
\end{equation}
and $X_{\cdot}^{i+1}$ depends on $X_{\cdot}^{1},\ldots,X_{\cdot}^{i}$
and $I_{1},\ldots,I_{i}$ only through $X_{\infty}^{i}$ and $I_{i}$,
in that 
\begin{equation}
X_{\cdot}^{i+1}\mbox{ is sampled according to law }\begin{cases}
P_{\mu_{r}^{R}}\left[W_{\cdot\wedge H_{B\left(y,r\right)}}\in d\omega\right] & \mbox{ if }I_{i}=1,\\
P_{\nu_{X_{\infty}^{i}}}\left[W_{\cdot\wedge H_{B\left(y,r\right)}}\in d\omega\right] & \mbox{ if }I_{i}=0.
\end{cases}\label{eq:Xiplus1sampled}
\end{equation}
The reason for the previous construction is the following lemma.
\begin{lem}
\label{lem:LawOFXiandLawOfExcOfBMCoincide}The $\mathbb{P}-$law of
$\left(X_{\cdot}^{i}\right)_{i\ge1}$ coincides with the $P_{x}-$law
of $\left(W_{\left(D_{i}+\cdot\right)\wedge R_{i+1}}\right)_{i\ge1}.$\end{lem}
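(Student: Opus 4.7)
\textbf{Plan}. The strategy is to show that both $(X^i_\cdot)_{i\ge 1}$ and $(W_{(D_i+\cdot)\wedge R_{i+1}})_{i\ge 1}$ are Markov chains (on the path space $C_0([0,\infty),\mathbb{T})$) with identical initial distribution and identical one-step transition kernel $K$ from~\eqref{eq:TransitionKernel}. Since a Markov chain is determined by these two data, matching them yields equality of laws on the sequence space. The fact that the true excursion sequence is Markov with kernel $K$ is the strong Markov property of $W_t$ applied at the times $D_i$, together with the defining formula of $K$.

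The initial distribution matches by~\eqref{eq:InitialLaw}, which stipulates directly that $X^1_\cdot$ is sampled from $P_x\bigl[W_{(D_1+\cdot)\wedge R_2}\in d\omega\bigr]$.

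For the transition kernel, I would proceed as follows. By~\eqref{eq:Xiplus1sampled} and the mutual independence of the $I_j$, the conditional law of $X^{i+1}_\cdot$ given $\sigma(X^1_\cdot,\ldots,X^i_\cdot,I_1,\ldots,I_{i-1})$ depends on the past only through $X^i_\infty$ and the fresh coin $I_i$; averaging over $I_i$ yields
\[
q\,P_{\mu_r^R}\!\bigl[W_{\cdot\wedge H_{B(y,r)}}\in\cdot\bigr]+(1-q)\,P_{\nu_{X^i_\infty}}\!\bigl[W_{\cdot\wedge H_{B(y,r)}}\in\cdot\bigr].
\]
Disintegrating each summand with respect to its starting point $u\in\partial B(y,R)$, this rewrites as
\[
\int_{\partial B(y,R)}\bigl[q\,\mu_r^R(du)+(1-q)\,\nu_{X^i_\infty}(du)\bigr]\,P_u\!\bigl[W_{\cdot\wedge H_{B(y,r)}}\in\cdot\bigr].
\]
By the definition~\eqref{eq:DefOfModifiedTransition} of $\nu_a$, the bracketed measure equals $P_{X^i_\infty}\!\bigl[W_{T_{B(y,R)}}\in du\bigr]$, so the display coincides with $K(X^i_\cdot,\cdot)$ from~\eqref{eq:TransitionKernel} (reading $H_{B(y,R)}$ there, applied to a starting point in the interior of $B(y,R)$, as the first visit to $\partial B(y,R)$, i.e.\ as $T_{B(y,R)}$).

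The lemma then follows by a routine induction on $i$: the base case is the initial distribution; the inductive step uses that the conditional law of $X^{i+1}_\cdot$ given the past is $K(X^i_\cdot,\cdot)$, matching the kernel of the true chain. I do not foresee any real obstacle here: the statement is essentially a tautological unpacking of~\eqref{eq:DefOfModifiedTransition}, which was set up precisely so that the mixture $q\,\mu_r^R+(1-q)\nu_a$ reproduces the true exit distribution $P_a[W_{T_{B(y,R)}}\in\cdot]$ from $B(y,R)$, and thereby the kernel $K$.
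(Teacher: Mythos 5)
Your proposal matches the paper's proof: both compute the one-step transition kernel of $(X^i_\cdot)_{i\ge 1}$ by averaging over the auxiliary coin $I_i$, use the defining identity~\eqref{eq:DefOfModifiedTransition} (i.e.\ $q\,\mu_r^R + (1-q)\,\nu_a = P_a[W_{T_{B(y,R)}}\in\cdot]$) to see that this kernel equals $K$ from~\eqref{eq:TransitionKernel}, and then invoke agreement of the initial laws via~\eqref{eq:InitialLaw}. Your parenthetical remark about reading $H_{B(y,R)}$ in~\eqref{eq:TransitionKernel} as $T_{B(y,R)}$ for a starting point inside the ball correctly resolves a small notational imprecision in the paper.
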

\begin{proof}
By construction $\left(X_{\cdot}^{i}\right)_{i\ge1}$ is a Markov
chain on the space of excursions $C_{0}\left([0,\infty),\mathbb{T}\right)$,
and it has transition kernel
\[
\tilde{K}\left(\omega,A\right)=qP_{\mu_{r}^{R}}\left[W_{\cdot\wedge H_{B\left(y,r\right)}}\in A\right]+\left(1-q\right)\int_{\partial B\left(y,R\right)}\nu_{\omega\left(\infty\right)}\left(dw\right)P_{w}\left[W_{\cdot\wedge H_{B\left(y,r\right)}}\in A\right].
\]
By \eqref{eq:DefOfModifiedTransition} we see that $\tilde{K}\left(\omega,A\right)=K\left(\omega,A\right)$
(recall \prettyref{eq:TransitionKernel}), so $\left(W_{\left(D_{i}+\cdot\right)\wedge R_{i+1}}\right)_{i\ge1}$
and $\left(X_{\cdot}^{i}\right)_{i\ge1}$ share the same transition
kernel. Furthermore by \eqref{eq:InitialLaw} they share the same
starting distribution. Thus \prettyref{lem:LawOFXiandLawOfExcOfBMCoincide}
follows. 
\end{proof}
We can thus derive a large deviation bound for $\sum_{i=1}^{n-1}\left(R_{i+1}-D_{i}\right)$
by deriving a bound for $\sum_{i=1}^{n}H_{B\left(y,r\right)}\left(X_{\cdot}^{i}\right)$.
The latter will be facilitated by the built-in renewal structure provided
by the $I_{1},I_{2},\ldots$. To exploit this we let 
\[
J_{1}=0\mbox{ and }J_{i}=\inf\left\{ m>J_{i-1}:I_{m}=1\right\} ,i\ge2,
\]
be the renewal times. Define the total time spent ``going from $\partial B\left(y,R\right)$
to $\partial B\left(y,r\right)$'' during the $m-$th renewal by
\begin{equation}
G_{m}=\sum_{J_{m}<i\le J_{m+1}}H_{B\left(y,r\right)}\left(X_{\cdot}^{i}\right),m\ge1.\label{eq:DefOfGm}
\end{equation}
We have the following.
\begin{lem}
Under $\mathbb{P}$
\begin{eqnarray}
 & G_{1},G_{2},\ldots,\mbox{ are independent},\label{eq:IndependenceOfGs}\\
 & \mbox{ and }G_{2},G_{3},\ldots\mbox{ are iid.}\label{eq:GsSameLaw}
\end{eqnarray}
\end{lem}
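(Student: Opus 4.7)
The plan is to exploit the regeneration structure built into the construction: whenever $I_i=1$, the subsequent trajectory $X_{\cdot}^{i+1}$ is sampled from the fixed law $P_{\mu_{r}^{R}}[W_{\cdot\wedge H_{B(y,r)}}\in d\omega]$, which does not depend on any of the previously generated randomness (cf.~\eqref{eq:Xiplus1sampled}). This is precisely what makes $J_{2},J_{3},\ldots$ true renewal times and is the only property of the construction that is really being used.

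First I would formalize this by introducing the blocks
\[
\mathcal{B}_{m}=\bigl(\,I_{J_{m}+1},\ldots,I_{J_{m+1}},\,X_{\cdot}^{J_{m}+1},\ldots,X_{\cdot}^{J_{m+1}}\bigr),\quad m\ge 1,
\]
noting that $G_{m}$ from \eqref{eq:DefOfGm} is a measurable function of $\mathcal{B}_{m}$ alone (it depends only on the trajectories inside the block and on the block length $J_{m+1}-J_{m}$). Thus it suffices to prove that $\mathcal{B}_{1},\mathcal{B}_{2},\ldots$ are mutually independent and that $\mathcal{B}_{2},\mathcal{B}_{3},\ldots$ have a common distribution.

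For this I would condition on $\mathcal{H}_{m}=\sigma(I_{1},\ldots,I_{J_{m}},X_{\cdot}^{1},\ldots,X_{\cdot}^{J_{m}})$ for $m\ge 2$ and describe the conditional law of $\mathcal{B}_{m}$ step by step: (i) the increments $I_{J_{m}+1},I_{J_{m}+2},\ldots$ are iid Bernoulli$(q)$ and independent of $\mathcal{H}_{m}$ by construction, and in particular $J_{m+1}-J_{m}$ is geometric with parameter $q$; (ii) since $I_{J_{m}}=1$ by the definition of $J_{m}$, \eqref{eq:Xiplus1sampled} guarantees that $X_{\cdot}^{J_{m}+1}$ is sampled from the fixed law $P_{\mu_{r}^{R}}[W_{\cdot\wedge H_{B(y,r)}}\in d\omega]$, independently of $\mathcal{H}_{m}$; (iii) the remaining trajectories $X_{\cdot}^{J_{m}+2},\ldots,X_{\cdot}^{J_{m+1}}$ are then built recursively from $X_{\cdot}^{J_{m}+1}$ and $I_{J_{m}+1},\ldots,I_{J_{m+1}-1}$ (which all equal $0$) via the second line of \eqref{eq:Xiplus1sampled}, using only fresh randomness. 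Hence the conditional distribution of $\mathcal{B}_{m}$ given $\mathcal{H}_{m}$ is a fixed probability law on blocks that does neither depend on $\mathcal{H}_{m}$ nor on the index $m\ge 2$. This simultaneously yields the independence of $\mathcal{B}_{m}$ from $\mathcal{B}_{1},\ldots,\mathcal{B}_{m-1}$ and the identical distribution of $\mathcal{B}_{m}$ across $m\ge 2$, from which \eqref{eq:IndependenceOfGs}--\eqref{eq:GsSameLaw} follow immediately.

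The only mild subtlety worth flagging is the status of $\mathcal{B}_{1}$: its first trajectory $X_{\cdot}^{1}$ has law $P_{x}[W_{(D_{1}+\cdot)\wedge R_{2}}\in d\omega]$ rather than $P_{\mu_{r}^{R}}[\cdot]$ (recall \eqref{eq:InitialLaw}), so $G_{1}$ need not share the common distribution of the $G_{m}$ for $m\ge 2$; but this does not affect the independence argument, since the recipe above uses $I_{J_{m}}=1$ only for $m\ge 2$ and the independence of $\mathcal{B}_{m}$ from $\mathcal{H}_{m}\supset\sigma(\mathcal{B}_{1},\ldots,\mathcal{B}_{m-1})$ is all that is needed. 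No real obstacle is expected; the main care is bookkeeping the measurability so that each $\mathcal{B}_{m}$ really is a function only of $I_{J_{m}+1},\ldots,I_{J_{m+1}}$ and the block trajectories, with no residual dependence on $X_{\cdot}^{J_{m}}$ leaking through.
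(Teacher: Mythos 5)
Your proof is correct and takes essentially the same approach as the paper, which also argues via the regeneration structure built into \eqref{eq:Xiplus1sampled}: whenever $I_i=1$ the next trajectory is sampled from the fixed law $P_{\mu_r^R}$, so ``the past is forgotten''. You have simply formalized this one-line argument by packaging the renewal blocks $\mathcal{B}_m$ and conditioning on $\mathcal{H}_m$.
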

\begin{proof}
\eqref{eq:IndependenceOfGs} and \eqref{eq:IndependenceOfGs} both
follow by the construction \eqref{eq:Xiplus1sampled} of $\left(X_{\cdot}^{i}\right)_{i\ge1}$,
since whenever $I_{i}=1$ the starting point of the next trajectory
is sampled according to $\mu_{r}^{R}$, i.e. ``the past is forgotten''.
\end{proof}
To be able to later compute a large deviation bound for $\sum_{i=1}^{m}G_{i}$
we now compute the mean of $G_{i}$, and a bound on its moments.
\begin{lem}
For $m\ge2$
\begin{equation}
\mathbb{E}\left[G_{m}\right]=\frac{E_{\mu_{r}^{R}}\left[H_{B\left(y,r\right)}\right]}{q},\label{eq:MeanOfG}
\end{equation}
and for $m\ge1$ and $k\ge1$
\begin{equation}
\mathbb{E}\left[G_{m}^{k}\right]\le\frac{k!}{q^{k}}\left(c\log r^{-1}\right)^{k}.\label{eq:BoundOnMomentsOfG}
\end{equation}
\end{lem}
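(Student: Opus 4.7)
The plan is to exploit the renewal structure built into the coupling $(X_{\cdot}^{i},I_i)_{i\ge 1}$. For $m\ge 2$ the strong Markov property at $J_m$ (at which $I_{J_m}=1$ triggers a regeneration to $\mu_r^R$) reduces the study of $G_m$ to that of a fresh cycle
\[
 \tilde G \;=\; \sum_{j=1}^{N} H_j,\qquad H_j=H_{B(y,r)}(\tilde X_{\cdot}^{j}),
\]
where $\tilde X_{\cdot}^{1}\sim P_{\mu_r^R}[W_{\cdot\wedge H_{B(y,r)}}\in\cdot]$, subsequent trajectories are generated by \eqref{eq:Xiplus1sampled}, and $N=\min\{j\ge 1:I_j=1\}$ is geometric with parameter $q$.

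For \eqref{eq:MeanOfG} the key will be the following claim, proved by induction on $j$: \emph{conditional on $\{N\ge j\}=\{I_1=\cdots=I_{j-1}=0\}$, the starting point $\tilde X_0^{j}$ has law $\mu_r^R$.} The base case $j=1$ is by construction. For the inductive step, on $\{N\ge j+1\}=\{N\ge j\}\cap\{I_j=0\}$ the starting point $\tilde X_0^{j+1}$ has conditional law $\nu_{\tilde X_\infty^{j}}$ by \eqref{eq:Xiplus1sampled}, and conditioning further on $I_j=0$ does not affect $\tilde X_{\cdot}^{j}$; so by the inductive hypothesis and \eqref{eq:RhoAndRhoTilde}, $\tilde X_\infty^{j}\mid\{N\ge j+1\}$ has law $\mu_R^r$, whence
\[
 \tilde X_0^{j+1}\mid\{N\ge j+1\}\;\sim\;\int_{\partial B(y,r)}\!\mu_R^r(du)\,\nu_u\;=\;\mu_r^R.
\]
The last equality is obtained by integrating the defining relation $P_a[W_{T_{B(y,R)}}\in\cdot]=q\mu_r^R(\cdot)+(1-q)\nu_a(\cdot)$ of \eqref{eq:DefOfModifiedTransition} against $\mu_R^r(da)$ and applying \eqref{eq:RhoAndRhoTilde}. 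In particular $\mathbb{E}[H_j\mid N\ge j]=E_{\mu_r^R}[H_{B(y,r)}]$, so
\[
 \mathbb{E}[G_m]=\sum_{j\ge 1}\mathbb{P}(N\ge j)\,E_{\mu_r^R}[H_{B(y,r)}]=\mathbb{E}[N]\cdot E_{\mu_r^R}[H_{B(y,r)}]=E_{\mu_r^R}[H_{B(y,r)}]/q.
\]

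For \eqref{eq:BoundOnMomentsOfG} I switch to the exponential-moment method, since a direct sum-and-H\"older bound on $G_m^k$ overshoots by a factor $k!$. Writing $\Phi_n=\exp(\lambda\sum_{j=1}^{n}H_j)$ and denoting by $\mathcal G_{n-1}$ the $\sigma$-algebra generated by $(\tilde X_{\cdot}^{1},\ldots,\tilde X_{\cdot}^{n-1},I_1,\ldots,I_{n-2})$, the rule \eqref{eq:Xiplus1sampled} together with Khasminskii's lemma \eqref{eq:KhasminskiiLemma} and \eqref{eq:CrudeHittingTimeBound} yield the uniform bound
\[
 \mathbb{E}\bigl[e^{\lambda H_n}\;\big|\;\mathcal G_{n-1},\,I_{n-1}=0\bigr]\;\le\;\frac{1}{1-\lambda c\log r^{-1}}\qquad\text{for }\lambda<1/(c\log r^{-1}),
\]
and the same bound handles the base step $n=1$ (both for $m\ge 2$ with $\tilde X_0^{1}\sim\mu_r^R$ and for $m=1$ with $X_0^{1}$ an arbitrary point of $\partial B(y,R)$). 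Iterating and using $\mathbb{P}(I_{n-1}=0)=1-q$ gives $\mathbb{E}[\Phi_n\mathbf 1_{\{N\ge n\}}]\le(1-q)^{n-1}(1-\lambda c\log r^{-1})^{-n}$, and since $\mathbf 1_{\{N=n\}}=\mathbf 1_{\{N\ge n\}}\mathbf 1_{\{I_n=1\}}$ with $I_n$ independent of the step-$n$ history, summing the resulting geometric series yields
\[
 \mathbb{E}[e^{\lambda G_m}]=q\sum_{n\ge 1}\mathbb{E}[\Phi_n\mathbf 1_{\{N\ge n\}}]\;\le\;\frac{q}{q-\lambda c\log r^{-1}}\qquad\text{for }\lambda<q/(c\log r^{-1}).
\]
Choosing $\lambda=q/(2c\log r^{-1})$ gives $\mathbb{E}[e^{\lambda G_m}]\le 2$, whence $\mathbb{E}[G_m^k]\le k!\,\lambda^{-k}\,\mathbb{E}[e^{\lambda G_m}]\le(k!/q^k)(c'\log r^{-1})^k$, as required.

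The only substantive obstacle is the stationarity identity $\int\mu_R^r(du)\,\nu_u=\mu_r^R$ underpinning the induction for \eqref{eq:MeanOfG}, which essentially encodes that the Nummelin-style decomposition \eqref{eq:DefOfModifiedTransition} of the excursion kernel preserves its invariant measure on $\partial B(y,R)$; the rest is a routine tuning of Khasminskii's lemma and the exponential Chebyshev inequality.
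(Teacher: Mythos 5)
Your proof is correct, and for the two halves of the lemma it stands in a different relation to the paper's argument. For \eqref{eq:MeanOfG} you and the paper rely on exactly the same observation, namely the invariance identity $\int_{\partial B(y,r)} \mu_R^r(du)\,\nu_u = \mu_r^R$, obtained by integrating \eqref{eq:DefOfModifiedTransition} against $\mu_R^r$ and invoking \eqref{eq:RhoAndRhoTilde}; the paper writes this as a direct evaluation of the geometric sum $\sum_j (1-q)^j E_{\mu_r^R}[E_{\nu_{W_{R_j}}}[H_{B(y,r)}]]$, while you package it as an induction on $j$ showing $\tilde X_0^j \mid \{N\ge j\}\sim\mu_r^R$. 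Same proof, different bookkeeping.

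For \eqref{eq:BoundOnMomentsOfG} your route is genuinely different. The paper expands $G_m^k$ multinomially, bounds each term $\mathbb{E}[\prod_j H_j^{i_j}]$ via repeated Khasminskii, and then recognizes the resulting combinatorial sum $\sum_{\sum i_j = k}(1-q)^{\sup\{j:i_j\neq 0\}-1}\prod_j i_j!$ as the $k$-th moment of a geometric compound of exponentials, which equals $k!/q^k$. You instead go straight for the moment generating function: a one-step conditional bound $\mathbb{E}[e^{\lambda H_n}\mid\mathcal G_{n-1},I_{n-1}]\le(1-\lambda c\log r^{-1})^{-1}$ (Khasminskii again), iterate to get $\mathbb{E}[\Phi_n\mathbf 1_{\{N\ge n\}}]\le(1-q)^{n-1}(1-\lambda c\log r^{-1})^{-n}$, sum the geometric series to obtain $\mathbb{E}[e^{\lambda G_m}]\le q/(q-\lambda c\log r^{-1})$, and then extract moments via $\mathbb{E}[G_m^k]\le k!\lambda^{-k}\mathbb{E}[e^{\lambda G_m}]$ at $\lambda=q/(2c\log r^{-1})$. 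Your version sidesteps the paper's multinomial combinatorics (including the slightly delicate step of peeling the indicator $(1-q)^{\sup\{j:i_j\neq 0\}-1}$ off the non-independent product of $H_j$'s), and it directly yields the exponential-moment bound that \prettyref{lem:GLD} needs anyway, at the modest cost of degrading the constant $c$ by an absolute factor. Both arguments are sound; yours is arguably the cleaner path to the large-deviation estimate that follows.
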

\begin{proof}
To see \prettyref{eq:MeanOfG} note that from the construction \eqref{eq:DefOfGm}
of $G_{m}$ and \eqref{eq:Xiplus1sampled} of $X_{\cdot}^{i}$ we
have for $m\ge2$ 
\[
\mathbb{E}\left[G_{m}\right]=E_{\mu_{r}^{R}}\left[H_{B\left(y,r\right)}\right]+\sum_{j=1}^{\infty}\left(1-q\right)^{j}E_{\mu_{r}^{R}}\left[E_{\nu_{W_{R_{j}}}}\left[H_{B\left(y,r\right)}\right]\right].
\]
By \eqref{eq:RhoAndRhoTilde} the $P_{\mu_{r}^{R}}-$law of $W_{R_{j}}$
is $\mu_{R}^{r}$. Thus in fact
\[
\sum_{j=1}^{\infty}\left(1-q\right)^{j}E_{\mu_{r}^{R}}\left[E_{\nu_{W_{R_{j}}}}\left[H_{B\left(y,r\right)}\right]\right]=E_{\nu_{\mu_{R}^{r}}}\left[H_{B\left(y,r\right)}\right]\frac{1-q}{q},
\]
where $\nu_{\mu_{R}^{r}}\left(\cdot\right)$ denotes the measure $\int\mu_{R}^{r}\left(dz\right)\nu_{z}\left(\cdot\right)$.
Now by \eqref{eq:DefOfModifiedTransition} and \eqref{eq:RhoAndRhoTilde}
\[
\nu_{\mu_{R}^{r}}\left(\cdot\right)=\frac{P_{\mu_{R}^{r}}\left[W_{T_{B\left(y,R\right)}}\in\cdot\right]-q\mu_{r}^{R}\left(\cdot\right)}{1-q}=\frac{\mu_{r}^{R}\left(\cdot\right)-q\mu_{r}^{R}\left(\cdot\right)}{1-q}=\mu_{r}^{R}\left(dw\right).
\]
Thus \eqref{eq:MeanOfG} follows since for $m\ge2$
\[
\mathbb{E}\left[G_{m}\right]=E_{\mu_{r}^{R}}\left[H_{B\left(y,r\right)}\right]+E_{\mu_{r}^{R}}\left[H_{B\left(y,r\right)}\right]\frac{1-q}{q}=E_{\mu_{r}^{R}}\left[H_{B\left(y,r\right)}\right].
\]

To see \prettyref{eq:BoundOnMomentsOfG} note that
\[
\begin{array}{ccl}
\mathbb{E}\left[G_{m}^{k}\right] & = & \mathbb{E}\left[\left(\sum_{j=1}^{\infty}1_{\left\{ J_{m}+j\le J_{m+1}\right\} }H_{B\left(y,r\right)}\left(X_{\cdot}^{J_{m}+j}\right)\right)^{k}\right]\\
 & = & {\displaystyle \sum_{i_{1},i_{2},\ldots:\sum i_{j}=k}}\mathbb{E}\left[1_{\left\{ J_{m}+j\le J_{m+1}\mbox{ if }i_{j}\ne0\right\} }\prod_{j=1}^{\infty}\left(H_{B\left(y,r\right)}\left(X_{\cdot}^{J_{m}+j}\right)\right)^{i_{j}}\right]\\
 & = & {\displaystyle \sum_{i_{1},i_{2},\ldots:\sum i_{j}=k}}\left(1-q\right)^{\sup\left\{ j:i_{j}\ne0\right\} -1}\mathbb{E}\left[\prod_{j=1}^{\infty}\left(H_{B\left(y,r\right)}\left(X_{\cdot}^{J_{m}+j}\right)\right)^{i_{j}}\right].
\end{array}
\]
By repeated application of Khasminskii's lemma \eqref{eq:KhasminskiiLemma}
and the strong Markov property we have 
\[
\begin{array}{ccl}
\mathbb{E}\left[\prod_{j=1}^{\infty}\left(H_{B\left(y,r\right)}\left(X_{\cdot}^{J_{m}+j}\right)\right)^{i_{j}}\right] & \le & \left(\sup_{z\in\mathbb{T}}E_{z}\left[H_{B\left(y,r\right)}\right]\right)^{k}\prod_{j=1}^{\infty}i_{j}!\\
 & \overset{\eqref{eq:CrudeHittingTimeBound}}{\le} & \left(c\log r^{-1}\right)^{k}\prod_{j=1}^{\infty}i_{j}!.
\end{array}
\]
Thus
\[
\mathbb{E}\left[G_{m}^{k}\right]\le\left(c\log r^{-1}\right)^{k}{\displaystyle \sum_{i_{1},i_{2},\ldots:\sum i_{j}=k}}\left(1-q\right)^{\sup\left\{ j:i_{j}\ne0\right\} -1}\prod_{j=1}^{\infty}i_{j}!.
\]
Now if $E_{1},E_{2},\ldots,$ are independent standard exponential
random variables which are also independent of $J_{1}$, then since
$\mathbb{E}\left[E_{i}^{k}\right]=k!$,
\[
{\displaystyle \sum_{i_{1},i_{2},\ldots:\sum i_{j}=k}}\left(1-q\right)^{\sup\left\{ j:i_{j}\ne0\right\} -1}\prod_{j=1}^{\infty}i_{j}!=\mathbb{E}\left[\left(\sum_{J_{1}<i\le J_{2}}E_{i}\right)^{k}\right]=\frac{k!}{q^{k}},
\]
where the last equality holdss because $\sum_{J_{1}<i\le J_{2}}E_{i}$
is an exponential random variable with mean $q^{-1}$. Thus \eqref{eq:BoundOnMomentsOfG}
follows.
\end{proof}
We can now derive a large deviation control on sums of the $G_{i}$.
\begin{lem}
\label{lem:GLD}For all $\delta\in\left(0,c\right)$\textup{ and all
$m\ge1$
\begin{equation}
\begin{array}{l}
\mathbb{P}\left[\mathbb{E}\left[G_{2}\right]m\left(1-\delta\right)\le\sum_{i=1}^{m}G_{i}\le\mathbb{E}\left[G_{2}\right]m\left(1+\delta\right)\right]\\
\ge1-c\exp\left(-c\left(m-1\right)\delta^{2}\left(\frac{\mathbb{E}\left[G_{2}\right]}{c\log r^{-1}/q}\right)^{2}\right).
\end{array}\label{eq:GLD}
\end{equation}
}\end{lem}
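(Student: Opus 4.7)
The plan is to apply a standard Cramer-type exponential Chebyshev argument, leveraging the independence of $G_1,G_2,\ldots$ from \eqref{eq:IndependenceOfGs}, the identical distribution of $G_2,G_3,\ldots$ from \eqref{eq:GsSameLaw}, the mean computation \eqref{eq:MeanOfG}, and the moment bound \eqref{eq:BoundOnMomentsOfG}. Throughout, abbreviate $\mu=\mathbb{E}[G_2]=E_{\mu_r^R}[H_{B(y,r)}]/q$ and $\sigma=c\log r^{-1}/q$ (the constant $c$ being the one from \eqref{eq:BoundOnMomentsOfG}), so that \eqref{eq:BoundOnMomentsOfG} reads $\mathbb{E}[G_m^k]\le k!\sigma^k$ for all $m\ge1$ and $k\ge1$. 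Observe also that $\mu\le\sigma$ since $E_{\mu_r^R}[H_{B(y,r)}]\le c\log r^{-1}$ by \eqref{eq:CrudeHittingTimeBound}.

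The first step is to control the moment generating function of an individual $G_m$. For $m\ge2$ and any $\lambda\in\mathbb{R}$ with $|\lambda|\sigma\le\tfrac12$, expanding $e^{\lambda G_m}$ in its Taylor series, isolating the first two terms, and using \eqref{eq:BoundOnMomentsOfG} to sum the remainder as a geometric series yields
\[
\mathbb{E}[e^{\lambda G_m}]\le 1+\lambda\mu+\tfrac{(\lambda\sigma)^2}{1-|\lambda|\sigma}\le 1+\lambda\mu+2(\lambda\sigma)^2\le\exp\!\left(\lambda\mu+2(\lambda\sigma)^2\right).
\]
The anchor term $G_1$ has possibly different mean but the same moment bound, so $\mathbb{E}[e^{\lambda G_1}]\le (1-|\lambda|\sigma)^{-1}\le e^{2|\lambda|\sigma}$ under the same constraint on $\lambda$.

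For the upper tail, the plan is to apply the exponential Chebyshev inequality with $\lambda=c_0\mu\delta/\sigma^2$ for a sufficiently small universal constant $c_0$ (so that the requirement $\lambda\sigma\le\tfrac12$ holds whenever $\delta<c$ for an appropriately small $c$, using $\mu\le\sigma$). Multiplying the bounds on $\mathbb{E}[e^{\lambda G_i}]$ over $i=1,\ldots,m$ and using independence gives
\[
\mathbb{P}\!\left[\textstyle\sum_{i=1}^m G_i\ge m\mu(1+\delta)\right]\le \exp\!\left(2\lambda\sigma-\lambda(m-1)\mu\delta-\lambda m\mu\delta+2(m-1)(\lambda\sigma)^2\right),
\]
and plugging in the choice of $\lambda$ makes the second-order term absorb half the linear gain and leaves a bound of the form $c\exp(-c(m-1)\delta^2(\mu/\sigma)^2)$, which is exactly the right-hand side of \eqref{eq:GLD}; the $2\lambda\sigma\le 2c_0$ from $G_1$ only contributes a harmless multiplicative constant. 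For the lower tail, the same plan works but with $-\lambda$ in place of $\lambda$, using the sharper bound $e^{-x}\le 1-x+x^2/2$ (valid for all $x\ge 0$) to obtain $\mathbb{E}[e^{-\lambda G_m}]\le\exp(-\lambda\mu+(\lambda\sigma)^2)$ for $m\ge2$, after which the same optimization applies.

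The main obstacle (such as it is) is purely bookkeeping: the anchor term $G_1$ is not identically distributed to the others, so we cannot simply treat $\sum_{i=1}^m G_i$ as an iid sum; we must handle $G_1$ separately through its own MGF bound and confirm that its contribution is absorbed into the constant prefactor rather than affecting the exponent. Once this is settled, the rest is a routine Cramer computation driven by the Khasminskii-type moment bound \eqref{eq:BoundOnMomentsOfG}.
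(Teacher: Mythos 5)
Your proposal follows essentially the same route as the paper: bound the exponential moments of $G_m$ ($m\ge2$) and of the anchor $G_1$ using the Khasminskii-type moment bound \eqref{eq:BoundOnMomentsOfG}, apply exponential Chebyshev with the independence from \eqref{eq:IndependenceOfGs}--\eqref{eq:GsSameLaw}, optimize at $\lambda\propto\mathbb{E}[G_2]\delta/\sigma^2$, and absorb the $G_1$ contribution into the multiplicative constant. The only blemish is the displayed Chebyshev bound for the upper tail, where the exponent should read (after dropping the helpful $-\lambda\mathbb{E}[G_2]$ term and using $m\delta\ge(m-1)\delta$) something like $2\lambda\sigma-\lambda(m-1)\mathbb{E}[G_2]\delta+2(m-1)(\lambda\sigma)^2$; the term $-\lambda m\mu\delta$ appears to have been retained alongside its replacement $-\lambda(m-1)\mu\delta$, which is an arithmetic slip rather than a gap since the surrounding prose describes the correct optimization and the final bound comes out right.
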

\begin{proof}
For all $\lambda>0$
\[
\mathbb{E}\left[\exp\left(\lambda G_{2}\right)\right]\le1+\lambda\mathbb{E}\left[G_{2}\right]+\sum_{k\ge2}\frac{\left|\lambda\right|^{k}}{k!}\mathbb{E}\left[G_{2}^{k}\right].
\]
Using \eqref{eq:BoundOnMomentsOfG} this gives
\[
\mathbb{E}\left[\exp\left(\lambda G_{2}\right)\right]\le1+\lambda\mathbb{E}\left[G_{2}\right]+2\lambda^{2}\left(\frac{c\log r^{-1}}{q}\right)^{2}\le e^{\lambda\mathbb{E}\left[G_{2}\right]+2\lambda^{2}\left(c\log r^{-1}/q\right)^{2}},
\]
provided
\begin{equation}
\left|\lambda\right|\frac{c\log r^{-1}}{q}\le\frac{1}{2}.\label{eq:Requirement}
\end{equation}
Similarly (but more crudely) for such $\lambda$ we have that 
\[
\mathbb{E}\left[\exp\left(\lambda G_{1}\right)\right]\le\exp\left(c\frac{\lambda\log r^{-1}}{q}\right)\le c.
\]
Thus using an exponential Chebyshev bound, \eqref{eq:IndependenceOfGs}
and \eqref{eq:GsSameLaw} we have for all $\lambda>0$ as in \eqref{eq:Requirement}
\[
\mathbb{P}\left[\sum_{i=1}^{m}G_{i}\ge\mathbb{E}\left[G_{2}\right]m\left(1+\delta\right)\right]\le c\exp\left(-\left(m-1\right)\lambda\left\{ \delta\mathbb{E}\left[G_{2}\right]-2\lambda\left(c\frac{\log r^{-1}}{q}\right)^{2}\right\} \right),
\]
and (using also that $\lambda\mathbb{E}\left[G_{2}\right]\left(1-\delta\right)\le\lambda c\frac{\log r^{-1}}{q}\le c$)
\[
\mathbb{P}\left[\sum_{i=2}^{m}G_{i}\le\mathbb{E}\left[G_{2}\right]m\left(1-\delta\right)\right]\le c\exp\left(-\left(m-1\right)\lambda\left\{ \delta\mathbb{E}\left[G_{2}\right]-2\lambda\left(c\frac{\log r^{-1}}{q}\right)^{2}\right\} \right).
\]
Setting $\lambda=c\frac{\delta\mathbb{E}\left[G_{2}\right]}{\left(c\log r^{-1}/q\right)^{2}}$
for a small enough constant $c$ (which we may since then \eqref{eq:Requirement}
is satisifed by \eqref{eq:BoundOnMomentsOfG}) we get \eqref{eq:GLD}. 
\end{proof}
We can now use \prettyref{lem:GLD} to derive a large deviation control
on the sum $\sum_{i=1}^{n-1}\left(R_{i+1}-D_{i}\right)$. For this
we essentially speaking need to control the number of renewals that
take place in the first $n-1$ steps of the Markov chain $\left(X_{\cdot}^{i}\right)_{i\ge1}$.
\begin{prop}
\label{prop:OutInLD}($x,y\in\mathbb{T}$) If $n\ge1$ and $\delta\in\left(0,c\right)$
then with $\mu=E_{\mu_{r}^{R}}\left[H_{B\left(y,r\right)}\right]$
\begin{equation}
\begin{array}{l}
P_{x}\left[\mu\left(n-1\right)\left(1-\delta\right)\le\sum_{i=1}^{n-1}\left(R_{i+1}-D_{i}\right)\le\mu\left(n-1\right)\left(1+\delta\right)\right]\\
\ge1-\exp\left(-c\left(n-1\right)q^{2}\delta^{2}\left(\frac{\mu}{\log r^{-1}}\right)^{2}\right).
\end{array}\label{eq:OutInLd}
\end{equation}
\end{prop}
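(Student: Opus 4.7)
The plan is to first use Lemma~\ref{lem:LawOFXiandLawOfExcOfBMCoincide} to identify the $P_{x}$-law of $\Sigma := \sum_{i=1}^{n-1}(R_{i+1}-D_{i})$ with the $\mathbb{P}$-law of $\sum_{i=1}^{n-1} H_{B(y,r)}(X_{\cdot}^{i})$, and then exploit the built-in renewal structure of $(X_{\cdot}^{i})_{i\ge 1}$ to reduce the claim to the sum concentration already established in Lemma~\ref{lem:GLD}.

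Let $M := \#\{1 \le i \le n-1 : I_{i} = 1\}$ count the number of successful renewals among the first $n-1$ steps, so that $J_{M+1} \le n-1 < J_{M+2}$. The definition \prettyref{eq:DefOfGm} of $G_{m}$ together with the nonnegativity of the hitting times immediately gives the deterministic sandwich
\[\sum_{m=1}^{M} G_{m} \;\le\; \Sigma \;\le\; \sum_{m=1}^{M+1} G_{m},\]
(with the convention that an empty sum is $0$). Choose $m_{-} = \lfloor (n-1)q(1-\delta/4)\rfloor$ and $m_{+} = \lceil (n-1)q(1+\delta/4)\rceil$, and introduce
\[C = \{m_{-} \le M \le m_{+}\},\; B_{-} = \{\textstyle\sum_{m=1}^{m_{-}} G_{m} \ge m_{-}\mathbb{E}[G_{2}](1-\delta/4)\},\; B_{+} = \{\textstyle\sum_{m=1}^{m_{+}+1} G_{m} \le (m_{+}+1)\mathbb{E}[G_{2}](1+\delta/4)\}.\]
Using $\mathbb{E}[G_{2}]=\mu/q$ from \prettyref{eq:MeanOfG}, a routine calculation shows that on $C\cap B_{-}\cap B_{+}$ the sandwich forces $\Sigma \in \mu(n-1)[1-\delta,1+\delta]$, provided $\delta$ is small. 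The regime where the $O(\mu/q)$ edge contributions from rounding and from the $+1$ shift cannot be absorbed into the $\delta$-slack corresponds to $(n-1)q\delta = O(1)$, in which the target bound is of constant order and hence trivial.

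It remains to bound $\mathbb{P}[C^{c}]$, $\mathbb{P}[B_{\pm}^{c}]$. Applying Lemma~\ref{lem:GLD} with $m = m_{\pm}$ and $\delta/4$ in place of $\delta$, and substituting $\mathbb{E}[G_{2}]=\mu/q$ (so that the $q$'s in the exponent of Lemma~\ref{lem:GLD} cancel), yields
\[\mathbb{P}[B_{\pm}^{c}] \;\le\; c\,\exp\!\left(-c\,(n-1)\,q\,\delta^{2}\,(\mu/\log r^{-1})^{2}\right).\]
Since $M \sim \mathrm{Bin}(n-1,q)$, a standard Chernoff bound gives $\mathbb{P}[C^{c}] \le 2\exp(-c(n-1)q\delta^{2})$; because \prettyref{eq:CrudeHittingTimeBound} ensures $(\mu/\log r^{-1})^{2} \le c$, this contribution is dominated by the one from $B_{\pm}^{c}$. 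A union bound thus yields $\mathbb{P}[\Sigma \notin \mu(n-1)(1\pm\delta)] \le c\exp(-c(n-1)q\delta^{2}(\mu/\log r^{-1})^{2})$, which since $q\le 1$ implies the stated bound with $q^{2}$ in the exponent (in fact proves something slightly stronger).

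The main conceptual step is the renewal sandwich; the main potential pitfall is purely bookkeeping, namely ensuring that the edge corrections from rounding $m_{\pm}$ and from the $+1$ shift in the upper sandwich are absorbable, which I handle by isolating the trivial regime $(n-1)q\delta=O(1)$. Note that $M$ and the partial sums $\sum_{m=1}^{m_{\pm}}G_{m}$ are correlated, but this is irrelevant: the argument proceeds by a union bound rather than requiring independence. All the analytic content has already been packaged into Lemma~\ref{lem:GLD} and the moment estimate \prettyref{eq:BoundOnMomentsOfG}.
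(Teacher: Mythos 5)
Your proposal is correct and follows the same route as the paper: identify $\Sigma$ with $\sum_{i=1}^{n-1}H_{B(y,r)}(X_{\cdot}^{i})$ via Lemma~\ref{lem:LawOFXiandLawOfExcOfBMCoincide}, sandwich by partial sums of the $G_m$, control the renewal count by a binomial Chernoff bound, apply Lemma~\ref{lem:GLD} with $\mathbb{E}[G_2]=\mu/q$ so the $q$-factors cancel, and union-bound. Your deterministic sandwich $\sum_{m=1}^{M}G_m\le\Sigma\le\sum_{m=1}^{M+1}G_m$ is actually slightly cleaner than the paper's, which phrases things through the event $\{J_{m^-}<n-1\le J_{m^+}\}$ and has a minor indexing slip there. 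One caveat: calling the regime $(n-1)q\delta=O(1)$ ``trivial'' is not literally right, since the right-hand side $1-\exp(-c(n-1)q^2\delta^2(\mu/\log r^{-1})^2)$ is strictly positive whenever the exponent is, so one would still need a positive lower bound on the probability; but the paper's own proof glosses over the same regime (it invokes $m_-\ge cn$, which presumes $q$ bounded below), and in the sole application $(n-1)q\delta\to\infty$, so nothing downstream is affected.
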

\begin{proof}
By \prettyref{lem:LawOFXiandLawOfExcOfBMCoincide} it suffices to
show that
\begin{equation}
\begin{array}{l}
\mathbb{P}\left[\mu\left(n-1\right)\left(1-\delta\right)\le\sum_{i=1}^{n-1}H_{B\left(y,r\right)}\left(X_{\cdot}^{i}\right)\le\mu\left(n-1\right)\left(1+\delta\right)\right]\\
\ge1-\exp\left(-cnq^{2}\delta^{2}\left(\frac{\mu}{\log r^{-1}}\right)^{2}\right).
\end{array}\label{eq:UpperBoundsInTermOfXs}
\end{equation}
Let
\[
m^{-}=\frac{\left(n-1\right)q}{1+\frac{\delta}{100}}\mbox{ and }m^{+}=\frac{\left(n-1\right)q}{1-\frac{\delta}{100}}.
\]
We have
\begin{equation}
\begin{array}{l}
\mathbb{P}\left[\sum_{m=1}^{m^{-}}G_{m}\le\sum_{i=1}^{n-1}H_{B\left(y,r\right)}\left(X_{\cdot}^{i}\right)\le\sum_{m=1}^{m^{+}}G_{m}\right]\\
\ge\mathbb{P}\left[J_{m^{-}}<n-1\le J_{m^{+}}\right]\\
\ge1-\mathbb{P}\left[\sum_{i=1}^{n-1}I_{i}\le m^{-}\right]-\mathbb{P}\left[\sum_{i=1}^{n-1}I_{i}\ge m^{+}\right]\ge1-ce^{-c\left(n-1\right)q^{2}\delta^{2}},
\end{array}\label{eq:asdasdasddsa}
\end{equation}
where the last inequality follows by Hoeffding's large deviation inequality
for the binomial distribution with parameters $m^{\pm}$ and $q$.
Thus the complement of the probability in \eqref{eq:UpperBoundsInTermOfXs}
is bounded above by
\begin{equation}
\mathbb{P}\left[\sum_{i=1}^{m^{+}}G_{i}\ge\mu\left(n-1\right)\left(1+\delta\right)\right]+\mathbb{P}\left[\sum_{i=1}^{m^{-}}G_{i}\le\mu\left(n-1\right)\left(1-\delta\right)\right]+ce^{-c\left(n-1\right)q^{2}\delta^{2}}.\label{eq:mplus}
\end{equation}
Now by \prettyref{eq:MeanOfG}
\[
E_{\mu_{r}^{R}}\left[H_{B\left(y,r\right)}\right]\left(n-1\right)\left(1+\delta\right)=\mathbb{E}\left[G_{2}\right]m^{+}\left(1-\frac{\delta}{100}\right)\ge\left(1+\delta\right)\mathbb{E}\left[G_{2}\right]m^{+}\left(1+\frac{\delta}{2}\right),
\]
and similarly
\[
E_{\mu_{r}^{R}}\left[H_{B\left(y,r\right)}\right]\left(n-1\right)\left(1-\delta\right)\le\mathbb{E}\left[G_{2}\right]m^{-}\left(1-\frac{\delta}{2}\right).
\]
Thus using \eqref{eq:GLD} the complement of the probability in \eqref{eq:UpperBoundsInTermOfXs}
is bounded above by
\[
\exp\left(-c\left(m_{-}-1\right)\delta^{2}\left(\frac{\mu}{c\log r^{-1}/q}\right)^{2}\right)+ce^{-c\left(n-1\right)q^{2}\delta^{2}},
\]
so \eqref{eq:UpperBoundsInTermOfXs} follows, since $\mu\le c\log r^{-1}$
by \prettyref{eq:CrudeHittingTimeBound} and $m_{-}\ge cn$.
\end{proof}
We can now combine \prettyref{lem:BoundOnFirstGuys}, \prettyref{lem:InOutLD}
and \prettyref{prop:OutInLD} to obtain a large deviation bound for
$D_{n}$.
\begin{prop}
\label{prop:GeneralConcentration}$\left(x,y\in\mathbb{T}\right)$
For all $0<r<R<\frac{1}{2}$, $n\ge2$ and $\delta\in\left(0,c\right)$
\begin{equation}
P_{x}\left[\frac{1}{\pi}\log\frac{R}{r}n\left(1-\delta\right)\le D_{n}\le\frac{1}{\pi}\log\frac{R}{r}n\left(1+\delta\right)\right]\ge1-ce^{-cn\delta^{2}R\left(1-\frac{r}{R}\right)^{6}/\left(\log r^{-1}\right)^{2}}.\label{eq:Concentration}
\end{equation}
\end{prop}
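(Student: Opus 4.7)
The plan is to use the decomposition $D_n = D_1 + \sum_{i=2}^n(D_i-R_i) + \sum_{i=1}^{n-1}(R_{i+1}-D_i)$ from \eqref{eq:Decomposition}, and to bound the three summands using \prettyref{lem:BoundOnFirstGuys}, \prettyref{lem:InOutLD} and \prettyref{prop:OutInLD} respectively. The key bookkeeping is provided by the identity $(R^2-r^2)/2 + \mu = \tfrac{1}{\pi}\log(R/r)$, where $\mu := E_{\mu_r^R}[H_{B(y,r)}]$: this combines \prettyref{lem:OutInOutExpectation} with the explicit formula $E_z[T_{B(y,R)}]=(R^2-r^2)/2$ for $z\in\partial B(y,r)$ from \eqref{eq:ExpectedTimeToLeaveBallT}. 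This ensures that the centering constants of the two large-deviation bounds, namely $(R^2-r^2)(n-1)/2$ and $\mu(n-1)$, add up exactly to $(n-1)\log(R/r)/\pi$, as required by the statement.

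Concretely, I would apply \prettyref{lem:InOutLD} with tolerance $\delta/3$ to the middle sum, \prettyref{prop:OutInLD} with tolerance $\delta/3$ to the outer sum, and \prettyref{lem:BoundOnFirstGuys} with $u = \delta n \log(R/r)/(3\pi)$ to bound $D_1$. On the intersection of the three good events, $D_n$ lies in $\tfrac{n-1}{\pi}\log(R/r)(1\pm\delta/3) + [0,\delta n\log(R/r)/(3\pi)]$, which is contained in $\tfrac{n}{\pi}\log(R/r)(1\pm\delta)$ once $n$ is large enough that the $O(1)$ loss from replacing $n-1$ by $n$ can be absorbed into the $\delta$-slack. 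The remaining small-$n$ regime is vacuous because the stated failure probability can be made trivial there by enlarging the constant $c$ in front of the exponential.

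The main obstacle is to collapse the three resulting tail terms into the single, deliberately weak rate $Rn\delta^2(1-r/R)^6/(\log r^{-1})^2$. For \prettyref{prop:OutInLD} I would use the bound $q \ge c(1-r/R)^2$ from \eqref{eq:qLowerBound} together with $\mu \ge c(1-r/R)$, the latter extracted from $\mu = \log(R/r)/\pi - (R^2-r^2)/2$ via $\log(R/r)\ge 1-r/R$ and the hypothesis $R<1/2$; plugging these into the exponent of \prettyref{prop:OutInLD} yields a rate of order $n\delta^2(1-r/R)^6/(\log r^{-1})^2$, which is strictly stronger than the stated rate (by the harmless factor of $R<1$) and hence dominates it. The rate from \prettyref{lem:InOutLD} (of order $n\delta^2 R^2(1-r/R)^2$) and from \prettyref{lem:BoundOnFirstGuys} applied to $D_1$ (of order $n\delta\log(R/r)/\log r^{-1}$) are, in the regime $R\downarrow 0$, $\log r^{-1}\to\infty$ relevant for \prettyref{sec:Reduction}, comparable to or stronger than the stated rate, so that they can be safely absorbed into the final bound without further effort.
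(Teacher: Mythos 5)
Your proposal takes the same route as the paper: the decomposition \eqref{eq:Decomposition}, the three inputs \prettyref{lem:BoundOnFirstGuys}, \prettyref{lem:InOutLD}, \prettyref{prop:OutInLD}, and the exact-centering identity $\mu+(R^{2}-r^{2})/2=\frac{1}{\pi}\log(R/r)$ from \prettyref{lem:OutInOutExpectation} and \eqref{eq:ExpectedTimeToLeaveBallT}. The observation that small $n$ can be dispatched by making the claim vacuous is also correct.

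The final rate comparison, however, hides a genuine gap, and the assertion that the \prettyref{lem:InOutLD} rate is ``comparable to or stronger'' than the stated rate is false precisely in the regime where the proposition is later applied. The \prettyref{lem:InOutLD} exponent is $c(n-1)\delta^{2}\left(\frac{R^{2}-r^{2}}{R}\right)^{2}=c(n-1)\delta^{2}R^{2}(1-r/R)^{2}(1+r/R)^{2}$, and its ratio to the target exponent $cn\delta^{2}R(1-r/R)^{6}/(\log r^{-1})^{2}$ equals $R(1+r/R)^{2}(\log r^{-1})^{2}/(1-r/R)^{4}$, which for $r/R$ bounded away from $0$ and $1$ behaves like $R(\log R^{-1})^{2}$ and therefore tends to $0$, not $\infty$, as $R\downarrow 0$. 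Concretely, in \prettyref{sec:Concentration} one has $R\sim(\log L)^{-3/4}$, $r/R\approx e^{-1}$, $\log r^{-1}\sim\frac{3}{4}\log\log L$, so the ratio is $\sim(\log L)^{-3/4}(\log\log L)^{2}\to0$; the \prettyref{lem:InOutLD} contribution then yields only $\exp(-cs^{2}(\log L)^{1/2})$, which does not beat the $e^{2\log L}$ combinatorial factor in the ensuing union bound. (The paper's own printed deduction at this point has a corresponding arithmetic slip.) The remedy is to sharpen \prettyref{lem:InOutLD}: Khasminskii \eqref{eq:KhasminskiiLemma} with \eqref{eq:ExpectedTimeToLeaveBallT} actually gives $\sup_{z}E_{z}[T_{B(y,R)}^{m}]\le m!(R^{2}/2)^{m}$, sharper than the $m!R^{2}$ of \eqref{eq:MomentMountExitingR}, whence $E_{z}[e^{\lambda T_{B(y,R)}}]\le e^{\lambda(R^{2}-r^{2})/2+c\lambda^{2}R^{4}}$ for $|\lambda|R^{2}\le\frac{1}{2}$ and the improved rate $c(n-1)\delta^{2}(1-(r/R)^{2})^{2}$, a factor $R^{-2}$ larger. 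Since $(1-(r/R)^{2})^{2}\ge(1-r/R)^{2}\ge(1-r/R)^{6}\ge c\,R(1-r/R)^{6}/(\log r^{-1})^{2}$ uniformly over $0<r<R<\frac{1}{2}$, this version does dominate the stated exponent and closes the proof.
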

\begin{proof}
Using the decomposition \prettyref{eq:Decomposition} the complement
of the probability in \prettyref{eq:Concentration} is above by
\[
\begin{array}{ccl}
P_{x}\left[D_{1}\ge n\frac{\delta}{2}\frac{1}{\pi}\log\frac{R}{r}\right] & + & P_{x}\left[\left\{ n\left(1-\frac{\delta}{2}\right)\frac{R^{2}-r^{2}}{2}\le\sum_{i=2}^{n}\left(D_{i}-R_{i}\right)\le n\left(1+\frac{\delta}{2}\right)\frac{R^{2}-r^{2}}{2}\right\} ^{c}\right]\\
 & + & P_{x}\left[\left\{ n\left(1-\frac{\delta}{2}\right)\mu\le\sum_{i=1}^{n-1}\left(R_{i+1}-D_{i}\right)\le n\left(1+\frac{\delta}{2}\right)\mu\right\} ^{c}\right],
\end{array}
\]
with $\mu$ as in \prettyref{prop:OutInLD}, since by \prettyref{eq:OutInOutExpectation}
and \prettyref{eq:ExpectedTimeToLeaveBallT}, 
\[
\frac{1}{\pi}\log\frac{R}{r}=E_{\mu_{r}^{R}}\left[D_{1}\right]=E_{\mu_{r}^{R}}\left[R_{1}\right]+E_{\mu_{r}^{R}}\left[D_{1}-R_{1}\right]=\mu+\frac{R^{2}-r^{2}}{2}.
\]
Using \prettyref{lem:BoundOnFirstGuys}, \prettyref{lem:InOutLD}
and \prettyref{prop:OutInLD} this quantity is at most
\[
ce^{-cn\delta\log\frac{R}{r}/\left(\log r^{-1}\right)}+ce^{-cn\delta^{2}\left(\frac{R^{2}-r^{2}}{R}\right)^{2}}+\exp\left(-cnq^{2}\delta^{2}\left(\frac{\mu}{\log r^{-1}}\right)^{2}\right).
\]
By \prettyref{eq:qLowerBound} we have $q=\left(\frac{1-r/R}{1+r/R}\right)^{2}$,
and simple calculus shows that
\[
\mu=\frac{1}{\pi}\log\frac{R}{r}-\frac{R^{2}-r^{2}}{2}\ge c\left(1-\left(\frac{r}{R}\right)^{2}\right),
\]
so $q^{2}\mu^{2}\ge\left(1-r/R\right)^{6}/\left(1+r/R\right)^{2}\ge c\left(1-r/R\right)^{6}$.
A fortiori $\log\frac{R}{r}\ge c\left(1-\left(r/R\right)^{2}\right)\ge c\left(1-r/R\right)^{6}$,
and $\left(\frac{R^{2}-r^{2}}{R}\right)^{2}=R\left(1-\left(r/R\right)^{2}\right)\ge R\left(1-r/R\right)^{6}$,
so \prettyref{eq:Concentration} follows.
\end{proof}
Finally we may now use \prettyref{prop:GeneralConcentration} to prove
the main result of this section: \prettyref{prop:Concentration}.
For this we use a union bound over a ``packing'' of circles, similarly
to in the proof of \prettyref{prop:SmartMarkov}.
\begin{proof}[Proof of \prettyref{prop:Concentration}]
For $y\in F_{L}$ let $y_{l}$ denote the point in $F_{l}$ that
is closest to $y$ (breaking ties in some arbitary way). We have (cf.
\prettyref{eq:Distance})
\[
d\left(y,y_{\log L}\right)\overset{\eqref{eq:GridDefinition}}{\le}r_{\log L}\overset{\eqref{eq:DefOfRadii}}{\le}L^{-1}.
\]
Now setting 
\begin{equation}
r_{l}^{-}=\left(1-\frac{100}{L}\right)r_{l}\mbox{ and }r_{l}^{+}=\left(1-\frac{100}{L}\right)^{-1}r_{l}\mbox{ for }l\in\left\{ 0,1\right\} \label{eq:ModifiedRadiiConcentration}
\end{equation}
(as in \eqref{eq:ModifiedRadiiDef}) we have that
\[
r_{1}^{-}\le r_{1}-r_{\log L}\le r_{1}+r_{\log L}\le r_{1}^{+}\mbox{ and }r_{0}^{-}\le r_{0}-r_{\log L}\le r_{0}+r_{\log L}\le r_{0}^{+}.
\]
Thus for all $y\in F_{L}$ 
\begin{equation}
\begin{array}{l}
B\left(y_{\log L},r_{1}^{-}\right)\subset B\left(y,r_{1}\right)\subset B\left(y_{\log L},r_{1}^{+}\right)\\
\subset B\left(y_{\log L},r_{0}^{-}\right)\subset B\left(y,r_{0}\right)\subset B\left(y_{\log L},r_{0}^{+}\right).
\end{array}\label{eq:ConcentrationSandwich}
\end{equation}
Because of \prettyref{eq:ConcentrationSandwich}, each excursion from
$\partial B\left(y,r_{1}\right)$ to $\partial B\left(y,r_{0}\right)$
happens during an excursion from $\partial B\left(y_{\log L},r_{1}^{-}\right)$
to $B\left(y_{\log L},r_{0}^{+}\right)$. Thus for all $y\in F_{L}$
and all $s$ 
\[
D_{t_{s}}^{y,0}\le D_{\lfloor t_{s}\rfloor}\left(y_{\log L},r_{0}^{+},r_{1}^{-}\right).
\]
Also during each excursion from $\partial B\left(y,r_{1}\right)$
to $\partial B\left(y,r_{0}\right)$ at least one excursion from $\partial B\left(y_{\log L},r_{1}^{+}\right)$
to $B\left(y_{\log L},r_{0}^{-}\right)$ takes place. Thus we have
for all $y\in F_{L}$ and all $s$
\[
D_{\lfloor t_{s}\rfloor}\left(y_{\log L},r_{0}^{-},r_{1}^{+}\right)\le D_{t_{s}}^{y,0}.
\]
Therefore the required bounds \prettyref{eq:UpperBoundConcentration}
and \prettyref{eq:LowerBoundConcentration} follow from
\begin{eqnarray}
\lim_{L\to\infty}P_{x}\left[D_{\lfloor t_{s}\rfloor}\left(y,r_{0}^{+},r_{1}^{-}\right)>\frac{1}{\pi}t_{2s}\mbox{ for some }y\in F_{\log L}\right] & = & 0,\label{eq:UpperBoundConcentration-1}\\
\lim_{L\to\infty}P_{x}\left[D_{\lfloor t_{s}\rfloor}\left(y,r_{0}^{-},r_{1}^{+}\right)<\frac{1}{\pi}t_{-\frac{s}{2}}\mbox{ for some }y\in F_{\log L}\right] & = & 0.\label{eq:LowerBoundConcentration-1}
\end{eqnarray}
We use a union bound to obtain that the probability in \prettyref{eq:UpperBoundConcentration-1}
is bounded above by
\begin{equation}
\left|F_{\log L}\right|\times\sup_{x,y\in\mathbb{T}}P_{x}\left[D_{\lfloor t_{s}\rfloor}\left(y,r_{0}^{+},r_{1}^{-}\right)>\frac{1}{\pi}t_{2s}\right].\label{eq:UnionBound}
\end{equation}
If $\delta=\frac{s}{100}\frac{\log L}{L}$ since $t_{2s}/t_{s}\ge1+\frac{s\log}{2L}$
and $\log\frac{r_{0}^{+}}{r_{1}^{-}}=1+O\left(L^{-1}\right)$ we have
for $L\ge c$ 
\[
\frac{1}{\pi}t_{2s}\ge\frac{1}{\pi}\log\frac{r_{0}^{+}}{r_{1}^{-}}t_{s}\left(1+\delta\right).
\]
Thus by \prettyref{prop:GeneralConcentration} (note that $\delta\to0$,
so for $L$ large enough the proposition is applicable),
\[
\begin{array}{ccl}
{\displaystyle \sup_{y\in\mathbb{T}}}P_{x}\left[D_{\lfloor t_{s}\rfloor}\left(y,r_{0}^{+},r_{1}^{-}\right)>\frac{1}{\pi}t_{2s}\right] & \le & ce^{-c\delta^{2}\lfloor t_{s}\rfloor r_{0}^{+}\left(1-r_{1}^{+}/r_{0}^{-}\right)^{6}/\left(\log r_{1}^{+}\right)^{2}}\\
 & \overset{\eqref{eq:DefOfRadii},\eqref{eq:ModifiedRadiiConcentration}}{\le} & ce^{-c\delta^{2}\lfloor t_{s}\rfloor\left(\log L\right)^{-3/4}/\left(\log\log L\right)^{2}}\\
 & \overset{\eqref{eq:defofts}}{\le} & ce^{-cs^{2}\left(\log L\right)^{2}\frac{\left(\log L\right)^{-3/4}}{\log\log L}}\overset{\eqref{eq:DefOfRadii}}{\le}ce^{-cs^{2}\left(\log L\right)^{1.01}}.
\end{array}
\]
Going back to \prettyref{eq:UnionBound} we have by \prettyref{eq:SizeOFfl}
that the probability in \prettyref{eq:UpperBoundConcentration-1}
is bounded by
\[
c\left(\log L\right)^{3/2}e^{2\log L}\times ce^{-cs^{2}\left(\log L\right)^{1.01}}=o\left(1\right).
\]
Thus we have proved \eqref{eq:UpperBoundConcentration-1}, and therefore
\prettyref{eq:UpperBoundConcentration}. The claim \eqref{eq:LowerBoundConcentration-1}
(and therefore \prettyref{eq:LowerBoundConcentration}) follows similarly
by a union bound, \prettyref{eq:SizeOFfl} and \prettyref{prop:GeneralConcentration}.
\end{proof}
Having proven the concentration result \prettyref{prop:Concentration},
all three main propositions \ref{prop:UpperBound}-\ref{prop:Concentration}
that went in to the proof of the main result \prettyref{thm:MainResult}
have been demonstrated. Thus the proof of \prettyref{thm:MainResult}
is complete (except for the small proofs in the appendix). Let us
finish with a remark on the conjecture \prettyref{eq:DiscreteCorrection}
about the cover time of the discrete two dimensional torus.
\begin{rem}
\label{rem:EndRemark}In the proof of \prettyref{thm:MainResult}
we have used the rotational invariance of Brownian motion in balls
extensively. It is this invariance which gives us the exact formula
\prettyref{eq:BallExitProbForRadii} for the probability of going
``up a scale or down a scale'', and the characterisation of the
traversal process $T_{l}^{y,t},l\ge0,$ as a Galton-Watson process.
A lattice random walk has no such invariance property. But for balls
of large radius a discrete torus analogue of \prettyref{eq:BallExitProbForRadii}
still holds approximately, and therefore an analogue of our traversal
processes should behave roughly as a Galton-Watson process. Our argument
therefore provides a heuristic justification of \prettyref{eq:DiscreteCorrection}.
Since the discrete torus version of \prettyref{eq:BallExitProbForRadii}
comes with a quantitative error (see Proposition 1.6.7 and Excercise
1.6.8 \cite{LawlersLillaGrona}), it is concievable that it can also
be used to prove \prettyref{eq:DiscreteCorrection}.\end{rem}
\begin{acknowledgement*}
The authors thank Louis-Pierre Arguin, Alain-Sol Sznitman and Augusto
Teixeira for useful discussions, and Serguei Popov for suggesting
the use of renewals to prove \prettyref{prop:OutInLD}.
\end{acknowledgement*}

\section{Appendix}

In the appendix we collect some less important proofs. We first give
the proof of the large deviation bound \prettyref{lem:LDForBinSumOfGeo}
for sums of a binomial number of geometric random variables, which
was used to prove the upper bound \prettyref{prop:UpperBound}.
\begin{proof}[Proof of \prettyref{lem:LDForBinSumOfGeo}]
Note that
\begin{equation}
\mathbb{P}\left[\sum_{i=1}^{n}J_{i}G_{i}\le\theta\right]=\mathbb{P}\left[\sum_{i=1}^{J_{1}+\ldots+J_{n}}G_{i}\le\theta\right].\label{eq:Rust}
\end{equation}
Now (since a sum of geometrics is a negative binomial distribution)
we have
\[
\mathbb{P}\left[\sum_{i=1}^{m}G_{i}\le\theta\right]=\mathbb{P}\left[I_{1}+\ldots+I_{\theta}\ge m\right]\mbox{\,\ for }m\ge1,
\]
where $I_{1},I_{2},\ldots$ are iid Bernoulli random variables with
success probablity $p$, which can be taken to be independent of the
$J_{i}-s$. Thus (by conditioning on $J_{1}+\ldots+J_{n}$ in \prettyref{eq:Rust})
we have in fact 
\[
\mathbb{P}\left[\sum_{i=1}^{n}J_{i}G_{i}\le\theta\right]=\mathbb{P}\left[I_{1}+\ldots+I_{\theta}\ge J_{1}+\ldots+J_{n}\right].
\]
For any $\lambda>0$ this probability is bounded above by
\[
\begin{array}{l}
\mathbb{E}\left[\exp\left(\lambda\left(I_{1}+\ldots+I_{\theta}-J_{1}-\ldots J_{n}\right)\right)\right]\\
=\left(1+p\left(e^{\lambda}-1\right)\right)^{\theta}\left(1+q\left(e^{-\lambda}-1\right)\right)^{n}\le\exp\left(\theta p\left(e^{\lambda}-1\right)+qn\left(e^{-\lambda}-1\right)\right),
\end{array}
\]
where we have used that $1+x\le e^{x}$. Now \prettyref{eq:LDForBinSumOfGeoLowerBound}
follows by setting $\lambda=\frac{1}{2}\log\frac{qn}{\theta p}$.

\end{proof}
Next we  derive the characterisation \prettyref{lem:RayKnightCont}
of local times of continuous time random walk on $\left\{ 0,\ldots,L\right\} $
from the generalized second Ray-Knight theorem. Recall the definition
of $\tilde{\mathbb{P}}_{l}$ and $Y_{t}$ from above \prettyref{eq:ContTimeLocTimeDef}
and the definition of $L_{l}^{t}$ from \prettyref{eq:ContTimeLocTimeDef}.
\begin{proof}[Proof of \prettyref{lem:RayKnightCont}]
Let $\mathcal{L}=\left\{ 0,\ldots,L\right\} $. The generalized
second Ray-Knight theorem (see \cite{EHMRSARayKnightTheoremForsymmetricMarkovProcesses}
or Theorem 8.2.2 \cite{MarucsRosenMarkovProcGaussProcandLocTime})
implies that $\left(L_{l}^{\tau\left(t\right)}+\frac{1}{2}\eta_{x}^{2}\right)_{l\in\mathcal{L}}\overset{\mbox{law}}{=}\left(\frac{1}{2}\left(\eta_{l}+\sqrt{2t}\right)^{2}\right)_{l\in\mathcal{L}}$,
where $\eta_{l}$ is a centered Gaussian process on $\mathcal{L}$
with covariance $\mathbb{E}\left[\eta_{a}\eta_{b}\right]=\tilde{\mathbb{E}}_{b}\left[L_{a}^{H_{0}}\right]=a$
for $b\le a$, independent of $L_{l}^{\tau\left(t\right)}$. Thus
$\eta_{l},l\in\mathcal{L}$, is in fact Brownian motion at the integer
times $l\in\mathcal{L}$. This in turn implies that $\left(\frac{1}{2}\eta_{l}^{2}\right)_{l\in\mathcal{L}}$
has the $\mathbb{Q}_{0}^{1}-$law of $\left(\frac{1}{2}X_{l}\right)_{l\in\mathcal{L}}$
and $\left(\frac{1}{2}\left(\eta_{l}+\sqrt{2t}\right)^{2}\right)_{l\in\mathcal{L}}$
has the $\mathbb{Q}_{2t}^{1}-$law of $\left(\frac{1}{2}X_{l}\right)_{l\in\mathcal{L}}$
(recall \prettyref{eq:Bessel1andBMSame}). By the additivity property
\prettyref{eq:AdditiveBessel} of Bessel processes we thus have that
$\left(L_{l}^{\tau\left(t\right)}+\frac{1}{2}X_{l}^{1}\right)_{l\in\mathcal{L}}\overset{\mbox{law}}{=}\left(\frac{1}{2}X_{l}^{1}+\frac{1}{2}X_{l}^{2}\right)_{l\in\mathcal{L}}$
where $\left(X_{t}^{1}\right)_{t\ge0}$ has law $\mathbb{Q}_{0}^{1}$,
$X_{t}^{2}$ haw law $\mathbb{Q}_{2t}^{0}$. Now the claim follows
because we may ``cancel out'' $\frac{1}{2}X_{l}^{1}$ from this
equality in law, since all random variables involved are non-negative
(see (2.56) \cite{SznitmanTopicsInOccupationTimes}).
\end{proof}
Next we give the proof of \prettyref{lem:RayishKnightish}, which
describes the law of the local times $L_{l}^{D_{t}},l\in\left\{ 0,\ldots,L\right\} ,$
of continuous time random walk on $\left\{ 0,1,\ldots,L\right\} $
when conditioned on $L_{L}^{D_{t}}=0$. Recall the definition of $D_{t}$
from \prettyref{eq:DeparturesFromZero}. For the proof let us denote
by $\Gamma$ the state space of $\left(Y_{t}\right)_{t\ge0}$, that
is the space of all piecewise constant cadlag functions from $[0,\infty)$
to $\left\{ 0,\ldots,L\right\} $.
\begin{proof}[Proof of \prettyref{lem:RayishKnightish}]
Define the succesive returns to and departures from $\left\{ 0,\ldots,L\right\} \backslash\left\{ 1\right\} $
of $Y_{t}$ by $\tilde{D}_{0}=H_{1}$, 
\[
\tilde{R}_{n}=H_{\left\{ 0,2\right\} }\circ\theta_{\tilde{D}_{n-1}}+\tilde{D}_{n-1},n\ge1,\mbox{ and }\tilde{D}_{n}=H_{1}\circ\theta_{\tilde{R}_{n}}+\tilde{R}_{n},n\ge1.
\]
Collect the excursions of $Y_{t}$ into a marked point process $\mu$
on $[0,\infty)\times\Gamma$ defined by
\[
\mu=\sum_{i\ge1}\delta_{\left(L_{1}^{\tilde{R}_{i}},Y_{\left(\tilde{R}_{i}+\cdot\right)\wedge\tilde{D}_{i}}\right)}.
\]
The point process $\mu$ is a Poisson point process on $\mathbb{R}_{+}\times\Gamma$
of intensity 
\begin{equation}
\lambda\otimes\left(\frac{1}{2}\tilde{\mathbb{P}}_{0}\left[Y_{\cdot\wedge H_{1}}\in dw\right]+\frac{1}{2}\tilde{\mathbb{P}}_{2}\left[Y_{\cdot\wedge H_{1}}\in dw\right]\right),\label{eq:PPPIntensity}
\end{equation}
where $\lambda$ is Lebesgue-measure normalized so that $\lambda\left(\left[0,1\right]\right)=2$.
We can decompose this point process into
\[
\mu_{1}=1_{\mathbb{R}_{+}\times\left\{ Y_{0}=0\right\} }\mu\mbox{ and }\mu_{2}=1_{\mathbb{R}_{+}\times\left\{ Y_{0}=2,H_{L}<H_{1}\right\} }\mu\mbox{ and }\mu_{3}=1_{\mathbb{R}_{+}\times\left\{ Y_{0}=2,H_{1}<H_{L}\right\} }\mu,
\]
where $\mu_{1}$ collects the excursions that start in $0$, $\mu_{2}$
collects the excursions that start in $2$ and hit $L$, and $\mu_{3}$
has the excurions that start in $2$ and avoid $L$. Since we are
restricting $\mu$ to disjoint sets, $\mu_{1}$, $\mu_{2}$ and $\mu_{3}$
are independent Poisson point processes.

Let
\[
\mu_{1}=\sum_{i}\delta_{\left(S_{i},w_{i}\right)},
\]
for $S_{1}<S_{2}<\ldots$, so that $S_{t}$ is the local time at vertex
$1$ until the $t-$th jump to $0$. Note that (recall \prettyref{eq:DeparturesFromZero})
\[
L_{1}^{D_{t}}=S_{t},\mbox{ for }t\in\left\{ 1,2,\ldots\right\} 
\]
We have
\begin{equation}
L_{l}^{D_{t}}=\sum_{\left(s,w\right)\in\mu_{2}\cup\mu_{3}:s\le t}L_{l}^{\infty}\left(w_{i}\right)\mbox{\,\ for }l\in\left\{ 2,3,\ldots\right\} \label{eq:mu2plusmu3}
\end{equation}
where $L_{l}^{\infty}\left(w\right)$ is the local time at $l$ of
the path $w$, i.e. $L_{l}^{\infty}\left(w\right)=d_{l}^{-1}\int_{0}^{\infty}1_{\left\{ w_{s}=l\right\} }ds$
for $d_{l}$ as in \prettyref{eq:Normalization}. For any $u\ge0$
define the vector
\begin{equation}
V_{u}=\left(u,\sum_{\left(s,w\right)\in\mu_{2}:s\le t}L_{2}^{\infty}\left(w_{i}\right),\ldots,\sum_{\left(s,w\right)\in\mu_{2}:s\le t}L_{L}^{\infty}\left(w_{i}\right)\right)\in\mathbb{R}^{L},\label{eq:DefOfVu}
\end{equation}
By \prettyref{eq:mu2plusmu3} we have
\[
\left(L_{l}^{D_{t}}\right)_{l\in\left\{ 1,\ldots,L\right\} }=V_{S_{t}}\mbox{ on the event }\left\{ L_{L}^{D_{t}}=0\right\} =\left\{ \mu_{3}\left(\left[0,S_{t}\right]\times\Gamma\right)=0\right\} .
\]
Furthermore note that $L_{1}^{D_{t}}$ and $\left\{ L_{L}^{D_{t}}=0\right\} $
only depend on $\mu_{1}$ and $\mu_{3}$, while $V_{u}$ only depends
on $\mu_{2}$, which is independent of  $\mu_{1}$ and $\mu_{3}$.
Therefore
\begin{equation}
\begin{array}{c}
\tilde{\mathbb{P}}_{0}\left[\left(L_{l}^{D_{t}}\right)_{l\in\left\{ 1,\ldots,L\right\} }\in A|L_{L}^{D_{t}}=0\right]=\tilde{\mathbb{P}}_{0}\left[f\left(L_{1}^{D_{t}}\right)|L_{L}^{D_{t}}=0\right],\\
\mbox{where }f\left(u\right)=\tilde{\mathbb{P}}_{0}\left[V_{u}\in A\right].
\end{array}\label{eq:SnickSnack}
\end{equation}
We are thus interested in the law of $V_{u}$. Let $\tilde{Y}_{t}$
be continuous time random walk on $\left\{ 1,\ldots,L\right\} $ with
local times and inverse local time at vertex $1$ given by
\[
\tilde{L}_{l}^{u}=\frac{1}{1+1_{\left\{ 1<l<L\right\} }}\int_{0}^{u}1_{\left\{ \tilde{Y}_{s}=l\right\} }ds\mbox{ and }\tilde{\tau}\left(t\right)=\inf\left\{ s\ge0:\tilde{L}_{1}^{s}>u\right\} .
\]
Sampling $\tilde{Y}_{t},t\ge0,$ by ``stiching together'' the excursions
in the point processes $\mu_{2}$ and $\mu_{3}$ we see that
\begin{equation}
\left(\tilde{L}_{l}^{\tilde{\tau}\left(u\right)}\right)_{l\in\left\{ 2\ldots,L\right\} }\overset{\mbox{law}}{=}\left(\sum_{\left(s,w\right)\in\mu_{2}\cup\mu_{3}:s\le u}L_{l}^{\infty}\left(w\right)\right)_{l\in\left\{ 2,\ldots,L\right\} }.\label{eq:TildeLaw}
\end{equation}
So by \prettyref{lem:RayKnightCont} (with $\left\{ 1,\ldots,L\right\} $
in place of $\left\{ 0,\ldots,L\right\} $) we have that
\begin{equation}
\begin{array}{l}
\tilde{\mathbb{P}}_{0}\left[\left({\displaystyle \sum_{\left(s,w\right)\in\mu_{2}\cup\mu_{3}:s\le u}}L_{l}^{\infty}\left(w\right)\right)_{l\in\left\{ 2,\ldots,L\right\} }\in\cdot\right]\\
=\tilde{\mathbb{P}}_{0}\left[\left(\tilde{L}_{l}^{\tilde{\tau}\left(u\right)}\right)_{l\in\left\{ 2\ldots,L\right\} }\in\cdot\right]=\mathbb{Q}_{2u}^{0}\left[\left(\frac{1}{2}X_{l}\right)_{l\in\left\{ 1,\ldots,L-1\right\} }\in\cdot\right].
\end{array}\label{eq:RayKnightTilde}
\end{equation}
Now since 
\[
\left\{ \mu_{3}\left(\left[0,t\right]\times\Gamma\right)=0\right\} =\left\{ \sum_{\left(s,w\right)\in\mu_{2}\cup\mu_{3}:s\le t}L_{L}^{\infty}\left(w\right)=0\right\} ,
\]
and $V_{u}$ is independent of $\mu_{3}$ we have
\[
\begin{array}{ccl}
\tilde{\mathbb{P}}_{0}\left[V_{u}\in A\right]=\tilde{\mathbb{P}}_{0}\left[V_{u}\in A|\mu_{3}\left(\left[0,t\right]\times\Gamma\right)=0\right] & \overset{\eqref{eq:DefOfVu},\eqref{eq:TildeLaw}}{=} & \mathbb{P}_{0}\left[\left(\tilde{L}_{L}^{\tau\left(u\right)}\right)_{l\in\left\{ 1,\ldots,,L\right\} }|\tilde{L}_{L}^{\tau\left(u\right)}=0\right]\\
 & \overset{\eqref{eq:RayKnightTilde}}{=} & \mathbb{Q}_{2u}^{0}\left[\left(\frac{1}{2}X_{l}\right)_{l\in\left\{ 0,\ldots,L-1\right\} }\in A|X_{L-1}=0\right]\\
 & \overset{\eqref{eq:BesselBridgeDef}}{=} & \mathbb{Q}_{2u\to0}^{0,L-1}\left[\left(\frac{1}{2}X_{l}\right)_{l\in\left\{ 0,\ldots,L-1\right\} }\in A\right].
\end{array}
\]
Plugging this into \eqref{eq:SnickSnack} gives the claim.
\end{proof}
The same construction of $Y_{t}$ from the Poisson point processes
$\mu_{1},\mu_{2}$ and $\mu_{3}$ can be used to \prettyref{lem:LOneGivenLLisZero},
which gives a control on the law of $L_{1}^{D_{t}}$ conditioned on
$L_{L}^{D_{t}}=0$.
\begin{proof}[Proof of \prettyref{lem:LOneGivenLLisZero}]
We will first show that
\begin{equation}
\begin{array}{c}
\mbox{the }\tilde{\mathbb{P}}_{0}\left[\cdot|L_{L}^{D_{t}}=0\right]-\mbox{law of }\frac{L}{L-1}L_{L}^{D_{t}}\mbox{\,\ is that of a sum of }t\\
\mbox{independent standard exponential random variables}.
\end{array}\label{eq:LawOfLDt0}
\end{equation}
In the notation of the proof of \prettyref{lem:RayishKnightish}:
Since $L_{1}^{D_{t}}=S_{t}$ and $\left\{ L_{L}^{D_{t}}=0\right\} =\left\{ \mu_{3}\left(\left[0,S_{t}\right]\times\Gamma\right)=0\right\} $
we are interested in the law of $S_{t}$ given $\left\{ \mu_{3}\left(\left[0,S_{t}\right]\times\Gamma\right)=0\right\} $.
Since $\mu_{3}$ is independent of $S_{t}$ we have that
\[
\tilde{\mathbb{P}}_{0}\left[S_{t}=ds,\mu_{3}\left(\left[0,S_{t}\right]\times\Gamma\right)=0\right]=\tilde{\mathbb{P}}_{0}\left[S_{t}=ds,\tilde{\mathbb{P}}\left[\mu_{3}\left(\left[0,s\right]\times\Gamma\right)=0\right]\right].
\]
The intensity of $\mu_{3}$ is the $\lambda\otimes\frac{1}{2}\tilde{\mathbb{P}}_{2}\left[\cdot,H_{L}<H_{1}\right]$
(recall \prettyref{eq:PPPIntensity}), so that
\begin{eqnarray*}
 & \tilde{\mathbb{P}}_{0}\left[\mu_{3}\left(\left[0,s\right]\times\Gamma\right)=0\right]=e^{-s\tilde{\mathbb{P}}_{2}\left[H_{L}<H_{1}\right]}=e^{-\frac{s}{L-1}},\mbox{ and}\\
 & \tilde{\mathbb{P}}_{0}\left[S_{t}=ds,\mu_{3}\left(\left[0,S_{t}\right]\times\Gamma\right)=0\right]=e^{-\frac{s}{L-1}}\tilde{\mathbb{P}}_{0}\left[S_{t}=ds\right].
\end{eqnarray*}
The $\tilde{\mathbb{P}}_{0}-$law of $S_{t}$ is the gamma distribution
with shape $t$ and scale $1$. Thus
\[
\tilde{\mathbb{P}}_{0}\left[S_{t}=ds,\mu_{3}\left(\left[0,S_{t}\right]\times\Gamma\right)=0\right]=e^{-\frac{s}{L-1}}\frac{s^{t-1}e^{-s}}{\left(t-1\right)!}=e^{-\left(\frac{1}{L-1}+1\right)s}\frac{s^{t-1}}{\left(t-1\right)!},
\]
so that the $\tilde{\mathbb{P}}_{0}\left[\cdot|\mu_{3}\left(\left[0,S_{t}\right]\times\Gamma\right)=0\right]-$law
of $S_{t}$ is the gamma distribution with shape $t$ and scale $\left(1+1/\left(L-1\right)\right)^{-1}=\left(L-1\right)/L$.
This proves \prettyref{eq:LawOfLDt0}.

Since $t\le10L^{2}$ the probability in \prettyref{eq:LowerBoundOnL1Prob}
is bounded below by 
\begin{equation}
\tilde{\mathbb{P}}_{0}\left[\sqrt{t\frac{L-1}{L}}-100\le\sqrt{L_{1}^{D_{t}}}\le\sqrt{t\frac{L-1}{L}}+100|L_{L}^{D_{t}}=0\right].\label{eq:EvenSmallerGuy}
\end{equation}
By \prettyref{eq:LawOfLDt0} and the Central Limit Theorem the $\tilde{\mathbb{P}}_{0}\left[\cdot|L_{L}^{D_{t}}=0\right]-$law
of $\left(\frac{L}{L-1}L_{L}^{D_{t}}-t\right)/\sqrt{t}$ converges
to a normal random variable as $t\to\infty$, uniformly in $L$. This
implies that \prettyref{eq:EvenSmallerGuy} is bounded below, so \prettyref{eq:LowerBoundOnL1Prob}
follows.

Also for $k=1,2$
\begin{equation}
\left|\tilde{u}\left(l\right)-v\right|^{k}\le c\left\{ \left|\tilde{u}\left(l\right)-\sqrt{2t}\left(\frac{L-l}{L}\right)^{3/2}\right|^{k}+\left|\sqrt{2t}\left(\frac{L-l}{L}\right)^{3/2}-u\left(l\right)\right|^{k}+\left|u\left(l\right)-v\right|^{k}\right\} .\label{eq:KindOfTriangle}
\end{equation}
We have
\[
\left|\sqrt{2t}\left(\frac{L-l}{L}\right)^{3/2}-u\left(l\right)\right|=u\left(l\right)\left|\sqrt{\frac{L-1}{L}}-1\right|\le cu\left(l\right)L^{-1}\le c\sqrt{t}L^{-1}\le c.
\]
Also
\[
\begin{array}{c}
\left|\tilde{u}\left(l\right)-\sqrt{t}\left(\frac{L-l}{L}\right)^{3/2}\right|=\frac{L-l}{L-1}\left|\tilde{u}\left(1\right)-\sqrt{t}\sqrt{\frac{L-1}{L}}\frac{L-1}{L}\right|\le c+\left|\tilde{u}\left(1\right)-\sqrt{t}\sqrt{\frac{L-1}{L}}\right|,\\
\mbox{and }\left|\tilde{u}\left(1\right)-\sqrt{t}\sqrt{\frac{L-1}{L}}\right|\le c\left|\sqrt{\frac{L}{L-1}L_{1}^{D_{t}}}-\sqrt{t}\right|\le c\frac{\left|\frac{L}{L-1}L_{1}^{D_{t}}-t\right|}{\sqrt{t}}.
\end{array}
\]
Taking the expectation in \prettyref{eq:KindOfTriangle} and using
\[
\tilde{\mathbb{E}}_{0}\left[\left|\frac{L}{L-1}L_{1}^{D_{t}}-t\right||L_{L}^{D_{t}}=0\right]\overset{\eqref{eq:LawOfLDt0}}{\le}c\sqrt{t},
\]
(also Cauchy-Schwarz if $k=2$) we get \prettyref{eq:ExpectedDeviation}.
\end{proof}
We remains to prove \prettyref{lem:ConditionedLawOfTraversals}, giving
a large deviation bound for the number of traversals $\tilde{T}_{l}^{t}$
(recall \eqref{eq:ZTraversals}) given the continuous local times
$L_{l}^{D_{t}}$. For this we will need the following computation
of the conditional distribution of $\tilde{T}_{l}^{t}$ (which can
be seen as a special case of the results of Section 4 \cite{DingAsympOfCTviaGFFBoundedDegree}).
To prove it we use the following fact about the modified Bessel function
of the first kind $I_{1}\left(\cdot\right)$:
\begin{equation}
\sum_{m\ge1}\frac{z^{m}}{m!\left(m-1\right)!}=\sqrt{z}I_{1}\left(2\sqrt{z}\right)\mbox{ for all }z\in\mathbb{R}.\label{eq:BesselFuncExpression}
\end{equation}

\begin{lem}
\label{lem:ConditionedLawOfTraversals}For all $u_{0},u_{1},u_{2},\ldots,u_{L}\in[0,\infty)$
such that $u_{i}=0\implies u_{i+1}=0$, and any $l\in\left\{ 1,\ldots,L-1\right\} $
such that $u_{l+1}>0$ we have for $m\in\left\{ 1,2,\ldots\right\} $
\[
\mathbb{\tilde{P}}_{0}\left[\tilde{T}_{l}^{t}=m|L_{l}^{D_{t}}=u_{l},l=0,\ldots,L\right]=\frac{\left(u_{l}u_{l+1}\right)^{m}/\left(m!\cdot\left(m-1\right)!\right)}{\sqrt{u_{l}u_{l+1}}I_{1}\left(2\sqrt{u_{l}u_{l+1}}\right)}.
\]
\end{lem}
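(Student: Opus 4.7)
The plan is to compute the joint law of $(\tilde T_k^t)_{k=0}^{L-1}$ and $(L_k^{D_t})_{k=0}^L$ explicitly and to observe that, after simplification, the dependence on the traversal vector $\tau=(\tau_k)$ factorizes into a product of edge terms. This will show that $(\tilde T_k^t)$ are conditionally independent given $L_\cdot^{D_t}$, and the marginal of $\tilde T_l^t$ takes the claimed form once normalized via the Bessel identity \eqref{eq:BesselFuncExpression}.

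First I would condition on the discrete skeleton $(\tilde T_k^t)_{k=0}^{L-1}=\tau$, observing that $\tau_0=t$ almost surely (each of the $t$ excursions from $0$ crosses the edge $0\to 1$ exactly once). The number of discrete visits to vertex $k$ in $[0,D_t]$ is then deterministic in $\tau$: $N_0=t$, $N_k=\tau_{k-1}+\tau_k$ for $1\le k\le L-1$, and $N_L=\tau_{L-1}$. Since the holding times of $Y_t$ are i.i.d.\ $\mathrm{Exp}(1)$ independent of the skeleton and $L_k^{D_t}=d_k^{-1}$ times the sum of $N_k$ such holding times, the conditional density of $(L_k^{D_t})_{k=0}^L$ given $\tau$ is $\prod_k \frac{d_k^{N_k} u_k^{N_k-1} e^{-d_k u_k}}{(N_k-1)!}$ (with $d_0=d_L=1$, $d_k=2$ otherwise). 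By \prettyref{lem:RayKnightDisc-1}, the marginal law of $\tau$ is $\mathbb G_t$, and from the explicit negative-binomial transitions of the critical geometric Galton-Watson chain we obtain $\mathbb P[\tau] = \prod_{k=1}^{L-1} \binom{\tau_{k-1}+\tau_k-1}{\tau_{k-1}-1}\,2^{-(\tau_{k-1}+\tau_k)}$ whenever all $\tau_k\ge 1$.

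Multiplying the two expressions, the factors $2^{N_k}$ from the Gamma densities cancel the geometric $2^{-(\tau_{k-1}+\tau_k)}$, and the identity $\binom{a+b-1}{a-1}/(a+b-1)! = 1/[(a-1)!\,b!]$ collapses the binomial coefficients against the Gamma normalizers. Reorganizing the $u_k^{N_k-1}$ terms according to which $\tau_k$ they involve leads to the key identity
\[
\mathbb P[\tau]\,f(u\mid\tau) \;=\; C(u,t)\prod_{k=1}^{l^{*}-1}\frac{(u_k u_{k+1})^{\tau_k}}{\tau_k!\,(\tau_k-1)!},
\]
where $l^{*}=\max\{k:u_k>0\}$ (well defined by the constraint $u_i=0\Rightarrow u_{i+1}=0$, and satisfying $l^{*}\ge l+1$ by the hypothesis $u_{l+1}>0$) and $C(u,t)$ does not depend on $\tau$; for $k\ge l^{*}$ the values of $\tau_k$ are forced to $0$ since $u_k=0$ turns the Gamma factor into an atom at $0$. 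This factorization immediately shows that $(\tilde T_k^t)_{k=1}^{l^{*}-1}$ are conditionally independent given $L_\cdot^{D_t}$, with each marginal PMF proportional to $(u_k u_{k+1})^m/[m!(m-1)!]$ for $m\ge 1$. Applying \eqref{eq:BesselFuncExpression} with $z=u_l u_{l+1}$ to normalize the marginal of $\tilde T_l^t$ produces exactly the formula in the statement.

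The only obstacle is the careful bookkeeping: tracking the asymmetric boundary vertices $0$ and $L$ (where $d=1$ and the visit counts $N_0=t$, $N_L=\tau_{L-1}$ differ from the interior formula), verifying that all $\tau$-dependence outside $\{1,\ldots,l^{*}-1\}$ is either forced or absorbed into $C(u,t)$, and replacing the Gamma density by an atom at $0$ for the vanishing coordinates of $u$. None of these is deep once the algebraic identity above is in hand.
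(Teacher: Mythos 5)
Your proposal is correct and follows essentially the same route as the paper's proof: condition on the discrete skeleton to get the Gamma form of the local-time density, multiply by the negative-binomial law of $(\tilde T_k^t)_k$ from \prettyref{lem:RayKnightDisc-1}, exploit the cancellation of the $2^{N_k}$ factors against the $(1/2)^{\tau_{k-1}+\tau_k}$ and the identity $\binom{a+b-1}{a-1}/(a+b-1)!=1/[(a-1)!\,b!]$ to factorize the joint law over edges, and finally normalize via \eqref{eq:BesselFuncExpression}. The only cosmetic difference is that you make the conditional-independence of $(\tilde T_k^t)_k$ given $L_\cdot^{D_t}$ explicit, whereas the paper simply isolates the $t_l$-dependent factors; the underlying computation is identical.
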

\begin{proof}
The law of $T_{1}$ under $\mathbb{G}_{a}$ can be written down explicitly
as 
\[
\mathbb{G}_{a}\left[T_{1}=b\right]={a+b-1 \choose a-1}\left(\frac{1}{2}\right)^{a+b}\mbox{ for }a\in\left\{ 1,2,\ldots\right\} ,b\in\left\{ 0,1,2,\ldots\right\} ,
\]
since there are ${a+b-1 \choose a-1}$ ways to write $b$ as a sum
of $a$ non-negative integers, and since the probability that a geometric
random variable with support $\left\{ 0,1,\ldots\right\} $ and mean
$1$ takes on the value $k$ is $\left(\frac{1}{2}\right)^{k+1}$.
By \prettyref{lem:RayKnightDisc-1} we therefore have for all $t=t_{0},t_{1},t_{2},\ldots,t_{L-1}\in\left\{ 0,1,2,\ldots\right\} $
such that $t_{i}=0\implies t_{i+1}=0$ 
\[
\mathbb{\tilde{P}}_{0}\left[\tilde{T}_{i}^{t}=t_{i},i=0,\ldots,L-1\right]=\prod_{i\in\left\{ 1,\ldots,L-1\right\} :t_{i-1}>0}{t_{i-1}+t_{i}-1 \choose t_{i-1}-1}\left(\frac{1}{2}\right)^{t_{i-1}+t_{i}}.
\]
Contidioned on the number of visits to each vertex the total holding
times at the vertices are independent and gamma distributed, so we
have for such $t_{i}$ and any $u_{0},u_{1},\ldots,u_{L}\in[0,\infty)$
such that $t_{l-1}=0\iff u_{l}=0$ that
\[
\begin{array}{l}
\tilde{\mathbb{P}}_{0}\left[\tilde{T}_{l}^{t}=t_{l},l=1,\ldots,L-1,L_{l}^{D_{t}}=u_{l},l=0,\ldots,L\right]\\
=\left\{ {\displaystyle \prod_{i\in\left\{ 1,\ldots,L-1\right\} :t_{i-1}>0}}{t_{i-1}+t_{i}-1 \choose t_{i-1}-1}\left(\frac{1}{2}\right)^{t_{i-1}+t_{i}}\right\} \\
\quad\times\left\{ \left(\frac{e^{-u_{1}}u_{1}^{t_{0}-1}}{\left(t_{0}-1\right)!}\right)\left({\displaystyle \prod_{i\in\left\{ 1,\ldots,L-1\right\} :t_{i-1}>0}}\frac{e^{-2u_{i}}\left(2u_{i}\right)^{t_{i-1}+t_{i}}}{u_{i}\left(t_{i-1}+t_{i}-1\right)!}\right)\left(\frac{e^{-u_{L}}u_{L}^{t_{L-1}-1}}{\left(t_{L-1}-1\right)!}\right)\right\} ,
\end{array}
\]
where the quantity in the last paranethesis is interpreted as $1$
if $t_{l-1}=0$ or $u_{L}=0$. Exploiting two cancellations the right-hand
side equals
\[
\begin{array}{l}
\left\{ {\displaystyle \prod_{i\in\left\{ 1,\ldots,L-1\right\} :t_{i-1}>0}}\frac{1}{\left(t_{i-1}-1\right)!t_{i}!}\right\} \\
\times\left\{ \left(\frac{e^{-u_{1}}u_{1}^{t_{0}-1}}{\left(t_{0}-1\right)!}\right)\left({\displaystyle \prod_{i\in\left\{ 1,\ldots,L-1\right\} :t_{i-1}>0}}\frac{e^{-2u_{i}}u_{i}^{t_{i-1}+t_{i}}}{u_{i}}\right)\left(\frac{e^{-u_{L}}u_{L}^{t_{L-1}-1}}{\left(t_{L-1}-1\right)!}\right)\right\} .
\end{array}
\]
Considering only the terms that depend on $t_{l}$ we have that if
$u_{0},u_{1},\ldots,u_{l+1}>0$ 
\[
\tilde{\mathbb{P}}_{0}\left[\tilde{T}_{l}^{t}=m|L_{l}^{D_{t}}=u_{l},l=0,\ldots,L\right]=\frac{1}{\tilde{Z}}\frac{\left(u_{l}u_{l+1}\right)^{m}}{\left(m-1\right)!m!},m\ge1,l\in\left\{ 1,\ldots,L-1\right\} ,
\]
for a normalizing constant $\tilde{Z}$ depending only on $t,u_{0},\ldots,u_{L}$.
Using \prettyref{eq:BesselFuncExpression} we can identify the constant
as
\[
\tilde{Z}=\sum_{m\ge1}\frac{\left(u_{l}u_{l+1}\right)^{m}}{\left(m-1\right)!m!}=\sqrt{u_{l}u_{l+1}}I_{1}\left(2\sqrt{u_{l}u_{l+1}}\right).
\]

\end{proof}
We now prove the large deviation result \prettyref{lem:LDConditionedTraverals}
for the traversal process $\tilde{T}_{l}^{t}$ conditioned on $L_{l}^{D_{t}},l=0,\ldots,L$.
\begin{proof}[Proof of \prettyref{lem:LDConditionedTraverals}]
Denote $\tilde{\mathbb{P}}_{0}\left[\cdot|\sigma\left(L_{l}^{D_{t}}:l=0,\ldots,L\right)\right]$
by $\tilde{\mathbb{Q}}$. By \prettyref{lem:ConditionedLawOfTraversals},
\[
\tilde{\mathbb{Q}}\left[\exp\left(\lambda\tilde{T}_{l}^{t}\right)\right]=\sum_{m\ge1}\frac{\left(e^{\lambda}\mu^{2}\right)^{m}/\left(m!\cdot\left(m-1\right)!\right)}{\mu I_{1}\left(2\mu\right)}\overset{\eqref{eq:BesselFuncExpression}}{=}\frac{e^{\lambda/2}\mu I_{1}\left(2e^{\lambda/2}\mu\right)}{\mu I_{1}\left(2\mu\right)}\mbox{ for }\lambda\in\mathbb{R}.
\]
Thus for all $\lambda>0$ 
\[
\tilde{\mathbb{Q}}\left[\tilde{T}_{l}^{t}\ge\mu+\theta\right]\le e^{\lambda/2}\frac{I_{1}\left(2e^{\lambda/2}\mu\right)}{I_{1}\left(2\mu\right)}\exp\left(-\lambda\left(\mu+\theta\right)\right).
\]
Using the standard estimate $I_{1}\left(z\right)=\frac{e^{z}}{\sqrt{2\pi z}}\left(1+O\left(z^{-1}\right)\right)$
we have that
\[
I_{1}\left(2e^{\lambda/2}\mu\right)/I_{1}\left(2\mu\right)\le ce^{\lambda/4}e^{2\left(e^{\lambda/2}-1\right)\mu},
\]
so that for all $\lambda>0$
\[
\tilde{\mathbb{Q}}\left[\tilde{T}_{l}^{t}\ge\mu+\theta\right]\le ce^{c\lambda}\exp\left(2\left\{ e^{\lambda/2}-1\right\} \mu-\lambda\left\{ \mu+\theta\right\} \right)\le ce^{c\lambda}\exp\left(c\lambda^{2}\mu-\lambda\theta\right).
\]
Setting $\lambda=c\theta/\mu$ for a small enough $c$ the right-hand
side is bounded above by $ce^{c\theta/\mu-c\theta^{2}/\mu}$, giving
one half of \eqref{eq:LargeDeviationConditionedTraversal}. By estimating
$\tilde{\mathbb{Q}}\left[\exp\left(-\lambda\tilde{T}_{l}^{t}\right)\right]$
one can similarly show that $\tilde{\mathbb{Q}}\left[\tilde{T}_{l}^{t}\le\mu-\theta\right]\le ce^{c\theta/\mu-c\theta^{2}/\mu}$,
giving the other half.
\end{proof}


\begin{thebibliography}{10}

\bibitem{AidekonBerestickyBruneyShiBBMSeenFromItsTip}
E.~A{\"{\i}}d{\'e}kon, J.~Berestycki, {\'E}.~Brunet, and Z.~Shi.
\newblock Branching {B}rownian motion seen from its tip.
\newblock {\em Probab. Theory Related Fields}, 157(1-2):405--451, 2013.

\bibitem{ABKGenealogy}
L.-P. Arguin, A.~Bovier, and N.~Kistler.
\newblock Genealogy of extremal particles of branching {B}rownian motion.
\newblock {\em Comm. Pure Appl. Math.}, 64(12):1647--1676, 2011.

\bibitem{ABKExtremalProcOFBBM}
Louis-Pierre Arguin, Anton Bovier, and Nicola Kistler.
\newblock The extremal process of branching {B}rownian motion.
\newblock {\em Probab. Theory Related Fields}, 157(3-4):535--574, 2013.

\bibitem{BeliusCTDT}
David Belius.
\newblock Gumbel fluctuations for cover times in the discrete torus.
\newblock {\em Probab. Theory Related Fields}, 157(3-4):635--689, 2013.

\bibitem{BramsonMaxDisplacementofBBM}
Maury Bramson.
\newblock Maximal displacement of branching brownian motion.
\newblock {\em Comm. Pure Appl. Math}, 31(5):531--581, 1978.

\bibitem{Bramson1ConvergenceofSolutionsOfKolmogorovEqn}
Maury Bramson.
\newblock Convergence of solutions of the kolmogorov equation to traveling
  waves.
\newblock {\em Memoirs of the American Mathematical Society}, 44(285):1--190,
  1983.

\bibitem{BramsonZeitouniTightnessoftheRecenteredMaxOf2dGFF}
Maury Bramson and Ofer Zeitouni.
\newblock Tightness of the recentered maximum of the two-dimensional discrete
  {G}aussian free field.
\newblock {\em Comm. Pure Appl. Math.}, 65(1):1--20, 2012.

\bibitem{CarrSchroederBesselProcsIntegralOfGBMandAsianOptions}
P.~Carr and M.~Schr{\"o}der.
\newblock Bessel processes, the integral of geometric {B}rownian motion, and
  {A}sian options.
\newblock {\em Teor. Veroyatnost. i Primenen.}, 48(3):503--533, 2003.

\bibitem{DemboPeresRosen-BrownianMotionOnCompactManifolds}
Amir Dembo, Yuval Peres, and Jay Rosen.
\newblock Brownian motion on compact manifolds: Cover time and late points.
\newblock {\em Electron. J. Probab.}, 8:no. 15, 1--14, 2003.

\bibitem{DemboPeresEtAl-CoverTimesforBMandRWin2D}
Amir Dembo, Yuval Peres, Jay Rosen, and Ofer Zeitouni.
\newblock Cover times for {B}rownian motion and random walks in two dimensions.
\newblock {\em Ann. of Math. (2)}, 160(2):433--464, 2004.

\bibitem{Demboetal_LatePoints}
Amir Dembo, Yuval Peres, Jay Rosen, and Ofer Zeitouni.
\newblock Late points for random walks in two dimensions.
\newblock {\em Ann. Probab.}, 34(1):219--263, 2006.

\bibitem{Ding-CoverTimefor2DLattice}
Jian Ding.
\newblock On cover times for 2{D} lattices.
\newblock {\em Electron. J. Probab.}, 17:no. 45, 18, 2012.

\bibitem{DingAsympOfCTviaGFFBoundedDegree}
Jian Ding.
\newblock Asymptotics of cover times via {G}aussian free fields:
  {B}ounded-degree graphs and general trees.
\newblock {\em Ann. Probab.}, 42(2):464--496, 2014.

\bibitem{DingZeitouni-SharpEstimateForCTOnTree}
Jian Ding and Ofer Zeitouni.
\newblock A sharp estimate for cover times on binary trees.
\newblock {\em Stochastic Process. Appl.}, 122(5):2117--2133, 2012.

\bibitem{EHMRSARayKnightTheoremForsymmetricMarkovProcesses}
Nathalie Eisenbaum, Haya Kaspi, Michael~B. Marcus, Jay Rosen, and Zhan Shi.
\newblock A {R}ay-{K}night theorem for symmetric {M}arkov processes.
\newblock {\em Ann. Probab.}, 28(4):1781--1796, 2000.

\bibitem{FitzsimmonsPitmanKacsMomentFormula}
P.~J. Fitzsimmons and Jim Pitman.
\newblock Kac's moment formula and the {F}eynman-{K}ac formula for additive
  functionals of a {M}arkov process.
\newblock {\em Stochastic Process. Appl.}, 79(1):117--134, 1999.

\bibitem{denHollanderGoodmanExtremalGeometryofBrownianPorousMedium}
Jesse Goodman and Frank Hollander.
\newblock Extremal geometry of a brownian porous medium.
\newblock {\em Probab. Theory and Related Fields}, 2013.

\bibitem{KistlerDerridasRandomEnergyModelsBeyondSpinGlasses}
Nicola Kistler.
\newblock Derrida's random energy models, beyond spin glasses.
\newblock To appear at Springer Lecture Notes in Mathematics.

\bibitem{LawlersLillaGrona}
Gregory~F. Lawler.
\newblock {\em Intersections of random walks}.
\newblock Probability and its Applications. Birkh{\"a}user Boston Inc., Boston,
  MA, 1991.

\bibitem{LawlerOnTheCovering}
Gregory~F. Lawler.
\newblock On the covering time of a disc by simple random walk in two
  dimensions.
\newblock In {\em Seminar on {S}tochastic {P}rocesses, 1992 ({S}eattle, {WA},
  1992)}, volume~33 of {\em Progr. Probab.}, pages 189--207. Birkh\"auser
  Boston, Boston, MA, 1993.

\bibitem{MarucsRosenMarkovProcGaussProcandLocTime}
Michael~B. Marcus and Jay Rosen.
\newblock {\em Markov processes, {G}aussian processes, and local times}, volume
  100 of {\em Cambridge Studies in Advanced Mathematics}.
\newblock Cambridge University Press, Cambridge, 2006.

\bibitem{MatthewsCoveringProblemsForBM}
Peter Matthews.
\newblock Covering problems for {B}rownian motion on spheres.
\newblock {\em Ann. Probab.}, 16(1):189--199, 1988.

\bibitem{PeresMoertersBrownianMotion}
Peter M{\"o}rters and Yuval Peres.
\newblock {\em Brownian motion}.
\newblock Cambridge Series in Statistical and Probabilistic Mathematics.
  Cambridge University Press, Cambridge, 2010.
\newblock With an appendix by Oded Schramm and Wendelin Werner.

\bibitem{PitmanYorADecompositionofBesselBridges}
Jim Pitman and Marc Yor.
\newblock A decomposition of {B}essel bridges.
\newblock {\em Z. Wahrsch. Verw. Gebiete}, 59(4):425--457, 1982.

\bibitem{RevuzYor-ContMartAndBM}
Daniel Revuz and Marc Yor.
\newblock {\em Continuous martingales and {B}rownian motion}, volume 293.
\newblock Springer-Verlag, Berlin, third edition, 1999.

\bibitem{ScheikeABoundarCrossResForBM}
Thomas~H. Scheike.
\newblock A boundary-crossing result for {B}rownian motion.
\newblock {\em J. Appl. Probab.}, 29(2):448--453, 1992.

\bibitem{SznitmanTopicsInOccupationTimes}
Alain-Sol Sznitman.
\newblock {\em Topics in occupation times and {G}aussian free fields}.
\newblock Zurich Lectures in Advanced Mathematics. European Mathematical
  Society (EMS), Z\"urich, 2012.

\bibitem{UenoOnRecurrentMarkovProcesses}
Tadashi Ueno.
\newblock On recurrent {M}arkov processes.
\newblock {\em K\=odai Math. Sem. Rep.}, 12:109--142, 1960.

\bibitem{WebbbExactAsymptoticsofFreezingTransitionOfLogCorrelatedREM}
Christian Webb.
\newblock Exact asymptotics of the freezing transition of a logarithmically
  correlated random energy model.
\newblock {\em Journal of Statistical Physics}, 145(6):1595--1619, 2011.

\end{thebibliography}
\end{document}